\theoremstyle{plain}
\newtheorem{theorem}{Theorem}[section]
\newtheorem{lemma}[theorem]{Lemma}
\newtheorem{proposition}[theorem]{Proposition}
\newtheorem{definition}[theorem]{Definition}
\theoremstyle{remark}
\numberwithin{equation}{section}
\newcommand{\R}{\mathbb{R}} 
\newcommand{\N}{\mathbb{N}}
\newcommand{\Z}{\mathbb{Z}}
\newcommand{\Om}{\Omega}
\newcommand{\Sf}{\mathbb{S}}
\newcommand{\A}{\mathcal{A}}
\newcommand{\Ar}{\mathcal{A}^r}
\newcommand{\B}{\mathcal{B}}
\newcommand{\Ha}{\mathcal{H}}
\newcommand{\M}{\mathcal{M}}
\newcommand{\E}{\mathcal{E}}
\newcommand{\Le}{\mathcal{L}}
\newcommand{\Rcal}{\mathcal{R}} 
\newcommand{\BB}{\mathfrak{S}}
\newcommand{\dd}{\mathrm{d}}
\newcommand{\id}{\mathbf{id}}
\newcommand{\e}{\varepsilon}
\newcommand{\p}{\partial}
\renewcommand{\t}{\theta}
\newcommand{\Set}{O}
\DeclareMathOperator{\At}{At}
\DeclareMathOperator{\cof}{cof}
\DeclareMathOperator{\Det}{Det}
\DeclareMathOperator{\imG}{im_G}
\DeclareMathOperator{\imT}{im_T}
\DeclareMathOperator{\adj}{adj}
\DeclareMathOperator{\dive}{div}
\DeclareMathOperator{\supp}{supp}
\DeclareMathOperator{\rel}{rel}
\DeclareMathOperator{\tr}{tr}
\DeclareMathOperator{\dist}{dist}
\DeclareMathOperator{\Per}{Per}
\renewcommand{\vec}[1]{\text{\boldmath $#1$}}
\newcommand{\vecg}[1]{\text{\boldmath $#1$}}
\newcommand{\matrizzDet}[1]{\begin{vmatrix} #1 \end{vmatrix}}
\newcommand{\weakcs}{\overset{*}{\rightharpoonup}}
\newcommand{\res}{\mathop{\hbox{\vrule height 7pt width .5pt depth 0pt \vrule height .5pt width 6pt depth 0pt}}\nolimits\,}
\begin{document}

\title[Lack of compactness in the axisymmetric neo-Hookean model]{On the lack of compactness in the axisymmetric neo-Hookean model}
\author{Marco Barchiesi}
\author{Duvan Henao}
\author{Carlos Mora-Corral}
\author{R\'emy Rodiac}
\date{January 30, 2024}

\address[Marco Barchiesi]{
Dipartimento di Matematica, Informatica e Geoscienze, Universit\`a degli Studi di Trieste,
Via Weiss 2 - 34128 Trieste, Italy.
}
\email{barchies@gmail.com}

\address[Duvan Henao]{Faculty of Mathematics and Institute for Mathematical and Computational Engineering, Pontificia Universidad Cat\'olica de Chile, Vicu\~na Mackenna 4860, Macul, Santiago, Chile.
Present address: Instituto de Ciencias de la Ingenier\'ia, Universidad de O'Higgins. Rancagua, Chile.}
\email{duvan.henao@uoh.cl}

\address[Carlos Mora-Corral]{Departamento de Matem\'aticas, Universidad Aut\'onoma de Madrid,
28049 Madrid, Spain \& Instituto de Ciencias Matem\'aticas,
CSIC-UAM-UC3M-UCM, 28049 Madrid, Spain.
}
\email{carlos.mora@uam.es}

\address[R\'emy Rodiac]{Universit\'e Paris-Saclay, CNRS,  Laboratoire de math\'ematiques d'Orsay, 91405, Orsay, France \& Institute of Mathematics, University of Warsaw, Banacha 2, 02-097
Warszawa, Poland}
\email{rrodiac@mimuw.edu.pl}

\begin{abstract}
We provide a fine description of the weak limit of sequences of regular axisymmetric
maps with equibounded neo-Hookean energy, under the assumption that they have finite 
surface energy. We prove that these weak limits have a dipole structure, 
showing that the singular map described by Conti \& De Lellis is generic in some sense. 
On this map we provide the explicit relaxation of the neo-Hookean energy.
We also make a link with Cartesian currents showing that the candidate for the relaxation 
we obtained presents strong similarities with the relaxed energy in the context 
of \(\mathbb{S}^2\)-valued harmonic maps.
\end{abstract}

\keywords{neo-Hookean, dipole, relaxation}
\subjclass[2020]{49J45, 49Q15, 74B20, 74G65, 74G70}


\maketitle

\tableofcontents

\vspace*{14pt}

\section{Introduction}\label{se:intro}

One of the most used models in nonlinear elasticity is that of neo-Hookean materials:
given a body in a reference configuration $\Om\subset\R^3$, its deformation $\vec u:\Om\to\R^3$ observed 
in response to given boundary conditions is postulated to minimise in a certain admissible function space a stored 
energy functional of the form
\begin{equation*}
 E (\vec u)=\int_\Om \left[|D \vec u|^2+H(\det D\vec u)\right] \dd \vec x,
\end{equation*}
where $H:(0,+\infty)\to[0,+\infty)$ is some convex function penalizing volume changes, satisfying
\begin{equation}\label{eq:growth_H}
\lim_{t\rightarrow +\infty} \frac{H(t)}{t}=\lim_{s\rightarrow 0} H(s)=+\infty.
\end{equation} 
As discussed, e.g., in \cite{Ball01,Ball10}, since minimisers in different function spaces 
can be different, the choice of the function space is part of the model.
Because of the growth condition of $E$, the function space is a suitable subfamily of $H^1(\Om,\R^3)$.
In order to be physically realistic, the deformations have to be at least one-to-one a.e.\ and orientation preserving, 
i.e., to satisfy \(\det D \vec u>0\) a.e\@. We set as boundary condition a bounded $C^1$ orientation-preserving
diffeomorphism $\vec b:\Om\rightarrow \R^3$ and we choose as basic function space
\begin{equation*}
 \A := \{ \vec u \in H^1(\Om,\R^3) : \,  \vec u = \vec b \text{ in } \Om\setminus\widetilde\Om, 
 \, \vec u \text{ is one-to-one a.e.}, \, \det D\vec u>0 \text{ a.e.},  \text{ and }  E(\vec u)<\infty  \}.
\end{equation*}
For technical convenience, we work with a strong form of the Dirichlet boundary condition, 
i.e., we choose a smooth bounded domain $\widetilde{\Om}$ compactly included in $\Om$ and
we require that deformations coincide with $\vec b$ not only on $\partial \Om$ but on the 
whole $\Om\setminus\widetilde{\Om}$.
To avoid interpenetration of matter, the well-known INV condition (see \cite{MuSp95, CoDeLe03}) has to be satisfied.
Simplifying, the INV condition means that after the deformation, matter coming from any subregion $U$ remains 
enclosed by the image of $\partial U$ and matter coming from outside $U$ remains exterior to the region enclosed 
by the image of $\partial U$.
Because of that, a reasonable function space where to look for realistic deformations is
\begin{equation*}
 \Ar := \{ \vec u \in \A : \text{ the divergence identities are satisfied}\},
\end{equation*}
with superscript $r$ standing for ``regular''. 
We recall that the \emph{divergence identities} are
\begin{equation}\label{eq:divergence_identities}
\mathrm{Div}((\adj D \vec u)\vec g \circ \vec u)=(\dive \vec g) \circ \vec u \det D\vec u \quad \forall \vec g\in C^1_c(\R^3,\R^3).
\end{equation}
The identity $\Det D\vec u=(\det D \vec u)\Le^3$, with the distributional determinant defined by
\begin{equation}\label{eq:defDet}
\langle \Det \vec u,\varphi\rangle =-\frac13 \int_\Om \vec u(\vec x)\cdot (\cof D\vec u (\vec x))D \varphi(\vec x) \dd \vec x, \quad \varphi \in C^1_c(\Om),
\end{equation}
is a particular case. 
One can use the Brezis-Nirenberg degree and adapt \cite[Lemma 5.1]{BaHeMo17} to show that condition 
INV holds for maps in $\Ar$.
Moreover, by \cite[Th.\ 3.4]{HeMo15}, the inverse map $\vec u^{-1}$ belongs to $W^{1,1}(\Om_{\vec b},\R^3)$, where 
$\Om_{\vec b}:=\vec b(\Om)$. 

The existence of minimisers in the space $\Ar$ has not yet been obtained, since this space is not sequentially compact with respect to the \(H^1\) weak convergence. Indeed, Conti \& De Lellis \cite[Sect.~6]{CoDeLe03} (see also Section \ref{se:LimitMap}) provided a sequence of 
orientation-preserving bi-Lipschitz deformations with uniformly bounded neo-Hookean energy whose limit $\vec u$ presents a change of orientation and an interpenetration in some region. Therefore, it does not satisfy INV or the divergence identities. 

To prove the existence of minimisers for the neo-Hookean energy in the class $\Ar$,
in \cite{BaHeMoRo_21,BaHeMoRo_23} we proposed a new strategy. 
Firstly, we provided a larger space $\B\supset\Ar$ that is \emph{compact} for sequences with equibounded energy:
\begin{equation*}
\B:= \{\vec u \in \A : \Om_{\vec b} = \imG(\vec u,\Om) \text{ a.e\@. and }
\vec u^{-1}\in BV(\Om_{\vec b},\R^3)\},
\end{equation*}
As usual, $BV$ denotes  the space of functions of bounded variation, while $\imG$ denotes 
the geometric image (see Definition \ref{def:geometric_image}). Our choice for the family 
$\B$ is driven by the fact that if $\vec u$ belongs to  $\Ar$, then \(\imG(\vec u,\Om)=\Om_{\vec b}\) 
(by using the Brezis-Nirenberg degree and adapting \cite[Th.\ 4.1]{BaHeMo17}) and its inverse has Sobolev regularity.

Secondly, we extended $E$ to $\B$ through a \emph{lower semicontinuous} energy:
\begin{equation}\label{eq:relaxedF}
F(\vec u):= E(\vec u)+2 \| D^s \vec u^{-1} \|,
\end{equation}
for \(\vec u \in \B\).
Here $D^s \vec u^{-1}$ is the singular part of the distributional gradient of the inverse, $|D^s \vec u^{-1}|$ is its total variation, 
and $\|D^s \vec u^{-1}\| = |D^s \vec u^{-1}| (\Om_{\vec b})$.
Then, in \cite[Th.\ 1.1]{BaHeMoRo_23} (see also \cite[Th.\ 1.1]{BaHeMoRo_21} for the axisymmetric case), 
we obtained by using the direct method of calculus of variations that the energy $F$ admits a minimiser $\vec u$ on $\B$. 

\begin{theorem}\label{th:previous_article}
Let $(\vec u_n)_n$ be a sequence in $\B$ such that $(F(\vec u_n))_n$ is equibounded.
Then there exists $\vec u\in\B$ such that, up to a subsequence,
$\vec u_n \rightharpoonup \vec u$ in $H^1(\Om,\R^3)$ and 
\begin{equation*}
\liminf_{n\to\infty} F(\vec u_n)\geq F(\vec u).
\end{equation*} 
In particular, the energy $F$ has a minimiser in $\B$. 
\end{theorem}

In this way the existence of a minimiser for $E$ is reduced to showing that $\vec u$ belongs to $\Ar$.
Indeed, the hope is that creating a discontinuity on the inverse and paying the cost $2\|D^s\vec u^{-1}\|$ is incompatible with being a minimiser of $F$ in the class $\B$; this would then yield the existence of a minimiser of the original neo-Hookean energy $E$ in the
regular subclass $\Ar$ of maps where the divergence identities \eqref{eq:divergence_identities} are satisfied, and $\vec u^{-1}$ belongs to $W^{1,1}(\Om_{\vec b},\R^3)$.

We remark that, by definition of the relaxed energy, we have \( F(\vec u)\leq E_{\rel}(\vec u)\) for every \(\vec u\) in the weak \(H^1\) closure of maps in \(\Ar\). We recall that the relaxed energy is defined abstractly by  
\begin{equation}\label{eq:Erel}
 E_{\rel}(\vec u):= \inf \{ \liminf_{n\rightarrow \infty} E(\vec u_n) : (\vec u_n)_{n} \subset \Ar \text{ and } \vec u_n \rightharpoonup \vec u \text{ in } H^1 (\Om, \R^3) \}.
\end{equation}
It is desirable that $F$ coincides with the relaxation of $E$, in order to get, possibly, a negative result: 
if none of the minimisers of the relaxed energy belong to $\Ar$, then $E$ has no minimisers in $\Ar$.

\emph{First goal of this paper:} to show that, for at least the singular map $\vec u$ provided by Conti \& De Lellis, there exists
a sequence $(\vec u_n)_n$ in $\Ar$ such that  $\lim_{n\rightarrow \infty} E(\vec u_n) = F(\vec u)$, i.e., 
$F(\vec u)=E_{\rel}(\vec u)$.

In general proving that $F$ is the relaxed energy by constructing a matching upper bound is a difficult task because of the injectivity constraint. However, even without showing that $F$ is the relaxation of $E$, its explicit expression (compared to that for $E_{\rel}$ in \eqref{eq:Erel}), as well as the more explicit definition of the admissible class $\B$ (compared to the abstract notion of the $H^1$-weak closure of the set of regular orientation-preserving maps), make the proposed variational problem of minimising $F$ in $\B$ likely to be better suited for the study of the regularity of the minimisers.  

Since the map of Conti--De Lellis is axisymmetric (see Subsection \ref{axisimmetry} for a precise definition), 
we will assume $\Om$, $\widetilde{\Om}$, and $\vec b$ axisymmetric and mainly work in the spaces 
\begin{equation}\label{eq:defAs}
\Ar_s:=\{\vec u\in\Ar \colon \vec u \text{ is axisymmetric}\} \text{ and }
\B_s:=\{\vec u\in\B \colon \vec u \text{ is axisymmetric}\}.
\end{equation}
If $\vec u\in\B_s$, then by \cite[Prop.\ 4.15]{BaHeMoRo_21} the first two components of $\vec u^{-1}$ are regular:
\begin{equation*}
\vec u^{-1}=(u^{-1}_1,u^{-1}_2,u^{-1}_3)\in W^{1,1}(\Om_{\vec b}, \R^2) \times BV(\Om_{\vec b}).
\end{equation*}

\begin{figure}[hbt!]
\begin{center}
	\includegraphics[width=0.95\textwidth]{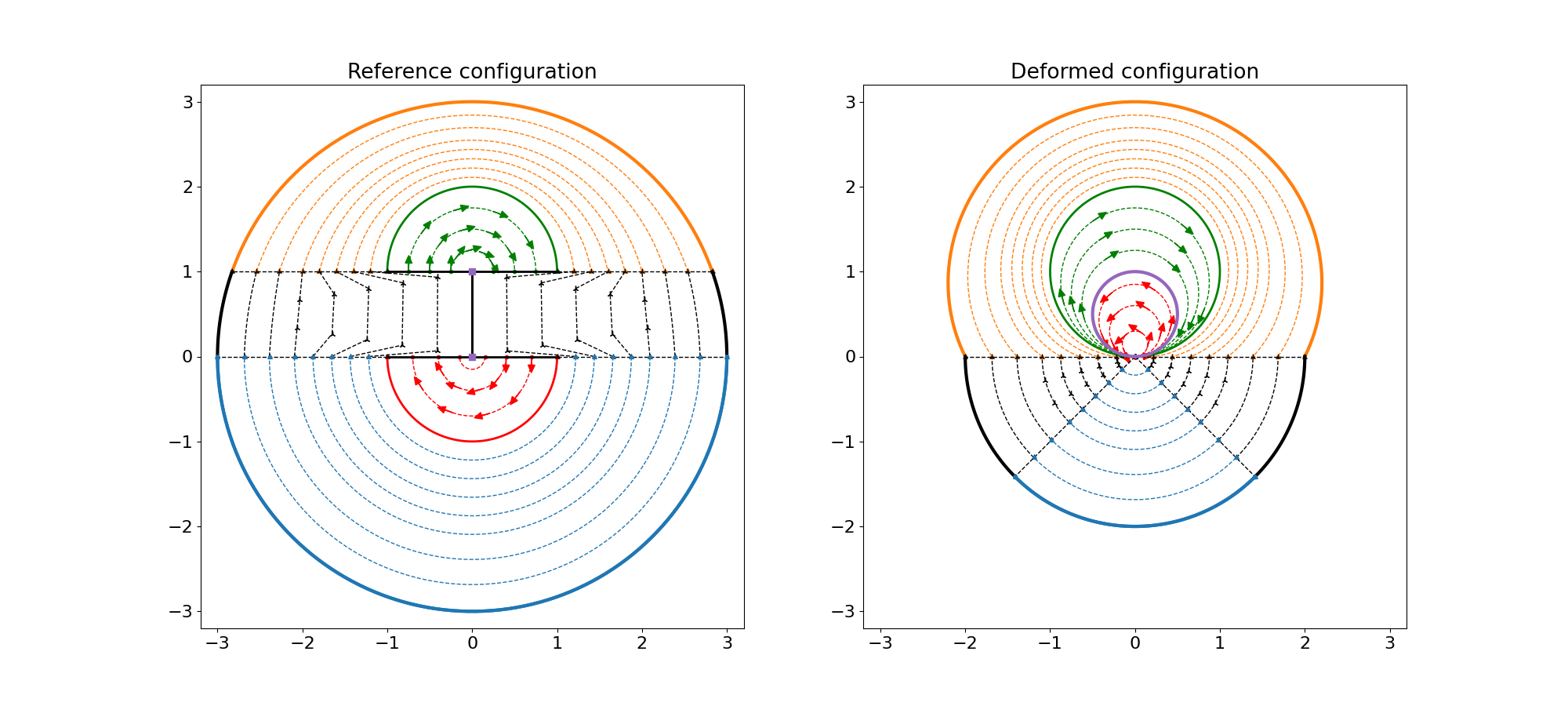}
\end{center}
\caption{
The $2D$ section of (a possible realization of) the Conti--De Lellis map \cite{CoDeLe03}. 
The purple circle $\{y_1^2 + y_2^2+ (y_3-\tfrac{1}{2})^2=\tfrac{1}{2}^2\}$ on the right is not attained 
as the image of any set of material points $\vec x$ in $\Om=B(\vec 0,3)$. It is, instead, new surface created 
by the map, that is, part of the boundary of the image of $\Om \setminus \{\vec 0,\vec 0'\}$ by $\vec u$, 
where $\vec 0=(0,0,0)$ and $\vec 0'=(0,0,1)$ are the only points where $\vec u$ is singular}
\label{fig:python-original_map}
\end{figure}

Let us describe briefly the singular map $\vec u$ of Conti--De Lellis (see Figure \ref{fig:python-original_map}).
We remark that, since $\vec u$ does not satisfy the INV condition,
$\vec u\notin\Ar_s$ and it does not correspond to a physical deformation.
Given any smooth open set $\Set$ containing the origin $\vec 0=(0,0,0)$ and contained in $B(\vec 0, 1)$, it sends (as depicted in Figure \ref{fig:unphysical}) one part of $\Set$ (the one lying in the first two quadrants of the planar representation of this axisymmetric map) into the region enclosed by $\vec u(\p \Set)$, and other part of $\Set$ (the one in the lower half-plane) to the unbounded region outside $\vec u(\p \Set)$.
Also, two parts of the body that  were at unit distance apart, namely, those initially occupying the half-balls
$$
	a:=\{\vec x:\, x_1^2 +x_2^2 +x_3^2 <1,\, x_3 < 0\}
\quad \text{and}\quad 
	e:= \{\vec x:\, x_1^2 +x_2^2 +(x_3-1)^2,\, x_3 >1\},
$$
are put  in contact with each other across the ``bubble''
\begin{equation}\label{bubble}
\Gamma:=\{(y_1,y_2,y_3):\, y_1^2 + y_2^2 + (y_3-\tfrac{1}{2})^2 = \tfrac{1}{2^2}\},
\end{equation}
which in turn comes entirely from only two singular points: the origin $\vec 0$ and $\vec 0'=(0,0,1)$.
Note that from $\vec 0'$ a cavity is created and is filled by material coming from 
the half-ball $a$ through the origin. A similar structure has been already described in the setting
of harmonic maps \cite{BrCoLi86}. We refer to this structure as \emph{dipole}.
The third component of the inverse, $u^{-1}_3$, is not Sobolev, but it belongs to the class $SBV$ 
(special functions of bounded variation). 
Its jump set coincides with the sphere $\Gamma$, and the amplitude of the jump is given by the distance between the
poles $\vec 0$ and~$\vec 0'$.

\begin{figure}[hbt!]
\begin{center}
\begin{overpic}[width=0.8\textwidth,tics=10]{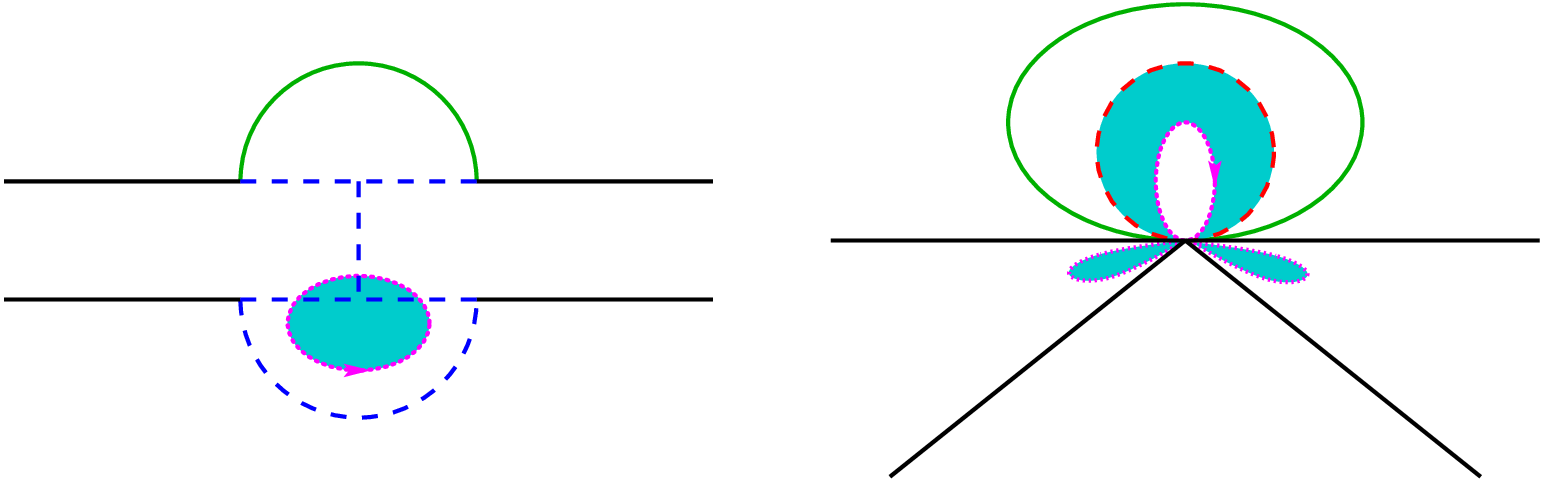}
\put (23,21) {$e$}
\put (23,29) {$f$}
\put (17,15) {$d$}
\put (28,15) {$d$}
\put (21,9) {$\Set$}
\put (23,5) {$a$}
\put (23,1) {$b$}
\put (76,33) {$f$}
\put (76,28) {$e$}
\put (82,24) {$\Gamma$}
\put (76,20) {$a$}
\put (76,9) {$b$}
\put (61,7) {$d$}
\put (91,7) {$d$}
\put (61,13) {\small $\vec u (\p \Set)$}
\put (85,13) {\small $\vec u (\p \Set)$}
\put (73,23) {\footnotesize $\vec u (\p \Set)$}
\end{overpic}
\end{center}
\caption{The Conti--De Lellis map \cite{CoDeLe03} takes a portion of a given region $\Set$ and sends it outside itself. The two closed curves in the right figure play a prominent role. One, on top, $\Gamma$, represented with a dashed circle, is a bubble  created from two cavitation-like singularities. The other, $\vec u (\p \Set)$, with self-intersections, enclosing three connected components, is represented with a dash-dotted line. Part of the coloured region on the right figure lies outside the dash-dotted loop, even though it consists of material points that were inside the dash-dotted curve in the reference configuration. Regions $a$--$f$ are defined in Section 3; see also Figure~\ref{fig:regions}}
\label{fig:unphysical}
\end{figure}

\begin{figure}[hbt!]
\begin{center}
	\includegraphics[width=0.95\textwidth]{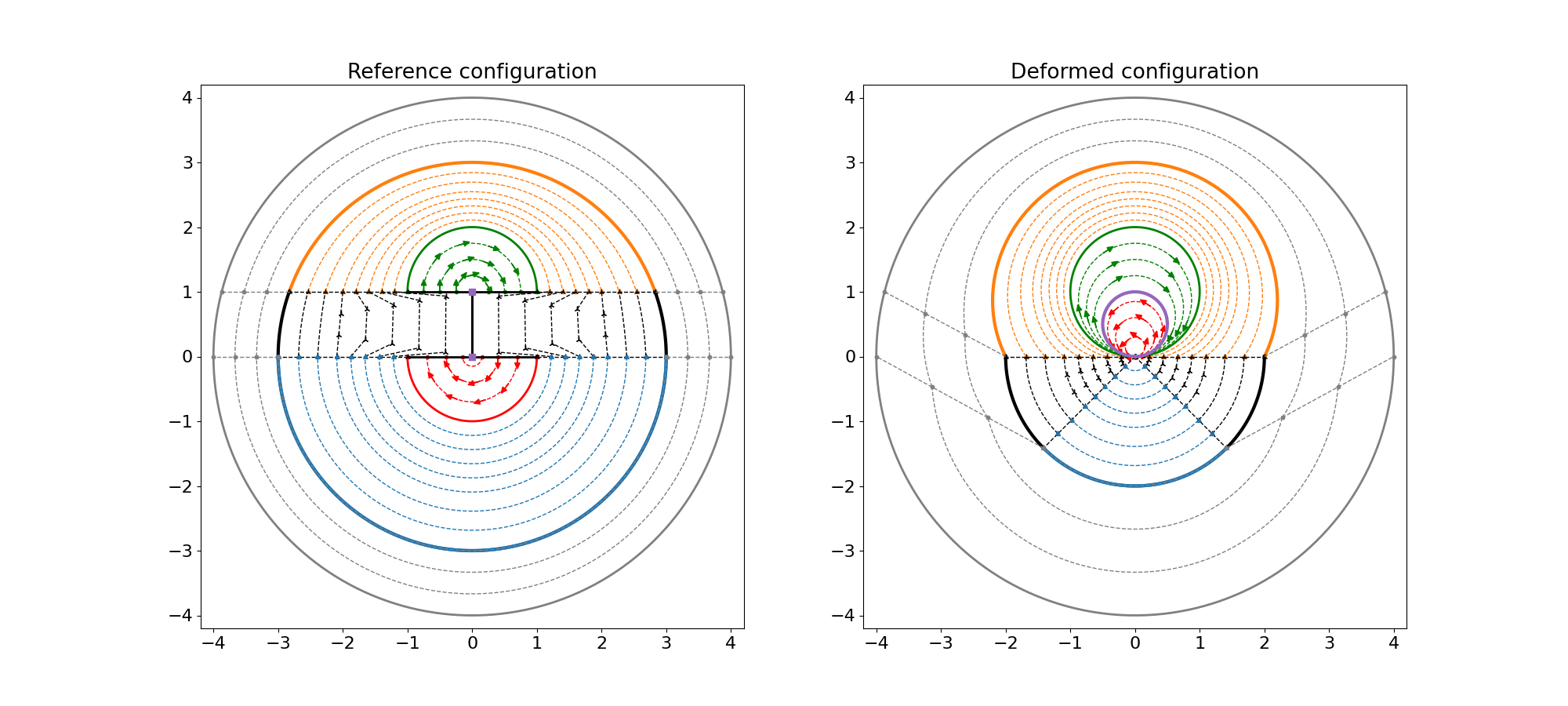}
\end{center}
\caption{An extension of the Conti--De Lellis map \cite{CoDeLe03} that satisfies the Dirichlet 
condition $\vec u(\vec x)=\vec x$ on the boundary}
\label{fig:python-bdry_condition}
\end{figure}

Regarding the Dirichlet boundary condition that facilitates the proof of the lower semicontinuity in 
\cite[Th.\ 1.1]{BaHeMoRo_23} and \cite[Th.\ 1.1]{BaHeMoRo_21}, let us recall that the Conti-De Lellis 
construction can be rescaled and translated, so it can appear as the singular part of maps defined in many 
domains and matching many different Dirichlet data. An example of an extension of the Conti-De Lellis map 
to the domain $B(\vec 0,4)$, so that it satisfies the Dirichlet condition $\vec u(\vec x)=\vec x$ on 
$\partial B(\vec 0, 4)$, is depicted in Figure~\ref{fig:python-bdry_condition}.

As we said, in the present article we are able to prove that the lower bound we obtained previously is optimal in 
the particular case when \(\vec u\) is the dipole of Conti--De Lellis and under very mild hypotheses on \(H\).

\begin{theorem}\label{th:upper_bound}
Let $\vec u$ be the $H^1(B(\vec 0, 3), \R^3)$ axisymmetric map
of Conti--De Lellis, as defined in Section \ref{se:LimitMap}. 
Let \(H: (0,+\infty) \rightarrow [0,+\infty)\) be a convex function satisfying  \eqref{eq:growth_H} and such that
\begin{equation}\label{eq:finiteHu}
\int_{B(\vec 0, 3)} H(\det D\vec u) \dd\vec x < \infty.
\end{equation}
Then there exists a sequence of axisymmetric maps \( (\vec u_n)_n \subset H^1(B(\vec 0,3),\R^3)\)
 such that:
\begin{enumerate}[i)]
\item \( \vec u_n\) is bi-Lipschitz (and therefore satisfies the 
divergence identities \eqref{eq:divergence_identities}) for every \(n\in \N\),
\item \(\vec u_n \rightharpoonup \vec u\) in \(H^1(B(\vec 0,3),\R^3)\),
\item \(\vec u \)  is one-to-one a.e., but it does not satisfy condition INV.
Moreover, one has the equality \( \Det D \vec u=(\det D \vec u)\Le^3+\frac{\pi}{6}(\delta_{(0,0,1)}-\delta_{(0,0,0)})\)
and a fortiori the divergence identities are not satisfied,
\item \( \lim_{n \rightarrow \infty} \int_{B(\vec 0,3)} |D\vec u_n|^2\dd\vec x  =\int_{B(\vec 0,3)} |D\vec u|^2\dd \vec x +2\pi\),
\item \( \lim_{n \rightarrow \infty} \int_{B(\vec 0,3)} H(\det D\vec u_n)= \int_{B(\vec 0,3)} H(\det D \vec u)\),
\item $u^{-1}_3$ has $SBV$ regularity, its jump set is the sphere $\Gamma$ as defined in \eqref{bubble},
and the amplitude of the jump is one. Therefore $\|D^s u^{-1}_3\|=\pi$.
\end{enumerate}
Gathering the last three items shows that
\begin{equation}\label{eq:relaxed_energy_Conti_De_Lellis}
 E_{\text{rel}}(\vec u)=E(\vec u)+2\|D^s u^{-1}_3\|=F(\vec u).
\end{equation}
\end{theorem}

The limiting map \( \vec u \) is the same as in the example of Conti--De Lellis, cf.\ \cite[Th.\ 6.1]{CoDeLe03}. 
However, it can be seen by direct computations that the approximating sequence \(\tilde{\vec u}_n\) of Conti--De Lellis 
satisfies \( \lim_{n \to \infty} E (\tilde{\vec u}_n)\geq E(\vec u)+\frac{8\pi}{3}\) and thus, in view of \eqref{eq:relaxed_energy_Conti_De_Lellis}, it does not give a matching upper bound with the lower bound 
on the relaxed energy obtained in Theorem \ref{th:previous_article}. 
The challenge, therefore, is to regularize the Conti--De Lellis dipole with maps:
\begin{itemize}
 \item that respect the non-interpenetration of matter and the preservation of orientation,
 \item whose  determinant can be controlled as much as possible,
 \item with a negligible amount of extra elastic energy (that is, no energy on top of the $2\pi$ singular energy coming from the Dirichlet term, which is unavoidable according to the lower bound in Theorem \ref{th:previous_article}).
\end{itemize}
We are able to reach the optimal amount of extra energy by constructing a recovery sequence which almost satisfies in 
a neighbourhood of the segment $\{(0,0,t) \colon t\in[0,1]\}$ the equality in the ``area-energy'' inequality 
\( 2|(\cof D \vec u) \vec e_3| \leq |D \vec u|^2\), valid for axisymmetric maps. 
Being optimal for this inequality means that \(\vec u\) restricted to the planes perpendicular to the symmetry axis is conformal. 
Another difference with the  recovery sequence of Conti--De Lellis is that our recovery  sequence 
\(\vec u_n\) is incompressible near the set of concentration. Hence our construction is valid for a general choice of the 
convex function \(H\). This shows that the lack of compactness of the problem is due to the Dirichlet part of the 
neo-Hookean energy and not to the determinant part.

\smallskip
In \cite{HeMo10} a way to measure the amount of new surface created by a deformation  was introduced. 
This can be done by looking at the failure of the divergence identities via the quantity $\E(\vec u)$ in Definition \ref{def:measure_mu}. 
In particular, the divergence identities are satisfied if and only if the surface energy $\E$ is identically zero.  
If $\vec u$ is the Conti--De Lellis dipole, then $\E(\vec u)$ is strictly positive but finite
(see Appendix \ref{se:surface_energy_dipole}).

\emph{Second goal of this paper:} to give a fine description 
of maps $\vec u$ in the weak \(H^1\) closure of \(\Ar_s\) under the supplementary hypothesis 
that the amount of the new surface created by $\vec u$ is finite. 
In this case we show that $\vec u$ has a multi-dipole structure. This also enlightens the importance 
in providing the optimality of our lower bound for the Conti--De Lellis dipole. 

\begin{theorem}\label{th:main2}
Let \(\vec u \in \overline{\Ar_s}\) be such that \(\E(\vec u)<+\infty\). Then:
\begin{itemize}
\item[i)] There exists a countable set of points \(C(\vec u)\) such that
\begin{equation*}
\Det D \vec u=(\det D \vec u) \Le^3+\sum_{ \vec a\in C(\vec u)} \Det D \vec u(\{\vec a\}) \delta_{\vec a}.
\end{equation*}
\item[ii)] \(\vec u^{-1} \in SBV(\Om_{\vec b},\R^3)\). 
\item[iii)] Let $J_{\vec u^{-1}}$ be the jump set of the inverse, and $(\vec u^{-1})^{\pm}$ its lateral traces.
Defined for \(\vec \xi,\vec \xi' \in \R^3\) 
\begin{equation*}
\Gamma_{\vec \xi}^{\pm}:=\{ \vec y \in J_{\vec u^{-1}} : (\vec u^{-1})^{\pm}(\vec y)=\vec \xi\}, \quad \Gamma_{\vec \xi}:=\Gamma_{\vec \xi}^-\cup \Gamma_{\vec \xi}^{+}, \quad \Gamma_{\vec \xi,\vec \xi'}:= \Gamma_{\vec \xi}^-\cap \Gamma_{\vec \xi}^{+} ,
\end{equation*}
we have that
\begin{equation*}
\|D^{s}\vec u^{-1}\| =\sum_{\vec \xi,\vec \xi' \in C(\vec u)} |\vec \xi-\vec \xi'|\Ha^2(\Gamma_{\vec \xi,\vec \xi'}).
\end{equation*}

\item[iv)] Let \(\vec x \in\Om\) and \(r>0\) be such that \(B(\vec x,r) \subset\Om\) and \(\deg(\vec u, \p B(\vec x,r),\cdot)\) is well defined. We set
\begin{equation*}
\Delta_{\vec x,r}:= \deg(\vec u, \p B(\vec x,r),\cdot)-\chi_{\imG(\vec u,B(\vec x,r))}.
\end{equation*}
Then 
\begin{itemize}
\item[a)] \(\Delta_{\vec x, r} \in BV(\R^3)\) and is integer-valued. There exists \(\Delta_{\vec x} \in BV(\R^3)\) integer-valued such that \(\Delta_{\vec x,r_n} \rightharpoonup \Delta_{\vec x}\) weakly\(^*\) in \(BV(\R^3)\) for all sequences \(r_n \rightarrow 0\) such that \( \Delta_{\vec x,r_n}\) is well defined.

\item[b)] \(\Delta_{\vec x} \neq 0\) if and only if \(\vec x\in C(\vec u)\).

\item[c)] For \(\vec \xi \in C(\vec u)\) we have 
$$
	\Gamma_{\vec \xi}=\bigcup_{k \in \Z} \partial^*\{\vec y \in \R^3: \Delta_{\vec \xi}(\vec y) =k\}\quad \Ha^2 \text{-a.e.}
$$
\item[d)] \(\sum_{\vec \xi \in C(\vec u)} \Delta_{\vec \xi}=0\).
\end{itemize}
\end{itemize}
\end{theorem}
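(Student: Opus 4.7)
The overarching strategy is to exploit the finite surface energy hypothesis $\mathcal{E}(\vec u)<+\infty$ as an isoperimetric-type constraint forcing the singularities of $\vec u$ to be concentrated on a discrete set of ``cavitation points''. I would define $C(\vec u)$ as the set of atoms of the singular part of $\Det D\vec u$, or equivalently as the set of points where the local defect degrees $\Delta_{\vec x}$ are nonzero. Most of the theorem then reduces to a careful analysis of the BV functions $\Delta_{\vec x, r}$ and their limits as $r\to 0$, combined with the Cartesian currents framework from Theorem \ref{th:previous_article}.

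\textbf{Items (i) and (ii).} For (i), I would decompose $\Det D\vec u$ using that $\vec u\in\oAsr$ and approximate by $\vec u_n\in\Asr$, for which $\Det D\vec u_n=(\det D\vec u_n)\mathcal{L}^3$. The defect $\Det D\vec u-(\det D\vec u)\mathcal{L}^3$ is thus carried by the cavitation-type singularities. The surface energy controls the total variation of this defect measure via a relative isoperimetric inequality, and each nontrivial atom forces a uniform lower bound on the bubble area; finiteness of $\mathcal{E}(\vec u)$ then forces countably many atoms and no diffuse singular part. For (ii), Theorem \ref{th:previous_article} already gives $\vec u^{-1}\in BV(\widetilde\Om_{\vec b})$; I would exclude the Cantor part by showing that $D^c\vec u^{-1}$ would contribute a diffuse, non-rectifiable mass to the defect measure generated by an approximating sequence, contradicting the bound by $\mathcal{E}(\vec u)$ together with the integer-valued nature of the defect current.

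\textbf{Item (iv).} For admissible $(\vec x, r)$, both the degree on $\partial B(\vec x, r)$ and the geometric image are well defined, and $\Delta_{\vec x, r}$ is $\Z$-valued. Along any approximating sequence $\vec u_n\in\Asr$, the corresponding $\Delta_{\vec x, r}^{(n)}$ vanish identically (divergence identities), so in the limit $\|D\Delta_{\vec x,r}\|_{\mathcal{M}}$ is controlled by the contribution of the surface energy on $B(\vec x, r)$; this provides the necessary BV estimate. Monotonicity of this localized surface energy in $r$, combined with BV compactness for $\Z$-valued functions and a diagonal argument over vanishing $r_n$, yields weak$^*$ convergence to $\Delta_{\vec x}\in BV(\R^3)$, integer-valued; uniqueness of the limit comes from identifying $\Delta_{\vec x}$ with the atomic part of the defect current at $\vec x$, which is intrinsic. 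Claim (b) is essentially the definition of $C(\vec u)$. For (c), I would use BV slicing: on each reduced level-set boundary $\partial^*\{\Delta_{\vec \xi}=k\}$ the one-sided trace of $\vec u^{-1}$ equals $\vec \xi$, since crossing this boundary corresponds to changing the number of preimages that escape to $\vec \xi$ as $r\to 0$. Claim (d) follows from applying the degree formula on a large ball $B\supset\overline{\widetilde\Om}$ where $\vec u=\vec b$ is a $C^1$ diffeomorphism: globally $\Delta_{\vec x, R}\equiv 0$ for $R$ large, so all localized defects must cancel.

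\textbf{Item (iii) and main obstacle.} Once (iv)(c) is in hand, BV coarea applied to each component of $\vec u^{-1}$ identifies the jump set as $J_{\vec u^{-1}}=\bigcup_{\vec \xi\in C(\vec u)} \Gamma_{\vec \xi}$; on the portion where $(\vec u^{-1})^{-}=\vec \xi$ and $(\vec u^{-1})^{+}=\vec \xi'$ the jump amplitude is exactly $|\vec \xi-\vec \xi'|$, and integration over $J_{\vec u^{-1}}$ produces the claimed sum. The delicate step throughout will be the uniform BV bound on $\Delta_{\vec x, r}$ as $r\to 0$ and the compatible identification of its limit $\Delta_{\vec x}$ with a local piece of the defect Cartesian current: this requires coupling the Conti--De Lellis-type slicing used in \cite{BaHeMoRo_21} with a precise accounting of how $\mathcal{E}(\vec u)$ concentrates near each $\vec \xi\in C(\vec u)$. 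Ruling out the Cantor part of $\vec u^{-1}$ hinges on the same coupling; once it is established, the remaining structural statements follow from BV theory and the integer-valued nature of $\Delta_{\vec x}$ applied component by component.
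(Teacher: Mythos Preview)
Your strategy for items (i) and (ii) proposes new arguments where none are needed: the paper simply quotes the literature. Item (ii), that $\vec u^{-1}\in SBV$, is a direct consequence of \cite[Th.~2]{HeMo11} under the hypothesis $\mathcal E(\vec u)<\infty$; no separate exclusion of a Cantor part via defect currents is required. Item (i) is Mucci's theorem (\cite{Mucci05,Mucci10a,Mucci10b}, \cite[Th.~6.2]{HeMo12}) that for $H^1$ maps with finite surface energy the singular part of $\Det D\vec u$ is purely atomic. The set $C(\vec u)$ is then $\At(\mu_{\vec u})$.

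There is a genuine gap in your treatment of (iv)(a). You propose to obtain the $BV$ bound on $\Delta_{\vec x,r}$ by approximating with $\vec u_n\in\Asr$, noting that $\Delta_{\vec x,r}^{(n)}\equiv 0$, and then ``passing to the limit''. But the whole point of the Conti--De Lellis example is that the degree is \emph{not} continuous under weak $H^1$ convergence; there is no mechanism by which $\Delta_{\vec x,r}^{(n)}\to\Delta_{\vec x,r}$, and indeed if it did converge you would conclude $\Delta_{\vec x,r}\equiv 0$, which is false. The paper bypasses approximation entirely and computes, directly for $\vec u$ and a.e.\ $r$,
\[
\overline{\mathcal E}_{\vec u}\bigl(B(\vec x,r),\vec g\bigr)
= -\int_{\partial B}\vec g(\vec u)\cdot(\cof D\vec u)\,\vec\nu\,\dd\mathcal H^2
+ \int_{B}(\dive\vec g)(\vec u)\det D\vec u\,\dd\vec x
= \langle D\Delta_{\vec x,r},\vec g\rangle,
\]
using the integral definition of the Brezis--Nirenberg degree and the area formula. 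The bound $\|D\Delta_{\vec x,r}\|_{\mathcal M}\le\mathcal E(\vec u)$ is then immediate, and letting $r\to 0$ gives $\langle D\Delta_{\vec x},\vec g\rangle=\overline{\mathcal E}_{\vec u}(\{\vec x\},\vec g)$, which is intrinsic and yields uniqueness of the limit. This identity, linking $D\Delta_{\vec x,r}$ to the localized surface-energy functional, is the engine of the entire proof of (iv) and is missing from your proposal.

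Your routes to (iv)(c) and (iv)(d) also diverge from the paper. For (c), the paper does not use BV slicing; it evaluates $\overline{\mathcal E}_{\vec u}(\{\vec\xi\},\vec g)$ through the jump representation $\mathcal E_{\vec u}(\phi\,\vec g)=\int_{J_{\vec u^{-1}}}[\phi\circ\vec u^{-1}]\,\vec g\cdot\vec\nu\,\dd\mathcal H^2$ from \cite[Th.~2(iv)]{HeMo11} with $\phi\to\chi_{\{\vec\xi\}}$, obtaining $D\Delta_{\vec u,\vec\xi}=-\vec\nu_{\vec\xi}\,\mathcal H^2\res\Gamma_{\vec\xi}$ directly. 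For (d), the paper does not invoke a large ball where $\vec u=\vec b$; your argument would require summing $\Delta_{\vec\xi,r}$ at a common radius $r$, which is not available when $C(\vec u)$ is infinite. Instead the paper sums the identities $D\Delta_{\vec u,\vec\xi}=\vec\nu\,\mathcal H^2\res\Gamma_{\vec\xi}^+-\vec\nu\,\mathcal H^2\res\Gamma_{\vec\xi}^-$ over $\vec\xi\in\At(\mu_{\vec u})$ and uses that $\bigcup_{\vec\xi}\Gamma_{\vec\xi}^+=\bigcup_{\vec\xi}\Gamma_{\vec\xi}^-=J_{\vec u^{-1}}$ $\mathcal H^2$-a.e., so the total derivative vanishes and hence so does the compactly supported sum.
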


Our starting point is Item i), namely, that the singular part of the distributional  determinant \eqref{eq:defDet} consists only of Dirac masses, a result which, in the $H^1$ setting, is due to Mucci \cite{Mucci05,Mucci10a,Mucci10b}, \cite[Th.\ 6.2]{HeMo12}. It is a generalization of the result by M\"uller \& Spector \cite[Th.\ 8.4]{MuSp95}, who obtained it for maps in $W^{1,p}$, with $p>2$, producing a deformed configuration with finite perimeter and satisfying condition INV. 
In that more classical setting, conceived for the modelling of cavitation, the notion of the 
topological image of a point $\vec\xi\in\Om$, introduced by \v Sver\'ak \cite{Sverak88}, is important: 
it is the intersection of $\overline{\imT( \vec u, B(\vec\xi, r))}$ taken over a suitable $\Le^1$-full measure set of radii $r$ (where the degree with respect to $B(\vec\xi, r)$ is well defined, among other requirements). 
Simplifying, the topological image $\imT( \vec u, B(\vec\xi, r))$ is the subregion enclosed by $\partial B(\vec\xi, r)$, 
see Definition \ref{def:topim}.
In particular, the topological image of a singular point $\vec\xi$ provides the cavity opened in the deformed configuration
(a region inside every $\overline{\imT( \vec u, B(\vec\xi, r))}$, but not containing any material coming from a 
neighbourhood of $\vec\xi$).
In contrast, in the $H^1$ setting here considered, where condition INV is not necessarily fulfilled, the notion of the topological 
image of a point is no longer valid since the family $\imT(\vec u, B(\vec\xi, r))$
is not necessarily decreasing for decreasing $r$, as can be seen in the map by Conti \& De Lellis, see Figure \ref{fig:unphysical}; note that in the white region enclosed by dash-dotted loop the degree is $-1$, whereas  the degree is zero outside.
Nevertheless, there happens to be a natural analogue of the topological image of a point: to look at the maps $\Delta_{\vec x, r}$ of Item iv),  which are defined in the deformed configuration, and taking then their $L^1$ limit as $r\to 0$. For example, in the map by Conti \& De Lellis, when $\vec x=(0,0,1)$ the  limit map $\Delta_{\vec x}$ is piecewise constant, equal to $+1$ inside the bubble and equal to zero elsewhere. When $\vec x=(0,0,0)$, the map $\Delta_{\vec x}$ is equal to $-1$ inside the bubble, and zero elsewhere. For any other $\vec x$, the map $\Delta_{\vec  x}$ is identically zero (up  to an $\Le^3$-null set).

The description of the singularities provided by Theorem \ref{th:main2} is the following: the bubbles $\Gamma$ created by an axisymmetric map $\vec  u$ with finite surface energy are not just arbitrary $2$-rectifiable sets, but they are the (reduced) boundaries of a family of volumes (of $3D$ sets with finite perimeter). These volumes, in turn, are not just any volumes, but the level sets of the maps $\Delta_{\vec \xi}$ obtained from the Brezis--Nirenberg degree. Across ($\Ha^2$-almost) every point on $\Gamma$, two portions of the body (that were separated in the reference configuration) are put in contact with each other (the bubbles are the jump set of the inverse of $\vec u$). 
We observe a dipole structure in the contribution $|\vec\xi-\vec \xi'|\Ha^2(\Gamma_{\vec\xi,\vec \xi'})$ of each pair $\vec\xi, \vec \xi'$ to the singular term $\|D^s \vec u^{-1}\|$ in our modified functional $F(\vec u)$, precisely as in the map by Conti \& De Lellis.
Finally, the sum of the degrees $\Delta_{\vec \xi}$, over all singular points $\vec\xi$, vanishes identically: 
this may be indicative of the singularities coming by pairs (positive cavitations cancel out with negative ones, meaning that  holes opened at singular points with positive degrees are filled with material points that in the reference configuration were next to singular points with negative degrees).  
We remark that by Theorem \ref{th:main2} deformations in $\overline{\Ar_s}\setminus\Ar_s$ are not physically realistic.
This fact reinforces the conjecture that the energy $F$ (and so $E$) attains is minimum in $\Ar_s$.
Moreover, as in the context of harmonic maps,  the analysis of the dipole structure 
in \cite{BrCoLi86,BeBrCo90,Giaquinta_Modica_Soucek_1989a}
made it possible to rule out the presence of dipoles coming from smooth axisymmetric minimising sequences \cite{Hardt_Lin_1992},
the partial characterization provided by Theorem \ref{th:main2} could be useful in future attempts to solve the conjecture itself. 

\smallskip
Finally, \emph{third goal of this paper:} to translate the results we obtained about the lower bound on the relaxed energy into the language of Cartesian currents. Specifically, we show (Proposition \ref{prop:relaxed_energy_current}) that the extra term in our candidate for the relaxed energy  \( \| D^su_3^{-1}\|\) can be expressed as the mass of the defect current generated by any sequence converging weakly to \(\vec u\) in \(H^1\). This reinforces the analogy between the problem of finding a minimiser for the neo-Hookean energy and the problem of finding a minimiser for the Dirichlet energy
\begin{equation*}
\int_\Om |D\vec u|^2 \quad \text{in} \quad \bigl\{ \vec u \in C^0 (\Om,\Sf^2) \cap H^1(\Om,\Sf^2) : \vec u=\vec g \text{ on } \p \Om \bigr\}.
\end{equation*}
Indeed, for this problem, which was raised by Hardt and Lin in \cite{Hardt_Lin_1986} and where the occurrence of the Lavrentiev gap phenomenon is shown, Bethuel, Brezis and Coron derived an explicit formula for the relaxed energy in \cite{BeBrCo90}. This relaxed energy can be expressed in terms of the ``length of minimal connections'' for maps with a finite number of singularities. In this case it bears resemblance with our extra lower bound in the case of finite surface energy. In the general case, the supplementary term in the relaxed energy of Bethuel-Brezis-Coron  can be expressed as the mass of the defect currents associated to \(\vec u\), as seen for example in \cite{GiMoSo89}. This is exactly the same for our candidate relaxed energy. Hence both problems have the same flavour in terms of lack of compactness.


The paper is organized as follows.
In Section \ref{se:notation} we introduce our notations and some definitions. In particular, we define the surface energy $\E$ and the geometric and topological images of maps. Section \ref{sec:towards_upper_bound} is devoted to the description of the limit of Conti--De Lellis map and to the construction of the new optimal recovery sequence which allows us to obtain Theorem \ref{th:upper_bound}. In Section \ref{sec:VI} we focus on maps in \(\overline{\Ar_s}\) with finite surface energy. For such maps we prove Theorem \ref{th:main2}. The last section of this paper is devoted to reformulating our candidate for the relaxed energy \(F\) in terms of Cartesian currents.  
We provide four appendices for the comfort of the reader. The first one contains technical lemmas used in the proof of Theorem \ref{th:upper_bound}, the second one describes several geometric quantities in different systems of coordinates, the third one contains a lemma in measure theory used in Section \ref{sec:VI}, and the fourth one computes the surface enery $\mathcal{E} (\vec u)$ of the Conti--De Lellis map.

\section{Notations and definitions}\label{se:notation}

Throughout the paper, we employ the following notation.
\begin{itemize}
\item The open ball of center $\vec x$ and radius $r$ is denoted by $B (\vec x, r)$.
We set $\R_+ = [0, \infty)$.
Given $U \subset \R^n$, its boundary is written as $\p U$, its closure as $\overline{U}$ and its characteristic function as $\chi_U$.
We use the notation $\Subset$ for ``compactly contained''.
\item Vector and matrices are written in bold face.
We recall that the adjugate matrix $\adj \vec A$ of $\vec A \in \R^{3\times 3}$ satisfies $(\det \vec A) \vec{I} = \vec A \adj \vec A$, where $\vec{I}$ denotes the identity matrix.
The transpose of $\adj \vec A$ is the cofactor $\cof \vec A$.
The norm of a vector is the Euclidean norm, and of a matrix the Frobenius norm; we use the notation $|\cdot|$ for both.

\item We use $\wedge$ for the exterior product.
We also make the usual identifications in exterior algebra; for example, a $3$-form in $\R^3$ and a $2$-form in $\R^2$ are identified with a number, while a $2$-form in $\R^3$ is identified with a vector in $\R^3$.
In this way, for instance, $\vec a \wedge \vec b$ is the determinant of $\vec a, \vec b$ whenever $\vec a, \vec b \in \R^2$, while  $\vec a \wedge \vec b$ is the cross product of $\vec a, \vec b$ whenever $\vec a, \vec b \in \R^3$.
\item We use $\Le^N$ for the Lebesgue measure in $\R^N$.
The Hausdorff measure of dimension $d$ is denoted by $\Ha^d$.
We use the abbreviation \emph{a.e.}\ for \emph{almost everywhere} or \emph{almost every}.
It refers to the Lebegue measure, unless otherwise stated.
Given two sets $A, B$ of $\R^N$, we write $A = B$ a.e.\ when $\Le^N (A \setminus B) = \Le^N (B \setminus A) = 0$.
An analogous meaning is given to the expression $\Ha^d$-a.e\@.
\end{itemize}

\subsection{Different coordinates systems and axisymmetry}\label{axisimmetry}

We denote by \( (x_1,x_2,x_3)\in \R^3\) the Cartesian coordinates. We will also use cylindrical coordinates \((r,\theta,x_3)\in \R^+\times [0,2\pi)\times \R\) and spherical coordinates \( (\rho,\theta,\varphi)\in \R^+\times [0,\pi/2)\times [0,\pi]\). The relations between these coordinate systems are
\begin{equation*}
(x_1,x_2,x_3)=(r\cos \theta, r\sin \theta,x_3)=\rho (\cos \theta \sin \varphi, \sin \theta,\sin \varphi, \cos \varphi).
\end{equation*}
A map \( \vec u:\Om \subset \R^3 \to \R^3 \) can be described in the three coordinate systems; we use the notation
\begin{equation*}
\vec u=(u_1,u_2,u_3)=(u_r\cos u_\theta,u_r\sin u_\theta, u_3)=u_\rho( \cos u_\theta \sin u_\varphi, \sin u_\theta \sin u_\varphi, \cos u_\varphi).
\end{equation*}
In Appendix \ref{sec:Appendix_B} we give the different expressions of the differential matrix \( D \vec u\), the cofactor matrix \( \cof D \vec u\), the Jacobian \( \det D \vec u\) and the Dirichlet energy of \(\vec u\) in the different coordinate systems. 

In this paper we mainly work in the axisymmetric setting. We say that the set $\Om \subset \R^3$ is axisymmetric if
\[
 \Om=\bigcup_{\vec x \in \Om} \left( \p B_{\R^2} ((0,0), |(x_1, x_2)|) \times \{ x_3 \} \right).
\]
When we define
\begin{align}\label{def:pi_P}
 \pi : \R^3 & \to [0, \infty) \times \R & \vec P : [0, \infty) \times \R \times \R &\to \R^3 \nonumber \\ 
 \vec x & \mapsto \left( |(x_1, x_2)|, x_3 \right) & (r, \t, x_3) &\mapsto (r \cos \t, r \sin \t, x_3) ,
\end{align}
the axisymmetry of $\Om$ is equivalent to the equality
\begin{equation*}
 \Om = \left\{ \vec P (r, \t, x_3) : \, (r,x_3) \in \pi(\Om), \, \t\in[0,2\pi) \right\} .
\end{equation*}

Given an axisymmetric set $\Om$, we say that $\vec u:\Om\to\R^3$ is axisymmetric if there exists $\vec v : \pi(\Om)\to [0, \infty) \times \R$ such that
\begin{align*}
 & (\vec u \circ \vec P) (r,\t,x_3) = \vec P \left( v_1 (r,x_3), \t , v_2(r,x_3) \right), \text{i.e.}, \\
 & \vec u(r\cos \theta,r\sin \theta,x_3)=v_1(r,x_3)(\cos \theta \vec e_1+\sin \theta \vec e_2) +v_2(r,x_3) \vec e_3
\end{align*}
for all $(r, x_3,\t) \in \pi(\Om)\times[0,2\pi)$.

This $\vec v$ is uniquely determined by $\vec u$. In spherical \( (u_\rho,u_\theta,u_\varphi)\) or cylindrical \( (u_r,u_\theta,u_3)\) coordinates, we remark that for axisymmetric maps we have \( u_\theta=\theta\).

\subsection{Topological images}
We first recall how to define the classical Brouwer degree for continuous functions \cite{Deimling85,FoGa95book}. 
Let \( U \subset \R^3 \) be a bounded open set. If  \(\vec u \in C^1(\overline{U},\R^3)\)  then for every  regular value 
\(\vec y\) of \(\vec u\) with \(\vec y \notin \vec u(\p U)\), we set
\begin{equation}\label{def:degree_1}
\deg(\vec u,  U, \vec y)= \sum_{\vec x \in {\vec u}^{-1}(\vec y) \cap U} \det D {\vec u}(\vec x).
\end{equation}
By definition, a regular value $\vec y$ satisfies that $\det D \vec u (\vec x) \neq 0$ for each $\vec x \in {\vec u}^{-1} (\vec y)$.
Note that the sum in \eqref{def:degree_1} is finite since the pre-image of a regular value consists in isolated points, 
thanks to the inverse function theorem. We can show that the right-hand side of \eqref{def:degree_1} is invariant by 
homotopies. This allows to extend Definition \eqref{def:degree_1} to every \(\vec y \notin \vec u(\p U)\). 
This homotopy invariance can also be used to show that the definition depends only on the boundary values of $\vec u$.
If \(\vec u\) is only in \( C(\p U,\R^3)\), it is again the homotopy invariance which allows to define the degree of \( \vec u\), 
since in this case we may extend $\vec u$ to a continuous map in $\overline U$ by Tietze's theorem and set
\begin{equation*}
\deg(\vec u, U, \cdot)= \deg(\vec v, U ,\cdot),
\end{equation*}
where \(\vec v\) is any map in \(C^1(\overline U, \R^3)\) which is homotopic to the extension of \(\vec u\).

If $U$ is of class $C^1$ and \(\vec u \in C^1(\p U,\R^3)\), by using \eqref{def:degree_1}, Sard's theorem and the divergence identities  
we can make a change of variables and integrate by parts to obtain
\begin{equation}\label{def:degree_integral}
\int_{\R^3} \deg(\vec u ,U,\vec y) \dive \vec g(\vec y) \, \dd \vec y = \int_{\p U} (\vec g \circ \vec u) \cdot \left( \cof D \vec u \, \vec \nu \right) \dd \Ha^{2}.
\end{equation}
This formula can be used as the definition of the degree for maps in \(W^{1,2} \cap L^\infty(\p U,\R^3)\) 
as noticed by Brezis \& Nirenberg \cite{Brezis_Nirenberg_1995}.
For any open set $U$ having a positive distance away from the symmetry axis $\R \vec e_3$
it is possible to use the classical degree since there every map in $\A_s$ has a continuous
representative (cf.\ Lemma 3.1 in \cite{BaHeMoRo_21}). However, for open sets $U$ crossing the axis 
(where maps in $\A_s$ may have singularities) we use the Brezis--Nirenberg degree.

\begin{definition}\label{prop:degree}
Let $U\subset \R^3$ be a bounded open set. For any $\vec u \in C(\partial U, \R^3)$ and 
any $\vec y \in \R^N\setminus \vec u(\partial U)$ we denote by $\deg(\vec u,  U, \vec y)$
the classical topological degree of $\vec u$ with respect to $\vec y$. Suppose now that 
\(U \subset \R^3\) is a \(C^1\) bounded open set and \(\vec u \in W^{1,2}(\p U,\R^3)\cap L^\infty(\p U,\R^3)\). 
Then the degree of \(\vec u\), denoted by \( \deg( \vec u, U, \cdot)\), is defined as the only \(L^1\) function which satisfies
\begin{equation*}
\int_{\R^3} \deg(\vec u, U, \vec y) \dive \vec g(\vec y) \dd \vec y= \int_{\p U} (\vec g \circ \vec u)\cdot \left( \cof D \vec u \, \vec \nu \right) \dd \Ha^{2},
\end{equation*}
for all \( \vec g \in C^\infty(\R^3,\R^3)\).
\end{definition}

To see that this definition makes sense we refer to \cite{Brezis_Nirenberg_1995} or \cite[Remark 3.3]{CoDeLe03}. 
Also, using  \eqref{def:degree_integral} for a sequence of smooth maps approximating $\vec u$ we can see that for any 
\(\vec u \in C(\p U, \R^3)\cap W^{1,2} (\p U,\R^3)\) such that $\Le^3\big ( \vec u(\partial U)\big )=0$
the two definitions are consistent (as stated in \cite[Prop.~2.1.2]{MuSp95}).

Thanks to the degree we can define the topological image of a set through a map.
\begin{definition}\label{def:topim}
Let \(U\subset \R^3\) be a bounded open set and let \(\vec u \in C(\p U,\R^3)\). We define 
\begin{equation*}
\imT(\vec u,U):=\{ \vec y\in \R^3\setminus \vec u(\p U) : \ \deg (\vec u,U,\vec y)\neq 0\}.
\end{equation*}
\end{definition}

We define
\begin{equation}\label{eq:segment_L}
L:=\overline{\Om}\cap \R \vec e_3 .
\end{equation}
Recall from Section \ref{se:intro} that $\vec b:\Om\rightarrow \R^3$ is the given orientation-preserving
diffeomorphism acting as a boundary condition.

\begin{definition}\label{def:imT(u,L)}
Let \(\vec u \in \Ar_s \) and let $\mathcal{U}_{\vec u}^s:=\{U\in\mathcal{U}_{\vec u} \text{ is axisymmetric
and } U\Subset \Om \setminus \R \vec e_3\}$. Here \(\mathcal{U}_{\vec u}\) denotes a family
of ``good open sets'' as defined in \cite[Def.\ 2.12]{BaHeMoRo_21}.
\begin{enumerate}[a)]
\item We define the topological image of $\Om\setminus L$ by $\vec u$ as
\[
\imT (\vec u,\Om\setminus L) 
:=\vec b(\Om') \cup \bigcup_{\substack{U\in \mathcal{U}_{\vec u}^s}} \imT (\vec u, U),
\]
where $\Om'$ is the complement in $\Om$ of the closure of  $\widetilde\Om$.
\item We define the topological image of $L$ by $\vec u$ as
\begin{equation*}
\imT(\vec u,L):=\Om_{\vec b} \setminus \imT (\vec u,\Om\setminus L) .
\end{equation*}
\end{enumerate}
\end{definition}

This definition makes sense because, as explained in Lemma 3.1 in \cite{BaHeMoRo_21}, maps in \(\Ar_s\) are continuous outside the symmetry axis.

Throughout the paper reference is made to condition INV introduced by M\"uller \& Spector \cite{MuSp95}, as a property that is not satisfied by the map of Conti \& De Lellis. 

\begin{definition}
Let $U$ be a bounded open set in $\R^3$.  If $\vec u\in C(U,\R^3)$, we say that $\vec u$ satisfies property INV
in $U$ provided that for every point \(\vec x_0 \in U\) and a.e.\ \(r\in (0,\dist(\vec x_0,\p U))\):
\begin{enumerate}[(a)]
\item \( \vec u(\vec x) \in \imT(\vec u,B(\vec x_0,r))\) for a.e.\ \(\vec x \in B(\vec x_0,r)\)
\item \(\vec u(\vec x) \notin \imT (\vec u,B(\Vec x _0,r))\) for a.e.\ \(\vec x \in \Om \setminus B(\vec x_0,r)\).
\end{enumerate}
\end{definition}

\subsection{The surface energy}
The functional $\E$ was introduced in \cite{HeMo10} to measure the creation of new surface of a deformation.
The formal definition is as follows (see \cite{HeMo10,HeMo12}).

\begin{definition}
	\label{def:measure_mu}
Let $ \vec u\in H^1(\Om,\R^3)$ be such that $\det D \vec u \in L^1(\Om)$.

\begin{enumerate}[a)]
\item For every $\phi \in C^1_c(\Om)$ and $\vec g\in C^1_c(\R^3,\R^3)$ we define
\begin{equation*}
\overline{\E}_{\vec u }(\phi,\vec g)=\int_\Om\left[ \vec g( \vec u(\vec x))\cdot \left( \cof D \vec u(\vec x) D\phi(\vec x) \right) 
+ \phi( \vec x)\dive \vec g ( \vec u(\vec x)) \det D \vec u(\vec x) \right] \dd \vec x .
\end{equation*}

\item For $ \vec g\in C^1_c(\R^3,\R^3)$ we denote by $\overline{\E}_{\vec u}(\cdot, \vec g)$ the distribution on $\Om$ defined by 
\begin{equation*}
\langle \overline{\E}_{\vec u}(\cdot,\vec g),\phi \rangle =\overline{\E}_{\vec u}(\phi,\vec g), \ \forall \phi \in C^1_c(\Om).
\end{equation*}
If for every compact set $K\subset \Om$ we have 
$$\sup \{ \overline{\E}_{ \vec u}(\phi,\vec g) : \phi \in C^1_c(\Om), \supp \phi \subset K, \|\phi\|_{L^\infty}\leq 1 \}<\infty $$
then $\overline{\E}_{\vec u}(\cdot, \vec g)$ is a Radon measure in $\Om$. In this case, if $ U \subset \Om$ is an 
open set then $$\overline{\E}_{\vec u}(U,\vec g)=\lim_{n\rightarrow \infty} \overline{\E}_{\vec u} (\phi_n,\vec g) $$
for all sequences of $\phi_n\in C^1_c(\Om)$ such that $\phi_n\rightarrow\chi_U$ pointwise and \(0\leq \phi_n \leq \chi_{U}\).

\item In the case where $\overline{\E}_{\vec u}(\cdot,\vec g)$ is a Radon measure in $\Om$ 
	for all $\vec g$, we define 
\begin{equation*}
\mu_{\vec u}(E):= \sup \{ \overline{\E}_{\vec u}(E,\vec g) : \vec g \in C^1_c(\R^3,\R^3), \|\vec g\|_{L^\infty}\leq 1 \}
\end{equation*}
for every Borel set $E \subset \Om$.

\item For all $\vec f \in C^1_c(\Om\times \R^3,\R^3)$ we define 
\begin{equation*}
\E_{\vec u}(\vec f)= \int_\Om \left[ D_\vec x\vec f(\vec x,\vec u(\vec x))\cdot \cof D\vec u(\vec x)
+\dive_\vec y \vec f(\vec x,\vec u(\vec x)) \det D \vec u(\vec x) \right] \dd \vec x 
\end{equation*}
and
\begin{equation}\label{eq:def_surface_energy}
 \E(\vec u)= \sup \{ \E_{\vec u}(  \vec f): \, \vec f \in C^1_c(\Om\times \R^3,\R^3) , \, \| \vec f \|_{L^{\infty}} \leq 1 \} .
\end{equation}
\end{enumerate}
\end{definition}

\subsection{Geometric image and area formula}

In this section we state the results for $\R^N$ with arbitrary $N \in \N$.
The definition of approximate differentiability can be found in many places 
(see, e.g., \cite[Sect.\ 3.1.2]{Federer69}, \cite[Def.\ 2.3]{MuSp95} or \cite[Sect.\ 2.3]{HeMo12}).
We recall the area formula (or change of variable formula) of Federer; 
see \cite[Prop.\ 2.6]{MuSp95} or \cite[Th.\ 3.2.5 and Th.\ 3.2.3]{Federer69}.
We will use the notation $\mathcal{N}(\vec u, A,\vec y)$ for the number of preimages of a point $\vec y$ in the set $A$ under $\vec u$.

\begin{proposition}\label{prop:area-formula}
Let $\vec u\in W^{1,1}(\Om,\R^N)$, and denote the set of approximate differentiability points of $\vec u$ by $\Om_d$. Then, for any measurable set $A\subset \Om$ and any measurable function $\varphi:\R^N \rightarrow \R$,
\begin{equation*}
\int_A (\varphi \circ \vec u)|\det D \vec u| \, \dd \vec x =\int_{\R^N} \varphi(\vec y) \, \mathcal{N}(\vec u,\Om_d\cap A,\vec y) \, \dd \vec y
\end{equation*}
whenever either integral exists. Moreover, if a map $\psi:A\rightarrow \R$ is measurable and $\bar{\psi}:\vec u(\Om_d\cap A) \rightarrow \R$ is given by
\begin{equation*}
\bar{\psi}(\vec y):= \sum_{\vec x \in \Om_d \cap A, \vec u(\vec x)= \vec y} \psi(\vec x)
\end{equation*}
then $\bar{\psi}$ is measurable and
\begin{equation}\label{eq:area-formula}
\int_A\psi(\varphi\circ \vec u) |\det D \vec u| \dd \vec x= \int_{\vec u(\Om_d\cap A)} \bar{\psi} \varphi \ \dd \vec y, \ \ \vec y \in \vec u(\Om_d\cap A),
\end{equation}
whenever the integral on the left-hand side of \eqref{eq:area-formula} exists.
\end{proposition}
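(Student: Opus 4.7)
The plan is to reduce Federer's area formula for general Sobolev maps to the classical area formula for Lipschitz maps via a Lusin-type decomposition of $\vec u$. First, I would invoke the standard fact that every $\vec u \in W^{1,1}(\Omega,\R^N)$ admits a countable family of Borel sets $E_k \subset \Omega_d$ and globally Lipschitz maps $\vec u_k : \R^N \to \R^N$ such that $\vec u|_{E_k} = \vec u_k|_{E_k}$, the classical differential of $\vec u_k$ coincides a.e.\ on $E_k$ with the approximate differential of $\vec u$, and $\mathcal{L}^N\bigl(\Omega_d \setminus \bigcup_k E_k\bigr) = 0$ (this is a consequence of Federer's analysis in Section 3.1 of \cite{Federer69}, or equivalently of Liu's Lusin approximation). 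On the complementary null set one has $\det D\vec u = 0$ $\mathcal{L}^N$-a.e., so that set contributes nothing to the left-hand side; and since preimages in the right-hand side are explicitly taken in $\Omega_d \cap A$, it is harmless there as well.

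Second, I would apply to each set $E_k \cap A$ the classical Federer area formula for Lipschitz maps (\cite[Th.\ 3.2.3]{Federer69}):
\begin{equation*}
\int_{E_k \cap A} (\varphi \circ \vec u_k) |\det D \vec u_k| \dd \vec x = \int_{\R^N} \varphi(\vec y) \, \mathcal{N}(\vec u_k, E_k \cap A, \vec y) \dd \vec y .
\end{equation*}
Disjointifying the $E_k$ (modulo null sets), substituting $\vec u = \vec u_k$ and $D\vec u = D\vec u_k$ on $E_k$, and summing by monotone convergence (first for $\varphi \geq 0$, then by decomposition of a general $\varphi$ into positive and negative parts) produces the first identity, because $\sum_k \mathcal{N}(\vec u_k, E_k \cap A, \vec y) = \mathcal{N}(\vec u, \Omega_d \cap A, \vec y)$.

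Third, for the weighted identity I would specialise the previous one to $A \cap \vec u^{-1}(B)$ with $B \subset \R^N$ Borel, which yields the identity with $\psi = \mathbf{1}_{A \cap \vec u^{-1}(B)}$; standard simple-function approximation and monotone convergence then cover general measurable $\psi$. Measurability of $\bar\psi$ follows from the Lusin (N) property of each $\vec u_k$ together with the fact that, on each $E_k$, one may further decompose into countably many subsets on which $\vec u_k$ is injective, so that $\bar\psi$ is the sum of the measurable pushforwards $\psi \circ (\vec u_k|_{E_{k,j}})^{-1}$ extended by zero.

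The main obstacle (and the only non-mechanical point) is the bookkeeping of the residual null set $\Omega \setminus \bigcup_k E_k$ and of $\Omega \setminus \Omega_d$: one has to argue that on both sets $\det D\vec u = 0$ a.e., so that they contribute nothing on the left, and that the multiplicity $\mathcal{N}$ on the right is computed only on $\Omega_d \cap A$, so that a priori extraneous preimages outside $\Omega_d$ do not enter. These two facts are precisely the content of the fine structure theory of approximately differentiable functions in \cite[Sec.\ 3.1]{Federer69}, after which the rest of the argument is a straightforward assembly.
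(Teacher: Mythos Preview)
The paper does not supply a proof of this proposition at all: it is stated as a known result and attributed directly to \cite[Prop.\ 2.6]{MuSp95} and \cite[Th.\ 3.2.5 and Th.\ 3.2.3]{Federer69}, with no further argument. Your outline is essentially the standard reduction (Lusin-type Lipschitz decomposition of a $W^{1,1}$ map, classical area formula on each piece, then sum by monotone convergence) that underlies those references, so there is nothing to compare against in the paper itself.
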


\begin{definition}\label{def:Om0}
Let $\vec u\in W^{1,1}(\Om,\R^N)$ be such that $\det D \vec u>0$ a.e. We define $\Om_0$ as the set of $\vec x\in \Om$ for which the following are satisfied:
\begin{enumerate}[i)]
\item the approximate differential of $\vec u$ at $\vec x$ exists and equals $D \vec u(\vec x)$.
\item there exist $\vec w\in C^1(\R^N,\R^N)$ and a compact set $K \subset \Om$ of density $1$ at $\vec x$  such that $\vec u|_{K}=\vec w|_{K}$ and $D \vec u|_{K}=D \vec w|_{K}$,
\item $\det D \vec u(\vec x)>0$.
\end{enumerate}
\end{definition}
 We note that the set \( \Om_0\) is a set of full Lebesgue measure in \( \Om \), i.e., \( |\Om \setminus \Om_0|=0\). This follows from Theorem 3.1.8 in \cite{Federer69}, Rademacher's Theorem and Whitney's Theorem.

\begin{definition}\label{def:geometric_image}
For any measurable set $A$ of $\Om$, we define the geometric image of $A$ under $\vec u$ as  
\begin{equation*}
\imG(\vec u,A)=\vec u(A\cap \Om_0)
\end{equation*}
with \(\Om_0\) as in Definition \ref{def:Om0}.
\end{definition}

	\subsection{Change of variables formula for surfaces}

	 Use shall be made of the change of variables formula for surfaces, in the following form.

\begin{proposition}\label{pr:change_of_variables-surfaces}
Let $\vec u \in H^1(\Om, \R^3)\cap L^\infty(\Om, \R^3)$ be injective a.e.\ and such that $\det D\vec u>0$ a.e. 
Then, for every bounded and measurable $\vec g: \R^3 \to \R^3$, every $\vec \xi\in \Om$, and 
a.e.\ $r\in \big (0, \dist(\vec x, \partial \Om)\big )$,
		$$\Ha^2\big ( \partial B(\vec \xi, r)\setminus \Om_0\big ) =0$$
	   and
		$$
			\int_{\partial B(\vecg \xi, r)} \vec g \big ( \vec u(\vec x)\big )\cdot \cof D\vec u(\vec x)\,\vecg\nu(\vec x)\dd\Ha^2(\vec x )
		    = \int_{\imG (\vec u , \partial B(\vec \xi, r) )} \vec g(\vec y) \cdot \tilde{\vecg\nu}_{\vec \xi, r} (\vec y) \dd\Ha^2 (\vec y),
		$$
		where 
		$$
		\tilde{\vecg\nu}_{\vec \xi, r} \big (\vec u(\vec x)\big ) = 
	    \frac{\cof D\vec u(\vec x) \, \vecg\nu(\vec x)}{|\cof D\vec u(\vec x) \, \vecg\nu(\vec x)|},
		\qquad \vec x \in \Om_0\cap \partial B(\vec\xi, r),
		$$
		$\vecg\nu(\vec x)$ being the outward unit normal to $B(\vec\xi,r)$ on $\vec x$.
	 \end{proposition}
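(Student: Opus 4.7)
The plan is to reduce the claim to a classical area formula on the sphere, after selecting via Fubini a sufficiently good set of radii $r$.

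First, because $\widetilde \Omega \setminus \Omega_0$ has Lebesgue measure zero, Fubini's theorem applied in spherical coordinates centered at $\vec\xi$ (or equivalently the coarea formula with respect to $\vec x\mapsto |\vec x-\vec \xi|$) gives that $\mathcal H^2\big (\partial B(\vec \xi, r)\setminus \Omega_0\big )=0$ for a.e.\ $r\in (0,\dist(\vec\xi,\partial\widetilde \Omega))$. The standard slicing results for Sobolev functions (see e.g.\ the arguments in \cite{MuSp95}) then allow me to further restrict to those $r$ for which the trace $\vec u|_{\partial B(\vec \xi,r)}$ belongs to $H^1(\partial B(\vec\xi,r),\R^3)\cap L^\infty$, and for which the approximate tangential differential of this trace coincides $\mathcal H^2$-a.e.\ with the restriction of $D\vec u(\vec x)$ to the tangent plane $T_{\vec x}\partial B(\vec\xi,r)$. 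At the same time, since $\vec u$ is injective a.e.\ on $\widetilde \Omega$, the set of non-injectivity pairs has zero measure in $\widetilde\Omega\times \widetilde\Omega$; a further Fubini argument in spherical coordinates shows that for a.e.\ such $r$, $\vec u$ is injective $\mathcal H^2$-a.e.\ on $\partial B(\vec\xi, r)$.

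The key algebraic ingredient is the identity relating the cofactor to tangential cross products: if $\{\vec\tau_1,\vec\tau_2\}$ is a positively oriented orthonormal basis of $T_{\vec x}\partial B(\vec \xi,r)$ such that $\vec\tau_1\wedge \vec\tau_2 = \vec\nu(\vec x)$, then for any $\vec x\in \Omega_0\cap \partial B(\vec\xi,r)$,
\begin{equation*}
\p_{\tau_1}\vec u(\vec x)\wedge \p_{\tau_2}\vec u(\vec x)=\cof D\vec u(\vec x)\,\vec\nu(\vec x),
\end{equation*}
so the Jacobian of the sphere-to-image restriction of $\vec u$ equals $|\cof D\vec u(\vec x)\,\vec\nu(\vec x)|$, while the unit normal to the image surface at $\vec u(\vec x)$ is precisely $\cof D\vec u(\vec x)\vec\nu(\vec x)/|\cof D\vec u(\vec x)\vec\nu(\vec x)|=\tilde{\vec\nu}_{\vec\xi,r}(\vec u(\vec x))$.

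For the set of good radii selected above, the area formula for Sobolev mappings between $2$-rectifiable sets (the surface version of Proposition \ref{prop:area-formula}, as in \cite[Prop.\ 2.7]{MuSp95} or \cite[Thm.\ 3.2.22]{Federer69}) yields
\begin{equation*}
\int_{\partial B(\vec\xi,r)\cap \Omega_0}\vec g(\vec u(\vec x))\cdot \cof D\vec u(\vec x)\vec \nu(\vec x)\dd\mathcal H^2(\vec x)=\int_{\imG(\vec u,\partial B(\vec\xi,r))}\vec g(\vec y)\cdot \tilde{\vec\nu}_{\vec\xi,r}(\vec y)\,\mathcal N(\vec y)\dd\mathcal H^2(\vec y),
\end{equation*}
with $\mathcal N(\vec y)$ the number of preimages. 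The a.e.\ injectivity on the sphere established above gives $\mathcal N=1$ on $\imG(\vec u,\partial B(\vec\xi,r))$, and discarding the $\mathcal H^2$-null set $\partial B(\vec\xi,r)\setminus \Omega_0$ delivers the stated identity. The most delicate step is the one ensuring simultaneously for a.e.\ $r$ the Sobolev slicing (so that tangential and ambient differentials agree on $\partial B(\vec\xi,r)\cap \Omega_0$) and the passage of a.e.\ injectivity from the volume to the sphere; once these are in place, the rest is an application of the surface area formula together with the standard cofactor–cross product identity.
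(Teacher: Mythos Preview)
Your proposal is correct and follows essentially the same route as the paper: Fubini/coarea to select good radii with $\mathcal H^2(\partial B(\vec\xi,r)\setminus\Omega_0)=0$, then the surface area formula of Federer (via \cite[Prop.~2.7]{MuSp95}) combined with the cofactor--cross-product identity. The only noteworthy difference is how injectivity on the sphere is secured: the paper invokes \cite[Lemma~3]{HeMo11}, which gives that $\vec u|_{\Omega_0}$ is \emph{genuinely} injective (not merely a.e.), so injectivity on $\partial B(\vec\xi,r)\cap\Omega_0$ is immediate once the sphere lies in $\Omega_0$ up to an $\mathcal H^2$-null set; you instead pass a.e.\ injectivity from the volume to the sphere by a second Fubini argument. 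Both work, but the paper's route is slightly cleaner and avoids the detour through ``non-injectivity pairs'' (what you actually need is that the non-injectivity \emph{set} in $\widetilde\Omega$ is $\mathcal L^3$-null, which is the definition of a.e.\ injectivity).
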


	The set $\Om_0$ of Definition \ref{def:Om0} is such that $\vec u|_{\Om_0}$ is injective \cite[Lemma 3]{HeMo11}. 
	Also, $\vec u(\vec x)$ and $D\vec u(\vec x)$ are well defined and $\det D\vec u(\vec x)>0$ at every $\vec x\in \Om_0$. 
	The fact that for almost every radii $r$ the sphere $\partial B(\vec \xi, r)$ is contained in $\Om_0$ except for an $\Ha^2$-null set is a standard consequence of Fubini's theorem and the coarea formula. The change of variables formula, as presented here, is a particular case of the general version of Federer \cite[Cor.~3.2.20]{Federer69} (cf.\ \cite[Prop.~2.7]{MuSp95}, \cite[Prop.~2.9]{HeMo12}).

\section{Towards an upper bound for the relaxed energy}\label{sec:towards_upper_bound}

This section is devoted to the proof of Theorem \ref{th:upper_bound}. We first recall the definition of the limiting map in the Conti--De Lellis example cf.\ \cite[Th.\ 6.1]{CoDeLe03}. Then we present our new optimal approximating sequence and in the last part of this section we check that this approximating sequence is indeed optimal. We denote the canonical basis of \(\R^3\) by $\vec e_1$, $\vec e_2$, $\vec e_3$ and we set
\begin{align*}
 \vec e_r(\theta)  := \cos\theta\, \vec e_1 + \sin\theta \,\vec e_2, \quad 
 \vec e_\theta(\theta) := -\sin \theta\,\vec e_1 + \cos\theta\,\vec e_2 
\qquad \theta\in\R.
\end{align*}

\subsection{Definition of the limit Conti--De Lellis map}
\label{se:LimitMap}

The definition of the limiting map in the Conti--De Lellis example is based on a division of the ball \( B(\vec 0,3)\) into several regions. 
By axisymmetry it suffices to describe \(\vec u\) in the right halfplane.
We describe \(\vec u\) by its spherical coordinates \( (u_\rho, u_{\theta} =\theta, u_\varphi)\).  Note that, when \( \theta=0\), the vector \(\vec e_r \) equals \( \vec e_1\); as a consequence, \( (u_\rho,u_\varphi)\) are the polar coordinates of the map \( \vec u\) restricted to the plane generated by \( (\vec e_1, \vec e_3)\).

\begin{figure}
\centering
\subfigure{
 \begin{overpic}[width=.35\linewidth]{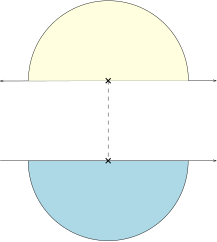}
\put (43,17) {$a$}
\put (75,12) {$b$}
\put (71,48) {$d$}
\put (43,80) {$e$}
\put (75,88) {$f$}
\put (43,27) {$\vec 0$}
\put (43,69) {$\vec 0'$}
\end{overpic}}
\hspace{1cm}             
\subfigure{
\begin{overpic}[width=.41\linewidth]{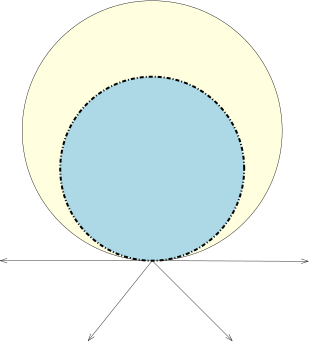}
\put (43,48) {$a$}
\put (44,12) {$b$}
\put (67,16) {$d$}
\put (43,83) {$e$}
\put (70,93) {$f$}
\put (62,73) {$\Gamma$}
\end{overpic}}

\vspace*{1cm}

\caption{The map by Conti--De Lellis is defined differently in regions $a$ to $f$. The reference and deformed configurations appear, respectively, on the left and on the right}
\label{fig:regions}
\end{figure}

We start by describing \(\vec u\) in the region \( a:=\{ \rho \sin \varphi \vec e_r(\theta)+\rho \cos \varphi \vec e_3 : \ 0\leq \rho \leq 1, \frac{\pi}{2} \leq \varphi \leq \pi \}\). In this region we set
\begin{gather*}
    \vec u \big ( \rho \sin\varphi \vec e_r(\theta) +\rho\cos\varphi\vec e_3 \big ) 
    = u_\rho \sin u_\varphi\,\vec e_r(\theta) + u_\rho \cos u_\varphi\, \vec e_3,
    \\
    u_\rho(\rho, \varphi) = (1-\rho)\cos u_\varphi, \qquad
    u_\varphi(\rho,\varphi) = \pi-\varphi.
\end{gather*}
In region \( b:=\{ \rho \sin \varphi \vec e_r(\theta)+\rho \cos \varphi \vec e_3 : 1<\rho\leq 3, \frac{\pi}{2}\leq \varphi \leq \pi\}\) we define \(\vec u \) by
\begin{gather*}
	\vec u\big (\rho\sin\varphi\,\vec e_r(\theta) + \rho\cos\varphi\,\vec e_3\big )= u_\rho \sin u_\varphi\,\vec e_r(\theta) + u_\rho \cos u_\varphi\, \vec e_3,
    \\
    u_\rho(\rho, \varphi) = \rho-1, \qquad
    u_\varphi(\rho,\varphi) = \frac{\varphi+\pi}{2}. 
\end{gather*}
In \(e:=\{ \rho \sin \varphi \vec e_r(\theta) +(1+\rho \cos \varphi) \vec e_3 : 0\leq \rho \leq 1, 0 \leq \varphi \leq \frac{\pi}{2}\}\) we set
	\begin{gather*}
    \vec u\big ( \vec e_3 + \rho\sin\varphi \vec e_r(\theta) + \rho\cos\varphi \vec e_3\big ) := u_\rho  \sin\varphi\, \vec e_r(\theta) + u_\rho \cos \varphi\,\vec e_3,\\
    u_\rho( \rho, \varphi) := (1+\rho)\cos \varphi, \quad  u_\varphi=\varphi.
    \end{gather*}
In \( f:=\{ \vec e_3 + \rho \sin \varphi \vec e_r(\theta)+\rho \cos \varphi \vec e_3 : \rho \geq 1, 0\leq \varphi \leq \frac{\pi}{2}\}\cap B(\vec 0,3)\), in  \cite{CoDeLe03} there is no requirement made on the limiting map \( \vec u\) regarding the region $f$,
 other than that  it be transformed in a bi-Lipschitz manner onto its image, which is contained in
$$\{\vec e_3+\rho \cos\varphi\, \vec e_r(\theta) + \rho\sin\varphi\,\vec e_3: \rho\geq 2\cos\varphi,\  0\leq\varphi\leq \frac{\pi}{2}\}.$$
For the limit map $\vec u$, that means that it can be provided, in spherical coordinates
$$
	\vec u\big (\vec e_3 + \rho\sin\varphi\vec e_r(\theta) + \rho\cos\varphi \vec e_3\big ) = u_\rho\sin u_\varphi \vec e_r(\theta) + u_\rho\cos u_\varphi\vec e_3, 
	\quad \rho\geq 1,\ \varphi \in [0,\frac{\pi}{2}], 
$$
 by any pair of maps $u_\rho=u_\rho(\rho,\varphi)$, $u_\varphi=u_\varphi(\rho,\varphi)$ satisfying 
$$
	u_\varphi(1,\varphi):=\varphi,\quad
	u_\varphi(\rho, \frac{\pi}{2}) = \frac{\pi}{2}, 
	\quad u_\rho(1,\varphi):=2\cos\varphi.
$$ 
Note, in particular, that the interface $\{x_1^2 + x_2^2 \geq 1,\ x_3=1\}\cap B(\vec 0,3)$
between $f$ and $d$ is mapped to a portion of the plane $\{\vec y\in\R^3: y_3=0\}$ via
$$
	\vec e_3 + \rho\vec e_r(\theta)
	\mapsto
	u_\rho(\rho, \frac{\pi}{2}) \vec e_r(\theta),
	\quad \rho\geq 1,\quad \theta \in [0,2\pi]
$$ 
with $u_\rho(1,\frac{\pi}{2})=0$.

It remains to describe \( \vec u\) in region \( d:= \{ 0 \leq x_3\leq 1\}\cap B(\vec 0,3)\). 
Let $\vec g$ be a fixed axisymmetric bi-Lipschitz map from
$$
\{\hat r\vec e_r(\theta) + x_3\;\vec e_3: \hat r\geq 0,\ 0\leq \theta\leq 2\pi,\ 0\leq x_3\leq 1\}
$$
onto
$$
\{s\, \vec e_r(\theta) + z\;\vec e_3: s\geq 0,\ 0\leq \theta\leq 2\pi,\ 0\leq z\leq 3\},
$$
with:
\begin{itemize}
 \item
	\begin{equation}
  \label{eq:rhoRPrime-1}
		s(\hat r,1):=u_\rho(\hat r,\frac{\pi}{2}),\quad z(\hat r,1)\equiv 3, \quad \hat r\geq 1
	\end{equation}
	for $s=s(\hat r,x_3)$ the radial distance of $\vec g \big (\hat r \vec e_r(\theta) + x_3\vec e_3\big )= s\,\vec e_r(\theta)+z\,\vec e_3$, the function $r\mapsto u_\rho(\hat r,\frac{\pi}{2})$ being defined in region $f$;
	\item
	\begin{equation*}
		s(\hat r ,0):=\hat r-1,\quad z(\hat r,0)\equiv 0, \quad \hat r\geq 1;
	\end{equation*}

	\item the points $$A'(\hat r=1, x_3=0),\quad  B'(\hat r=0, x_3=0),\quad  C'(\hat r=0, x_3=1),\quad D'(\hat r=1, x_3=1),$$
being sent, respectively, to 
$$(s=0,z=0),\quad (s=0,z=1),\quad (s=0,z=2),\quad (s=0,z=3);$$ 
\item and $\vec g$ affine in the segments joining those points.
\end{itemize}

	The limit map in region $d$ is given by 
$$
	\vec u \big ( \hat r\vec e_r(\theta) + x_3\,\vec e_3\big ) 
	= s\,\sin \big (\varphi(z)\big)\,\vec e_r(\theta) 
	- s\,\cos \big ( \varphi(z)\big ) \,\vec e_3,
	\quad \hat r\geq 0,\  0\leq x_3\leq 1,
$$
with $\varphi=\varphi(z(\hat r,x_3))$,  $s=s(\hat r,x_3)$, and $z=z(\hat r,x_3)$ defined through the relations
$$
\varphi(z):= \frac{\pi}{4}\bigg ( 1 + \frac{z}{3} \bigg ),\quad 
s\,\vec e_r(\theta) + z\,\vec e_3 = \vec g \big ( \hat r \vec e_r(\theta) + x_3\vec e_3\big).
$$
In particular, note that the polygonal line $A'B'C'D'$ is contracted to a single point ($s=0$) and that the slab $d$ is deformed onto the angular sector $0\leq\varphi\leq \frac{\pi}{4}$. Also, the definition of $\vec u$ matches that of the region $f$ at the interface $\hat r \geq 1$, $x_3=1$, and that of the region $b$ at the interface $\hat r\geq 1$, $x_3=0$.

\subsection{Definition of the new recovery sequence $\vec u_\e$}\label{sec:recovery_sequence}

Once again, the definition will be based on a partition of the domain into several regions, as illustrated in Figure \ref{fig:regions-u_eps}.

\begin{figure}
\centering
\subfigure{
 \begin{overpic}[width=.35\linewidth]{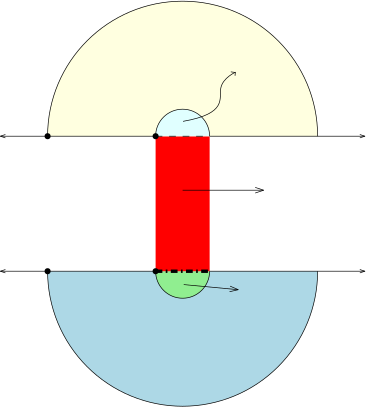}
\put (43,16) {$a_\e$}
\put (59,27) {$a_\e'$}
\put (67,1) {$b$}
\put (66,52) {$c_\e$}
\put (74,47) {$d_\e$}
\put (32,83) {$e_\e$}
\put (59,82) {$e_\e'$}
\put (67,95) {$f$}
\put (9,35) {$A$}
\put (33,35) {$B$}
\put (33,61) {$C$}
\put (9,61) {$D$}
\end{overpic}}
\hspace{1cm}             
\subfigure{
\begin{overpic}[width=.41\linewidth]{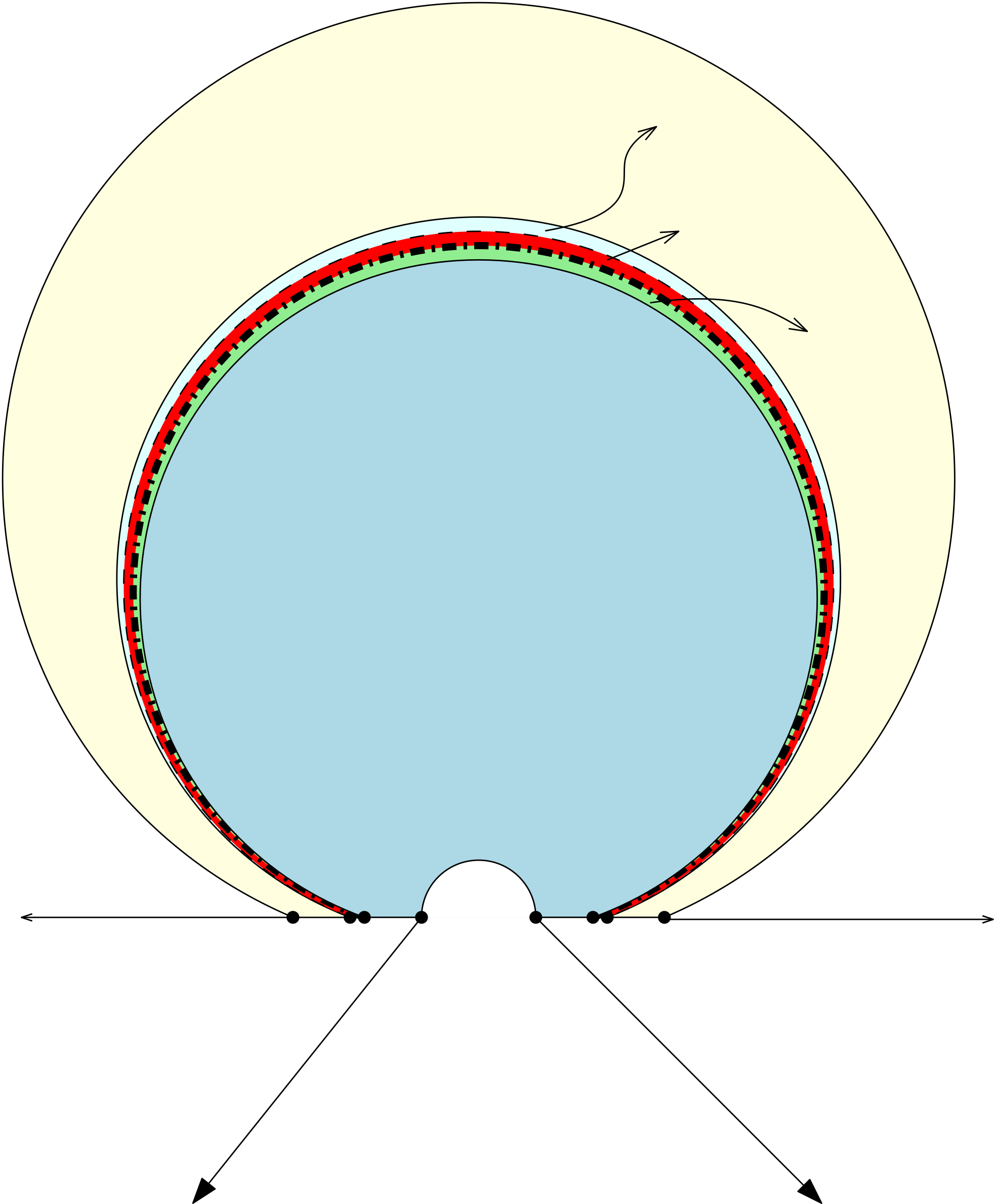}
\put (39,60) {$a_\e$}
\put (68,71) {$a_\e'$}
\put (39,10) {$b$}
\put (58,80) {$c_\e$}
\put (66,12) {$d_\e$}
\put (25,86) {$e_\e$}
\put (56,88) {$e_\e'$}
\put (67,95) {$f$}
\put (34,19) {$A$}
\put (29,19) {$B$}
\put (25,19) {$C$}
\put (20,19) {$D$}
\end{overpic}}

\vspace*{1cm}

\caption{Reference and deformed configurations for the map $\vec u_\e$}
\label{fig:regions-u_eps}
\end{figure}

\subsubsection{Motivation for our choice of recovery sequence}
 
 If \( (\vec u_n)_n \subset \B_s\) is a sequence satisfing INV and \( \vec u_n \rightharpoonup \vec u\) in \(H^1(\Om,\R^3)\) it might be that \(\vec u\) does not satisfy INV because it is not true that \(H^1\) embeds in \(C^0\) for two-dimensional domains and hence the degree is not continuous for the weak \(H^1\) convergence (for condition INV, we look at the degree of \( \vec u\) restricted to $2D$ spheres). The classical example of a sequence showing the non-continuity of the degree for the weak convergence in \(H^1\) is known as the bubbling-off of spheres and is given by 
 \begin{equation}\label{eq:bubble_sphere}
 \BB_n: \Sf^2 \rightarrow \Sf^2, \quad \BB_n(\vec x)=\pi_S^{-1}(n\pi_S(\vec x))
 \end{equation}
where we have denoted by \(\pi_S\) the stereographic projection from the South pole to the plane orthogonal to \(\vec e_3\) and passing through the North pole \( \{ x_3=1\}\). Note that \( \BB_n\) is conformal and \(\BB_n\) does not converge uniformly to \( (0,0,-1)\) or strongly in \(H^1\), and that the Dirichlet energy \(\int_{\Sf^2} |D \BB_n|^2 \dd \vec x\) concentrates near the North pole.

Another way to see conformality appearing in our problem is to observe that a key point to obtain the lower bound on the relaxed energy
in the axisymmetric case (\cite[Th.\ 1.1]{BaHeMoRo_21}) is the following ``area-energy'' inequality
\begin{equation}\label{eq:area_energy_inequality}
|(\cof D \vec u) \vec e_3|\leq \frac12 |D \vec u|^2.
\end{equation}
Let us recall briefly the proof of \eqref{eq:area_energy_inequality}. Since \( \vec e_1 \wedge \vec e_2= \vec e_3\), by using the properties of the cofactor matrix and Cauchy's inequality we find that 
\begin{align*}
|(\cof D \vec u) \vec e_3| &=|(\cof D \vec u) \vec e_1 \wedge \vec e_2|= |(D \vec u)\vec e_1 \wedge (D \vec u) \vec e_2| \\
& \leq |\p_{x_1}\vec u| |\p_{x_2}\vec u| \leq \frac12 (|\p_{x_1}\vec u|^2+|\p_{x_2} \vec u|^2) \leq \frac12 |D \vec u|^2,
\end{align*}
where we recall that we are using the Frobenius norm of a matrix.
From this proof we see that there is equality in \eqref{eq:area_energy_inequality} if and only if 
\[ \p_{x_3} \vec u=0, \quad  \p_{x_1} \vec u \cdot \p_{x_2} \vec u =0, \quad |\p_{x_1} \vec u|=|\p_{x_2} \vec u| .\]
The two last conditions mean that \(\vec u\) restricted to the plane \((\vec e_1,\vec e_2)\) is locally conformal at \( \vec x = (x_1,x_2,x_3)\). 

The two previous paragraphs indicate that to construct a sequence showing the lack of compactness in \(\Ar_s\) with optimal loss of energy for \(E\) we must construct a sequence such that both \( \cof D \vec u_n\) and \(D \vec u_n\) concentrate, and that inequality \eqref{eq:area_energy_inequality} becomes asymptotically an equality, thus involving conformality. Note that, because of the axisymmetry, the only place where the sequence can concentrate is the symmetry axis, as shown in \cite{HeRo18}. To construct our recovery sequence we will use the maps \(\BB_n\) in \eqref{eq:bubble_sphere}. We can use spherical coordinates and see that \( \BB_n\) is given by
\begin{equation*}
(\BB_n)_\rho =1, \quad (\BB_n)_\theta = \theta, \quad (\BB_n)_\varphi=2\arctan\left(n \tan \frac{\varphi}{2} \right).
\end{equation*}
The important information is carried by the zenith angle \(( \BB_n)_\varphi\). If we see the bubble as a map from \(\R^2\) 
to \(\Sf^2\), elementary geometric relations show that \(( \BB_n)_\varphi=2\arctan (nr)\) where \((r,\theta)\) are the polar coordinates. However, we want to construct a bubble with values onto the sphere \( S((0,0,\frac12),\frac12)\) since it is what is done in \cite{CoDeLe03}. In that case, by using the central angle theorem, since the origin of the sphere is at \((0,0,\frac12)\) we can see that we must take a modified sequence
\begin{equation*}
(\tilde{\BB}_n)_\rho=2\cdot \frac12 \cos((\tilde{\BB}_n)_\varphi), \quad (\tilde{\BB}_n)_\theta=\theta, \quad (\tilde{\BB}_n)_\varphi =\arctan(n r).
\end{equation*}

We also observe that the image of a disk of radius \(1/n\) in the plane \( x_3=0\) by \( \pi_S^{-1}(n\cdot)\) is the upper hemisphere of \(\Sf^2\). However, the image of a disk of radius \( \frac1n\) in the plane \(x_3=0\) by \( \pi_S^{-1}(n^2\cdot)\) is almost the all sphere \(\Sf^2\).  We arrive at the conclusion that the map near our set of concentration should look like
\[ (\vec u_n)_\rho\approx \cos (\vec u_n)_\varphi, \quad (\vec u_n)_\theta= \theta, \quad (\vec u_n )_\varphi \approx \arctan(n^2 r).\]
This is to compare with the construction of Conti--De Lellis, where \( (\vec u_n )_\varphi=2\arctan(n r)\). This latter map is not conformal because the sphere we want to use for the bubbling is not centred at the origin. 

Another difference in our construction is that we will require that \(\vec u_n\) is incompressible near the set of concentration. This is because we want to emphasize that the lack of compactness of the problem is due to the Dirichlet part of the neo-Hookean energy and not to the determinant part. Hence our construction is valid for quite a general choice of the convex function \(H\).

For notational simplicity we set \( \e=1/n\) for \(n \in \N\) with $n\geq 1$. From what precedes we can understand that the following function plays the main role in the construction:
\begin{align}
         \label{eq:def_stereo}
    f_\e (r):=\arctan \Big ( \frac{r}{\e^2} \Big ) + \alpha_\e \frac{r}{\e},
        \quad 0\leq r \leq \e,
        \qquad 
    \alpha_\e:=\arctan(\e).
\end{align}

\begin{figure}[hbt!]
\begin{center}
\begin{overpic}[width=.9\textwidth]{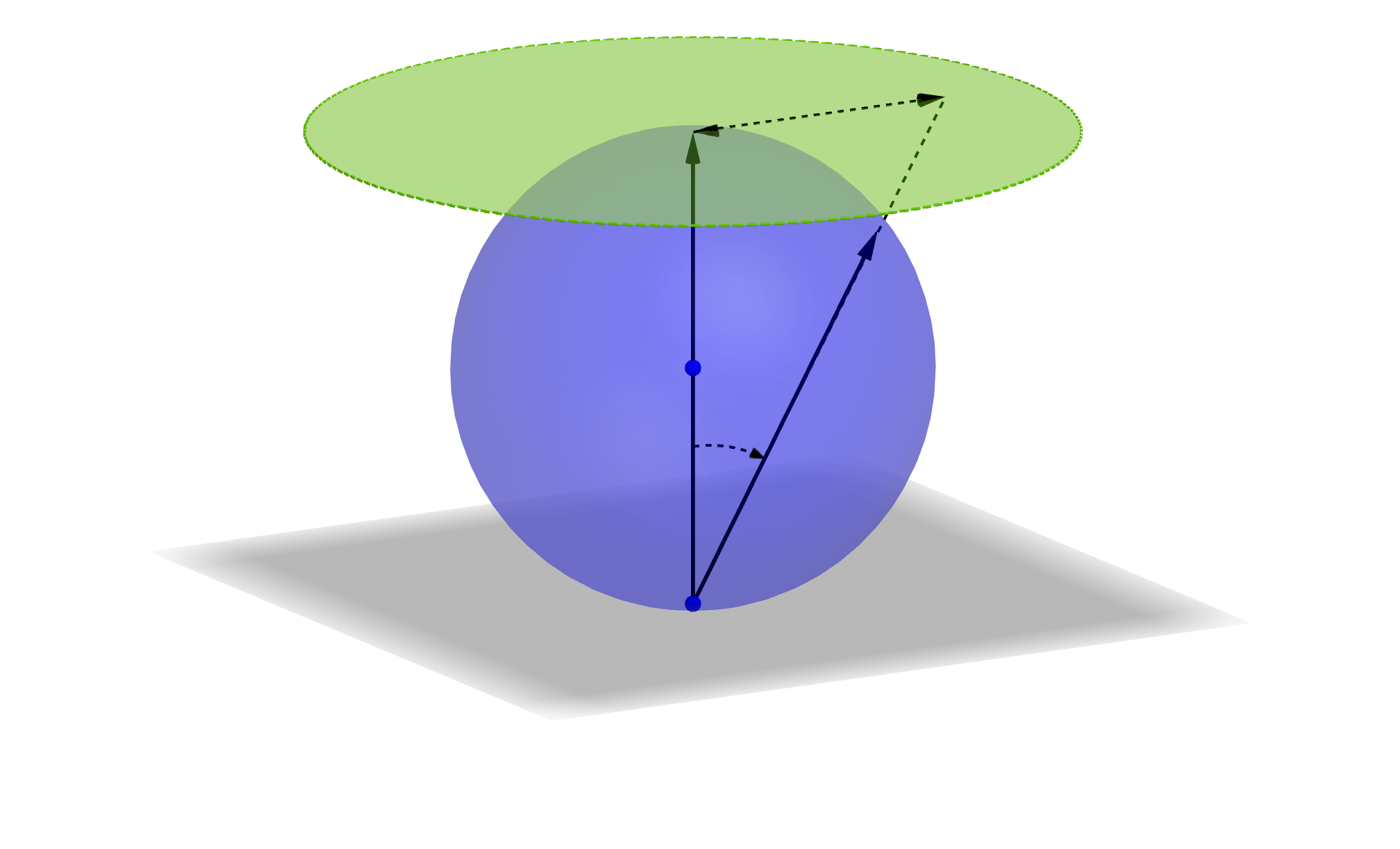}
\put (55,54) {$r/\e^2$}
\put (40,34) {$(0,0,\tfrac{1}{2})$}
\put (51,30) {$\arctan(r/\e^2)$}
\put (46,15) {$(0,0,0)$}
\end{overpic}
\end{center}

\vspace*{-1cm}

\caption{Conformal transformation of an $\e$-disk onto the sphere, via the stereographic projection}
\label{fig:stereo}
\end{figure}

Note that 
\begin{equation*}
    f_\e(0)=0, \quad f_\e(\e)=\frac{\pi}{2},
    \quad \text{and}\quad f_\e'(r)> 0\quad \text{for all }r.
\end{equation*}
The first term in $f_\e(r)$ may be interpreted as a function that 
stretches the disk $\{\vec x\in \R^3:\ x_1^2+x_2^2 < \e^2,\ x_3=0\}$
onto 
the disk $\{\vec y\in\R^3:\ y_1^2 +y_2^2 < \e^{-2},\ y_3=1\},$
so as to subsequently wrap that disk (conformally, via the stereographic projection, see Fig.~\ref{fig:stereo}) 
onto (a very large part of) the sphere \(S((0,0,\frac12),\frac12)\).

The function $f_\e(r)$ will correspond to the angle formed between the positive $y_3$ axis and
the segment
joining the origin with a point on that bubble. 
The correction term
$\alpha_\e \frac{r}{\e}$ in $f_\e(r)$,
which is chosen to be linear for simplicity in the calculations,
has the effect of wrapping the disk onto the whole bubble (all the way up to $f_\e(r)=\frac{\pi}{2}$) and not only to the large part consisting of all points with zenith angle between $0$ and $\frac{\pi}{2} - \alpha_\varepsilon$.
From now on, a choice is made of
\begin{align*}
0<\gamma \leq  \frac{1}{3} \quad \text{a fixed positive exponent.}
\end{align*} 

\subsubsection{Region $c_\e$} 

We start by describing our recovery sequence in region \[c_\e:=\{ x_1^2+x_2^2 < \e^2, 0 <x_3<1 \}.\] This is where concentration of energy will occur. In this region we use cylindrical coordinates in the domain, that is, we write \( \vec x (r, \theta, x_3) = r\vec e_r + x_3 \vec e_3, \ 0\leq r < \e,\ 0\leq \theta \leq \pi, \ 0<x_3<1. \) 
In most of the regions we describe \(\vec u\) via its spherical coordinates, that is, we set
 \begin{equation}   \label{eq:def_u_eps}
 \vec u_\e\big (\vec x(r,\theta,x_3)\big ) = u^\e_\rho \sin u^\e_\varphi\, \vec e_r(\theta)
    +  u^\e_\rho \cos u^\e_\varphi\,\vec e_3.
 \end{equation}
 In region \(c_\e\) we take
 \begin{equation}
 \label{eq:def-u_rho}
 u^\e_\rho(r, x_3) = \Big ( \big (\cos \big (f(r)\big ) + 2\e^\gamma\big )^3 
    + x_3\cdot \frac{3r}{\partial_r \big ( - \cos f(r) \big )} \Big )^{1/3},
  \qquad  u^\e_\varphi(r) = f_\e(r).
\end{equation}
These equations may be regarded as a perturbation of
$u_\rho = \cos u_\varphi$ and \(u_\varphi=\arctan(\frac{r}{\e^2})\), namely, the equations of a bubbling sequence (see Fig.~\ref{fig:bubbling-c})
from a disk of size \(\e\) perpendicular to the symmetry axis to the sphere \(S((0,0,\frac12),\frac12)\).

\begin{figure}[hbt!]
\centering
\begin{minipage}{.48\linewidth}
	\centering
  \includegraphics[width=\linewidth]{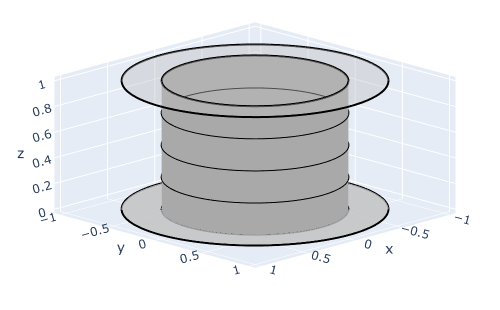}

  {\small a) Reference configuration $c_\e$}
\end{minipage}
\hspace{.02\linewidth}
\begin{minipage}{.48\linewidth}
	\centering
  \begin{overpic}[width=\linewidth,tics=10]{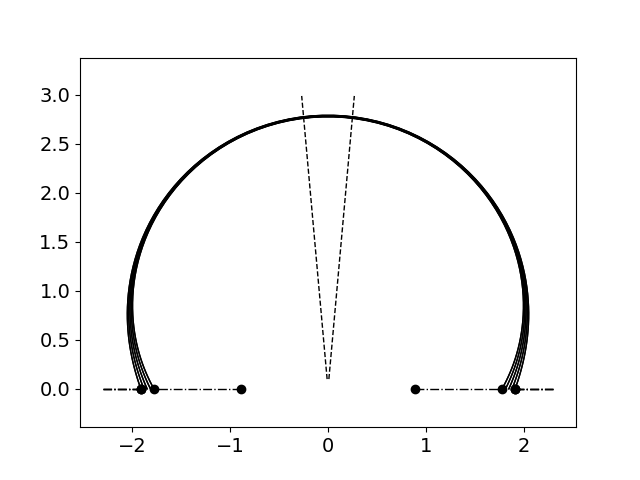}
    \put (36,10) {\footnotesize $A$}
    \put (23,10) {\footnotesize $B$}
    \put (20,10) {\footnotesize $C$}
  \end{overpic}

  {\small b) Axisymmetric deformed configuration}
\end{minipage}


\begin{minipage}{.48\linewidth}
	\centering
  \includegraphics[width=\linewidth]{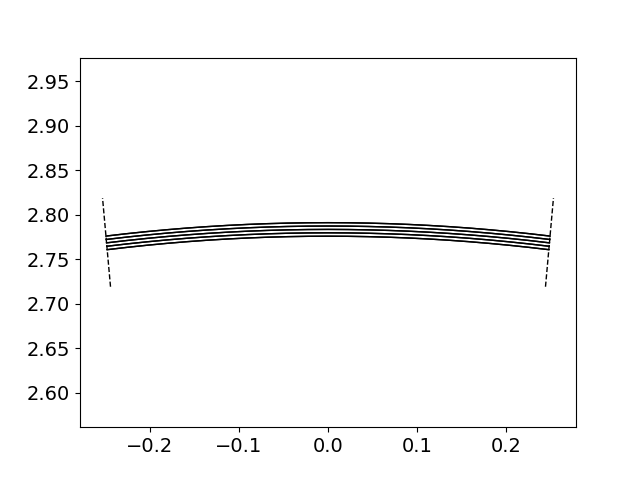}

  {\small c) Close-up view of the stack of spheres}
\end{minipage}

\smallskip

\caption{Illustration of the deformation in the key region $c_\e$, where the singular energy originates. Even for the exaggeratedly large value of $\e=0.7$ used for these plots, the images of the disks $B^2(\vec 0, \e)\times \{x_3\}$, taken at different heights $x_3$ between 0 and 1, are almost indistinguishable. As $\e$ becomes smaller, the polar coordinates $\e^\gamma$ and $2\e^\gamma$ of the deformed points $A$ and $B$ are increasingly small, and the image of
each of the disks resembles more and more the sphere $S\big ( (0,0,\frac{1}{2}), \frac{1}{2}\big )$. The bubbling effect can also begin to be appreciated, since the angular sector $|u_\varphi|<\frac{\pi}{50}(1+ \e\arctan(\e))$ that is zoomed out in Figure c) comes from the much smaller disks
$B^2(\vec 0, \frac{\pi}{50}\e^2)\times \{x_3\}$.
Correspondingly, when $0<x_1^2+x_2^2<\e^{2}$ the huge tangential stretch $\frac{\partial \vec u_\e}{\partial r}$ is of order $\e^{-2}$, and the normal compression $\frac{\partial \vec u_\e}{\partial x_3}$ is of order $\e^4$}
\label{fig:bubbling-c}
\end{figure}

The complicated expression for $u_\rho^\e$ arises as a solution of the incompressibility equation. Indeed, since in this construction $u_\varphi^\e$
is independent of $x_3$, 
from \eqref{eq:det_spherical_cylindrical} it follows that 
\begin{align*}
    \det D\vec u_\e\big ( \vec x (r,\theta,x_3)\big ) 
    = \frac{1}{3}
    \frac{\sin \big (f_\e(r)\big)\,f_\e'(r)}{r} \partial_{x_3} \big ( (u_\rho^\e)^3\big )
    \equiv 1.
\end{align*}
The ``initial condition'' 
\begin{equation*}
    u_\rho^\e(r,x_3) = \cos\big ( f_\e(r)\big)  + 2\e^\gamma
    \quad\text{at}
    \quad
    x_3=0
\end{equation*}
departs from the bubble $u_\rho=\cos u_\varphi$ only because of the 
small term $2\e^{\gamma}$, which is added 
for reasons of technical convenience that will become evident in the sequel (see, e.g., \eqref{eq:use_ofadding_eps_gamma}).

\subsubsection{Region $a_\e'$}
We call $a_\e'$ the region $$a_\e':=\{(x_1,x_2,x_3): x_1^2+x_2^2+x_3^2<\e^2,\, x_3< 0\}.$$  We parametrize this region by
$$
	\vec x(s,\theta,\varphi) = (1-s) g(\varphi)\vec e_r(\theta) + s\big ( \e \sin\varphi \vec e_r(\theta) - \e \cos \varphi \vec e_3\big ),\quad 0\leq s\leq 1,\ 0\leq \theta <2\pi,\ 0\leq \varphi < \frac{\pi}{2},
$$
where 
\begin{equation}\label{eq:def_g_eps}
g_\e:[0,\frac{\pi}{2}]\to [0,\e] \text{ is the inverse of the function } f_\e \text{ defined in } \eqref{eq:def_stereo}.
\end{equation}

\begin{figure}[hbt!]
\begin{center}
\begin{overpic}[width=0.5\textwidth,tics=10]{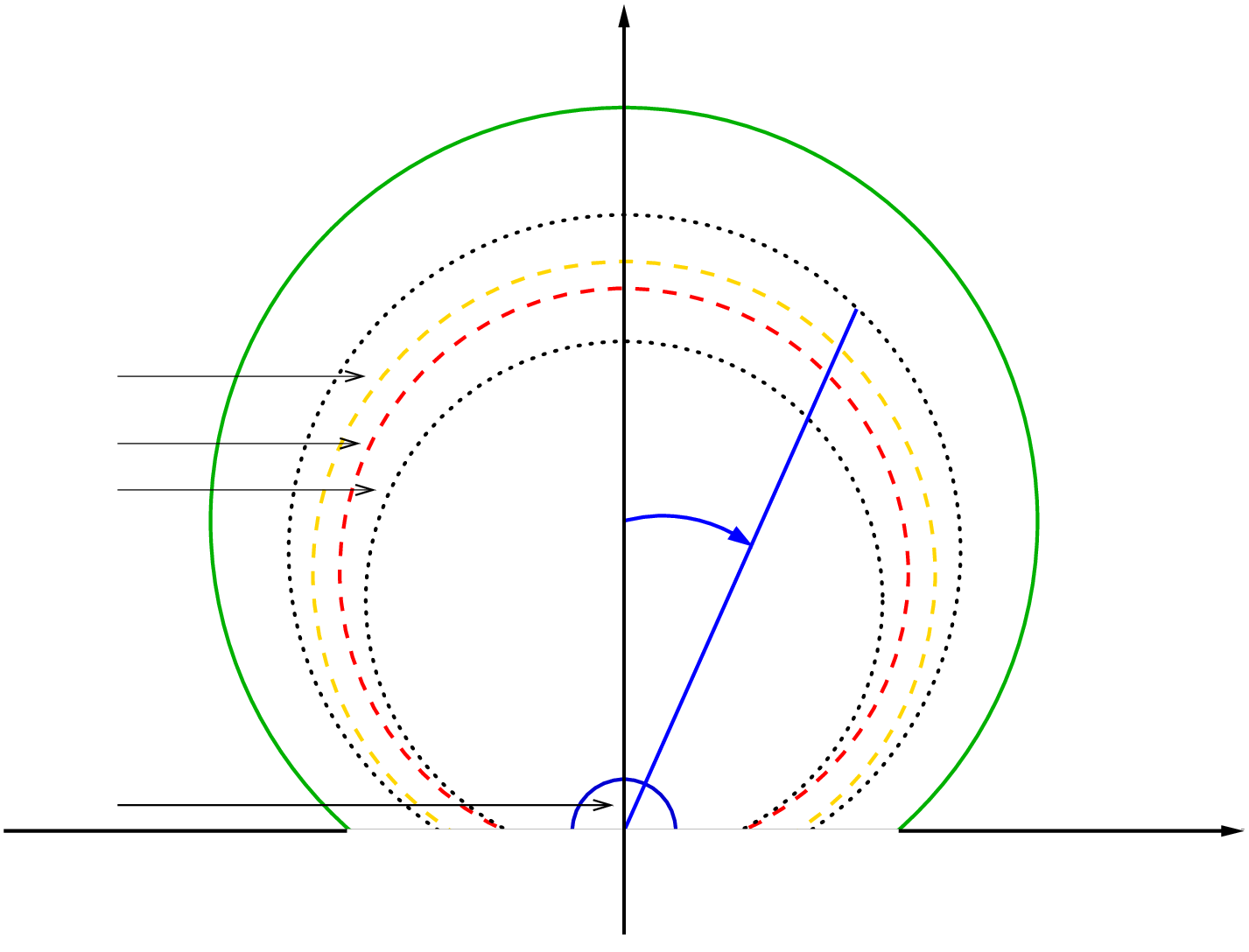}
 \put (54,37) {$\varphi$}
 \put (33, 26) {\small Region}
 \put (39, 22) {\small $a_\e$}
 \put (4, 35) {\small $a_\e'$}
 \put (4, 40) {\small $c_\e$}
 \put (4, 45) {\small $e_\e'$}
 \put (25, 52) {\small $e_\e$}
 \put (16, 67) {\small Region}
 \put (21, 62) {\small $f$}
 \put (6, 11) {\small $b$}
\end{overpic}
\end{center}
\caption{Schematic representation of the image of regions $a_\e$, $a_\e'$, $c_\e$, $e_\e'$, and $f$ after the deformation $\vec u_\e$. The parameter $\varphi$ in the definition of $\vec u_\e$ corresponds to the zenith angle in the deformed configuration}
\label{fig:regions_eps}
\end{figure}

For each fixed $\theta, \varphi$ the parametrization consists of an affine interpolation (with parameter $s$) between the preimage 
on the disk $\{r\leq\e,\, x_3=0\}$ of the point on the bubble with zenith angle $\varphi$ (which is determined by the function $f(r)$ that characterizes the conformal mapping from the stack of horizontal disks in region $c_\e$ to the corresponding stack of ``copies'' of the bubble)
and the point on the sphere $\{r^2+x_3^2=\e^2,\, x_3\leq 0\}$ that (according to the eversion-type map $\vec u_\e$ 
to be defined in region $a_\e$) will be assigned the same zenith polar angle $\varphi$ (see Fig.~ \ref{fig:regions_eps}).

We define $\vec u_\e$ in region $a_\e'$ through its spherical coordinates
\begin{align*}
	u_\rho^\e(s,\varphi) := \Big ( \big ( \cos\varphi + 2\e^\gamma\big ) ^3 - 3\int_{\sigma=0}^s h_\e(\sigma,\varphi)\dd \sigma \Big ) ^{\frac{1}{3}},
	\qquad u_\varphi^\e(s,\varphi) &:= \varphi,
\end{align*}
with
\begin{equation}
 \label{eq:def_h}
	h_\e(s,\varphi):= \e \Big ( (1-s) \frac{g_\e(\varphi)}{\sin\varphi}  +s\e \Big ) \Big ( (1-s) g_\e'(\varphi)\cos\varphi
	+ s (\e-g_\e(\varphi)\sin\varphi)\Big ).
\end{equation}

\underline{Derivatives of $\vec u_\e$:} they  are given by 
\begin{align*}
	\partial_s \vec u_\e = \partial_s u_\rho^\e\, \vec e_\rho(\theta,\varphi),
	\quad
	\partial_\theta \vec u_\e = u_\rho^\e \sin\varphi\, \vec e_\theta(\theta),
	\quad
	\partial_\varphi \vec u_\e = \partial_\varphi u_\rho^\e\,\vec e_\rho(\theta,\varphi) + u_\rho^\e\,\vec e_\varphi(\theta,\varphi),
\end{align*}
with
\begin{equation*}
		\vec e_\rho(\theta,\varphi)= \sin\varphi\, \vec e_r(\theta) +\cos\varphi \vec e_3,
		\quad
		\vec e_\varphi(\theta,\varphi)= \cos\varphi\,\vec e_r(\theta)- \sin\varphi \vec e_3.
\end{equation*}
We now prove that with this definition \( \det D \vec u_\e=1\) in \(a_\e'\).

\underline{Derivatives of the parametrization of the reference domain:}
\begin{align*}
	\partial_s\vec x &= (\e \sin\varphi - g_\e(\varphi))\vec e_r -\e \cos\varphi\,\vec e_3 
	\\
	\partial_\theta \vec x &= \Big ( (1-s) g_\e(\varphi) + s\e \sin\varphi \Big ) \vec e_\theta
	\\
	\partial_\varphi \vec x &= \big ( (1-s) g'_\e(\varphi) + s\e \cos\varphi \big ) \vec e_r
		+s\e \sin\varphi\,\vec e_3.
\end{align*}
Using the formulae
\begin{align*}
	A_{1i} \vec a_i \wedge A_{2j} \vec a_j \wedge A_{3k} \vec a_k = (\det A) \vec a_1\wedge \vec a_2 \wedge \vec a_3,
	\qquad \vec e_r\wedge \vec e_\theta \wedge \vec e_3=1,
\end{align*}
it can be seen that
\begin{align*}	
	\partial_s \vec x \wedge \partial_\theta \vec x \wedge \partial_\varphi \vec x
	 =\Big ( (1-s) g_\e(\varphi) + s\e \sin\varphi \Big ) 
	 \begin{vmatrix} \e\sin\varphi  - g_\e(\varphi) -\e\cos\varphi 
		 \\
	 	(1-s)g'_\e(\varphi) + s\e\cos\varphi s\e\sin\varphi 
	 	\end{vmatrix}
	= \sin \varphi\,h_\e(s,\varphi).	 
\end{align*}

\underline{Incompressibility:}
From the standard relation $\vec e_\varphi\wedge \vec e_\theta \wedge \vec e_\rho=1$ in spherical coordinates, it is clear that 
\begin{align*}
	\partial_s \vec u_\e \wedge \partial_\theta \vec u_\e \wedge \partial_\varphi \vec u_\e 
	&= - (u_\rho^\e)^2 \partial_s u_\rho^\e \sin \varphi = -\frac{1}{3}\partial_s \Big ( (u_\rho^\e)^3\Big ) 
	\sin \varphi
	= h_\e(s,\varphi) \sin \varphi. 
\end{align*}
On the other hand, using that the Jacobian of a composition is the product of the Jacobians, for $\vec F:= D\vec u_\e(\vec x)$, we have that
\begin{equation*}
	\partial_s \vec u_\e \wedge \partial_\theta \vec u_\e \wedge \partial_\varphi \vec u_\e
	=
	\vec F \partial_s \vec x \wedge \vec F \partial_\theta \vec x \wedge \vec F \partial_\varphi \vec x
	= \det D\vec u_\e \cdot \partial_s \vec x \wedge \partial_\theta \vec x \wedge \partial_\varphi \vec x
	= (\det D\vec u_\e) h_\e(s,\varphi) \sin \varphi.
\end{equation*}
Therefore $\det D\vec u_\e(\vec x)=1$ for all $\vec x \in a_\e'$. 
For this, it is necessary to know that $h_\e(s,\varphi)$ is nonzero; this is proved in Lemma \ref{le:positive_h} below.

The formula 
$$
	-\frac{1}{3}\partial_s \Big ((u_\rho^\e)^3\Big ) 
	\sin \varphi = (\det D\vec u_\e) h_\e(s,\varphi) \sin \varphi
$$
and the value of $u_\rho^\e$ at the disk $\{r\leq\e,\, x_3=0\}$
\[
 u_\rho^\e (r, \theta) = \cos f(r) + 2\e^\gamma
\]
(which is prescribed by the construction of $\vec u_\e$ in the critical region $c_\e$)
explain the definition of $u_\rho^\e$ in the region $a_\e'$. 
Although the construction here is more elaborate, it is based on the technique of using direction-preserving deformations developed in \cite{HeSe13,HeMoXu15} to solve the incompressibility constraint.
More precisely, this construction imposes that segments go to segments, which makes the incompressibility equation easily solvable.
Indeed, the incompressibility equation when $\vec u^\e$ is direction-preserving only involves $\partial_s (u_\rho^\e)$, so it can be integrated.
Since we also impose conformality, the component $u_\varphi$ can be recovered and, hence, the equation can be solved explicitly for $\vec u$.

\subsubsection{Region $a_\e$}

We set 
$$
    a_\e:= \{ 
    (x_1, x_2, x_3): 
    \e^2 < 
    x_1^2 + x_2^2 + x_3^2
    <1,\ 
    x_3< 0\}.
$$
In this region we use spherical coordinates \( (\rho,\theta,\varphi)\), \( \e<\rho <1, \ 0\leq \theta <2 \pi, \ \pi/2\leq \varphi \leq \pi\) in the domain and we define
 $\vec u_\e$ in region $a_\e$ through
its spherical coordinates
\begin{equation}
    \label{eq:def-u_eps-region_a}
    \begin{split}
 u_\rho^\e(\rho, \varphi) &:= 
 \frac{1-\rho}{1-\e}
 \Big ( 
 (\cos (\pi-\varphi) + 2\e^\gamma)^3 
 - 3\int_{\sigma=0}^1 h_\e(\sigma, \varphi)\dd\sigma\Big )^{\frac{1}{3}}
 + \e^\gamma \frac{\rho-\e}{1-\e},
 \\
 u_\varphi^\e (\rho,\varphi) &:= \pi-\varphi,
\end{split}
\end{equation}
where $h_\e(\sigma, \varphi)$ is the function defined in  \eqref{eq:def_h}.
 
Regarding $u_\rho^\e$, it is an affine interpolation connecting, on the one hand, the radial distance at the image of the interface between $a_\e'$ and $a_\e$
(which is mapped to a ``copy'' of the bubble lying slightly beneath it),
and, on the other hand,
the radial distance $\e^\gamma$ at which all points on the reference lower unit hemisphere are mapped to. By Lemma \ref{le:hE2Cos} below, the a.e.\ limit of $\vec u_\e$ in region $a_\e$ is the Conti--De Lellis map.

\subsubsection{Region $b$}
We recall that this region is described by 
\[ b=\{ \rho \sin \varphi \vec e_r(\theta) +\rho \cos \varphi \vec e_3 : 1\leq \rho <3, \ \frac{\pi}{2}\leq \varphi \leq \pi\}.
\]

Here a slight modification with respect to the original Conti--De Lellis maps is in order since in our construction we are now changing the scale of the spacings between the image of $A(\rho=1, x_3=0)$, $B(\rho=\e, x_3=0)$, $C(\rho=\e, x_3=1)$, and $D(\rho=1, x_3=1)$ to $\e^\gamma$ with $\gamma\leq 1/3$ instead of just $\e$. 

Working with spherical coordinates $(\rho, \theta, \varphi)$ in the reference configuration and cylindrical coordinates in the deformed configuration, we first define the auxiliary function $\vecg\phi_\e$ by 
\begin{align*}
	\begin{aligned}
		\phi_r^\e (\rho, \varphi) &= (\rho - 1 +\sqrt{2}\e^\gamma ) \sin \left(\frac{\varphi + \pi}{2} \right) , \\
		\phi_3^\e (\rho, \varphi) &= \e^\gamma + (\rho - 1 + \sqrt{2}\e^\gamma) \cos \left( \frac{\varphi + \pi}{2} \right) ,
	\end{aligned}
	\qquad 
	\rho \geq 1,
	\quad
	\frac{\pi}{2}\leq \varphi \leq \pi.
\end{align*}
The factor $\sqrt{2}$ appears because, according to the definition of $\vec u_\e$ in region $a_\e$, the image of $A$ is
$(\e^\gamma, 0)$, whose distance to $(0,\e^\gamma)$ is $\sqrt{2}\e^\gamma$. In region $b$ we define $\vec u_\e$ to be 
$$\vec u_\e (\vec x) = \e^\gamma \vecg\psi \Big (\e^{-\gamma} \vecg \phi_\e(\vec x) \Big ),$$
where $\vecg\psi$ is any axisymmetric bi-Lipschitz bijection from
$$
	\Big \{(x_1, x_2, x_3):\ r^2 + (x_3-1)^2 \geq 2,\ x_3\leq 0,\ r\leq 1 + |x_3|\Big \}
$$
onto
$$
	\Big \{(x_1, x_2, x_3):\ x_3\leq 0,\ r\leq 1 + |x_3|\Big \} \cup B\big ( (0,0,0),\, 1\big )
$$
such that
\begin{enumerate}[i)]
	\item $\vecg\psi(r, x_3)=(r,x_3)$ on the half-line $r=1+|x_3|$, $x_3\leq 0$,
	\item $\vecg\psi \big ( \sqrt{2}\sin (\bar\varphi), 1 + \sqrt{2}\cos (\bar\varphi) \big ) 
		= \Big ( \sin \big ( 2(\pi -\bar\varphi)\big ), \cos \big ( 2(\pi - \bar \varphi)\big ) \Big ) $,
		 $\frac{3\pi}{4}\leq \bar\varphi \leq \pi$,
	\item $\vecg\psi\equiv \id$ in $\{\vec x:\ r^2 +(x_3-1)^2 \geq 8,\ x_3\leq 0,\ r\leq 1+|x_3|\}.$
\end{enumerate}

The first two properties ensure the continuity of $\vec u_\e$ when passing from $b$ to $d_\e$ and to $a_\e$, respectively.
When $\e\to 0$, we can check that the resulting map $\vec u_\e$ converges to the correct limit map.

	\subsubsection{Region $e_\e'$}
The region $e_\e'$ is described by
	\begin{align*}
		e_\e' :=\{(x_1,x_2,x_3): x_1^2 +x_2^2 + (x_3-1)^2 < \e^2,\ x_3>1\}.
	\end{align*}
	We parametrize this region by 
	\begin{multline*}
		\vec x (s,\theta,\varphi)= \vec e_3 + (1-s) g_\e(\varphi) \vec e_r(\theta) + 
			s \big ( \e \sin\varphi \, \vec e_r(\theta)
				+\e\cos\varphi\,\vec e_3\big ),
		\\
			0\leq s\leq 1,\quad 0\leq \theta < 2\pi,\quad 0\leq \varphi < \frac{\pi}{2},
	\end{multline*}
	where, as in region $a_\e'$, the function $g_\e:[0,\frac{\pi}{2}]\to[0,\e]$ is defined by \eqref{eq:def_g_eps}.

	For each $\theta$, $\varphi$ the parametrization consists of an affine interpolation (with parameter $s$) between the pre-image on the disk
	$\{r\leq \e, x_3=1\}$ of the point on the (outer wall of the) bubble with zenith angle $\varphi$ (which is determined by the function $f(r)$, according to the definition of $u_\varphi^\e$ in region $c_\e$) and the point on the hemisphere $\{r^2 + (x_3-1)^2 = \e^2,\ x_3\geq 1\}$ that (according to the definition of the map in region $e_\e$) will be assigned the same zenith polar angle $\varphi$. 

	Define $\vec u_\e$ in region $e_\e'$ through its spherical coordinates
	\begin{align*}
  	 u_\varphi^\e (s, \varphi) &:= \varphi,
	\\
	 u_\rho^\e (s,\varphi)&:= \Bigg ( 
		 ( \cos \varphi + 2\e^\gamma )^3 
		+ \left [ \frac{3r}{\partial_r \big ( -\cos f_\e(r) \big ) }\right]_{r=g(\varphi)}
 + 3\int_{\sigma=0}^s h_\e(\sigma, \varphi)\dd\sigma
	\Bigg )^{\frac{1}{3}}
	\end{align*}
	with $h_\e(s,\varphi)$ defined as in \eqref{eq:def_h}.
	Note that, with this definition, $\vec u_\e$ is continuous 
at the interface $x_3=1$, $0\leq r\leq \e$, between this region and region $c_\e$.

    \subsubsection{Region $e_\e$}
   Region $e_\e$ is given by
   \[ e_\e= \{ \vec e_3+ \rho \sin \varphi \vec e_r(\theta) + \rho \cos \varphi \vec e_3 : \ \e < \rho< 1, \ 0\leq \varphi \leq \frac{\pi}{2}\} . \]
	We define $\vec u_\e$ in region $e_\e$ through its spherical coordinates
	\begin{align*}
&	u_\varphi^\e (\rho, \varphi) := \varphi,
	\qquad
	 u_\rho^\e (\rho ,\varphi):= \frac{1-\rho}{1-\e} u_\rho^\e(\e,\varphi) + \frac{\rho-\e}{1-\e} (2\cos\varphi + 6\e^{\gamma}),
	\\ &	u_\rho^\e(\e,\varphi)=
\Bigg ( 
		 ( \cos \varphi + 2\e^\gamma )^3 
		+ \left [ \frac{3r}{\partial_r \big ( -\cos f(r) \big ) }\right]_{r=g(\varphi)}
 + 3\int_{\sigma=0}^1 h_\e(\sigma, \varphi)\dd\sigma
	\Bigg )^{\frac{1}{3}},
	\end{align*}
	the function
	$h(\sigma,\varphi)$ being defined in \eqref{eq:def_h}.
	Note that, with this definition, $\vec u_\e$ is continuous 
at the interface $x_1^2 +x_2^2 + (x_3-1)^2=\e^2$, $x_3\geq 1$
	with region $e_\e'$.
	Also, from the relation $(a^3 +b^3)^\frac{1}{3}\leq a+b$, valid for \( a,b\geq 0\), and Lemma \ref{le:estimatesUrhoRegionEprime} it can be seen that 
	\begin{align}
  \label{eq:refinedBoundUrhoRegionE}
	 u_\rho^\e(\e,\varphi) \leq (\cos\varphi +2\e^\gamma) + 4\e^\frac{1}{3}
	< \cos\varphi + 6\e^{\gamma},
	\end{align}
	hence $u_\rho^\e(\cdot, \varphi)$ is an affine interpolation between $u_\rho^\e(\e, \varphi)$ and a value (at $\rho=1$) that is strictly larger (even when $\cos\varphi=0$).

  \subsubsection{Region $f$}
This region can be described by 
\[f=\{ \vec e_3 + \rho \sin \varphi \vec e_r(\theta)+ \rho \cos \varphi \vec e_3 : 1\leq \rho, \ 0 \leq \varphi < \frac{\pi}{2}\}\cap B(\vec 0,3). \]

Let $u_\rho(\rho, \varphi)$, $u_\varphi(\rho,\varphi)$ be the spherical coordinates for the limit Conti--De Lellis map. Recall that in region $f$ this map is not uniquely determined; nothing is imposed on the sequence producing $\vec u$ apart from the image region, the continuity across the interface with region $e$, and the bi-Lipschitz regularity.
Our aim is to prove the upper bound of Theorem \ref{th:upper_bound}
regardless of the specific definition chosen for the limit map $\vec u$ in region $f$.

Define $\vec u_\e$ through its spherical coordinates
	\begin{align*}
	u_\varphi^\e (\rho, \varphi) := u_\varphi(\rho, \varphi),
	\qquad
	 u_\rho^\e (\rho ,\varphi):= u_\rho(\rho,\varphi) + 6\e{\gamma}.
	\end{align*}
	Note that when $\rho=1$ the radial coordinate $$u_\rho^\e(1,\varphi)=u_\rho(1,\varphi) + 6\e^{\gamma}
	= 2\cos\varphi + 6\e^{\gamma}$$
	coincides with
	the definition given in region $e_\e$.

\subsubsection{Region $d_\e$}
Region $d_\e$ is given by 
$$
    d_\e =\{ x_1^2 +x_2^2 >\e^2,\quad 0<x_3<1\}.
$$
We use cylindrical coordinates in the domain in this region. The definition of $\vec u_\e$ must match the definition
already provided in regions $a_\e$, $b$, $c_\e$,  $e_\e$, and $f$.
\begin{itemize}
\item On the interface between $d_\e$ and $b$, the deformation $\vec u_\e$
is already prescribed as:
    $$
    r\geq 1\quad \Rightarrow\quad \vec u_\e\big ( r\,\vec e_r(\theta) \big )
    = \e^\gamma\vec e_r(\theta)+ (r-1) \frac{\vec e_r(\theta) - \vec e_3}{\sqrt{2}}.
        $$
    \item On the interface between $a_\e$ and $d_\e$, the deformation is prescribed as:
    $$
    \e \leq r\leq 1\quad \Rightarrow\quad 
    \vec u_\e \big (r\,\vec e_r(\theta)\big ) = \left( \frac{1-r}{1-\e} (2\e^\gamma) 
        + \e^\gamma \frac{r-\e}{1-\e}\right) \vec e_r(\theta).
    $$
    \item On the interface between $c_\e$ and $d_\e$, the deformation is prescribed as: 
    $$
    0\leq x_3\leq 1
    \quad \Rightarrow\quad
    \vec u_\e\big ( \e \,\vec e_r(\theta) + x_3\;\vec e_3\big) = \left( (2\e^\gamma)^3 
            + x_3\cdot \frac{3\e}{f'(\e)}\right)^{1/3} \vec e_r(\theta),
    $$
    where
    $$
    f'(\e)= 2 - \frac{\e^4}{\e^4 +\e^2} - \frac{\e -\alpha_\e}{\e} = 2+O(\e).
    $$
    \item On the interface between $e_\e$ and $d_\e$, the deformation is: 
	$$
		\e\leq r\leq 1\ \Rightarrow\ 
		\vec u_\e\big (\vec e_3 + r\,\vec e_r(\theta)\big ) = \frac{1-r}{1-\e} \eta_\e + \frac{r-\e}{1-\e} \cdot 6\e^{\gamma},
	$$
	with 
	\begin{equation*}
\eta_\e:=u_\rho^\e(\e,\frac{\pi}{2})= \Bigg ( (2\e^\gamma)^3  + \frac{3r}{\sin f_\e(r)f'_\e(r)}\Bigg |_{r=\e}\Bigg )^{\frac{1}{3}}
	 = \Bigg ( 
		(2\e^\gamma)^3
	 + \frac{3\e}{f'_\e(\e)}
	 \Bigg )^{\frac{1}{3}}.
	\end{equation*}
	This relevant quantity $\eta_\e$ is of order $\e^{\gamma}$ and we recall that we chose \(\gamma\leq 1/3\).

	\item On the interface with $f$, the deformation is prescribed to be
	$$
	 r\geq 1
	 \quad \Rightarrow \quad
	 \vec u_\e\big (\vec e_3 + r \,\vec e_r(\theta)\big )
		= \big (u_\rho(r, \frac{\pi}{2}) + 6\e^{\gamma}\big )\,\vec e_r(\theta).
	$$
\end{itemize}

The map shall be defined by composing three different auxiliary transformations. The first one is given by 
	\begin{equation}
 \label{eq:rPrimeRegionD}
		\hat r=\begin{cases} \displaystyle \frac{(r-\e)r}{\e^{2\gamma}-\e}, & \e < r\leq \e^{2\gamma},\\ r, & r\geq \e^{2\gamma}.
		\end{cases}
	\end{equation}
After this transformation the radial distance lies in the same interval $(0,+\infty)$ for all $\e$. Note also that $\hat r=\e^{2\gamma}$ when $r=\e^{2\gamma}$, so that $\hat r$ is continuous as a function of $r$. The decisions to take it quadratic in $r$, and to separately study the range $\e\leq r\leq \e^{2\gamma}$,  are to control the determinants, as will be seen in Section \ref{se:energyD}. 
    The second transformation
	is the fixed axisymmetric bi-Lipschitz map $\vec g$ used in section \(d\); see Section \ref{se:LimitMap}.
    The third transformation is 
	\begin{align}
 \label{eq:wEps}
		\vec w_\e\big ( s\,\vec e_r(\theta) + z\,\vec e_3\big ) = w_r^\e(s,z)\,\vec e_r(\theta) + w_3^\e(s,z)\, \vec e_3
	\end{align}
	given by
	\begin{equation}
	 \begin{aligned}
		w_r^\e(s,z) &= \omega_\e(z) + s\,\sin \big ( \varphi(z)\big ),
	\label{eq:defWr}
		\\
		w_3^\e(s, z) &= -s\,\cos \big ( \varphi(z)\big ),
	 \end{aligned}
	\qquad \text{with}\quad
\varphi(z):= \frac{\pi}{4} (1+\frac{z}{3}).	
	\end{equation}
Set 
    \begin{equation*}
    \vec u_\e \big ( r\vec e_r(\theta) + x_3 \vec e_3\big ) =
        \vec w_\e \Big ( \vec g(\hat r\vec e_r(\theta) + x_3\,\vec e_3)\Big )
		=w_r^\e(s,z) \vec e_r(\theta) + w_3^\e(s,z)\, \vec e_3,
    \end{equation*}
	where $s=s(\hat r,x_3)$ and $z=z(\hat r,x_3)$
	are the cylindrical coordinates of $\vec g(\hat r\vec e_r(\theta) + x_3\vec e_3)$
	and $\hat r=\hat r(r)$ is that of \eqref{eq:rPrimeRegionD}. 
The function $\omega_\e$ in \eqref{eq:defWr} is chosen such that $\vec u_\e$ takes the prescribed values on the interfaces with $b$, $a_\e$, $c_\e$, $e_\e$, and $f$.
        To be precise, since $\vec g$ is affine in the segments joining the points $A'(\hat r=1, x_3=0)$, $B'(\hat r=0, x_3=0)$, $C'(\hat r=0, x_3=1)$, $D'(\hat r=1, x_3=1)$, then 
        $$\omega_\e(0)=\e^\gamma,\quad \omega_\e(1)=2\e^\gamma,
        \quad \omega_\e(2)=\eta_\e,
        \quad \omega_\e(3)=6\e^{\gamma},$$
        and
        $$
            \omega_\e (\xi) = \begin{cases}
            \xi (2\e^\gamma) + (1-\xi)\e^\gamma, & 0\leq \xi\leq 1,
            \\
            \left( (2\e^\gamma)^3 + (\xi-1)\cdot \displaystyle \frac{3\e}{f'(\e)} \right)^{1/3}, & 
            1\leq \xi\leq 2,
            \\
            (\xi-2)\eta_\e + (3-\e)\cdot (6\e^{\gamma}),& 2\leq \xi\leq 3.
	\end{cases}
        $$
	Note also that, since $s(\hat r, 1)=u_\rho(\hat r,\frac{\pi}{2})$ by \eqref{eq:rhoRPrime-1}
	and $\hat r=r$ when $r\geq 1$ by \eqref{eq:rPrimeRegionD},
	then 
	\begin{align*}
	 \vec u_\e(r\,\vec e_r(\theta)+x_3\vec e_3)= (6\e^{\gamma} + u_\rho(r,\frac{\pi}{2}))\vec e_r(\theta),
\quad r\geq 1, \quad x_3=1 ,
	\end{align*}
	as desired (so that $\vec u$ is continuous at the interface with $f$).
	Analogously,
	\begin{equation*}
	 \vec u_\e(r\,\vec e_r(\theta)+x_3\vec e_3)= (\e^{\gamma} + \frac{r-1}{\sqrt{2}})\vec e_r(\theta) - \frac{r-1}{\sqrt{2}}\vec e_3,
\quad r\geq 1, \quad x_3=0,
	\end{equation*}
	as desired for continuity across the interface with $b$.

        The deformation $\vec u_\e$ depends on $\e$ through $\omega_\e(\xi)$ and 
        through $\hat r=\frac{(r-\e)r}{\e^{2\gamma}-\e}$ for $\e\leq  r\leq \e^{2\gamma}$.

\subsection{Computing the limit of the energies of the approximating sequence}

In this section we compute the limit of the energy of our approximating sequence. We divide our analysis into the several different regions.

\subsubsection{Extra energy in $c_\e$ is $2\pi$.}

The key estimate is that of the energy in this region, since it is where the singular term of the energy completely originates.
Since $\vec u_\e$ maps all discs $B^2(\vec 0,\e)\times \{x_3\}$ onto essentially (see Fig.~\ref{fig:bubbling-c}) the sphere $S\big ( (0,0,\frac{1}{2}), \frac{1}{2}\big )$, it follows that for every fixed $0<x_3<1$,
\[
 \int_{B^2(\vec 0,\e)\times \{x_3\}} |\cof D\vec u_\e| \dd\mathcal H^2 \approx \mathcal H^2\Big ( S\big ( (0,0,\frac{1}{2}), \frac{1}{2}\big ) \Big ) = 4\pi \cdot (\frac{1}{2})^2 = 2\pi.
\]
By integrating with respect to $x_3$, recalling that the normal contraction $\frac{\partial \vec u_\e}{x_3}$ is negligible (see Fig.~\ref{fig:bubbling-c}),
and using that
\[
 |D\vec u_\e|^2 = \Big |\frac{\partial \vec u_\e}{\partial x_1}\Big |^2
 +
 \Big |\frac{\partial \vec u_\e}{\partial x_2}\Big |^2
 +
 \Big |\frac{\partial \vec u_\e}{\partial x_3}\Big |^2
 \approx
 \Big |\frac{\partial \vec u_\e}{\partial x_1}\Big |^2
 +
 \Big |\frac{\partial \vec u_\e}{\partial x_2}\Big |^2
 \geq 2\Big | \frac{\partial \vec u_\e}{\partial x_1} \wedge
 \frac{\partial \vec u_\e}{\partial x_2} \Big |,
\]
it follows that the Dirichlet energy $\int |D\vec u_\e|^2 \dd\vec x$ cannot be small in $c_\e$.

\begin{proposition}
    Let $\vec u_\e$ be as in \eqref{eq:def_u_eps}. Then
    \[
            \lim_{\e\to 0} \int_{c_\e} [| D \vec u_\e|^2+H(\det D \vec u_\e) ] \dd \vec x = 2\pi.
    \]
\end{proposition}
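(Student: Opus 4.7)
My plan is to split the integral into its determinant part and its Dirichlet part, handle the first one in a single line, and then identify the bulk of the Dirichlet energy as an asymptotic conformal bubbling computation giving exactly $2\pi$.

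The determinant part is almost trivial because of the way \eqref{eq:def-u_rho} was built. The text already verifies, using \eqref{eq:det_spherical_cylindrical} and the fact that $u_\varphi^\eps=f_\eps(r)$ is independent of $x_3$, that $\det D\vec u_\eps\equiv 1$ throughout $c_\eps$. Convexity of $H$ on $(0,\infty)$ gives continuity there, hence $H(1)<\infty$, and therefore
$$\int_{c_\eps} H(\det D\vec u_\eps)\,\dd\vec x = H(1)\,|c_\eps| = H(1)\pi\eps^2 \longrightarrow 0.$$

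For the Dirichlet part, I would first write out $|D\vec u_\eps|^2$ in cylindrical source / spherical target coordinates using Appendix \ref{sec:Appendix_B}. Since $u_\theta^\eps=\theta$ and $u_\varphi^\eps=f_\eps(r)$ depends only on $r$, it reduces to
$$|D\vec u_\eps|^2 = (\p_r u_\rho^\eps)^2 + (\p_{x_3} u_\rho^\eps)^2 + (u_\rho^\eps)^2 (f_\eps'(r))^2 + \frac{(u_\rho^\eps)^2\sin^2 f_\eps(r)}{r^2}.$$
The crucial structural fact is that the principal part $\arctan(r/\eps^2)$ of $f_\eps$ satisfies the conformality identity $r f_\eps'(r)=\cos f_\eps(r)\sin f_\eps(r)$ exactly, the linear correction $\alpha_\eps r/\eps$ contributing only an $O(\eps)$ perturbation. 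Pretending $u_\rho^\eps\equiv\cos f_\eps$ (the polar equation of the bubble $S((0,0,\tfrac12),\tfrac12)$), the first and third terms collapse to $(f_\eps')^2$, which conformality equates to $\cos^2 f_\eps\sin^2 f_\eps/r^2$, matching the fourth; the $r\,\dd r$ integral becomes $\int_0^\eps 2 f_\eps'\cdot rf_\eps'\,\dd r = 2\int_0^{\pi/2}\cos\varphi\sin\varphi\,\dd\varphi = 1$ after $\varphi=f_\eps(r)$, and multiplication by $2\pi$ (for $\theta$) and $1$ (for $x_3$) yields precisely $2\pi$. Geometrically this is twice the surface area $\pi$ of the bubble, stacked over the unit-height cylinder.

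To close the argument I would show that replacing the idealized $\cos f_\eps$ by the genuine $u_\rho^\eps=(A^3+x_3B)^{1/3}$ with $A=\cos f_\eps+2\eps^\gamma$ and $B=3r/(\sin f_\eps f_\eps')$ introduces only $o(1)$ error. The incompressibility identity $\p_{x_3}((u_\rho^\eps)^3)=B$ gives the clean formula $\p_{x_3}u_\rho^\eps=B/(3(u_\rho^\eps)^2)$; the lower bound $u_\rho^\eps\geq A\geq 2\eps^\gamma$ combined with $\gamma\leq 1/3$ keeps this quantity bounded enough for its $L^2$ contribution to vanish. The $O(\eps^\gamma)$ deviation of $A$ from $\cos f_\eps$ and the $O(\eps)$ deviation of $f_\eps$ from exact stereographic conformality are absorbed by similar bookkeeping in the other three terms.

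The main technical obstacle I foresee is the asymptotic analysis near $r=\eps$, where $\cos f_\eps\to 0$ and the perturbation $2\eps^\gamma$ becomes essential to prevent the naive limit $u_\rho=\cos f_\eps$ from degenerating and to keep $(\p_{x_3}u_\rho^\eps)^2=B^2/(9(u_\rho^\eps)^4)$ under control. Making the error estimates explicit there requires tracking several algebraic identities through the precise forms of $f_\eps$ and $u_\rho^\eps$, but the constraint $\gamma\leq 1/3$ built into the construction is exactly the leverage that turns each error into an $o(1)$ contribution, producing the claimed limit $2\pi$.
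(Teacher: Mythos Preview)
Your proposal is correct and follows essentially the same approach as the paper: handle the determinant part via $\det D\vec u_\eps\equiv 1$, expand $|D\vec u_\eps|^2$ in cylindrical--spherical coordinates, identify the leading contribution as the conformal bubbling integral $\int_0^\eps 2r(f_\eps')^2\,\dd r\to 1$, and control the deviations of $u_\rho^\eps$ from $\cos f_\eps$ using $u_\rho^\eps\geq 2\eps^\gamma$ with $\gamma\leq 1/3$. The paper organizes the error analysis into three separate lemmas (one for each group of terms in the Dirichlet density), whereas you frame it as a single perturbation-from-conformality argument, but the underlying estimates are the same.
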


\begin{proof}
First we notice that in the region \(c_\e\) we have that \( \det D\vec u_\e=1\). Hence \( \int_{c_\e} H(\det \vec u_\e )=H(1)|c_\e| \rightarrow 0\) as \(\e\) tends to zero. Then, using \eqref{eq:Dirichlet_energy_cyl_coord},we  observe that the above is equivalent
to proving that
\begin{equation}
        \label{eq:strategy-c}
   \lim_{\e \rightarrow 0} \int_{x_3=0}^1 \int_{r=0}^{\varepsilon} \Big [ 
    \underbrace{r|\partial_r u^\e_\rho|^2 + r|u^\e_\rho\partial_r u^\e_\varphi|^2}_{:=I}  
    + \underbrace{\frac{1}{r}|u^\e_\rho \sin u^\e_\varphi|^2}_{:=II}
        + \underbrace{r|\partial_{x_3} u^\e_\rho|^2}_{:=III} \Big ] \dd r \dd x_3
        =1.
\end{equation}

We remark that  our map satisfies  \(\p_{x_3} \vec u^\e \approx 0\) and \(|\p_r u_\rho^\e|^2+|u^\e_\rho\p_ru^\e_\varphi|^2\approx|u^\e_\rho \sin u^\e_\varphi|^2/r\) which is due to the conformality of \(\vec u_\e\) in the planes orthogonal to \(\vec e_3\). Half of the energy will come from $I$ and half from $II$. The contribution of $III$, as will be shown, is negligible.  More precisely, the claim follows by combining \eqref{eq:strategy-c}
with Lemmas \ref{le:partI},
\ref{le:partII}, and 
\ref{le:partIII}.
\end{proof}

\begin{lemma}
        \label{le:partI}
  Let $u^{\e}_\rho$ and $u^{\e}_\varphi$ be defined as in \eqref{eq:def-u_rho}. Then 
  \begin{align*}
    \lim_{\e\to0}
    \int_{x_3=0}^1 \int_{r=0}^{\varepsilon} r|\partial_r u^{\e}_\rho|^2 + r|u^{\e}_\rho\partial_r u^{\e}_\varphi|^2\dd r \dd x_3
        =\frac{1}{2}.
  \end{align*}
\end{lemma}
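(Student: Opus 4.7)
The plan is to exploit the (asymptotic) conformality of the construction in planes perpendicular to the symmetry axis: both terms of the integrand should collapse to $r(f_\varepsilon'(r))^2$ up to negligible corrections, and the integral of this leading-order density will yield exactly $\tfrac12$.

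First I would compute the leading-order behaviour of $u_\rho^\eps$ and its derivatives. Setting $A(r):=(\cos f_\eps(r)+2\eps^\gamma)^3$ and $B(r):=\tfrac{3r}{\partial_r(-\cos f_\eps(r))}=\tfrac{3r}{\sin f_\eps(r)\,f_\eps'(r)}$, we have $(u_\rho^\eps)^3=A+x_3B$. Differentiating in $r$ gives
\begin{equation*}
\partial_r u_\rho^\eps=\frac{-(\cos f_\eps+2\eps^\gamma)^2\sin f_\eps\cdot f_\eps'}{(u_\rho^\eps)^2}+\frac{x_3\,B'(r)}{3(u_\rho^\eps)^2}.
\end{equation*}
Since $u_\rho^\eps\ge\cos f_\eps+2\eps^\gamma\ge\eps^\gamma$ and, by direct inspection of $f_\eps$, one has $B(r)=O(\eps)$ uniformly (e.g.\ near $r=0$, $\sin f_\eps\sim r/\eps^2$ and $f_\eps'\sim1/\eps^2$, while near $r=\eps$, $\sin f_\eps\sim1$ and $f_\eps'\sim2$), the ratio $x_3B/(u_\rho^\eps)^3$ is $o(1)$ uniformly. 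Hence $u_\rho^\eps=\cos f_\eps(r)+O(\eps^\gamma)$ and $\partial_r u_\rho^\eps=-\sin f_\eps(r)\,f_\eps'(r)+R_\eps(r,x_3)$ where I would show $\int_0^1\!\int_0^\eps r|R_\eps|^2\,dr\,dx_3\to0$ (the tricky bounds are on $B'(r)$ but its singular contributions are tamed by the factor $1/(u_\rho^\eps)^2\lesssim\eps^{-2\gamma}$ combined with the smallness of $B'$ in $L^2(r\,dr)$).

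Given these asymptotics, and using $\partial_r u_\varphi^\eps=f_\eps'(r)$, the integrand reduces to
\begin{equation*}
r|\partial_r u_\rho^\eps|^2+r|u_\rho^\eps\partial_r u_\varphi^\eps|^2
= r\sin^2 f_\eps(r)(f_\eps'(r))^2+r\cos^2 f_\eps(r)(f_\eps'(r))^2+o(1)
= r(f_\eps'(r))^2+o(1),
\end{equation*}
where the $o(1)$ is in $L^1([0,1]\times[0,\eps];dr\,dx_3)$. It then remains to compute $\lim_{\eps\to0}\int_0^\eps r(f_\eps'(r))^2\,dr$. Using $f_\eps'(r)=\tfrac{\eps^2}{\eps^4+r^2}+\tfrac{\alpha_\eps}{\eps}$ and $\alpha_\eps/\eps=O(1)$, the cross term and the linear-term square contribute $O(\eps^2)$, while the main piece
\begin{equation*}
\int_0^\eps\frac{r\,\eps^4}{(\eps^4+r^2)^2}\,dr
=\frac{\eps^4}{2}\left[\frac{1}{\eps^4}-\frac{1}{\eps^4+\eps^2}\right]
=\frac{1}{2}-\frac{\eps^2}{2(\eps^2+1)}\;\longrightarrow\;\frac{1}{2}.
\end{equation*}
Multiplying by the $\int_0^1 dx_3$ gives the claimed limit $\tfrac12$.

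The main obstacle is the error control in the second paragraph: the $x_3$-dependent correction in $u_\rho^\eps$ and the $2\eps^\gamma$ shift enter both in the numerator (through derivatives of $B$) and in the denominator $(u_\rho^\eps)^2$, which can be as small as $\eps^{2\gamma}$ near $r=\eps$. The careful estimate therefore needs the explicit scaling $0<\gamma\le 1/3$ to ensure that $\eps^\gamma\cdot\eps^{-2\gamma}\cdot\|B'\|$-type quantities, weighted by $r$, still integrate to zero. This is precisely where the choice of the exponent $\gamma$ and the explicit form \eqref{eq:def_stereo} of $f_\eps$ are decisive, and I would isolate these estimates as an auxiliary lemma (presumably one of the technical lemmas deferred to the first appendix).
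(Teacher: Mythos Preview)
Your proposal is correct and follows essentially the same route as the paper: reduce both terms to $r\sin^2 f_\eps(f_\eps')^2$ and $r\cos^2 f_\eps(f_\eps')^2$ respectively, combine to $r(f_\eps')^2$, and invoke the direct computation $\int_0^\eps r(f_\eps')^2\,dr\to\tfrac12$ (the paper's Lemma~\ref{le:energy_stereo}). The paper organizes the error control into two separate claims, using the identity $a^2-b^2=2b(a-b)+(a-b)^2$ and the pointwise bound $|\partial_r u_\rho - \partial_r(\cos f)|\le C\eps^{-2\gamma}$ (coming from $|B'|\le C$ and $u_\rho\ge 2\eps^\gamma$), which then integrates against the weight $r$ over $[0,\eps]$ to give $O(\eps^{2-4\gamma})\to 0$; this is exactly the mechanism you anticipate in your final paragraph.
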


The proof consists in establishing $
    \partial_r u^{\e}_\rho
    \approx \partial_r \big (\cos f_\e(r)\big ), \ 
    u_\rho \partial_r u_\varphi^\e  
    \approx \big (\cos f_\e(r) \big ) f'_\e(r),
$
where \(f_\e \) is defined in \eqref{eq:def_stereo}, and in using the following result.

\begin{lemma}
    \label{le:energy_stereo}
    Let $f_\e$ be defined as in \eqref{eq:def_stereo}. Then 
 \[
  \lim_{\e\to0}
    \int_{r=0}^{\varepsilon} rf_\e'(r)^2 \dd r = \frac{1}{2}.
 \]
\end{lemma}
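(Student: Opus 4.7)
The plan is to compute the integral directly by substituting the explicit form of $f_\eps'$ and expanding the square. From the definition \eqref{eq:def_stereo},
\begin{equation*}
  f_\eps'(r) = \frac{1/\eps^2}{1+(r/\eps^2)^2} + \frac{\alpha_\eps}{\eps}
  = \frac{\eps^2}{\eps^4+r^2} + \frac{\alpha_\eps}{\eps}.
\end{equation*}
Squaring gives three terms, so I would write
\begin{equation*}
  \int_0^\eps r\,f_\eps'(r)^2\dd r
  = \underbrace{\int_0^\eps \frac{r\,\eps^4}{(\eps^4+r^2)^2}\dd r}_{=:I_\eps}
  + \underbrace{\frac{2\alpha_\eps}{\eps}\int_0^\eps \frac{r\,\eps^2}{\eps^4+r^2}\dd r}_{=:II_\eps}
  + \underbrace{\frac{\alpha_\eps^2}{\eps^2}\int_0^\eps r\dd r}_{=:III_\eps}.
\end{equation*}

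The main term is $I_\eps$: setting $u=\eps^4+r^2$ one computes
\begin{equation*}
  I_\eps = \frac{\eps^4}{2}\Bigl(\frac{1}{\eps^4}-\frac{1}{\eps^4+\eps^2}\Bigr)
  = \frac{1}{2(1+\eps^2)} \xrightarrow[\eps\to 0]{} \frac{1}{2}.
\end{equation*}
This is precisely the energy of the stereographic projection $\pi_S^{-1}(\cdot/\eps^2)$ from the disk $\{r<\eps\}$ onto (most of) the sphere, which accounts for the entirety of the limiting value $\tfrac{1}{2}$; the other two terms, coming from the linear correction $\alpha_\eps r/\eps$, must be shown to be negligible.

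For $II_\eps$, a direct evaluation gives
\begin{equation*}
  II_\eps = \alpha_\eps\,\eps\,\ln\!\Bigl(1+\tfrac{1}{\eps^2}\Bigr),
\end{equation*}
and since $\alpha_\eps = \arctan(\eps)\sim \eps$ as $\eps\to 0$, we have $II_\eps = O(\eps^2|\ln\eps|)\to 0$. For $III_\eps$, one gets $III_\eps = \alpha_\eps^2/2 \to 0$. Combining the three estimates yields the claim. No real obstacle is expected here: the computation is entirely explicit, and the only thing to keep in mind is that the correction term $\alpha_\eps r/\eps$ in the definition of $f_\eps$ (which ensures $f_\eps(\eps)=\pi/2$) contributes only lower-order terms to the Dirichlet-type energy, so that the full $\tfrac{1}{2}$ comes from the pure stereographic piece.
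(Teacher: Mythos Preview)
Your proof is correct and follows essentially the same approach as the paper: both expand the square of $f_\eps'$ and evaluate the resulting three integrals explicitly, showing the stereographic term gives $\tfrac{1}{2}$ while the cross term and the square of the linear correction are $O(\eps^2|\ln\eps|)$ and $O(\eps^2)$ respectively. The only cosmetic difference is that the paper performs a single substitution $u=1+r^2/\eps^4$ before expanding, whereas you expand first and integrate each piece separately.
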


\begin{proof}
  By using that $\alpha_\e=\arctan(\e) \leq \e$ we can write that
    \begin{align*}
    \int_{r=0}^{\varepsilon} rf'_\e(r)^2 \dd r
    &= \int_{r=0}^{\e} \left[\frac{1}{1+\frac{r^2}{\e^4}} \e^{-2} + \frac{\alpha_\e}{\e} \right]^2 r\dd r 
    = \int_{u=1}^{1+\e^{-2}} \left[ \frac{1}{u}\e^{-2} 
        + \frac{\alpha_\e}{\e}\right]^2 \frac{\e^4}{2} \dd u
    \\ &= \frac{1}{2}\left[ \int_{u=1}^{1+\e^{-2}}
    \frac{1}{u^2} + 2\frac{\e \alpha_\e}{u}  + \alpha_\e^2\e^2 \dd u \right] = \frac{1}{2} + O(\e^2 \ln|\e|) .
  \end{align*}

\end{proof}

For notational simplicity we will drop the subscript and superscript \(\e\) in the proofs of the following results. In the proof of Lemma \ref{le:partI}, use shall be made of the following expressions. First,
\begin{equation*}
 \partial_r (u_\rho)^3 = 3(\cos f(r) + 2\e^\gamma)^2 \partial_r \big ( \cos f(r)\big )
 + x_3 \partial_r \left[ \frac{3r}{\partial_r (-\cos f(r))}\right] .
\end{equation*}
Second, since \(
 \partial_r u_\rho = \frac{\partial_r (u_\rho^3)}{3u_\rho^2},
\)
it follows that
\begin{align}
    \label{eq:p_r-u_rho1}
 \partial_r u_\rho = \frac{(\cos f(r) + 2\e^\gamma)^2}{u_\rho^2} \partial_r \big ( \cos f(r)\big)  + x_3 \frac{
 \displaystyle \partial_r \left[ \frac{r}{\partial_r
  (-\cos f(r))}\right]}{u_\rho^2}.
\end{align}

\begin{proof}[Proof of Lemma \ref{le:partI}]
 
 \phantom{,}
 
 \underline{Claim 1:} 
  \begin{align*}
    \lim_{\e\to0}
    \int_{x_3=0}^1 \int_{r=0}^{\varepsilon} r|\partial_r u_\rho|^2 \,\dd r \dd x_3
    =
     \lim_{\e\to0}
    \int_{x_3=0}^1 \int_{r=0}^{\varepsilon} r(\sin^2 f(r))(f'(r))^2\dd r \dd x_3 .
  \end{align*}
  In order to prove this claim, we begin by applying the relation $a^2-b^2 = 2b(a-b) + (a-b)^2$:
  \begin{multline}
   \int_{x_3=0}^1 \int_{r=0}^{\varepsilon} r|\partial_r u_\rho|^2
   - r(\sin^2 f(r))(f'(r))^2\,\dd r \dd x_3
   \\
   = 2\int_{x_3=0}^1 \int_{r=0}^{\varepsilon} r\partial_r\big (\cos f(r)\big )  \Big ( 
   \partial_r u_\rho -\partial_r\big ( \cos f(r) \big ) \Big ) \dd r \dd x_3
   \\
   + \int_{x_3=0}^1 \int_{r=0}^{\varepsilon} r\Big ( \partial_r u_\rho -\partial_r\big ( \cos f(r)\big ) \Big )^2 \dd r \dd x_3.
\label{eq:int_difference1}
  \end{multline}
  
  From \eqref{eq:p_r-u_rho1} it follows that
    \begin{equation}
   \partial_ru_\rho -\partial_r\big ( \cos f(r)\big )
   = \partial_r \big ( - \cos f(r)\big ) \left[ 1- \frac{(\cos f(r) + 2\e^\gamma)^2}{u_\rho^2} \right]
    + x_3 u_\rho^{-2} \partial_r \left( \frac{r}{\partial_r (-\cos f(r))}\right).
\label{eq:error-p_r-u_rho}
  \end{equation}

  The first term can be estimated via the relation
  $$
    a^2-b^2 = (a+b)(a-b) = (a+b)\frac{a^3-b^3}{a^2+ab+b^2},
  $$
yielding
  \begin{align}
\label{eq:u_rho2-cos2}
   u_\rho^2 - (\cos f(r) + 2\e^\gamma)^2 
   =\Big  (u_\rho + (\cos f(r) + 2\e^\gamma)\Big )
   \frac{ x_3 \cdot \frac{3r}{\partial_r (-\cos f(r))}}{
   u_\rho^2 + u_\rho (\cos f(r)+2\e^\gamma)+ (\cos f(r)+2\e^\gamma)^2}.
  \end{align}
  A first conclusion is that the expression on the left-hand side of \eqref{eq:u_rho2-cos2}
  is positive since $0\leq f(r)<\frac{\pi}{2}$ and $-\cos f(r)$ is increasing.
  Second, since \( \p_r(-\cos f(r)) >0\), it holds that
  \begin{align}
        \label{eq:u_rho-larger-eps_gamma}
    \cos f(r) + 2\e^\gamma \leq u_\rho.
  \end{align}
  Therefore, bounding $u_\rho^2 + u_\rho (\cos f(r)+2\e^\gamma) +(\cos f(r)+2\e^\gamma)^2$  from below by $u_\rho^2$, 
  it can be seen that
  \begin{multline}    
  |\partial_r(-\cos f(r))|
   \left |1- \frac{(\cos f(r) + 2\e^\gamma)^2}{u_\rho^2}\right |
   \\ \leq 
   \partial_r(-\cos f(r))\cdot u_\rho^{-2}\cdot (2u_\rho)\cdot u_\rho^{-2}\cdot \frac{3r}{\partial_r (-\cos f(r))}
   \leq 6ru_\rho^{-3}.
\label{eq:p_r-u_rho-cos-2}
  \end{multline}

Putting together \eqref{eq:error-p_r-u_rho}, 
\eqref{eq:p_r-u_rho-cos-2},
and Part \ref{it:p_r-complicated}) of
Lemma \ref{le:f_and_derivatives},
the following bound is obtained
(for small enough $\e$):
\begin{align*}
 |  \partial_ru_\rho -\partial_r\big ( \cos f(r)\big )|
 \leq 6ru_\rho^{-3} + 64\sqrt{2} u_\rho^{-2}
    = u_\rho^{-2} (6 r u_\rho^{-1} + 64\sqrt{2}).
\end{align*}
Thanks to \eqref{eq:u_rho-larger-eps_gamma} and by observing that
\begin{align}\label{eq:use_ofadding_eps_gamma}
 u_\rho \geq 2\e^\gamma \geq 2\e > 2r
\end{align}
we find \(
  |  \partial_ru_\rho -\partial_r\big ( \cos f(r)\big )|
 \leq 25 \e^{-2\gamma}.
\) Plugging that into 
\eqref{eq:int_difference1},
and using Part \ref{it:int-p_r-cos_f}) of Lemma \ref{le:f_and_derivatives},
the conclusion is that
\begin{equation*}
 \left | \int_{x_3=0}^1 \int_{r=0}^{\varepsilon} r|\partial_r u^{\e}_\rho|^2
   - r(\sin^2 f(r))(f'(r))^2\,\dd r \dd x_3
   \right |
\\
   \leq 
   25 \e^{-2\gamma} \cdot 12 \e^2 |\ln \e|
    + 25^2 \e^{-4\gamma} \int_0^\e r\dd r,
\end{equation*}
which vanishes as $\e\to0$ since $0<\gamma \leq \frac{1}{3} <\frac{1}{2}$.

 \underline{Claim 2:} 
  \begin{align*}
    \lim_{\e\to0}
    \int_{x_3=0}^1 \int_{r=0}^{\varepsilon} r|u_\rho^{\e}\partial_r u^{\e}_\varphi|^2 \,\dd r \dd x_3
    =
     \lim_{\e\to0}
    \int_{x_3=0}^1 \int_{r=0}^{\varepsilon} r(\cos^2 f(r))(f'(r))^2\dd r \dd x_3 .
  \end{align*}
  In order to prove this claim,
   we first observe that 
  \begin{align*}
   &
   \int_{x_3=0}^1 \int_{r=0}^{\varepsilon} r|u_\rho\partial_r u_\varphi|^2 
   - r(\cos^2 f(r))(f'(r))^2\dd r \dd x_3
   \\
   &\qquad \qquad \qquad 
   = \int_{x_3=0}^1 \int_{r=0}^{\varepsilon} rf'(r)^2(|u_\rho|^2 - \cos^2 f(r))\dd r\dd x_3
   \\
   &\qquad \qquad \qquad 
    = \int_{x_3=0}^1 \int_{r=0}^{\varepsilon} rf'(r)^2(|u_\rho|^2 - (\cos f(r)+2\e^\gamma)^2)\dd r\dd x_3
\\
   &\qquad \qquad \qquad 
    \qquad \qquad \qquad 
       + 4\e^\gamma  \int_{r=0}^{\varepsilon} rf'(r)^2 \cos f(r)\dd r 
    +4 \e^{2\gamma}
     \int_{r=0}^{\varepsilon} rf'(r)^2 \dd r.
  \end{align*}
    By Lemma \ref{le:energy_stereo},
    and considering that $0\leq \cos f(r)\leq 1$, 
    the last two terms
    are bounded by $4\e^\gamma
    +4\e^{2\gamma}$ and
    vanish as $\e$ goes to zero.
    As for the first term,
    by    
    \eqref{eq:u_rho2-cos2},
    \eqref{eq:u_rho-larger-eps_gamma},
    and Part \ref{it:p_r-cos_f})
    of Lemma \ref{le:f_and_derivatives},
    \begin{align*}
     u_\rho^2 -(\cos f(r) + 2\e^\gamma)^2 
     &\leq \frac{2u_\rho \cdot x_3\cdot 
     \displaystyle \frac{3r}{\partial_r\big(\cos f(r)\big)}}{u_\rho^2+0+0}
     \leq 6 u_\rho^{-1}\cdot 2\e^{-2}(\e^4+r^2)^{3/2}
\\
     &\leq 
     6\e^{-\gamma}\cdot \e^{-2} \cdot (2\e^2)^{3/2} = 12\sqrt{2}\e^{1-\gamma}.
    \end{align*}
    The claim follows by combining the above with Lemma \ref{le:energy_stereo}.
\medskip

    \underline{Conclusion:}
    By Claims 1 and 2, together with Lemma \ref{le:energy_stereo},
    \begin{multline*}
        \lim_{\e\to0}
    \int_{x_3=0}^1 \int_{r=0}^{\varepsilon} r|\partial_r u^{\e}_\rho|^2 + r|u^{\e}_\rho\partial_r u^{\e}_\varphi|^2\dd r \dd x_3
    \\
        =    
        \lim_{\e\to0}
    \int_{x_3=0}^1 \int_{r=0}^{\varepsilon} 
    r\sin^2 f(r)\,f'(r)^2 + 
    r\cos^2 f(r)\,f'(r)^2 \dd r = \frac{1}{2}.
    \end{multline*}

\end{proof}

\begin{lemma}
    \label{le:partII}
  Let $u^{\e}_\rho$ and $u^{\e}_\varphi$ be defined as in \eqref{eq:def-u_rho}. Then 
  \begin{align*}
    \lim_{\e\to0}
    \int_{x_3=0}^1 \int_{r=0}^{\varepsilon} \frac{1}{r}|u^{\e}_\rho \sin u^{\e}_\varphi|^2 \dd r \dd x_3
        =\frac{1}{2}.
  \end{align*}
\end{lemma}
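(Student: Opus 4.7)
The strategy I would follow is to replace $(u_\rho^\eps)^2$ by $\cos^2 f_\eps(r)$ in the integrand, so that the product $(u_\rho^\eps \sin u_\varphi^\eps)^2$ becomes the value one would compute for the ideal conformal bubble $u_\rho = \cos f_\eps,\, u_\varphi = f_\eps$, and then evaluate the resulting integral explicitly. Note that $u_\varphi^\eps = f_\eps(r)$ so $\sin u_\varphi^\eps = \sin f_\eps(r)$ exactly. Using the two estimates already established in Claim 2 of the proof of Lemma \ref{le:partI}, namely $|(u_\rho^\eps)^2 - (\cos f_\eps + 2\eps^\gamma)^2| \leq 12\sqrt{2}\,\eps^{1-\gamma}$ and $(\cos f_\eps + 2\eps^\gamma)^2 - \cos^2 f_\eps \leq 4\eps^\gamma + 4\eps^{2\gamma}$, one obtains the pointwise bound $|(u_\rho^\eps)^2 - \cos^2 f_\eps| \leq C(\eps^{1-\gamma} + \eps^\gamma) =: \eta_\eps$, independent of $x_3$ and vanishing as $\eps \to 0$.

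The main term is then $\int_0^1\!\int_0^\eps \frac{1}{r}\cos^2 f_\eps(r)\sin^2 f_\eps(r)\,dr\,dx_3$. Since the integrand is independent of $x_3$ this reduces to a $1$D integral, and $\cos^2 f_\eps \sin^2 f_\eps = \tfrac{1}{4}\sin^2(2f_\eps)$. I would first drop the linear correction in $f_\eps = \arctan(r/\eps^2) + \alpha_\eps r/\eps$ by setting $f_0(r):=\arctan(r/\eps^2)$. Since $s\mapsto \sin s \cos s$ is $1$-Lipschitz, $|\sin f_\eps \cos f_\eps - \sin f_0 \cos f_0| \leq |f_\eps - f_0| = \alpha_\eps r/\eps$, and combined with $\sin f \cos f \leq 1/2$ this gives
\begin{equation*}
|\cos^2 f_\eps \sin^2 f_\eps - \cos^2 f_0 \sin^2 f_0| \leq \alpha_\eps\, r/\eps,
\end{equation*}
which cancels the $1/r$ singularity and yields an error of order $\alpha_\eps = O(\eps)$ after integration. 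The remaining integral is explicit:
\begin{equation*}
\int_0^\eps \frac{\sin^2 f_0(r) \cos^2 f_0(r)}{r}\,dr
= \int_0^\eps \frac{r\,\eps^4}{(\eps^4 + r^2)^2}\,dr
= \frac{1}{2} - \frac{\eps^2}{2(\eps^2+1)} \xrightarrow[\eps \to 0]{} \frac{1}{2}.
\end{equation*}

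For the error coming from replacing $(u_\rho^\eps)^2$ by $\cos^2 f_\eps$, I would need the auxiliary estimate $\int_0^\eps r^{-1}\sin^2 f_\eps(r)\,dr = O(\log(1/\eps))$, proved via the dichotomy $\sin^2 f_\eps(r) \leq Cr^2/\eps^4$ on $(0,\eps^2]$ (since there $f_\eps(r)\leq r/\eps^2 + \alpha_\eps r/\eps \leq 2r/\eps^2$) and $\sin^2 f_\eps \leq 1$ on $[\eps^2,\eps]$. Multiplying by $\eta_\eps$ and integrating against $dx_3$ on $[0,1]$, the error contributes $O\big((\eps^\gamma + \eps^{1-\gamma})\log(1/\eps)\big) \to 0$ since $0 < \gamma \leq 1/3$.

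The delicate point is precisely the cancellation in the second paragraph: the naive bound $|\sin f_\eps - \sin f_0|\leq \alpha_\eps$ would produce a logarithmically divergent integral when multiplied by $1/r$, so it is essential to exploit that the correction $f_\eps - f_0 = \alpha_\eps r/\eps$ vanishes linearly at $r=0$. Everything else is a matter of combining explicit integration with standard $L^\infty$ bounds inherited from Lemma \ref{le:partI}.
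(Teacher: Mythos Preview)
Your proof is correct and follows essentially the same approach as the paper's: split off the ideal bubble term $\frac{1}{r}\cos^2 f_\eps \sin^2 f_\eps$, evaluate it explicitly to get $\tfrac12$, and kill the error using the $O(|\ln\eps|)$ bound on $\int_0^\eps r^{-1}\sin^2 f_\eps\,dr$. The only noticeable difference is that the paper replaces $u_\rho$ by $\cos f_\eps$ linearly (bounding $u_\rho-\cos f_\eps$ via the cubic identity, obtaining $O(\eps^\gamma)+O(\eps^{1-2\gamma})$), whereas you work directly with $u_\rho^2-\cos^2 f_\eps$ using the quadratic bound from Claim~2; and you explicitly justify dropping the linear correction $\alpha_\eps r/\eps$ in $f_\eps$ via the Lipschitz bound on $s\mapsto \sin s\cos s$, a step the paper passes over by citing \eqref{eq:sin_A} directly.
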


\begin{proof}
 Writing $u_\rho$ as $\cos f(r) + (u_\rho-\cos f(r))$, the integral expression becomes
 \begin{multline*}
    \int_{x_3=0}^1 \int_{r=0}^{\varepsilon} \frac{1}{r}|u_\rho \sin u_\varphi|^2 
    \dd r \dd x_3
    =
    \int_{r=0}^{\varepsilon}
    \frac{1}{r}\cos^2 f(r) \sin^2 f(r)
    \dd r 
\\
    +
    \int_{x_3=0}^1 \int_{r=0}^{\varepsilon}
    \frac{1}{r}
    \Big [2\cos f(r)\,(u_\rho-\cos f(r))
    + 
    (u_\rho-\cos f(r))^2
    \Big ]\sin ^2 f(r)
    \dd r \dd x_3 .
 \end{multline*}
 By \eqref{eq:sin_A} the first integral is given by
 \begin{align*}
  \int_{r=0}^{\varepsilon}
    \frac{1}{r}\cos^2 f(r) \sin^2 f(r)
    \dd r 
    &= \int_0^\e \frac{\e^4 r\dd r }{(\e^4+r^2)^2}
    = \frac{\e^4}{2(\e^4+r^2)}\Big |_{r=\e}^0
    \overset{\e\to0}{\longrightarrow}\frac{1}{2}.
 \end{align*}
    Regarding the second integral,
     we use the relation $(a-b)(a^2+ab+b^2)=a^3-b^3$:
  \[
  u_\rho-\cos f(r) = 2\e^\gamma + u_\rho - (\cos f(r) + 2\e^\gamma) 
  =2\e^\gamma + \frac{x_3\cdot \frac{3r}{\partial_r(-\cos f(r))}}{u_\rho^2 + u_\rho (\cos f(r) + 2\e^\gamma) + (\cos f(r) + 2\e^\gamma)^2}.
\]
 Part \ref{it:p_r-cos_f})
 of Lemma \ref{le:f_and_derivatives}
 and \eqref{eq:u_rho-larger-eps_gamma}
 yield
 \begin{align*}
  u_\rho-\cos f(r)
    \leq 2\e^\gamma + (2\e^\gamma)^{-2}\e^{-2}(\e^4+r^2)^{3/2}
    = 2\e^\gamma + \frac{\sqrt{2}}{2}\e^{1-2\gamma}.
 \end{align*}
 Hence, by using that \( \gamma \leq 1/3 <1/2\), we arrive at
 \begin{multline*}
  \left| \int_{x_3=0}^1 \int_{r=0}^{\varepsilon}
    \frac{1}{r}
    \Big [2\cos f(r)\,(u_\rho-\cos f(r))
    + 
    (u_\rho-\cos f(r))^2
    \Big ]\sin ^2 f(r)
    \dd r \dd x_3
        \right |
\\
    \leq 
    \Big [ 2\cdot \big (
    2\e^\gamma + \frac{\sqrt{2}}{2}\e^{1-2\gamma}
    \big ) 
    + 
    \big (
    2\e^\gamma + \frac{\sqrt{2}}{2}\e^{1-2\gamma}
    \big )^2
    \Big ]
    \int_{r=0}^{\varepsilon}
    \frac{1}{r}\sin ^2 f(r)
    \dd r
\\
    = \big [ O(\e^\gamma) + O(\e^{1-2\gamma})\big ] 
    \underbrace{\int_{r=0}^{\e}
    \frac{2r}{\e^4+r^2}\dd r 
    }_{=\ln (1+\e^{-2})}
    \overset{\e\to0}{\longrightarrow} 0.
 \end{multline*}
\end{proof}

\begin{lemma}
    \label{le:partIII}
  Let $u^{(\e)}_\rho$ and $u^{(\e)}_\varphi$ be defined as in \eqref{eq:def-u_rho}. Then 
  \begin{align*}
    \lim_{\e\to0}
    \int_{x_3=0}^1 \int_{r=0}^{\varepsilon} r|\partial_{x_3} u^{\e}_\rho|^2 \dd r \dd x_3
        =0.
  \end{align*}
\end{lemma}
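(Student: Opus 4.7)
The plan is to exploit the very simple $x_3$-dependence built into the definition \eqref{eq:def-u_rho}: since $(u_\rho^\eps)^3$ is affine in $x_3$, differentiating the cube gives a clean expression for $\partial_{x_3}u_\rho^\eps$, and then the lower bounds already available from previous lemmas produce a rapidly vanishing integral.

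First I would differentiate \eqref{eq:def-u_rho} to obtain
\begin{equation*}
   \partial_{x_3} u_\rho^\eps(r,x_3) = \frac{1}{3(u_\rho^\eps)^2}\cdot\frac{3r}{\partial_r\bigl(-\cos f_\eps(r)\bigr)} = \frac{r}{(u_\rho^\eps)^2\, \sin f_\eps(r)\, f_\eps'(r)}.
\end{equation*}
Squaring and multiplying by $r$,
\begin{equation*}
   r\,|\partial_{x_3}u_\rho^\eps|^2 = \frac{r^3}{(u_\rho^\eps)^4\,\bigl(\sin f_\eps(r)\, f_\eps'(r)\bigr)^2}.
\end{equation*}

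Next, for the denominator I would use the two uniform lower bounds already identified in the section. On the one hand, \eqref{eq:u_rho-larger-eps_gamma} gives $u_\rho^\eps \geq 2\eps^\gamma$, hence $(u_\rho^\eps)^4 \geq 16\,\eps^{4\gamma}$. On the other hand, a direct computation from \eqref{eq:def_stereo} yields, for $0\leq r\leq \eps$,
\begin{equation*}
   \sin f_\eps(r)\,f_\eps'(r) \geq \tfrac{1}{2}\cdot\frac{r}{\sqrt{\eps^4+r^2}}\cdot\frac{\eps^2}{\eps^4+r^2} = \frac{r\,\eps^2}{2(\eps^4+r^2)^{3/2}},
\end{equation*}
which is essentially the reciprocal of the upper bound in Part \ref{it:p_r-cos_f}) of Lemma \ref{le:f_and_derivatives} (the factor $1/2$ coming from $\cos(\alpha_\eps r/\eps) \geq 1/2$ for $\eps$ small, since $\alpha_\eps r/\eps \leq \alpha_\eps \leq \eps$).

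Combining the two estimates,
\begin{equation*}
   r\,|\partial_{x_3}u_\rho^\eps|^2 \leq \frac{r^3 \cdot 4(\eps^4+r^2)^3}{16\,\eps^{4\gamma}\,r^2\,\eps^4} = \frac{r\,(\eps^4+r^2)^3}{4\,\eps^{4+4\gamma}} \leq \frac{8\,\eps^6\,r}{4\,\eps^{4+4\gamma}} = 2\,r\,\eps^{2-4\gamma},
\end{equation*}
where in the last inequality I used $\eps^4+r^2 \leq 2\eps^2$ for $r\leq\eps$. Since the right-hand side is independent of $x_3$, integrating gives
\begin{equation*}
   \int_{x_3=0}^1 \int_{r=0}^\eps r\,|\partial_{x_3}u_\rho^\eps|^2\,\dd r\,\dd x_3 \leq 2\,\eps^{2-4\gamma}\int_0^\eps r\,\dd r = \eps^{4-4\gamma},
\end{equation*}
which tends to $0$ as $\eps\to 0$ since $\gamma\leq 1/3 < 1$. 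This is a rather generous margin, and there are no serious obstacles: the only slightly delicate point is producing a clean lower bound on $\sin f_\eps(r)\,f_\eps'(r)$ near $r=0$, where $\sin f_\eps$ vanishes, but the factor $r$ in the numerator of $\partial_{x_3}u_\rho^\eps$ compensates precisely.
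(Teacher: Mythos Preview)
Your proof is correct and follows essentially the same route as the paper: compute $\partial_{x_3}u_\rho^\eps = (u_\rho^\eps)^{-2}\,r/\partial_r(-\cos f_\eps(r))$, bound $(u_\rho^\eps)^{-2}$ via \eqref{eq:u_rho-larger-eps_gamma}, bound $r/\partial_r(-\cos f_\eps(r))$ via the lower bound in Part~\ref{it:p_r-cos_f}) of Lemma~\ref{le:f_and_derivatives}, and obtain the final estimate $\eps^{4(1-\gamma)}$. One small remark: the inequality you derive for $\sin f_\eps(r)\,f_\eps'(r)$ is precisely the \emph{lower} bound already stated in Part~\ref{it:p_r-cos_f}) of Lemma~\ref{le:f_and_derivatives} (not the reciprocal of the upper bound), so you could simply cite that directly rather than rederive it.
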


\begin{proof}
We start by observing that \(
  \partial_{x_3}u_\rho = 
  \frac{1}{3} u_\rho^{-2} \partial_{x_3} \big ( u_\rho^3\big )
  =u_\rho^{-2} \frac{r}{\partial_r\big(-\cos f(r)\big )}.
\)
 By \eqref{eq:u_rho-larger-eps_gamma} and Part \ref{it:p_r-cos_f}) of Lemma \ref{le:f_and_derivatives},
 \begin{align*}
   \int_{x_3=0}^1 \int_{r=0}^{\varepsilon} r|\partial_{x_3} u^{\e}_\rho|^2 \dd r \dd x_3
   \leq \int_{r=0}^{\varepsilon} r |(2\e^\gamma)^{-2}\cdot 2\e^{-2}(
   \underbrace{\e^4 +r^2}_{\leq 2\e^2})^{3/2}|^2 \dd r \leq
   \e^{4(1-\gamma)}
    \overset{\e\to 0}{\longrightarrow}0.
 \end{align*}
\end{proof}

In the rest of this section we will prove the following

\begin{proposition}\label{prop:convergence_energy}
Let \(\vec u_\e\) be the recovery sequence defined in Section \ref{sec:recovery_sequence}. Then
\begin{equation}\label{eq:energy_vanishes_a'_e'}
\lim_{\e \rightarrow 0} \int_{a_\e'\cup e_\e'} \left[ |D \vec u_\e|^2+H(\det D \vec u_\e) \right] \dd \vec x =0
\end{equation}
and
\begin{equation}\label{eq:convergence_energy_a_b_d_f}
\lim_{\e \rightarrow 0} \int_{a_\e\cup b_\e \cup d_\e \cup f} \left[ |D \vec u_\e|^2+H(\det D \vec u_\e) \right] \dd \vec x =\int_{a \cup b \cup d \cup f }\left[ |D \vec u|^2+H(\det D \vec u) \right] \dd \vec x.
\end{equation}
\end{proposition}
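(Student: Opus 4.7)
\emph{Strategy.} I decompose each integral according to the six pieces $a_\eps'$, $a_\eps$, $b$, $d_\eps$, $f$, $e_\eps'$ of the construction and treat the \emph{neck} regions $a_\eps'\cup e_\eps'$ separately from the \emph{bulk} regions $a_\eps\cup b\cup d_\eps\cup f$. By the incompressibility computation performed in the definition of $\vec u_\eps$, $\det D\vec u_\eps\equiv 1$ in $a_\eps'\cup e_\eps'$, so
\[
	\int_{a_\eps'\cup e_\eps'} H(\det D\vec u_\eps)\,\dd\vec x = H(1)\,\mathcal{L}^3(a_\eps'\cup e_\eps') = O(\eps^3) \xrightarrow[\eps\to 0]{} 0,
\]
and it remains to handle the Dirichlet part in the neck regions and both terms in the bulk regions.

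\emph{Dirichlet part in the neck regions.} Since $e_\eps'$ is symmetric to $a_\eps'$, I concentrate on the latter. Starting from the parametrization $\vec x(s,\theta,\varphi)$, the volume element is $\dd\vec x = \sin\varphi\,h_\eps(s,\varphi)\,\dd s\,\dd\theta\,\dd\varphi$, and the three tangential derivatives $\partial_s\vec u_\eps$, $\partial_\theta\vec u_\eps$, $\partial_\varphi\vec u_\eps$ have already been decomposed in the orthonormal spherical frame of the target. The lower bound $u_\rho^\eps\geq 2\eps^\gamma$ (the analogue in $a_\eps'$ of \eqref{eq:u_rho-larger-eps_gamma}) together with $-\tfrac{1}{3}\partial_s(u_\rho^\eps)^3 = h_\eps$ give $|\partial_s u_\rho^\eps|\leq C h_\eps\,\eps^{-2\gamma}$, while $|\partial_\theta\vec u_\eps|\leq C u_\rho^\eps\sin\varphi$ and $|\partial_\varphi\vec u_\eps|\leq C(u_\rho^\eps + |\partial_\varphi u_\rho^\eps|)$. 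After inverting the Gram matrix of the reference parametrization --- whose conditioning, once each column is rescaled by its natural $\eps$-scale, is governed by $g_\eps(\varphi)$ and $g_\eps'(\varphi)$ and can be controlled by the technical lemmas of the appendix --- and integrating, one obtains a bound of the form $\int_{a_\eps'}|D\vec u_\eps|^2\,\dd\vec x = O(\eps^{2-2\gamma})$, which tends to $0$ as $\eps\to 0$ since $\gamma\leq 1/3$.

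\emph{Convergence on the bulk regions.} On $a_\eps$, $b$ and $f$ the defining formulae for $\vec u_\eps$ differ from those of the limit map $\vec u$ by terms of size $O(\eps^\gamma)$: in $a_\eps$ by the interpolation factor $(1-\rho)/(1-\eps)\to 1-\rho$ and an additive $\eps^\gamma(\rho-\eps)/(1-\eps)$, in $b$ through $\vec u_\eps=\eps^\gamma\vecg\psi(\eps^{-\gamma}\vecg\phi_\eps)$ which converges uniformly to the limit by the bi-Lipschitz character of $\vecg\psi$, and in $f$ by the single additive term $6\eps^\gamma$, so that $D\vec u_\eps = D\vec u$ there. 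Hence $D\vec u_\eps\to D\vec u$ and $\det D\vec u_\eps\to\det D\vec u$ pointwise outside $\{\vec 0,\vec e_3\}$ in these three regions, with an $L^1$-domination provided by $|D\vec u|^2\in L^1$ and by \eqref{eq:finiteHu}; continuity of $H$ together with dominated convergence delivers the desired limit for both parts of the functional. In $d_\eps$ the map factorises as $\vec u_\eps=\vec w_\eps\circ\vec g\circ\tau_\eps$ with $\tau_\eps(\vec x)=\hat r\,\vec e_r(\theta)+x_3\vec e_3$; outside the annulus $\{\eps\leq r\leq\eps^{2\gamma}\}$ the transformation $\tau_\eps$ is the identity, and the explicit form of $\omega_\eps$ shows that $\vec w_\eps$ converges in $C^1$ on its complement, so the same dominated-convergence argument applies.

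\emph{Main obstacle.} The delicate step is the narrow annulus $\{\eps\leq r\leq\eps^{2\gamma}\}\subset d_\eps$, whose Lebesgue measure is $O(\eps^{2\gamma})\to 0$; there it suffices to show that both $|D\vec u_\eps|^2$ and $H(\det D\vec u_\eps)$ remain bounded uniformly in $\eps$. This is precisely the role of the quadratic profile $\hat r(r)=(r-\eps)r/(\eps^{2\gamma}-\eps)$: a direct computation shows that $\hat r'(r)$ and $\hat r(r)/r$ stay in a compact subinterval of $(0,+\infty)$ uniformly in $\eps$, so the chain rule $D\vec u_\eps = D\vec w_\eps\cdot D\vec g\cdot D\tau_\eps$, combined with the bi-Lipschitz nature of $\vec g$ and the $O(\eps^\gamma)$-closeness of $\omega_\eps$ to its limit, keeps $\det D\vec u_\eps$ in a compact subinterval of $(0,+\infty)$ and $|D\vec u_\eps|^2$ bounded there; coupled with the vanishing measure of the annulus, this yields vanishing of the extra contribution. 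The parallel technical point in $a_\eps'$ is to verify, via the estimates on $g_\eps$ and $g_\eps'$ of the appendix, that the Gram matrix of the parametrization is uniformly well-conditioned after the natural column rescaling, so that its inversion does not destroy the bound on $|D\vec u_\eps|^2$.
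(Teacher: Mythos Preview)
Your overall plan (split into neck regions and bulk regions, use incompressibility on the necks, use pointwise convergence plus domination on the bulk) matches the paper's, but two of the key claims you rely on are false, and a third is too vague to count as a proof.

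\textbf{The annulus in $d_\eps$.} Your assertion that $\hat r'(r)$ and $\hat r(r)/r$ stay in a compact subinterval of $(0,+\infty)$ uniformly in $\eps$ is incorrect. At $r=\eps$ one has $\hat r(\eps)/\eps=0$ and $\hat r'(\eps)=\eps/(\eps^{2\gamma}-\eps)\sim \eps^{1-2\gamma}\to 0$, so $\det D\tau_\eps$ vanishes there and the chain-rule bound you invoke gives no lower bound on $\det D\vec u_\eps$. The paper's argument is different: it computes $\det D\vec u_\eps$ explicitly via \eqref{eq:det3w}--\eqref{eq:detDuDw} and shows that the smallness of $\hat r'$ and $\hat r/r$ is compensated by the factors $w_r^\eps\geq 2\eps^\gamma$ and $\omega_\eps'(z)\cos\varphi\geq C\eps^\gamma$, yielding $\det D\vec u_\eps\geq C>0$ on the annulus. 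This compensation is precisely why the quadratic profile and the choice $\gamma\leq 1/3$ were made, and it does not follow from the chain rule alone.

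\textbf{Domination for the $H$ term on $a_\eps$, $b$, $e_\eps$, $f$.} Writing ``an $L^1$-domination provided by \eqref{eq:finiteHu}'' skips the main difficulty. In region $a_\eps$, for instance, $\det D\vec u_\eps$ is \emph{not} pointwise dominated by $\det D\vec u$ (it can be larger, up to $2\rho^{-2}$), and $H(\det D\vec u)$ alone does not dominate $H(\det D\vec u_\eps)$. The paper first extracts from \eqref{eq:finiteHu} the integrability conditions $\int_1^\infty H(s)s^{-5/2}\,\dd s<\infty$ and $\int_0^1 H(t^2)\,\dd t<\infty$ (Lemma~\ref{le:integrabilityHregionA} and its analogues), then proves two-sided bounds $\det D\vec u\leq \det D\vec u_\eps\leq C\rho^{-2}$ (or similar, region by region), and only then obtains an integrable majorant $H(\det D\vec u)+H(2\rho^{-2})+C$. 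None of this is automatic. Also, your remark that ``$D\vec u_\eps=D\vec u$'' in region $f$ is not right: $u_\rho^\eps=u_\rho+6\eps^\gamma$ changes the Jacobian by the factor $(u_\rho+6\eps^\gamma)^2/u_\rho^2$, which the paper bounds separately on $\{\varphi<\pi/4\}$ and $\{\varphi\geq\pi/4\}$.

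\textbf{The neck region $a_\eps'$.} Your claimed rate $O(\eps^{2-2\gamma})$ is not what the actual estimates give, and ``uniformly well-conditioned after the natural column rescaling'' hides the real work: the paper computes $\nabla s,\nabla\theta,\nabla\varphi$ explicitly and proves the integral bounds of Lemma~\ref{le:gradVarphiAPrime}, which involve logarithmic factors and require the pointwise control of $g_\eps$, $g_\eps'$ from Lemmas~\ref{le:lowerEMinusG}--\ref{le:logNablaVarphi}. Stating that the Gram matrix is well-conditioned is not a substitute for those estimates.
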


We remark that all the regions involved in the previous proposition are disjoint. Thus we will work separately in each of these regions.

\subsubsection{Extra energy in $a_\e'$ is negligible}

\begin{proof}[Proof of \eqref{eq:energy_vanishes_a'_e'} in \(a_\e'\)]
	We first observe that, since in this region \( \det D\vec u_\e=1\) we have that \( \int_{a_\e'} H(\det D\vec u_\e) \dd \vec x \rightarrow 0\) as \(\e\) tends to zero. The proof is then  obtained by dealing with the Dirichlet energy and by combining
	inequality \eqref{eq:DirichletAPrime} below,
	the bounds for the partial derivatives in Lemma \ref{le:partialUAPrime}
	and the integral estimates of 
	Lemma \ref{le:gradVarphiAPrime}.
\end{proof}

\underline{Inverse of the parametrization of the reference configuration}

Using Lemma \ref{le:positive_h} and, e.g., Ball's global invertibility theorem 
\cite{Ball81}
(considering that $(s, \cdot , \varphi)\mapsto \vec x(s, \cdot, \varphi)$,
seen as a map to $\R^2$, is one-to-one on the boundary of $[0,1]\times [0, \frac{\pi}{2}-\delta]$ for every small $\delta$)
we obtain that the parametrization of the reference domain (excluding $\varphi=\frac{\pi}{2}$, which collapses to the circle $\{r=\e,\, x_3=0\}$)
is a diffeomorphism and that $s$, $\theta$, and $\varphi$ can be obtained as functions of $\vec x$ in the interior of region $a_\e'$. 
Inverting the coefficient matrix for $\partial_s\vec x$, $\partial_\theta \vec x$, $\partial_\varphi \vec x$
in the basis $(\vec e_r, \vec e_\theta, \vec e_3)$ we find that
\begin{align}
	\nabla s &= \frac{ s\sin \varphi\,\vec e_r - \big ( (1-s)\e^{-1} g'(\varphi) + s\cos \varphi\big ) \vec e_3}{
		(1-s) g'(\varphi)\cos\varphi
	+ s (\e-g(\varphi)\sin\varphi)}, \nonumber
	\\
	\nabla \theta &= \frac{1}{(1-s) g(\varphi)
	+ s \e\sin\varphi} \vec e_\theta, \nonumber
	\\
			\label{eq:nablaVarphi}
	\nabla \varphi &= \frac{\cos \varphi \,\vec e_r + (\sin \varphi - \e^{-1}g(\varphi))\vec e_3}{
	(1-s) g'(\varphi)\cos\varphi
	+ s (\e-g(\varphi)\sin\varphi)}.
\end{align}

\underline{The Dirichlet energy}:
Based on the representation
\begin{align*}
	D\vec u_\e = \partial_s\vec u_\e \otimes \nabla s + \partial_\theta \vec u_\e \otimes \nabla \theta 
		+ \partial_\varphi \vec u_\e \otimes \nabla \varphi,
		\qquad |D\vec u_\e|^2 = \tr D\vec u_\e^T D\vec u_\e,
\end{align*}
it follows that 
\begin{align*}
	|D\vec u_\e|^2 &= |\nabla s|^2 |\partial_s\vec u_\e|^2 + 2(\nabla s\cdot \nabla \varphi)(\partial_s\vec u_\e \cdot \partial_\varphi \vec u_\e) + |\nabla \theta|^2 |\partial_\theta \vec u_\e|^2 + |\nabla \varphi|^2 |\partial_\varphi \vec u_\e|^2 
	\\
	& \! \! \! = |\nabla s|^2|\partial_s u_\rho^\e|^2
	+ 2(\nabla s\cdot \nabla \varphi)
	\partial_s u_\rho^\e \partial_\varphi u_\rho^\e
	+ |\nabla \theta|^2 |u_\rho^\e|^2 \sin^2\varphi 
	+ |\nabla \varphi|^2 (|u_\rho^\e|^2
	+ |\partial_\varphi u_\rho^\e|^2)
	\\ & \! \! \! \leq 
	|\nabla s|^2|\partial_s u_\rho^\e|^2
	+ 2(|\nabla s||\partial_s u_\rho^\e|)(|\nabla \varphi|
	 |\partial_\varphi u_\rho^\e|)
	+ |\nabla \theta|^2 |u_\rho^\e|^2 \sin^2\varphi 
	+ |\nabla \varphi|^2 (|u_\rho^\e|^2
	+ |\partial_\varphi u_\rho^\e|^2).
	\end{align*}
	Cauchy's inequality then yields that
	\begin{align}
		\label{eq:DirichletAPrime}
	|D\vec u_\e|^2
	&\leq 
	2|\nabla s|^2|\partial_s u_\rho^\e|^2
	+ |\nabla \theta|^2 |u_\rho^\e|^2 \sin^2\varphi 
	+ |\nabla \varphi|^2 (|u_\rho^\e|^2
	+ 2|\partial_\varphi u_\rho^\e|^2).
	\end{align}
Note also that
$$
	\int_{a_\e'} |D\vec u_\e|^2 \dd\vec x 
	= 2\pi \int_{s=0}^1 \int_{\varphi=0}^{\frac{\pi}{2}} |D\vec u_\e|^2 h_\e(s,\varphi) \sin\varphi \dd\varphi \dd s.
$$

\underline{Estimates for $g_\e$ and $h_\e$}

In order to estimate the partial derivatives of $ u_\rho^\e$, it is important to control first the derivatives of the functions $g_\e$ and $h_\e$ that appear in its definition. This is the object of Lemma \ref{le:positive_h} in the appendix.

\underline{Estimates for $u_\rho^\e$}

\begin{lemma}
		\label{le:partialUAPrime}
	For all $\varphi \in [0, \frac{\pi}{2}]$, all $s\in [0,1]$, 
	and all positive $\e$ such that 
	$\e^{2-2\gamma}
	< \frac{7}{9\pi\sqrt{2}}$,
	\begin{align*}
		\frac{1}{4}(\cos \varphi + 2\e^\gamma)\leq  u_\rho^\e(s,\varphi) \leq \cos \varphi + 2\e^\gamma\leq 2 ,
		\quad
		|\partial_s u_\rho^\e|\leq C \e^{2-2\gamma}\cos\varphi,
		\quad 
		|\partial_\varphi u_\rho^\e|=O(1).
	\end{align*}
\end{lemma}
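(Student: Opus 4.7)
The plan is to work directly from the defining identity
\[
u_\rho^\eps(s,\varphi)^3 = (\cos\varphi + 2\eps^\gamma)^3 - 3\int_0^s h_\eps(\sigma,\varphi)\,\dd\sigma,
\]
combined with the pointwise estimates on $g_\eps$, $g_\eps'$ and $h_\eps$ that Lemma \ref{le:positive_h} provides. The upper bound $u_\rho^\eps\leq \cos\varphi+2\eps^\gamma\leq 2$ is immediate: $h_\eps\geq 0$ (by Lemma \ref{le:positive_h}) forces $(u_\rho^\eps)^3\leq (\cos\varphi+2\eps^\gamma)^3$, and $2\eps^\gamma\leq 1$ holds under the hypothesis on $\eps$.

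The key step for the lower bound is the uniform estimate $h_\eps(\sigma,\varphi)\leq C\eps^2\cos\varphi$, which I will obtain by estimating the two factors of $h_\eps$ separately. For the first factor, $g_\eps(\varphi)/\sin\varphi\leq C\eps$ uniformly: this follows from the near-axis expansion $g_\eps(\varphi)\sim \eps^2\varphi$ (using $f_\eps'(0)\sim \eps^{-2}$) on $[0,\pi/4]$, combined with the trivial bound $g_\eps\leq \eps$ and $\sin\varphi\geq \sin(\pi/4)$ on $[\pi/4,\pi/2]$. For the second factor, the piece $(1-s)g_\eps'(\varphi)\cos\varphi$ carries the factor $\cos\varphi$ manifestly (with $g_\eps'$ uniformly bounded by $1/(\alpha_\eps/\eps)=O(1)$), while $\eps-g_\eps(\varphi)\sin\varphi$ vanishes at $\varphi=\pi/2$ and has derivative $-(g_\eps'\sin\varphi+g_\eps\cos\varphi)=O(1)$, so the mean value theorem yields $|\eps-g_\eps\sin\varphi|\leq C(\tfrac{\pi}{2}-\varphi)\leq C'\cos\varphi$. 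Multiplying out gives $h_\eps\leq C\eps^2\cos\varphi$. Since $(\cos\varphi+2\eps^\gamma)^3\geq 8\eps^{3\gamma}$ and $3\gamma\leq 1$, this in turn gives $3\int_0^s h_\eps\,\dd\sigma\leq 3C\eps^2\leq \frac{63}{64}(\cos\varphi+2\eps^\gamma)^3$ under the numerical threshold $\eps^{2-2\gamma}<7/(9\pi\sqrt{2})$, and taking cube roots delivers the lower bound $u_\rho^\eps\geq \frac{1}{4}(\cos\varphi+2\eps^\gamma)$.

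The partial derivatives are then handled by differentiating $(u_\rho^\eps)^3$ directly. One obtains
\[
\partial_s u_\rho^\eps = -\frac{h_\eps(s,\varphi)}{(u_\rho^\eps)^2},\qquad \partial_\varphi u_\rho^\eps = -\frac{(\cos\varphi+2\eps^\gamma)^2\sin\varphi + \int_0^s \partial_\varphi h_\eps(\sigma,\varphi)\,\dd\sigma}{(u_\rho^\eps)^2}.
\]
The lower bound on $u_\rho^\eps$ yields $(u_\rho^\eps)^{-2}\leq 16/(\cos\varphi+2\eps^\gamma)^2\leq 4\eps^{-2\gamma}$, so plugging in $h_\eps\leq C\eps^2\cos\varphi$ produces $|\partial_s u_\rho^\eps|\leq C'\eps^{2-2\gamma}\cos\varphi$. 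For $\partial_\varphi u_\rho^\eps$, the first summand in the numerator is dominated by $16\sin\varphi$ after applying the bound on $(u_\rho^\eps)^{-2}$, and the second is controlled via $|\partial_\varphi h_\eps|\leq C\eps$ (obtained by differentiating the product defining $h_\eps$ and invoking Lemma \ref{le:positive_h}), which gives a contribution of order $\eps^{1-2\gamma}=o(1)$ since $\gamma\leq 1/3<1/2$.

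The main technical point is the extraction of the factor $\cos\varphi$ in $h_\eps\leq C\eps^2\cos\varphi$: while the term $g_\eps'(\varphi)\cos\varphi$ carries it manifestly, the term $\eps-g_\eps(\varphi)\sin\varphi$ only vanishes at $\varphi=\pi/2$ and the required mean-value estimate rests on the uniform control of $g_\eps$ and $g_\eps'$ up to $\varphi=\pi/2$. The precise numerical threshold $7/(9\pi\sqrt{2})$ in the hypothesis reflects the sharp constants in those auxiliary estimates, whose proof is deferred to Lemma \ref{le:positive_h}.
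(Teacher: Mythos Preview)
Your approach is essentially the paper's: use the pointwise bound $h_\eps\leq C\eps^2\cos\varphi$ (this is Lemma~\ref{le:hE2Cos}, not Lemma~\ref{le:positive_h}, which only asserts positivity), compare $(u_\rho^\eps)^3$ with $(\cos\varphi+2\eps^\gamma)^3$, and bound the derivatives via $(u_\rho^\eps)^{-2}\leq 4\eps^{-2\gamma}$ together with $|\partial_\varphi h_\eps|=O(\eps)$.

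There is, however, a genuine quantitative slip in your lower-bound step. You write
\[
3\int_0^s h_\eps\,\dd\sigma\leq 3C\eps^2\leq \tfrac{63}{64}(\cos\varphi+2\eps^\gamma)^3,
\]
having discarded the factor $\cos\varphi$. But then the comparison with $(\cos\varphi+2\eps^\gamma)^3\geq 8\eps^{3\gamma}$ requires $\eps^{2-3\gamma}$ to be small, whereas the hypothesis only controls $\eps^{2-2\gamma}$; since $\eps^{2-3\gamma}>\eps^{2-2\gamma}$ for $\eps<1$, the stated threshold $\eps^{2-2\gamma}<\tfrac{7}{9\pi\sqrt{2}}$ does not follow from your chain of inequalities. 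The fix is immediate: keep the $\cos\varphi$, bound $\cos\varphi\leq \cos\varphi+2\eps^\gamma$, and cancel one factor in the cube, reducing the requirement to $\tfrac{63}{64}(\cos\varphi+2\eps^\gamma)^2\geq \tfrac{9\pi\sqrt 2}{2}\eps^2$, which indeed follows from $(\cos\varphi+2\eps^\gamma)^2\geq 4\eps^{2\gamma}$ and the stated threshold. The paper reaches the same conclusion by splitting into the cases $2\eps^\gamma\leq\cos\varphi$ and $2\eps^\gamma>\cos\varphi$; your uniform argument (once repaired) is arguably cleaner.
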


\begin{proof}

Since, by Lemma \ref{le:hE2Cos},
$$
	3\left | \int_{\sigma=0}^1 h(\sigma,\varphi)\dd\sigma \right | \leq \frac{9\pi\sqrt{2}}{2} \e^2 \cos\varphi,
$$
then
\begin{equation*}
	(\cos\varphi +2\e^\gamma )^3\geq u_\rho^\e(s,\varphi)^3
	\geq
	(\cos\varphi +2\e^\gamma )^3
	- \frac{9\pi\sqrt{2}}{2}\e^2\cos\varphi.
\end{equation*}

\underline{If $2\e^\gamma\leq \cos\varphi$}, 
then
$$
    \frac{7}{8}\cos^2\varphi
    \geq \frac{7}{2}\e^{2\gamma}
    \geq \frac{9\pi\sqrt{2}}{2}\e^2
$$
and
\begin{equation*}
	u_\rho^\e(s,\varphi)^3 \geq \cos^3\varphi - 
	\frac{7}{8}\cos^2 \varphi \cdot \cos\varphi  \geq \left( \frac{1}{2}\cos\varphi\right)^3.
\end{equation*}
Consequently,
\begin{equation*}
	u_\rho^\e(s,\varphi)\geq \frac{1}{2}\cos\varphi = \frac{1}{2} (\cos\varphi + 2\e^\gamma) \frac{1}{1 + \frac{2\e^\gamma}{\cos\varphi}}
	 \geq \frac{1}{4}(\cos\varphi + 2\e^\gamma).
\end{equation*}

\underline{If $2\e^\gamma> \cos\varphi$,}
\begin{align*}
	u_\rho^\e(s,\varphi)^3 \geq 8\e^{3\gamma} - \frac{9\pi\sqrt{2}}{2}\e^2 \cdot 2\e^\gamma
	=
	 8\e^{3/2} \left( 1- \frac{9\pi\sqrt{2}}{8}\e^{2-2\gamma} \right)
	\geq  
	\left((2\sqrt{\e})\cdot \frac{1}{2}\right)^3.
\end{align*}
Therefore,
\begin{equation*}
	u_\rho^\e(s,\varphi) \geq  \frac{1}{2} (\cos\varphi + 2\e^\gamma) \frac{1}{\frac{\cos\varphi}{2\e^\gamma}+1}
	\geq \frac{1}{4}(\cos\varphi + 2\e^\gamma).
\end{equation*}

Regarding $\partial_\varphi u_\rho^\e$,
\begin{align*}
		3\partial_\varphi u_\rho^\e =  (u_\rho^\e)^{-2}\partial_\varphi \Big ( (u_\rho^\e)^3\Big ) =-3
		\underbrace{\left(\frac{\cos\varphi + 2\e^\gamma}{u_\rho^\e}\right)^2}_{\leq 4^2}\sin\varphi -3(u_\rho^\e)^{-2} \underbrace{\int_{\sigma=0}^s \partial_\varphi h(\sigma, \varphi) \dd\sigma}_{=O(\e)}.
	\end{align*}
	Since 
	$$
		(u_\rho^\e)^2 \geq \frac{1}{4^2}(\cos \varphi + 2\e^\gamma)^2 \geq \frac{1}{4}\e^{2\gamma},
	$$
	and $\gamma \leq \frac{1}{3} < \frac{1}{2}$, 
	the derivative $\partial_\varphi u_\rho^\e$ is bounded uniformly with respect to $\e$. 
	
	The estimate for the derivative with respect to $s$ is now straightforward:
	\begin{align*}
		3|\partial_s u_\rho^\e|
		= \left | (u_\rho^\e)^{-2}\partial_s \Big ( (u_\rho^\e)^3\Big ) \right |
		\leq 3\cdot \frac{ \frac{3\pi\sqrt{2}}{2} \e^2\cos \varphi}{\frac{1}{4}\e^{2\gamma}} \leq C\e^{2-2\gamma}\cos\varphi.
	\end{align*}
\end{proof}

\underline{Integral estimates for $\nabla s$, $\nabla \theta$, and $\nabla \varphi$}

\begin{lemma}
	\label{le:gradVarphiAPrime}
\begin{align*}
 & \int_{a_\e'}
	 |\nabla \varphi|^2\,
		\dd\vec x 
		= O (\e^2|\ln \e|),
	\qquad 	\int_{a_\e'}(\e \cos\varphi)^2|\nabla s|^2\,\dd\vec x = O(\e|\ln \e|),\\
	& \text{and}\quad 
	\int_{a_\e'} |\nabla \theta|^2 \sin^2\! \varphi\,\dd\vec x 
		= O (\e|\ln \e|^2).
	\end{align*}
 \end{lemma}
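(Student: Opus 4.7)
The plan is to exploit the algebraic structure shared by the three gradient vectors. Each of $\nabla s$, $\nabla\theta$, $\nabla\varphi$ is a quotient whose denominator is either $d(s,\varphi):=(1-s)g_\eps'(\varphi)\cos\varphi+s(\eps-g_\eps(\varphi)\sin\varphi)$ (for $\nabla s$ and $\nabla\varphi$) or $(1-s)g_\eps(\varphi)+s\eps\sin\varphi$ (for $\nabla\theta$, after multiplying by $\sin\varphi$). The Jacobian of the parametrization factors as
\[
 h_\eps(s,\varphi)\sin\varphi = \eps\bigl((1-s)g_\eps(\varphi)+s\eps\sin\varphi\bigr)\,d(s,\varphi),
\]
so in each of the three integrals a substantial cancellation occurs, leaving an expression that is polynomial in the affine quantities and therefore amenable to the explicit $s$-integrations
\[
 \int_0^1\!\frac{\dd s}{\alpha(1-s)+\beta s}=\frac{\ln(\beta/\alpha)}{\beta-\alpha}.
\]

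After the $s$-integration I would pass to the variable $r=g_\eps(\varphi)\in[0,\eps]$, for which $\dd\varphi=f_\eps'(r)\dd r$ and the explicit formula \eqref{eq:def_stereo} allows me to control
\[
 \sin f_\eps(r)=\frac{r}{\sqrt{\eps^4+r^2}}+O(\eps), \qquad \cos f_\eps(r)=\frac{\eps^2}{\sqrt{\eps^4+r^2}}+O(\eps),
\]
together with the bound $g_\eps'(\varphi)=1/f_\eps'(r)=O\!\bigl((\eps^4+r^2)/\eps^2\bigr)$ (already encoded in Lemma \ref{le:positive_h}). For $|\nabla\varphi|^2$ the integrand becomes, after cancellation,
\[
 \frac{\eps\bigl((1-s)g_\eps+s\eps\sin\varphi\bigr)\bigl[\cos^2\varphi+(\sin\varphi-\eps^{-1}g_\eps)^2\bigr]}{d(s,\varphi)},
\]
whose bracketed factor is $O\!\bigl(\eps^4/(\eps^4+r^2)\bigr)$ thanks to the above expansions; the remaining 1D integral in $r$ produces exactly the $\eps^2|\ln\eps|$ bound. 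The weighted $|\nabla s|^2$ integral is treated analogously: the weight $\eps^2\cos^2\varphi$ combines with $|\nabla s|^2$ to cancel the bad factors in the numerator of $\nabla s$, yielding the gain $\eps|\ln\eps|$. Finally, for $|\nabla\theta|^2\sin^2\varphi$ the Jacobian cancellation gives the integrand $\eps\sin^3\varphi\,d(s,\varphi)/\bigl((1-s)g_\eps+s\eps\sin\varphi\bigr)$, whose $s$-integration contributes a logarithm $\ln(\eps\sin\varphi/g_\eps)$; the subsequent $r$-integral picks up a second $|\ln\eps|$, explaining the $\eps|\ln\eps|^2$ bound.

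The main obstacle is that the logarithmic factors originate from two different sources — near $\varphi\to 0$ (where $g_\eps'\sim\eps^2$ and $g_\eps\sim r$ are much smaller than $\eps-g_\eps\sin\varphi\sim\eps$) and near $\varphi\to\pi/2$ (where $\eps-g_\eps\sin\varphi\to 0$ while $g_\eps'\cos\varphi$ stays of order $1$) — and the asymptotic analysis of $f_\eps$, $g_\eps$ must be sharp enough in both regimes to avoid producing any additional negative power of $\eps$. Splitting the $r$-integration into the ranges $0\leq r\leq\eps^2$ and $\eps^2\leq r\leq \eps$ (the natural length scales dictated by $f_\eps$) and carrying out matched asymptotic expansions of $\sin f_\eps$, $\cos f_\eps$, and $g_\eps'$ on each is, I expect, the delicate step that actually produces the claimed rates.
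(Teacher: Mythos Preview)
Your approach is essentially the same as the paper's: you use the Jacobian factorization $h_\eps\sin\varphi=\eps\bigl((1-s)g_\eps+s\eps\sin\varphi\bigr)d(s,\varphi)$ to cancel one copy of the denominator, integrate in $s$ to produce a logarithm, and then handle the remaining one-dimensional integral by splitting at the scale $r=g_\eps(\varphi)=\eps^2$. The only cosmetic difference is that the paper packages the asymptotics of $\sin f_\eps$, $\cos f_\eps$ into the two-sided comparison $\cos\varphi\sim(\eps-g_\eps(\varphi))/\max\{\eps,g_\eps(\varphi)/\eps\}$ (Lemma~\ref{le:lowerEMinusG}) and the $s$-integration into Lemmas~\ref{le:logEpsilon}--\ref{le:logNablaVarphi}, whereas you plan to work directly with the expansions; this is bookkeeping, not a different idea. (Minor slip: in your $|\nabla\theta|^2\sin^2\varphi$ integrand the power of $\sin\varphi$ should be $2$, not $3$.)
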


\begin{proof}
	By \eqref{eq:1menosSinCos} and Lemma \ref{le:lowerEMinusG},
	the modulus of the numerator in \eqref{eq:nablaVarphi} can be estimated by 
	\begin{align*}
		\left| \cos\varphi \right| + \left| \sin \varphi -\e^{-1}g(\varphi) \right|
		&= \left| \cos\varphi \right| + \left| \e^{-1} (\e-g(\varphi)) - (1-\sin\varphi) \right|
		\leq \e^{-1} (\e-g(\varphi)) +2\cos \varphi
		\\
		&\leq \e^{-1} (\e-g(\varphi)) + 
		6\frac{\e-g(\varphi)}{\max\{\e, \frac{g(\varphi)}{\e}\}} 
		\leq 7\e^{-1}(\e-g(\varphi)).
	\end{align*}
	Therefore, by Lemma  \ref{le:hE2Cos}, for all $\varphi \in [0,\frac{\pi}{2}]$ and $s\in [0,1]$,
		\begin{align*}
		|\nabla \varphi|^2 h(s,\varphi)
		&\leq  \frac{49\e^{-2}(\e-g(\varphi))^2 }{ |(1-s)g'(\varphi)\cos \varphi + s(\e - g(\varphi)\sin\varphi) |^2}
		\cdot 
		\\ & \quad \quad \cdot 
		\e \Big ( \underbrace{(1-s) \frac{g(\varphi)}{\sin \varphi}+ s\e}_{\leq \sqrt{2} \e} \Big ) \Big ( (1-s)g'(\varphi)\cos \varphi + s(\e - g(\varphi)\sin\varphi) \Big )
		\\ & \leq
		\frac{C(\e-g(\varphi))^2}{(1-s)g'(\varphi)\cos \varphi + s(\e - g(\varphi)\sin\varphi)}.
	\end{align*}
	Hence, by Lemma \ref{le:logNablaVarphi},
	\begin{align*}
		\int_{\varphi=0}^{\frac{\pi}{2}}\int_{s=0}^1 |\nabla \varphi|^2 h(s,\varphi) \sin\varphi\,\dd s \dd\varphi
		& \leq 
		\int_{\varphi=0}^{\frac{\pi}{2}}
		 C\underbrace{(\e-g(\varphi))^2}_{\leq \e^2}|\ln \e| \dd\varphi =O(\e^2|\ln \e|).
	\end{align*}

	As for $\nabla \theta$, 
	observe first that 
	\begin{align*}
		|\nabla \theta|^2 \sin^2 \varphi \, h(s,\varphi) \sin \varphi &= \frac{\e \Big ( (1-s)g'(\varphi)\cos \varphi + s(\e - g(\varphi)\sin\varphi) \Big )}{(1-s) \frac{g(\varphi)}{\sin\varphi}+ s\e } \cdot\sin\varphi .
	\end{align*}
	By Lemma \ref{le:lowerEMinusG},
	\begin{equation*}
		 |\nabla \theta|^2 \sin^2 \varphi \, h(s,\varphi) \sin \varphi \leq \frac{ C\e \min \{\frac{1}{\e}, \frac{\e}{g(\varphi)}\}g'(\varphi)\cos\varphi}{(1-s) \frac{g(\varphi)}{\sin\varphi}+ s\e }.
	\end{equation*}
	
	\underline{Case when $g(\varphi)\leq \e^2$:}
	calling $r:=g(\varphi)$, it is easy to see that
	$$
		g'(\varphi) = \frac{1}{ \frac{\e^2}{r^2 +\e^4} + \frac{\alpha_\e}{\e} } \leq 2\e^2
		\quad
		\text{and}\quad 
		g(\varphi)= \int_{t=0}^\varphi g'(t)\dd t \geq \int_{t=0}^\varphi \frac{\e^2}{2} \dd t \geq \frac{\e^2}{2} \varphi \geq \frac{\e^2}{2}\sin\varphi.
	$$
	Hence
	\begin{align*}
		\int_{s=0}^1 |\nabla \theta|^2 \sin^2 \varphi \, h(s,\varphi) \sin \varphi&\leq \int_{s=0}^1\frac{ C\e^2}{(1-s) \e^2+ s\e }
		\leq  C\frac{\e^2}{\e-\e^2}|\ln \e|
		= O(\e|\ln \e|).
	\end{align*}
	
	\underline{Case when $g(\varphi)\geq \e^2$:}
	here $g'(\varphi)\leq 2\frac{r^2}{\e^2}$. Also,  applying Lemma \ref{le:logEpsilon} with
	$$
		a= \frac{g(\varphi)}{\sin \varphi},\quad
		b= \e,\quad \lambda= \e ,
		$$	
	it can be seen that
	\begin{align*}
		\int_{s=0}^1
		|\nabla \theta|^2 \sin^2 \varphi \, h(s,\varphi) \sin \varphi \,\dd s 
		&\leq C\e \frac{\e}{r}\cdot 2 \frac{r^2}{\e^2}\cos\varphi \cdot \e^{-1}\frac{1}{1-\e} |\ln \e|
		= C\frac{r}{\e}|\ln \e|.
	\end{align*}
	Integrating now over $\varphi$, changing variables to $t= \frac{g(\varphi)}{\e}$, $\varphi = f \Big ( \e t\Big )$:
	\begin{align*}
		&\int_{\varphi=f(\e^2)}^{\frac{\pi}{2}}
		\int_{s=0}^1
		|\nabla \theta|^2 \sin^2 \varphi \, h(s,\varphi)\sin \varphi \,\dd s \dd\varphi 
		\leq C|\ln \e| \int_{t=\e}^1 t \underbrace{f'\Big ( \e t\Big )}_{\leq 2t^{-2}} \e \dd t
		\leq C\e |\ln \e|^2.
		&
	\end{align*}

	Finally, for the result for $\nabla s$ note that
		\begin{align*}
		(\e\cos\varphi)^2 |\nabla s|^2 h(s,\varphi)\sin\varphi
		 & \leq \frac{ C\e^2 \Big (\sin^2 \varphi + \cos^2\varphi + \e^{-2}|g'(\varphi)|^2\Big )\cdot \e  \Big ( (1-s) \frac{g(\varphi)}{\sin \varphi}+ s\e \Big )}{(1-s)g'(\varphi)\cos \varphi + s(\e - g(\varphi)\sin\varphi)}\cos^2 \varphi
		\\
		& \leq \frac{C\e^2\cos^2 \varphi}{(1-s)g'(\varphi)\cos \varphi + s(\e - g(\varphi)\sin\varphi)}.
	\end{align*}

	By Lemma \ref{le:logNablaVarphi},
	\begin{align*}
		& \int_{\varphi=0}^{\frac{\pi}{2}}\int_{s=0}^1 (\e\cos\varphi)^2|\nabla s|^2 h(s,\varphi) \sin\varphi\,\dd s \dd\varphi
		 \leq C\e^2 
		\int_{\varphi=0}^{\frac{\pi}{2}}
		 \frac{\cos^2\varphi}{\e-g(\varphi)}|\ln \e| \dd\varphi 
		 \\ &\leq 
		 C\e^2|\ln \e| \int_{\varphi=0}^{\frac{\pi}{2}} \frac{\e - g(\varphi)}{\max\{\e, \frac{g(\varphi)}{\e}\}^2 } \dd\varphi 
		 \leq C\e^3|\ln \e| \left( \int_0^{f(\e^2)}\e^{-2}\dd\varphi + 
		 \int_{f(\e^2)}^{\frac{\pi}{2}} \frac{\e^2}{g(\varphi)^2} \dd\varphi \right).
	\end{align*}
	For the last integral, change variables to $t=\frac{g(\varphi)}{\e}$, $\varphi= f(\e t)$:
	\begin{align*}
	\int_{f(\e^2)}^{\frac{\pi}{2}} \frac{\e^2}{g(\varphi)^2} \dd\varphi &= 
		\int_\e^1 t^{-2} f'(\e t) \e \dd t 
		\leq 2\e \int_\e^1 t^{-4}\dd t 
		\leq \frac{2}{3}\e^{-2}
	\end{align*}
	and the conclusion follows.
\end{proof}

\subsubsection{Extra energy in $a_\e$ is negligible}\label{subsubse:aenegligible}

In this section we prove the part of Proposition \ref{prop:convergence_energy} relative to the region \(a_\e\).

In order to prove this proposition we first deduce an integrability property of the function \(H\) implied by the assumption that \( E(\vec u)<+\infty\). Indeed, using \eqref{eq:det_spherical_spherical}, the following expression is obtained for the Jacobian of the limit map:
 \begin{align}
    \label{eq:detContiDeLellisRegionA}
\det D\vec u = \frac{u_\rho^2 \sin u_\varphi \cdot \matrizzDet{ \partial_\rho u_\rho & \partial_\varphi u_\rho  \\ \partial_\rho u_\varphi & \partial_\varphi u_\varphi } }{
\rho^2 \sin\varphi }
    = (1-\rho)^2\rho^{-2}\cos^3\varphi.
\end{align}

	\begin{lemma}
  \label{le:integrabilityHregionA}
	Let $H$ be as in the statement of Theorem \ref{th:upper_bound}. 
		Then,
			$$
				\int_{s=1}^\infty H(s) s^{-5/2} \dd s < \infty.
			$$
	\end{lemma}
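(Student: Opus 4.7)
My plan is to exploit the explicit formula \eqref{eq:detContiDeLellisRegionA} for $\det D\vec u$ in region $a$, together with the hypothesis \eqref{eq:finiteHu}, which in particular implies $\int_a H(\det D\vec u)\,\dd\vec x<\infty$. Since $\vec u$ is axisymmetric, working in spherical coordinates $(\rho,\theta,\varphi)$ (and using $\psi=\pi-\varphi$ so that $|\cos(\pi-\psi)|=\cos\psi$) one has
\begin{equation*}
\int_a H(\det D\vec u)\,\dd\vec x = 2\pi \int_0^1 \int_0^{\pi/2} H\!\left(\frac{(1-\rho)^2\cos^3\psi}{\rho^2}\right) \rho^2 \sin\psi\,\dd\psi\,\dd\rho.
\end{equation*}

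Next I would perform, for each fixed $\rho\in(0,1)$, the change of variables $s=(1-\rho)^2\rho^{-2}\cos^3\psi$, which gives
$\sin\psi\,\dd\psi = \tfrac{1}{3}\rho^{2/3}(1-\rho)^{-2/3}s^{-2/3}\,\dd s$, with $s$ ranging from $0$ (at $\psi=\pi/2$) to $(1-\rho)^2/\rho^2$ (at $\psi=0$). Substituting and applying Fubini to swap the order of integration (the region $0<s<(1-\rho)^2/\rho^2$, $0<\rho<1$ becomes $s>0$, $0<\rho<1/(1+\sqrt{s})$), I obtain
\begin{equation*}
\int_a H(\det D\vec u)\,\dd\vec x = \frac{2\pi}{3}\int_0^\infty H(s)\,s^{-2/3} \left(\int_0^{1/(1+\sqrt{s})} \frac{\rho^{8/3}}{(1-\rho)^{2/3}}\,\dd\rho\right)\dd s.
\end{equation*}

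The final step is a lower bound on the inner $\rho$-integral for large $s$. For $s\geq 1$, restricting to $\rho\in\bigl(0,\tfrac{1}{2(1+\sqrt{s})}\bigr)$ one has $1-\rho>\tfrac12$, so the inner integral is bounded below by $\tfrac{3}{11}\bigl(\tfrac{1}{2(1+\sqrt{s})}\bigr)^{11/3}\geq c\,s^{-11/6}$ for some absolute $c>0$. Plugging this lower bound in gives
\begin{equation*}
\infty > \int_a H(\det D\vec u)\,\dd\vec x \geq C\int_1^\infty H(s)\,s^{-2/3}s^{-11/6}\,\dd s = C\int_1^\infty H(s)\,s^{-5/2}\,\dd s,
\end{equation*}
which is the claim. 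There is no real obstacle; the only care needed is the Fubini swap and checking that the lower bound on the Jacobian factor $\rho^{8/3}(1-\rho)^{-2/3}$ produces precisely the exponent $-5/2$ (the exponents $-2/3$ and $-11/6$ sum to $-5/2$), which confirms that the power in the statement is sharp for this argument.
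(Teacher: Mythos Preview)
Your proof is correct and takes a genuinely different route from the paper's. The paper restricts to a subregion where $\cos\psi>\tfrac12$ and $\rho<\tfrac12$, uses the convexity of $H$ (via the monotonicity property \eqref{eq:HdecreasingIncreasing}) to replace $H\big((1-\rho)^2\rho^{-2}t^3\big)$ by $H(2^{-5}\rho^{-2})$, integrates out the angular variable trivially, and only then changes variable to $s=2^{-5}\rho^{-2}$. Your argument instead performs the exact change of variables $s=(1-\rho)^2\rho^{-2}\cos^3\psi$ in the full region, swaps the order of integration by Tonelli (the integrand is non-negative since $H\geq 0$), and bounds the resulting $\rho$-integral from below by elementary means. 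The advantage of your approach is that it never invokes the convexity of $H$: only $H\geq 0$ and \eqref{eq:finiteHu} are used. The paper's approach, by contrast, is computationally lighter once monotonicity is available.

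One small cosmetic point: in your lower bound on the inner $\rho$-integral, the restriction to $\rho\in\bigl(0,\tfrac{1}{2(1+\sqrt{s})}\bigr)$ and the remark ``$1-\rho>\tfrac12$'' are unnecessary. Since $(1-\rho)^{-2/3}\geq 1$ for every $\rho\in(0,1)$, you can simply bound
\[
\int_0^{1/(1+\sqrt{s})}\rho^{8/3}(1-\rho)^{-2/3}\,\dd\rho \geq \int_0^{1/(1+\sqrt{s})}\rho^{8/3}\,\dd\rho = \tfrac{3}{11}(1+\sqrt{s})^{-11/3}\geq c\,s^{-11/6}
\]
for $s\geq 1$, using $1+\sqrt{s}\leq 2\sqrt{s}$. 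This does not affect the validity of your argument.
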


\begin{proof}
First note that the convexity assumption of $H$ together with the growth \eqref{eq:growth_H} implies that there exists $\delta>0$ such that 
            \begin{align}
    \label{eq:HdecreasingIncreasing}
                H \text{ is decreasing in } (0,\delta)
                \text{ and increasing in } (\frac{1}{\delta}, +\infty).
            \end{align}

Let \(\vec u\) be the Conti--De Lellis map as in Theorem \ref{th:upper_bound}. By assumption \eqref{eq:finiteHu},
        \begin{align*}
    \infty &> \int_{a} H(\det D\vec u) \dd \vec x > 
    \int\limits_{\{\cos \varphi > \frac{1}{2}\ \wedge\ \rho < \frac{1}{2} \}} H(\det D\vec u)\dd\vec x
    \\
    & = 2\pi \int_{\rho=0}^{\frac{1}{2}}\int_{\cos\varphi=\frac{1}{2}}^{\cos\varphi=1}
    H \big ( (1-\rho)^2 \rho^{-2}\cos^3\varphi \big ) \rho^2 \dd \big ( \cos\varphi\big ) \dd\rho
    \\
    & = 2\pi \int_{\rho=0}^{\frac{1}{2}} \int_{t=\frac{1}{2}}^1 
    H \big ( (1-\rho)^2 \rho^{-2} t^3 \big ) \rho^2 \dd t \dd \rho.
    \end{align*}

    At this point we observe that \( (1-\rho)^2 \rho^{-2} t^3 \geq 2^{-5} \rho^{-2}\). 
   In  the above integral, we keep only those values of $\rho$ such that 
    $2^{-5}\rho^{-2} \geq \frac{1}{\delta}$,
    i.e., $\rho \leq \sqrt{\delta/32}$.
    Since, by \eqref{eq:HdecreasingIncreasing}, $H$ is increasing in $(\frac{1}{\delta}, +\infty)$,
    it follows that 
    \begin{align*}
    \infty &> 2\pi \int_{\rho=0}^{\sqrt{\delta/32}}
    \int_{t=\frac{1}{2}}^1 H(2^{-5}\rho^{-2}) \rho^2 \dd t \dd \rho
    =
    \pi \int_{\rho=0}^{\sqrt{\delta/32}} H(2^{-5}\rho^{-2})\rho^2 \dd \rho.
    \end{align*}

    Changing the integration variable to $s=2^{-5}\rho^{-2}$ yields
    \begin{align*}
    \infty > \int_{s=\frac{1}{\delta}}^\infty H(s)
    \frac{2^{-5/2}}{s} s^{-3/2} \dd s.
    \end{align*}
    This finishes the proof since in $[1, \frac{1}{\delta}]$ the function $H$ is continuous (hence bounded).
\end{proof}

\begin{lemma}
\label{le:auxDCregion_a}
 Let us suppose that $\e$ is sufficiently small so that $\e^{2(1-\gamma)} \leq \frac{\sqrt{2}}{\pi}. $
Then,  for all $0\leq \varphi < \frac{\pi}{2}$,
\begin{equation*}
    \left(
 (\cos \varphi + 2\e^\gamma)^3 
 - 3\int_{\sigma=0}^1 h(\sigma, \varphi)\dd\sigma
 \right)^{\frac{1}{3}}
 \geq \cos\varphi + \e^{\gamma}.
\end{equation*}
\end{lemma}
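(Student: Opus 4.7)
The plan is to cube both sides of the desired inequality. Since $\cos\varphi + \eps^{\gamma} \geq 0$, the claim is equivalent to showing
\begin{equation*}
(\cos\varphi + 2\eps^\gamma)^3 - (\cos\varphi + \eps^\gamma)^3 \;\geq\; 3\int_{0}^{1} h_\eps(\sigma,\varphi)\,\dd\sigma,
\end{equation*}
and in particular it suffices to bound the right-hand side from above and the left-hand side from below.

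For the right-hand side, I would invoke the estimate from Lemma \ref{le:hE2Cos} (the same one already used in the proof of Lemma \ref{le:partialUAPrime}), namely
\begin{equation*}
3\Big|\int_{0}^{1} h_\eps(\sigma,\varphi)\,\dd\sigma\Big| \;\leq\; \tfrac{9\pi\sqrt{2}}{2}\,\eps^{2}\cos\varphi.
\end{equation*}
For the left-hand side, I would expand the difference of cubes explicitly. Writing $x=\cos\varphi$ and $y=\eps^{\gamma}$, one gets
\begin{equation*}
(x+2y)^{3}-(x+y)^{3} \;=\; 3x^{2}y + 9xy^{2} + 7y^{3}
\;=\; 3\cos^{2}\varphi\,\eps^{\gamma} + 9\cos\varphi\,\eps^{2\gamma} + 7\eps^{3\gamma},
\end{equation*}
each term of which is nonnegative.

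To conclude I would simply discard the first and third terms on the right and demand only that $9\cos\varphi\,\eps^{2\gamma}\geq \tfrac{9\pi\sqrt{2}}{2}\eps^{2}\cos\varphi$. This is automatic when $\cos\varphi=0$, and otherwise, after dividing by $9\cos\varphi$, it reduces exactly to $\eps^{2(1-\gamma)}\leq \tfrac{\sqrt{2}}{\pi}$, which is the standing hypothesis of the lemma. There is no real obstacle here: the result is a one-line algebraic identity combined with the already-established estimate on $h_\eps$. The only thing worth noting is that the choice of the correction factor $2\eps^\gamma$ in the definition of $u_\rho^\eps$ on the disk $\{r\leq\eps,\,x_3=0\}$ was made precisely so that this cushion dominates the $O(\eps^{2}\cos\varphi)$ perturbation coming from the antiderivative of $h_\eps$; the condition $\gamma\leq 1/3<1$ ensures $\eps^{2\gamma}\gg\eps^{2}$ for small $\eps$.
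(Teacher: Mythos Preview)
Your proof is correct and follows essentially the same approach as the paper: both invoke the bound $3\big|\int_0^1 h_\eps(\sigma,\varphi)\,\dd\sigma\big|\leq \tfrac{9\pi\sqrt{2}}{2}\eps^2\cos\varphi$ from Lemma~\ref{le:hE2Cos}, expand the cubes, and observe that the middle term $9\eps^{2\gamma}\cos\varphi$ dominates $\tfrac{9\pi\sqrt{2}}{2}\eps^2\cos\varphi$ precisely under the hypothesis $\eps^{2(1-\gamma)}\leq \sqrt{2}/\pi$. The only cosmetic difference is that the paper expands $(\cos\varphi+2\eps^\gamma)^3$ first and then regroups, whereas you compute the difference of cubes directly; the algebra is identical.
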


\begin{proof}
    By Lemma \ref{le:hE2Cos},
    \begin{multline*}
    (\cos \varphi + 2\e^\gamma)^3 
    - 3\int_{\sigma=0}^1 h(\sigma, \varphi)\dd\sigma
    \geq 
    (\cos \varphi + 2\e^\gamma)^3 
    - \frac{9\pi\sqrt{2}}{2}\e^2\cos\varphi
    \\
        \geq
        \big (\cos^3\varphi + 6\e^{\gamma}\cos^2\varphi+12\e^{2\gamma}\cos\varphi
              +8\e^{3\gamma}\big )
    - \frac{9\pi\sqrt{2}}{2}\e^2\cos\varphi
    \\
    = (\cos \varphi + \e^{\gamma})^3
        + \underbrace{3\e^{\gamma}\cos^2\varphi}_{\geq 0}
        + \frac{9\pi\sqrt{2}}{2}\e^{2\gamma}
        \underbrace{\big ( \frac{\sqrt{2}}{\pi} 
    - \e^{2(1-\gamma)}\big )}_{\geq 0}\cos\varphi  + \underbrace{7\e^{3\gamma}}_{\geq 0}.
    \end{multline*}

\end{proof}

\begin{lemma}
 Let \(\vec u_\e\) be the recovery sequence of Section \ref{sec:recovery_sequence} and \(\vec u\) the Conti--De Lellis map. Suppose that $\e$ is sufficiently small so that 
$
    \e^{2(1-\gamma)} \leq \frac{\sqrt{2}}{\pi}.
$ Then 
$$
\det D\vec u^\e (\vec x)
\geq \det D\vec u(\vec x) \quad 
\text{for all } \vec x \text{ in region } a_\e.
$$
\end{lemma}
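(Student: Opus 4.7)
The plan is to compute $\det D\vec u^\eps$ directly in spherical coordinates via the formula \eqref{eq:det_spherical_spherical} and to bound each factor below by the corresponding factor of the Conti--De Lellis determinant given by \eqref{eq:detContiDeLellisRegionA}, with Lemma \ref{le:auxDCregion_a} providing the key pointwise input.

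First I would observe that, since $u_\varphi^\eps(\rho,\varphi) = \pi - \varphi$ depends only on $\varphi$, one has $\partial_\rho u_\varphi^\eps = 0$ and $\partial_\varphi u_\varphi^\eps = -1$, so the $2\times 2$ sub-determinant appearing in \eqref{eq:det_spherical_spherical} reduces to $-\partial_\rho u_\rho^\eps$; combined with $\sin u_\varphi^\eps = \sin\varphi$, this collapses the Jacobian formula to
$$
\det D\vec u^\eps = \frac{(u_\rho^\eps)^2}{\rho^2}\bigl(-\partial_\rho u_\rho^\eps\bigr).
$$
Setting $A(\varphi) := (\cos(\pi-\varphi) + 2\eps^\gamma)^3 - 3\int_0^1 h_\eps(\sigma,\varphi)\,\dd \sigma$, the affine-in-$\rho$ structure of the formula \eqref{eq:def-u_eps-region_a} defining $u_\rho^\eps$ immediately gives
$$
-\partial_\rho u_\rho^\eps = \frac{A(\varphi)^{1/3}-\eps^\gamma}{1-\eps},\qquad u_\rho^\eps = \frac{1-\rho}{1-\eps}\,A(\varphi)^{1/3} + \eps^\gamma\,\frac{\rho-\eps}{1-\eps}.
$$

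Next I would invoke Lemma \ref{le:auxDCregion_a} (applied with $\pi-\varphi$ in place of its angular variable, so that $c:=\cos(\pi-\varphi)\in[0,1]$), which yields $A(\varphi)^{1/3} \geq c + \eps^\gamma$. Subtracting $\eps^\gamma$ and using the affine interpolation, this immediately translates into the two estimates
$$
-\partial_\rho u_\rho^\eps \geq \frac{c}{1-\eps},\qquad u_\rho^\eps \geq \frac{(1-\rho)\,c}{1-\eps} + \eps^\gamma \geq \frac{(1-\rho)\,c}{1-\eps}.
$$

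Finally, combining these bounds with the Conti--De Lellis identity $\det D\vec u = (1-\rho)^2\rho^{-2}\,c^3$ recorded in \eqref{eq:detContiDeLellisRegionA}, direct multiplication produces
$$
\det D\vec u^\eps \;\geq\; \frac{(1-\rho)^2\,c^2}{\rho^2(1-\eps)^2}\cdot \frac{c}{1-\eps} \;=\; \frac{(1-\rho)^2 c^3}{\rho^2(1-\eps)^3} \;\geq\; \frac{(1-\rho)^2 c^3}{\rho^2} \;=\; \det D\vec u,
$$
where the last step uses $(1-\eps)^3\leq 1$. No step presents a genuine obstacle: the only nontrivial analytic input is Lemma \ref{le:auxDCregion_a}, which has just been proved, and all remaining manipulations amount to unwinding the affine interpolation and keeping track of the angle convention (the fact that $\cos(\pi-\varphi)$, rather than $\cos\varphi$, is the relevant nonnegative cosine in region $a_\eps$).
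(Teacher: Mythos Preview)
Your proof is correct and follows essentially the same route as the paper: reduce the spherical Jacobian to $\rho^{-2}(u_\rho^\eps)^2(-\partial_\rho u_\rho^\eps)$ using $u_\varphi^\eps=\pi-\varphi$, then invoke Lemma \ref{le:auxDCregion_a} to bound both $-\partial_\rho u_\rho^\eps$ and $u_\rho^\eps$ from below by the corresponding quantities for the limit map, and multiply. The only cosmetic difference is that the paper absorbs the factors $1/(1-\eps)\geq 1$ immediately when bounding each piece, whereas you carry the $(1-\eps)^{-3}$ to the end and discard it there; your handling of the angle convention via $c=\cos(\pi-\varphi)$ is also slightly cleaner than the paper's, which juggles the sign of $\cos\varphi$ directly.
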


\begin{proof} 
    The same calculation as in \eqref{eq:detContiDeLellisRegionA} gives
    \begin{align}
    \label{eq:1detDuEpsdetDuRegionA}
        \det D\vec u_\e = \rho^{-2}(u_\rho^\e)^2\partial_\rho u_\rho^\e.
            \end{align}
            By Lemma \ref{le:auxDCregion_a},
            \begin{align}
    \label{eq:2detDuEpsdetDuRegionA}
            \partial_\rho u_\rho^\e
            &= \underbrace{\frac{1}{1-\e}}_{\geq 1} \Bigg ( 
                   \Big (
 (\cos (\pi-\varphi) + 2\e^\gamma)^3 
 - 3\int_{\sigma=0}^1 h(\sigma, \varphi)\dd\sigma
 \Big )^{\frac{1}{3}}
                   -\e^\gamma \Bigg )
            \geq -\cos \varphi.
            \end{align}
            Regarding $(u_\rho^\e)^2$, using again Lemma \ref{le:auxDCregion_a},
    \begin{align}
\label{eq:3detDuEpsdetDuRegionA}
    u_\rho^\e 
    \geq 
    \underbrace{\frac{1-\rho}{1-\e}}_{\geq (1-\rho)} (\cos (\pi\varphi) + \e^\gamma) 
    + \underbrace{\e^\gamma \frac{\rho-\e}{1-\e}}_{\geq 0}
    \geq -(1-\rho)\cos\varphi. 
    \end{align}
    Combining \eqref{eq:1detDuEpsdetDuRegionA}, \eqref{eq:2detDuEpsdetDuRegionA}, and
    \eqref{eq:3detDuEpsdetDuRegionA} it can be seen that
    \begin{equation*}
     \det D\vec u_\e \geq -(1-\rho)^2 \rho^{-2}\cos^3 \varphi = \det D\vec u.
    \end{equation*}
\end{proof}

Now bounds from above are needed for $\det D\vec u_\e$.

    \begin{lemma}
    Suppose that $\e$ is small enough so that $\frac{(1+2\e^\gamma)^3}{1-\e} < 2.$
    Then, for all $\rho\in (\e,1)$, $\theta\in [0,2\pi]$, and $\varphi \in [\frac{\pi}{2},\pi]$,
        $$\det D\vec u_{\e} \big ( \vec x (\rho, \theta, \varphi)\big ) \leq 2\rho^{-2}.$$
    \end{lemma}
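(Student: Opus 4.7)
I would start from the factorization of the Jacobian already recorded in \eqref{eq:1detDuEpsdetDuRegionA}, namely
\[
 \det D\vec u_\eps = \rho^{-2}(u_\rho^\eps)^2\,\partial_\rho u_\rho^\eps,
\]
(interpreting the sign so as to ensure positivity, given that $u_\varphi^\eps=\pi-\varphi$). Since the prefactor $\rho^{-2}$ is exactly the one appearing in the target bound, the task reduces to showing that $(u_\rho^\eps)^2 \,|\partial_\rho u_\rho^\eps|<2$ throughout $a_\eps$.

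The key structural observation is that, setting
\[
 A_\eps(\varphi) := \bigl(\cos(\pi-\varphi)+2\eps^\gamma\bigr)^3 - 3\int_{\sigma=0}^1 h_\eps(\sigma,\varphi)\dd\sigma,
 \qquad
 \lambda(\rho) := \frac{1-\rho}{1-\eps},
\]
the definition \eqref{eq:def-u_eps-region_a} reads
\[
 u_\rho^\eps(\rho,\varphi) = \lambda(\rho)\,A_\eps(\varphi)^{1/3} + (1-\lambda(\rho))\,\eps^\gamma ,
\]
which for $\rho\in[\eps,1]$ is a genuine convex combination of two non-negative quantities. Indeed, Lemma \ref{le:auxDCregion_a} (applied at the angle $\pi-\varphi\in[0,\pi/2]$) gives $A_\eps(\varphi)\geq 0$, while the positivity of the integral $\int_0^1 h_\eps(\sigma,\varphi)\dd\sigma$ provided by Lemma \ref{le:positive_h} yields the crucial upper bound $A_\eps(\varphi)^{1/3}\leq \cos(\pi-\varphi)+2\eps^\gamma$. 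Since also $\eps^\gamma\leq \cos(\pi-\varphi)+2\eps^\gamma$, the convex-combination structure immediately gives
\[
 u_\rho^\eps(\rho,\varphi) \leq \max\bigl\{A_\eps(\varphi)^{1/3},\eps^\gamma\bigr\}\leq \cos(\pi-\varphi)+2\eps^\gamma \leq 1+2\eps^\gamma .
\]

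Differentiating the same formula gives $\partial_\rho u_\rho^\eps = (\eps^\gamma - A_\eps(\varphi)^{1/3})/(1-\eps)$, and the same endpoint bound produces
\[
 |\partial_\rho u_\rho^\eps| \leq \frac{\max\{A_\eps(\varphi)^{1/3},\eps^\gamma\}}{1-\eps} \leq \frac{\cos(\pi-\varphi)+2\eps^\gamma}{1-\eps}\leq \frac{1+2\eps^\gamma}{1-\eps}.
\]
Multiplying the two estimates and invoking the smallness hypothesis $(1+2\eps^\gamma)^3/(1-\eps)<2$ yields
\[
 \det D\vec u_\eps \leq \rho^{-2}\,\frac{(\cos(\pi-\varphi)+2\eps^\gamma)^3}{1-\eps}\leq \rho^{-2}\,\frac{(1+2\eps^\gamma)^3}{1-\eps}<2\rho^{-2},
\]
as required. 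There is essentially no obstacle in this argument: all the nontrivial work has been done in Lemmas \ref{le:positive_h} and \ref{le:auxDCregion_a}, and the role of the present lemma is only to combine them with the convex-combination observation and a trivial differentiation.
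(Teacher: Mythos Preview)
Your proof is correct and follows essentially the same approach as the paper: bound $u_\rho^\eps$ by $1+2\eps^\gamma$ using that $h_\eps\geq 0$ (hence $A_\eps(\varphi)^{1/3}\leq \cos(\pi-\varphi)+2\eps^\gamma$), bound $|\partial_\rho u_\rho^\eps|$ by $(1+2\eps^\gamma)/(1-\eps)$ from the affine structure, and multiply. Your explicit framing of $u_\rho^\eps$ as a convex combination of $A_\eps(\varphi)^{1/3}$ and $\eps^\gamma$ is a slightly cleaner way of organizing the same two estimates, but the substance is identical.
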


    \begin{proof}
    From \eqref{eq:def-u_eps-region_a} it can be seen (since $h\geq 0$) that
    $$
        |u_\rho^\e(\rho,\varphi)|\leq \frac{1-\rho}{1-\e} (1+2\e^\gamma) + \e^\gamma \frac{\rho-\e}{1-\e} \leq (1+2\e^\gamma).
    $$
    On the other hand,
    \begin{align*}
        \partial_\rho u_\rho^\e
            &= \frac{1}{1-\e} \Bigg ( 
                   \Big (
 (\underbrace{\cos (\pi-\varphi)}_{\leq 1} + 2\e^\gamma)^3 
 - 3\int_{\sigma=0}^1 h(\sigma, \varphi)\dd\sigma
 \Big )^{\frac{1}{3}}
                   -\e^\gamma \Bigg )
            \leq \frac{1+2\e^\gamma}{1-\e}.
         \end{align*}
    Plugging both into \eqref{eq:1detDuEpsdetDuRegionA} the desired upper bound for $\det D\vec u_\e$ is obtained. 
    \end{proof}

We are now ready to prove the part of Proposition \ref{prop:convergence_energy} relative to the region \(a_\e\).
    \begin{proof}[Proof of Proposition \ref{prop:convergence_energy} in \(a_\e\)]
We start with the Dirichlet part of the energy. Since \(\vec u\) is given in spherical coordinates in this region we use \eqref{eq:Dirichlet_energy_spherical_sperical}. By direct computation we can check that \( \chi_{a_\e}D\vec u_\e \rightarrow\chi_a D\vec u\) pointwise. Furthermore using that \( h(s,\varphi)=O(\e^2)\) from Lemma \ref{le:hE2Cos} we can see that \( |\p_\rho u_\rho^\e|, |u_\rho^\e \p_\varphi u_\varphi^\e|, |u_\rho^\e \sin u_\varphi^\e|\) are uniformly bounded in the region \(a_\e\). Now, using the same computation as in the last item of Lemma \ref{le:partialUAPrime} we also obtain that \( | \p_\varphi u_\rho^\e|\) is uniformly bounded. We can thus applied the dominated convergence theorem to obtain the convergence of the Dirichlet energy in this region.

    We then deal with the determinant part of in the energy. Combining the bounds for $\det D\vec u_\e$ from below and above one obtains
    \begin{align*}
    H\big (\det D\vec u_{\e}(\vec x)\big ) \leq C + H\big (\det D\vec u(\vec x)\big )
    + H(2\rho^{-2}),
    \end{align*}
    the constant $C$ being $C:=\max\{ H(J): \delta \leq J\leq 1/\delta\}$.
    In order to use the dominated convergence theorem it suffices, then, to show that $H(2\rho^{-2})$ is integrable.
We just note that
    \begin{align*}
    \int_a H(2\rho^{-2})\dd \vec x 
    &= 2\pi \int_{\rho=0}^1\int_{\varphi=0}^\pi H(2\rho^{-2}) \rho^2 \dd (-\cos \varphi)\dd\rho
    = 2\pi \int_{\rho=0}^1 H(2\rho^{-2}) \rho^2\dd\rho.
    \end{align*}
    Changing the integration variable to $t:=2\rho^{-2}$ and using Lemma \ref{le:integrabilityHregionA},
    \begin{align*}
    \int_a H(2\rho^{-2})\dd \vec x 
    &=2\sqrt{2}\pi \int_{t=2}^\infty H(t)t^{-5/2} \dd t < \infty.
    \end{align*}
\end{proof}

\subsubsection{Extra energy in $b$ is negligible}
Here we prove the part of Proposition \ref{prop:convergence_energy} concerning region~\(b\).

\begin{proof}[Proof of Proposition \ref{prop:convergence_energy} in \(b\)]

We start with the Dirichlet energy. Since in this region \(\vec u_\e= \e^\gamma \vec \psi \left(\e^\gamma \vec \phi_\e (\vec x) \right)\) we find that \( D \vec u_\e (\vec x)= D \vec \psi (\e^{-\gamma}\vec \phi_\e (\vec x))D\vec \phi_\e(\vec x))\). From this and the expressions of \( \vec \phi_\e\) and \( \vec \psi\) we can see that \( D \vec u_\e \rightarrow D \vec u\) almost everywhere in \( b\).  Since \( \vec \psi \) is bi-Lipschitz and \( |D \vec \phi_\e|\leq C\) for \(\e\) small enough we can apply the dominated convergence theorem to get the convergence of the Dirichlet energy.

For the determinant part, as in Subsection \ref{subsubse:aenegligible}, we start by deriving an integrability property of the function \(H\) in Theorem \ref{th:upper_bound}. We can compute that the Jacobian of the limit map is 
$$
	\det D\vec u (\rho, \varphi) = \frac{ (\rho-1)^2 \sin \Big (\frac{\varphi+\pi}{2} \Big )}{2\rho^2 \sin\varphi}
	= \frac{(\rho-1)^2}{4\rho^2 \sin \frac{\varphi}{2}}.
$$
We know from \eqref{eq:HdecreasingIncreasing} that $H$ is decreasing in some interval $(0,\delta)$. Since
$$
	\rho\geq 1,\ \frac{\pi}{2}\leq \varphi\leq \pi\quad\Rightarrow\quad 
	\det D\vec u \leq \frac{(\rho-1)^2}{2\sqrt{2}},
$$
it follows that 
$$
	\frac{(\rho-1)^2}{2\sqrt{2}}\leq\delta\ \Rightarrow\ H(\det D\vec u) \geq H\Big ( \frac{(\rho-1)^2}{2\sqrt{2}} \Big ).
$$
Hence
\begin{align*}
	\infty &> 
	\int_b H(\det D\vec u)\dd\vec x \geq 2\pi \int_{\rho=1}^3 \int_{\varphi=\frac{\pi}{2}}^{\pi} 
		H(\det D\vec u) \cdot 1^2 \cdot \sin\varphi\dd\varphi \dd\rho 
		\\ & \geq 
	2\pi \int_{\{ \frac{(\rho-1)^2}{2\sqrt{2}}\leq\delta \}}
	\int_{\varphi=\frac{\pi}{2}}^\pi H\Big ( \frac{(\rho-1)^2}{2\sqrt{2}} \Big )\sin\varphi\dd\varphi\dd\rho 
	= 2\pi \int_{t=0}^{\sqrt{\delta}} H(t^2)\cdot \sqrt{2\sqrt{2}} \dd t,
\end{align*}
which yields that the function $t\mapsto H(t^2)$ is $L^1\big ((0,1)\big )$. This is helpful in what follows. 

Now we estimate $H(\det D\vec u_\e)$. In the region $\rho\geq 1+\sqrt{2}\e^\gamma$ we have that 
\[
         \vec u_\e(\vec x) =\e^\gamma \id \Big ( \e^{-\gamma} \vecg\phi_\e (\vec x) \Big )
        = \e^\gamma \vec e_3 + (\rho -1 +\sqrt{2}\e^\gamma ) \Big ( \Big ( \sin \frac{\varphi+\pi}{2}\Big ) \vec e_r + \Big (  \cos \frac{\varphi+\pi}{2}\Big ) \,\vec e_\theta\Big ).
\]
Hence
$$	
	\det D\vec u_\e = \frac{(\rho-1+\sqrt{2}\e^\gamma)^2}{4\rho^2 \sin \frac{\varphi}{2}}.
$$
On the one hand, $\det D\vec u_\e(\vec x)\geq \det D\vec u(\vec x)$ for all $\vec x$ in that region. 
On the other hand,
$$
	1\leq \rho\leq 3,\ \frac{\pi}{2}\leq \varphi\leq \pi\quad\Rightarrow\quad 
	\det D\vec u_\e \leq \frac{(2+\sqrt{2}\e^\gamma)^2}{4\cdot 1^2 \cdot \frac{\sqrt{2}}{2}} \leq 2
$$
for small $\e$. Therefore,
$$
	H(\det D\vec u_\e(\vec x)) \leq \max\Big \{ H(\det D\vec u(\vec x)), \max_{\delta\leq J\leq 2} H(J)\Big \}
$$
and, by dominated convergence, 
$$
	\int_{b\cap \{\rho \geq 1+\sqrt{2}\e^\gamma\}} H (\det D\vec u_\e) \dd\vec x \overset{\e\to0}{\longrightarrow} \int_b H(\det D\vec u) \dd \vec x.
$$

In the region $1\leq \rho \leq 1+\sqrt{2}\e^\gamma$: 
$$
	\det D\vec u_\e(\vec x) = \Big ( \det (D\vecg\psi)\big (\e^{-\gamma}\vecg\phi_\e(\vec x)\big ) \Big ) \cdot \frac{(\rho-1+\sqrt{2}\e^\gamma)^2}{4\rho^2 \sin \frac{\varphi}{2}},
$$
so
$$
	(\min \det D\vecg\psi)\cdot \frac{(\rho -1+\sqrt{2}\e^{\gamma})^2}{4(1+\sqrt{2}\e^\gamma)^2\cdot 1}
	\leq \det D\vec u_\e(\vec x) \leq (\max \det D\vecg\psi)\frac{(\rho -1+\sqrt{2}\e^{\gamma})^2}{4\cdot 1^2 \cdot \frac{\sqrt{2}}{2}}.
$$
Since $\rho-1+\sqrt{2}\e^\gamma\leq 2\sqrt{2}\e^\gamma$, for $\e$ small enough, $\det D\vec u_\e(\vec x)$ lies in the region where $H$ is decreasing and 
$$
	H(\det D\vec u_\e(\vec x)) \leq H\big ( C^2 (\rho-1 + \sqrt{2}\e^\gamma)^2 \big ),\quad \text{with} \quad C^2= \frac{\min \det D\vecg\psi}{5}.
$$
Consequently, 
\begin{align*}
	\int_{\{ 1\leq \rho \leq 1+\sqrt{2}\e^\gamma \}} H(\det D\vec u_\e) \dd\vec x 
	&\leq
	2\pi 
	\int_{\{ 1\leq \rho \leq 1+\sqrt{2}\e^\gamma \}} H\big ( C^2 (\rho-1 + \sqrt{2}\e^\gamma)^2 \big )
	\int_{\varphi=\frac{\pi}{2}}^\pi 3^2 \sin\varphi \dd\varphi\dd\rho
	\\
	&=
	18\pi \int_{t=0}^{2\sqrt{2}C\e^\gamma} H(t^2)\dd t ,
\end{align*}
which vanishes as $\e\to 0$ because $t\mapsto H(t^2)$ is integrable in $(0,1)$. 
\end{proof}

	\subsubsection{Extra energy in $e_\e'$ is negligible}
	
	We prove here the part of Proposition \ref{prop:convergence_energy} concerning region \(e_\e'\) and in particular \eqref{eq:energy_vanishes_a'_e'}.
	
We start with the following.
	\begin{lemma}
  \label{le:estimatesUrhoRegionEprime}
		For all $\varphi\in [0,\frac{\pi}{2}]$, all $s\in[0,1]$, and all $\e$ sufficiently small,
		\begin{align}
	\label{eq:bndUregionEprime}
		\cos \varphi + 2\e^\gamma \leq u_\rho^\e
		\leq \Big (  ( \cos \varphi + 2\e^\gamma )^3 + 12\sqrt{2}\e 
			+ \frac{9\pi \sqrt{2}}{2}\e^2 \cos\varphi \Big )^\frac{1}{3}
			\leq 2,
		\end{align}
		\begin{align}
	\label{eq:bndUregionEprime2}
			|\partial_s u_\rho^\e|\leq C\e^{2-2\gamma}\cos\varphi
\quad \text{and}\quad |\partial_\varphi u_\rho^\e|\leq C \e^{-2\gamma}.
		\end{align}

	\end{lemma}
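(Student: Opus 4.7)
The plan is to read off both bounds directly from the explicit formula
\[
  u_\rho^\eps(s,\varphi)^3 = (\cos\varphi + 2\eps^\gamma)^3 + \Bigg[\frac{3r}{\partial_r(-\cos f_\eps(r))}\Bigg]_{r=g(\varphi)} + 3\int_{0}^{s} h_\eps(\sigma,\varphi)\,\dd\sigma,
\]
by controlling each of the two ``extra'' terms separately.  For the \emph{lower bound} in \eqref{eq:bndUregionEprime} the key observation is that both extra terms are nonnegative: $h_\eps \geq 0$ by Lemma \ref{le:positive_h}, and the bracket term is $3g(\varphi)/[\sin\varphi\cdot f_\eps'(g(\varphi))]\geq 0$ since $f_\eps$ is strictly increasing and $0\leq \varphi <\frac{\pi}{2}$ (use $f_\eps(g(\varphi))=\varphi$, so $\sin(f_\eps(g(\varphi)))=\sin\varphi\geq 0$). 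Cubing back through the monotonicity of $t\mapsto t^{1/3}$ yields $u_\rho^\eps\geq \cos\varphi+2\eps^\gamma$.

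For the \emph{upper bound}, the integral term is estimated by Lemma \ref{le:hE2Cos}, which gives $3\int_0^s h_\eps\,\dd\sigma\leq \frac{9\pi\sqrt 2}{2}\eps^2\cos\varphi$. For the bracket term I would substitute the exact formula $f_\eps'(r)=\eps^2/(\eps^4+r^2)+\alpha_\eps/\eps$: since $g(\varphi)\leq \eps$, one has $f_\eps'(g(\varphi))\geq \eps^2/(\eps^4+\eps^2)=1/(1+\eps^2)\geq 1/2$ for small $\eps$, and Lemma \ref{le:hE2Cos} (which is already used to bound $h_\eps$) provides $g(\varphi)/\sin\varphi\leq \sqrt 2\eps$ on $[0,\tfrac{\pi}{2}]$. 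Combining,
\[
 \Bigg[\frac{3r}{\partial_r(-\cos f_\eps(r))}\Bigg]_{r=g(\varphi)} = \frac{3g(\varphi)}{\sin\varphi\, f_\eps'(g(\varphi))}\leq 3\sqrt 2\eps\cdot 2 = 6\sqrt 2 \eps \leq 12\sqrt 2\eps.
\]
This establishes the middle inequality of \eqref{eq:bndUregionEprime}, and the rightmost $\leq 2$ follows because $(\cos\varphi+2\eps^\gamma)^3\leq (1+2\eps^\gamma)^3$ while the two correction terms are $O(\eps)$, so the cube is $\leq 8$ for $\eps$ small.

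For the derivative estimates \eqref{eq:bndUregionEprime2}, I would differentiate $u_\rho^\eps=[(u_\rho^\eps)^3]^{1/3}$ and use $(u_\rho^\eps)^2\geq (2\eps^\gamma)^2=4\eps^{2\gamma}$ (from the already proved lower bound) in the denominator. For $\partial_s$,
\[
 \partial_s u_\rho^\eps = \frac{h_\eps(s,\varphi)}{(u_\rho^\eps)^2},
\]
and Lemma \ref{le:hE2Cos}'s bound $h_\eps(s,\varphi)\leq \frac{3\pi\sqrt 2}{2}\eps^2\cos\varphi$ yields $|\partial_s u_\rho^\eps|\leq C\eps^{2-2\gamma}\cos\varphi$. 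For $\partial_\varphi$, I would differentiate each of the three terms in $(u_\rho^\eps)^3$: the first gives $-3(\cos\varphi+2\eps^\gamma)^2\sin\varphi$ which is $O(1)$; the integral's $\varphi$-derivative is bounded like $|\partial_\varphi h_\eps|=O(\eps)$ (computable directly from \eqref{eq:def_h} using $g(\varphi)\leq\eps$ and $g'(\varphi)\leq 2\eps^2$); the bracket term's $\varphi$-derivative involves $g'(\varphi)$ and derivatives of $f_\eps$ at $r=g(\varphi)$, which are all bounded uniformly in $\varphi$ using the explicit expression for $f_\eps$ on $[0,\eps]$. So the numerator is $O(1)$ and dividing by $3(u_\rho^\eps)^2\geq 12\eps^{2\gamma}$ gives $|\partial_\varphi u_\rho^\eps|\leq C\eps^{-2\gamma}$.

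The main obstacle is the bracket-term estimate and its $\varphi$-derivative, because they require handling $f_\eps'$, $f_\eps''$ and $g'$ simultaneously near the singular behaviour of the stereographic map at $r=0$; however, the key fact that $f_\eps'(r)\geq 1/2$ uniformly for $r\in [0,\eps]$ when $\eps$ is small makes the denominators harmless, so the computation reduces to routine (if slightly tedious) bookkeeping using Lemma \ref{le:f_and_derivatives} and the chain rule $g'(\varphi) = 1/f_\eps'(g(\varphi))$.
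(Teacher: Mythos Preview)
Your proposal is correct and follows essentially the same approach as the paper: decompose $(u_\rho^\eps)^3$ into its three summands, bound the bracket and integral terms using the technical lemmas on $f_\eps$, $g_\eps$, $h_\eps$, and use $(u_\rho^\eps)^2\geq 4\eps^{2\gamma}$ in the denominator for the derivative estimates. One small slip: you write ``$g'(\varphi)\leq 2\eps^2$'' when justifying $|\partial_\varphi h_\eps|=O(\eps)$, but Lemma~\ref{le:gPrime} only gives $g'(\varphi)\leq 2$ on the full interval $[0,\tfrac{\pi}{2}]$ (the $2\eps^2$ bound holds only on $[0,\tfrac{\pi}{4}]$); however, the $O(\eps)$ bound on $\partial_\varphi h_\eps$ is already recorded in Lemma~\ref{le:hE2Cos}, so the argument goes through unchanged once you cite that instead.
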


	\begin{proof}
		The bounds for $u_\rho^\e$ can be obtained using Lemmas \ref{le:f_and_derivatives}.\ref{it:p_r-cos_f})
		and \ref{le:hE2Cos}.
		The bound for $\partial_s u_\rho^\e$ is proved exactly as in Lemma \ref{le:hE2Cos}. Regarding $\partial_\varphi u_\rho^\e$,
		\begin{multline*}
			3\partial_\varphi u_\rho^\e = (u_\rho^\e)^{-2} \partial_\varphi \big ( (u_\rho^\e)^3\big ) 
			= -3 \underbrace{\left( \frac{\cos \varphi +2\e^\gamma}{u_\rho^\e}\right)^2}_{\leq 1 \text{ by \eqref{eq:bndUregionEprime}}} \sin \varphi 
			\\
			+ (u_\rho^\e)^{-2} \left [ \partial_r \left ( \frac{3r}{\partial_r \big ( -\cos f(r) \big ) }\right)\right]_{r=g(\varphi)}g'(\varphi)
			+ 3(u_\rho^\e)^{-2} 
			\underbrace{\int_{\sigma=0}^s \partial_\varphi h(\sigma, \varphi)\dd\sigma}_{=O(\e) \text{ by Lemma \ref{le:hE2Cos}}}.
		\end{multline*}
		The claim \eqref{eq:bndUregionEprime2} follows from
		$u_\rho^\e\geq 2\e^\gamma$ (see  \eqref{eq:bndUregionEprime}), 
		Lemma \ref{le:f_and_derivatives}.\ref{it:p_r-complicated}),
		and Lemma \ref{le:gPrime}.
	\end{proof}

	\begin{proof}[Proof of Proposition \ref{prop:convergence_energy} in \(e_\e'\)]
	We note that in this region \( \det D \vec u_\e =1\) and thus
	\[ \int_{e_\e'} H(\det D \vec u_\e) \dd \vec x=O(\e^3) .\]
We deal with the Dirichlet energy. 
	First, the parametrization of the region in the reference configuration is equivalent up to a reflection (changing the sign of $\nabla \varphi$ but not its magnitude) to the parametrization of the region \(a_\e'\).
		By \eqref{eq:DirichletAPrime} and Lemma \ref{le:estimatesUrhoRegionEprime},
		\begin{align*}
		 \int_{e_\e'} |D\vec u_\e|^2 \dd\vec x 
			\leq \underbrace{C\e^{4(\frac{1}{2}-\gamma)}}_{\leq 1}\int_{e_\e'} (\e\cos\varphi)^2|\nabla s|^2 
			+ 4\int_{e_\e'} |\nabla \theta|^2 \sin^2\varphi\, \dd\vec x
			+ C\e^{-4\gamma}\int_{e_\e'} |\nabla \varphi|^2\,\dd\vec x. 
		\end{align*}
		The estimate for $\int |D\vec u_\e|^2$ in this region now follows from Lemma \ref{le:gradVarphiAPrime}, which can be applied in region \(e_\e'\) thanks to a reflection argument.
	\end{proof}

    \subsubsection{Extra energy in $e_\e$ is negligible}
    We show here that the neo-Hookean energy of the recovery sequence in region \(e_\e\) converges to the neo-Hookean energy of the limit map in region \(e\), cf.\ \eqref{eq:convergence_energy_a_b_d_f} in Proposition \ref{prop:convergence_energy}.

Before doing the proof of this fact we start by stating some integrability properties of the function \(H\) in Theorem \ref{th:upper_bound}.

	\begin{lemma}
	 Let  $H$ be as in the statement of Theorem \ref{th:upper_bound}. Then,
		\begin{align*}
        \int_0^1 H(s^3)\dd s <\infty
        \quad \text{and}\quad 
        \int_1^\infty s^{-5/2}H(s)\dd s < \infty.
        \end{align*}
	\end{lemma}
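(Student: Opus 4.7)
The second integrability property, $\int_1^\infty s^{-5/2} H(s)\dd s < \infty$, is precisely the content of Lemma \ref{le:integrabilityHregionA}, which has already been established by exploiting the behaviour of $\det D\vec u$ in region $a$ near the origin. So the real task is to establish $\int_0^1 H(s^3)\dd s < \infty$, and the plan is to extract this from the finiteness hypothesis \eqref{eq:finiteHu} in a region where $\det D\vec u$ is \emph{small}, comparable to the cube of some parameter.

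The natural candidate is again region $a$, where, by the same Jacobian computation as in \eqref{eq:detContiDeLellisRegionA} (using the substitution $\tilde\varphi=\pi-\varphi$ so that $u_\varphi=\tilde\varphi$ and $u_\rho=(1-\rho)\cos\tilde\varphi$), one obtains
\begin{equation*}
    \det D\vec u = (1-\rho)^2\rho^{-2}\cos^3\tilde\varphi
\end{equation*}
with $\tilde\varphi\in[0,\frac{\pi}{2}]$. The idea is to fix $\rho$ in an intermediate range, say $\rho\in[\frac{1}{2},\frac{3}{4}]$, where $(1-\rho)^2\rho^{-2}$ is bounded above by $1$ and below by a positive constant $c_0$, so that $\det D\vec u$ is comparable to $\cos^3\tilde\varphi$.

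The monotonicity statement \eqref{eq:HdecreasingIncreasing} gives a $\delta>0$ with $H$ decreasing on $(0,\delta)$. Restricting further to $\cos\tilde\varphi\leq \delta^{1/3}$ forces $\det D\vec u\leq \delta$, so both $\det D\vec u$ and $c_0\cos^3\tilde\varphi$ lie in the decreasing regime of $H$. Since $\det D\vec u\leq \cos^3\tilde\varphi$ in this subregion, monotonicity yields the pointwise bound $H(\cos^3\tilde\varphi)\leq H(\det D\vec u)$. Integrating over $\tilde\varphi\in[0,\arccos(\delta^{1/3})^c]$ and $\rho\in[\frac{1}{2},\frac{3}{4}]$, using the volume element $\rho^2\sin\tilde\varphi\dd\tilde\varphi\dd\rho\dd\theta$ and the change of variables $t=\cos\tilde\varphi$, $\dd t=-\sin\tilde\varphi\dd\tilde\varphi$, one obtains
\begin{equation*}
    \infty > \int_{a}H(\det D\vec u)\dd\vec x \geq C\int_{0}^{\delta^{1/3}} H(t^3)\dd t.
\end{equation*}
Combining with the continuity (hence local boundedness) of $H$ on $[\delta,1/\delta]$, which makes $H(t^3)$ bounded on $[\delta^{1/3},1]$, gives $\int_0^1 H(s^3)\dd s<\infty$.

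The argument is essentially routine once the correct subregion is chosen; the only subtle point is to pick the range of $\rho$ so that $(1-\rho)^2/\rho^2\leq 1$, which ensures the inequality $\det D\vec u\leq \cos^3\tilde\varphi$ and thus allows monotonicity of $H$ to be applied in the direction needed. The second claim is simply invoking Lemma \ref{le:integrabilityHregionA}.
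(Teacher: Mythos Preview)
Your argument is correct and follows essentially the same strategy as the paper: restrict to a subregion where $\det D\vec u$ is comparable to $\cos^3$ of the zenith angle, use the monotonicity of $H$ near zero, and change variables to $t=\cos(\text{angle})$. The only difference is that the paper carries this out in region $e$ (where $\det D\vec u = (1+\rho)^2\rho^{-2}\cos^3\varphi$ and the bound $(1+\rho)^2\rho^{-2}\leq 16$ on $[\tfrac12,1]$ forces a rescaling $s=16^{1/3}t$), whereas you work in region $a$ with $\rho\in[\tfrac12,\tfrac34]$ so that $(1-\rho)^2\rho^{-2}\leq 1$ and no rescaling is needed. One cosmetic point: your interval ``$\tilde\varphi\in[0,\arccos(\delta^{1/3})^c]$'' should read $\tilde\varphi\in[\arccos(\delta^{1/3}),\tfrac{\pi}{2}]$.
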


	\begin{proof}
       We remark the expression of the Jacobian of the limit map:
    \begin{align*}
    \det D\vec u = \frac{u_\rho^2\partial_\rho u_\rho}{\rho^2}
    = (1+\rho)^2 \rho^{-2}\cos^3 \varphi.
    \end{align*}
     It vanishes as $\varphi\to \frac{\pi}{2}^-$ and goes to infinity as $\rho\to 0^+$ 
    (and $\cos\varphi$ remains away from zero). The neo-Hookean energy of the limit map is being assumed to be finite, so
        \begin{align*}
        \infty > \int_{e} H(\det D\vec u)\, \dd\vec x 
        &= 2\pi \int_{\varphi=0}^1\int_{\varphi=0}^{\pi/2}
        H\Big ( (1+\rho)^2\rho^{-2}\cos^3 \varphi \Big ) \rho^2 \sin\varphi\,\dd\varphi\, \dd\rho
        \\
        &\underset{t=\cos\varphi}{=}\int_{\rho=0}^1\int_{t=0}^1 H\Big ( (1+\rho)^2\rho^{-2}t^3\Big)\rho^2 \,\dd t \,\dd\rho.
        \end{align*}
        Note that
    \begin{equation*}
    \int_{\rho=0}^1\int_{t=0}^1 H\Big ( (1+\rho)^2\rho^{-2}t^3\Big)\rho^2 \,\dd t \,\dd\rho
    > \int_{\rho=\frac{1}{2}}^1 \int_{t=0}^{(\delta/16)^{1/3}} H(16t^3)\,\dd t\,\dd \rho
    \end{equation*}
    because if $\frac{1}{2}<\rho< 1$ and $t <(\delta/16)^{1/3}$ then 
    \begin{align*}
        (1+\rho)^2\rho^{-2}t^3 < (1+1)^2 (1/2)^{-2}t^3 < \delta
    \end{align*}
    and in $(0,\delta)$ the function $H$ decreases (see \eqref{eq:HdecreasingIncreasing}).
        The integrand is now independent of $\rho$ and changing variables to $s=16^{1/3}t$
        yields the integrability of $H(s^3)$ near zero.

    Regarding the integrability at infinity, it was already established in Lemma \ref{le:integrabilityHregionA}.
	\end{proof}

\begin{proof}[Proof of Proposition \ref{prop:convergence_energy} in \(e_\e\)]
For the Dirichlet part of the energy we use spherical coordinates and formula \eqref{eq:Dirichlet_energy_spherical_sperical}. We note that 
\begin{align*}
\left[\frac{3r}{\p_r(-\cos f(r))}\right]_{r=g(\varphi)}= \frac{3 g(\varphi)}{f'(g(\varphi))\sin \varphi}.
\end{align*}
We can compute \(D \vec u_\e\) and use that \( h(s,\varphi)=O(\e^2)\) and Lemmas \ref{le:gPrime}--\ref{le:hE2Cos} to prove that \(  D\vec u_\e \rightarrow D \vec u\) pointwise and that \( | D\vec u_\e| \leq C\) in \(e_\e\). Hence, from the dominated convergence theorem we obtain the convergence of the Dirichlet energy.

For the other part of the energy, using \eqref{eq:det_spherical_spherical}, one obtains the Jacobian determinant for $\vec u_\e$:
	\begin{align}
  \label{eq:boundDetRegionE}
	 \det D\vec u_\e = \rho^{-2}(u_\rho^\e)^2\partial_\rho u_\rho^\e 
		= \frac{1}{1-\e}\rho^{-2}(u_\rho^\e)^2 \big ( 2\cos\varphi + 6\e^{\gamma} - u_\rho^\e (\e,\varphi)\big ).
	\end{align}
	Note that
	\begin{equation*}
		(1+\e)\cos\varphi \leq \cos\varphi + \e 
		<\cos \varphi + 2\e^\gamma \leq u_\rho^\e(\e, \varphi),
	\end{equation*}
	so
	\begin{equation*}
		u_\rho(\rho,\varphi)=
	 (1+\rho)\cos\varphi  = \frac{1-\rho}{1-\e}\big ((1+\e)\cos\varphi\big) 
		+ \frac{\rho-\e}{1-\e} (2\cos\varphi)
		\leq u_\rho^\e(\rho,\varphi).
	\end{equation*}
	Using now \eqref{eq:refinedBoundUrhoRegionE},
	\begin{equation*}
	 \det D\vec u_\e 
\geq \frac{1}{1-\e} \rho^{-2} (u_\rho)^2 \cos\varphi = \frac{1}{1-\e}\det D\vec u\geq \det D\vec u.
	\end{equation*}

	Now a bound from above is needed for $\det D\vec u_\e$. 
	Plugging \eqref{eq:refinedBoundUrhoRegionE} along with the lower bound $u_\rho^\e(\e,\varphi)\geq \cos \varphi$ into \eqref{eq:boundDetRegionE} yields
	\begin{align*}
	 \det D\vec u\big (\vec x(\rho, \theta, \varphi)\big ) \leq \frac{1}{1-\e}\rho^{-2}\cdot \big (\cos\varphi + 2\e^\gamma +4\e^{\frac{1}{3}}\big )^2 \big ( \cos\varphi + 6\e^{\gamma}\big )
	\leq 2\rho^{-2}
	\end{align*}
	for every $\rho$ and $\varphi$ (provided $\e$ is sufficiently small). 

	These bounds, with exactly the same proof of as the part of Proposition \ref{prop:convergence_energy} relative to region $a_\e$, gives the desired conclusion:
    $$\lim_{\e\to 0}  
    \int_{e_{\e}} H(\det D\vec u_{\e}) \dd\vec x 
    = \int_{e} H(\det D\vec u)\dd \vec x.
        $$
\end{proof}

  \subsubsection{Extra energy in  $f$ is negligible}

We now deal with region \(f\) and prove Proposition \ref{prop:convergence_energy} for this region.
\begin{proof}[Proof of Proposition \ref{prop:convergence_energy} in \(f\)]
It is direct to see that, in region \(f\), we have \( D \vec u_\e \rightarrow D \vec u\) almost everywhere and \( |D\vec u_\e|\leq C(|D \vec u|^2+1)\). Then by dominated convergence theorem we obtain the convergence of the Dirichlet energy.
For the other part of the energy. First note (using \eqref{eq:det_spherical_spherical}) that 
	\begin{equation*}
	 \det D\vec u_\e = \frac{(u_\rho + 6\e^{\gamma})^2 \sin u_\varphi}{\rho^2\sin \varphi} \matrizzDet{\partial_\rho u_\rho & \partial_\varphi u_\rho \\
	\partial_\rho u_\varphi & \partial_\varphi u_\varphi},
\quad
	\det D\vec u =
	\frac{u_\rho^2 \sin u_\varphi}{\rho^2\sin \varphi} \matrizzDet{\partial_\rho u_\rho & \partial_\varphi u_\rho \\
	\partial_\rho u_\varphi & \partial_\varphi u_\varphi} .
	\end{equation*}
	On the one hand, this shows that $\det D\vec u_\e \geq \det D\vec u$. It remains to bound the Jacobian from above. Here we decompose $f$ in the part where $0\leq \varphi<\frac{\pi}{4}$ and that where $\frac{\pi}{4}\leq \varphi\leq\frac{\pi}{2}$. In the first part use that
	$$
		u_\rho(\rho,\varphi)\geq u_\rho(1,\varphi)=2\cos\varphi \geq \sqrt{2}\geq 6\e^{\gamma},
	$$
	which holds (at least close to $\rho=1$, where the regularization of constructing the sequence $\vec u_\e$ is required)
	since $\vec u$ is orientation-preserving and injective, so increasing in $\rho$. Then
	\begin{align*}
	 \det D\vec u_\e \leq 
\frac{(u_\rho + u_\rho)^2 \sin u_\varphi}{\rho^2\sin \varphi} \matrizzDet{\partial_\rho u_\rho & \partial_\varphi u_\rho \\
	\partial_\rho u_\varphi & \partial_\varphi u_\varphi}
	=4\det D\vec u.
	\end{align*}
	In $\{\varphi\in[\frac{\pi}{4},\frac{\pi}{2}]\}$ just consider that 
	\begin{align*}
	 \det D\vec u_\e \leq \frac{(u_\rho + 6\e^{\gamma})^2 \sin u_\varphi}{1^2 \cdot \sin \frac{\pi}{4}} \matrizzDet{\partial_\rho u_\rho & \partial_\varphi u_\rho \\
	\partial_\rho u_\varphi & \partial_\varphi u_\varphi}
	\end{align*}
	and that the right-hand side is uniformly bounded since the limit map $\vec u$ is Lipschitz. With these bounds at hand and the hypothesis on $H$ in Theorem \ref{th:upper_bound}, it is possible to obtain the conclusion invoking the dominated convergence theorem.
\end{proof}

	\subsubsection{Extra energy in $d_\e$ is negligible}
 \label{se:energyD}

We conclude this section by proving the convergence of the neo-Hookean energy of the recovery sequence towards the energy of the limit map in region~\(d_\e\).

\begin{proof}[Proof of Proposition \ref{prop:convergence_energy} in \(d_\e\)]
We recall that in region \( d_\e\), the map \(\vec u_\e\) is the composition of three maps: \(\vec u_\e=\vec w_\e 
\circ \vec g \circ \hat{\vec R}\) where we denoted by \( \hat{ \vec R}\) the map which is described in spherical coordinates by \( (r,
\theta,\varphi) \mapsto (\hat{r},\theta,\varphi)\) with \( \hat{r}\) defined in \eqref{eq:rPrimeRegionD}. We thus have \( D \vec u_\e=D \vec 
w_\e (\vec g\circ \hat{\vec R}) D \vec g( \hat{\vec R})D \hat{\vec R}\). From this and the expressions of \( \vec w_\e \) and \( \vec R\), it is 
easily seen that \( \chi_{d_\e}D \vec u_\e \rightarrow \chi_d D \vec u\) almost everywhere and that \( |D 
\vec u_\e |\leq C\) for some \(C >0\). Hence by dominated convergence we obtain the convergence of the Dirichlet energy.

We now examine the convergence of the determinant part of the energy. We first examine the determinant of the differential of the limit map to understand the integrability property of \(H\). Let $\vec g$ be the bi-Lipschitz map from $\{\hat r\vec e_r+ x_3 \vec e_3: \hat r\geq 0, x_3\in[0,1]\}$ onto $\{s\vec e_r + z\vec e_3: s\geq 0, z\in[0,3]\}$ of Section \ref{se:LimitMap}.
 Since $\vec g$ is Lipschitz and $s(\hat r,x_3)=0$ when $\hat r=0$ (because that corresponds to the values of $\vec g$ on the segment $B'C'$),
	\begin{align}
 \label{eq:sRhat}
		\left |\frac{s(\hat r,x_3)}{\hat r} \right |
	= \left | \fint_0^{\hat r} \frac{\partial s}{\partial \hat r} (t,x_3)\dd t \right | 
	= \left | \fint_0^{\hat r} \Big (\big (D\vec g(t\vec e_r+x_3\vec e_3)\big ) \vec e_r\Big ) \cdot \vec e_r\dd t \right |
	\leq \|D\vec g\|_\infty.
	\end{align}
	Since $\vec g^{-1}$ is Lipschitz,
	\begin{align}
 \label{eq:LwrBndMinor2by2}
		0< c \leq \det D\vec g = \frac{s(\hat r,x_3)}{\hat r} \frac{\partial(s,z)}{\partial(\hat r,x_3)}
		\leq \|D\vec g\|_\infty \frac{\partial(s,z)}{\partial(\hat r,x_3)}.
	\end{align}
	It can be seen that
	\begin{equation}
 \label{eq:detLimitD}
	\det D\vec u \big ( \hat r\vec e_r(\theta) + x_3\vec e_3\big ) = \frac{\pi}{12} \frac{s^2\sin\varphi}{\hat r} \frac{\partial (s,z)}{\partial (\hat r,x_3)}.
	\end{equation}
	In addition,
	\begin{equation}
 \label{eq:2by2minorSz}
	\frac{\partial (s,z)}{\partial (\hat r,x_3)} = \left | \frac{\partial \vec g}{\partial \hat r}\wedge \frac{\partial \vec g}{\partial x_3}\right |\leq \|D\vec g\|_\infty^2.
	\end{equation}
	From the above it follows that
	\begin{align*}
	\frac{c}{\|D\vec g\|_\infty}\frac{\pi}{12} \frac{s^2\cdot \sin \frac{\pi}{4}}{\hat r}
	\leq \det D\vec u \big ( \hat r\vec e_r(\theta) + x_3\vec e_\theta\big ) 
	\leq \frac{\pi}{12} \frac{s^2 \cdot 1}{\hat r}\|D\vec g\|_\infty^2.
	\end{align*}
	This amounts to saying that $\det D\vec u \sim s^2/\hat r$ and that 
	$$
		\infty > \int_{ d} H(s^2/\hat r)\dd\vec x 
		\geq 2\pi \int_{\hat r=0}^{3 \sin(\arccos(1/3))} \int_{x_3=0}^1 H \Big ( \frac{s^2(\hat r,x_3)}{\hat r}\Big ) \hat r\dd x_3 \dd \hat r.
	$$
	Since $s(\hat r,x_3)=0$ in the segments  $A'B'$ and $C'D'$, where $\hat r>0$, the determinant vanishes as $\vec x$ tends to those segments from region $d$. In contrast, the determinant does not blow up to infinity since the singularity $\hat r=0$ in the denominator is removable (recall that $s/\hat r\leq \|D\vec g\|_\infty$).

	Now we consider the determinant of \( D \vec u_\e\). Let $\vec w_\e$ be the auxiliary transformation of \eqref{eq:wEps}. We observe that 
	\begin{equation}\label{eq:det3w}
	\begin{split}
	 &\frac{\partial \vec w_\e}{\partial s} \wedge \frac{\partial \vec w_\e}{\partial \theta} \wedge \frac{\partial \vec w_\e}{\partial z}
	\\&= (\sin\varphi\,\vec e_r -\cos \varphi\,\vec e_3) \wedge (w_r^\e \vec e_\theta) \wedge \Big (  \omega_\e'(z)\vec e_r+\frac{\pi}{12} s\,\cos \varphi\,\vec e_r + \frac{\pi}{12} s\,\sin\varphi\,\vec e_3\Big )
	\\
	&= \begin{vmatrix} \sin \varphi & 0 & \omega_\e' + \frac{\pi}{12}s\cos\varphi \\  0 & w_r^\e & 0 \\ -\cos\varphi & 0 & \frac{\pi}{12}s\sin\varphi \end{vmatrix}
	= w_r^\e \cdot \Big (\frac{\pi}{12}s +\omega_\e'(z)\cos\varphi \Big ).
	\end{split}
	\end{equation}
	Since $\vec u_\e$ is defined with cylindrical coordinates,
	\begin{align}
	& \det D\vec u_\e \big ( r\vec e_r(\theta) + x_3 \vec e_3 \big ) 
	= \frac{1}{r} \frac{\partial \vec u_\e}{\partial r}\wedge \frac{\partial \vec u_\e}{\partial \theta} \wedge \frac{\partial \vec u_\e}{\partial x_3} \nonumber
	\\
	&= \frac{1}{r} \Big ( \frac{\partial \vec w_\e}{\partial s} \frac{\partial s}{\partial \hat r}\frac{\partial \hat r}{\partial r} + \frac{\partial \vec w_\e}{\partial z}\frac{\partial z}{\partial \hat r} \frac{\partial \hat r}{\partial r} \Big ) \wedge \frac{\partial \vec w_\e}{\partial \theta}\wedge \Big ( \frac{\partial \vec w_\e}{\partial s}\frac{\partial s}{\partial x_3} + \frac{\partial \vec w_\e}{\partial z}\frac{\partial z}{\partial x_3}\Big ) \nonumber
	\\ 
 \label{eq:detDuDw}
	& =\frac{1}{r} \Big ( \frac{\partial \vec w_\e}{\partial s} \wedge \frac{\partial \vec w_\e}{\partial \theta} \wedge \frac{\partial \vec w_\e}{\partial z}\Big ) \frac{\partial \hat r}{\partial r} \Big ( \frac{\partial s}{\partial \hat r}\frac{\partial z}{\partial x_3} - \frac{\partial s}{\partial x_3}\frac{\partial z}{\partial \hat r}\Big ) .
	\end{align}

	Consider first the range $\e\leq r\leq \e^{2\gamma}$:
	\begin{align}
 \label{eq:detBetweenEpsAndEpsGamma}
	\det D\vec u_\e \big ( r\vec e_r(\theta) + x_3 \vec e_3 \big ) 
	= \frac{w_r^\e (2r-\e) }{r(\e^{2\gamma}-\e)} \Big (\frac{\pi}{12}s +\omega_\e'(z)\cos\varphi(z) \Big )\frac{\partial (s,z)}{\partial (\hat r,x_3)}.
	\end{align}
	By
	\eqref{eq:2by2minorSz}, and since $\e< \frac{1}{2}\e^{2\gamma}$ for small $\e$,
	\begin{align*}
	 \det D\vec u_\e \big ( r\vec e_r(\theta) + x_3\vec e_3\big ) 
\leq \frac{w_r^\e \cdot (2r)}{r (\frac{1}{2}\e^{2\gamma})}\Big (\frac{\pi}{12}s +\omega_\e'(z)\cos\varphi(z)
 \Big ) \|D\vec g\|_\infty^2.
	\end{align*}
	By \eqref{eq:sRhat},
	\[
	 w_r^\e \big ( s(\hat r, x_3), z(\hat r, x_3) \big ) = \omega_\e(z) + s(\hat r, x_3)\sin (\varphi(z))
 \leq \omega_\e(3) + \|D\vec g\|_\infty \hat r 
 \leq 6\e^{\gamma} + \|D\vec g\|_\infty \frac{(r-\e)r}{\e^{2\gamma}-\e}.
\]
	Since $r\leq \e^{2\gamma}$, it follows that $w_r^\e \leq C\e^{\gamma}$. By direct computation we can check that \( |\omega_\e'(z)|\leq C \e^\gamma \). Thus we find
	\begin{equation*}
	 \frac{\pi}{12}s(\hat r, x_3) +\omega_\e'\big ( z(\hat r, x_3)\big ) \cos\varphi\big(z(r,x_3)\big ) \leq C\e^{\gamma}.
	\end{equation*}
	Therefore,
	\begin{equation*}
		\sup \Big \{
		\det D\vec u_\e \big (r\vec e_r(\theta)+x_3\e\big ): \e\leq r\leq \e^{2\gamma}\Big \} < \infty.
	\end{equation*}
Here we recall that we chose \( \gamma \leq 1/3\).
The radial distance $r$ is greater than $\e$, hence $r>2r-\e$. Also, $w_r^\e(r)\geq 2\e^\gamma$ and 
$\omega_\e'(z)\geq C\e^\gamma$. Using these bounds  in \eqref{eq:detBetweenEpsAndEpsGamma}
yields 
	\begin{align*}
	  \det D\vec u_\e \big ( r\vec e_r(\theta) + x_3 \vec e_3 \big )
		\geq \frac{2\e^\gamma}{\e^{2\gamma}-\e} \big ( 0 + C\e^\gamma\cos\varphi\big ) \frac{\partial(s,z)}{\partial(\hat r, x_3)}
	\geq C\cos\varphi,
	\end{align*}
	where in the last inequality also  \eqref{eq:LwrBndMinor2by2} was used.
At this point let us observe that $\cos\varphi$ is bounded below away from zero when $\e \leq r\leq \e^{2\gamma}$. Indeed, when $\hat r=0$, the point $(\hat r, x_3)$ lies on the segment $BC$ and $z(\hat r, x_3)$ lies between $z=1$ and $z=2$. Therefore, when $\hat r$ is close to zero, $z(\hat r, x_3)$ must be near the interval $[1,2]$, hence $\cos\varphi$ should be bounded below by a value close to $\cos\varphi(2)=\cos(5\pi/12)$. This is made rigorous by noting that 
	\begin{align*}
	  z(\hat r,x_3)&= z(0,x_3) + \int_0^{\hat r} \frac{\partial z}{\partial r} (t,x_3)\dd t \leq 2 + 
	 \int_0^{\hat r} \big ( ( D\vec g)\vec e_r\big ) \cdot \vec e_3 \dd t 
	 \leq 2 + \|D\vec g\|_\infty \hat r.
	\end{align*}
	Since $\hat r \leq \e^{2\gamma}$ when $r\leq \e^{2\gamma}$, then, for $\e$ sufficiently small, $\cos\varphi$ is greater than any value smaller than $\cos (5\pi/12)$. All in all, in the region $\e\leq r\leq \e^{2\gamma}$ the determinants are controlled from below and above, and
	\begin{align*}
	 \lim_{\e\to0}
	 2\pi
	 \int_0^1  \int_\e^{\e^{2\gamma}} H(\det D\vec u_\e)\, r\,\dd r\,\dd x_3 = 0.
	\end{align*}

	Consider now the range $\e^{2\gamma}\leq r\leq 3$. By the definition of $\omega_\e$ and  \eqref{eq:sRhat}, 
	\begin{align*}
	 w_r^\e = \omega_\e(z(r,x_3)) + s(r,x_3)\sin \varphi \leq 6\e^{\gamma}  + \|D\vec g\|_\infty r
	\leq (6+\|D\vec g\|_\infty)\max\{r, \e^{\gamma}\}.
	\end{align*}
	Analogously,
	\begin{align*}
		\frac{\pi}{12}s(r,x_3) + \omega_\e'(z(r,x_3))\cos \varphi\big ( z (r,x_3)\big ) \leq C\max\{r,\e^{\gamma}\}.
	\end{align*}
	Hence,
	using \eqref{eq:det3w} and \eqref{eq:detDuDw}
	(recall that $\hat r=r$ for $r\geq \e^{2\gamma}$),
	it follows that
	\begin{align*}
	 \det D\vec u_\e (r\,\vec e_r(\theta) + x_3\vec e_3\big ) \leq C\frac{\max\{r, \e^{\gamma}\}^2}{r}
	\leq \begin{cases}
	      C\e^{2\gamma}/r, & \text{if } \e^{2\gamma}\leq r \leq \e^{\gamma}
\\
	Cr^2/r, & \text{if } \e^{\gamma}\leq r\leq 3.
	     \end{cases}
	\end{align*}
	In both cases, the expression at the right-hand side is clearly bounded. Therefore, $\det D\vec u_\e$ is bounded from above in $d_\e$ uniformly with respect to $\e$. 

	Regarding the lower bound, it suffices to see, from \eqref{eq:det3w} and \eqref{eq:detDuDw}, that when $r\geq \e^{2\gamma}$,
	\begin{align*}
	 \det D\vec u_\e \big (r\,\vec e_r(\theta)+ x_3\,\vec e_3\big ) \geq \frac{1}{r} \cdot \big (s\,\sin \varphi \big)\Big (\frac{\pi}{12}s \Big )\cdot 1 \cdot \frac{\partial (s,z)}{(r,x_3)},
	\end{align*}
	where $s=s(r,x_3)$ and $\varphi=\varphi\big (z(r,x_e)\big)$ (recall that $\hat r\equiv r$ in the range of values of $r$ being considered in this part of the proof). Comparing with  \eqref{eq:detLimitD}, the conclusion is obtained that 
	$$
		\det D\vec u_\e \big (r\,\vec e_r(\theta)+ x_3\,\vec e_3\big ) \geq \det D\vec u \big (r\,\vec e_r(\theta)+ x_3\,\vec e_3\big ) 
\qquad \forall\, r\geq \e^{2\gamma}, \ \forall\, x_3\in[0,1].
	$$
	With this estimate at hand, it is possible to find a dominating function to prove the desired convergence of the energies in region $d_\e$. 
\end{proof}

\section{Geometric and topological description of singularities as dipoles}\label{sec:VI}

This section shows that the singularities that have to be dealt with in the neo-Hookean problem share the same structure as the elastic dipole constructed by Conti \& De Lellis \cite[Th.~6.3]{CoDeLe03}, at least in the axisymmetric class and under the additional hypothesis that the functional $\E(\vec u)$ (defined in \eqref{eq:def_surface_energy}, which measures \cite{HeMo11} the deformed area of the surfaces created by $\vec u$) is finite.

\begin{proof}[Proof of Theorem \ref{th:main2}]
	It is a consequence of Propositions \ref{prop:previous}, \ref{pr:muproperties}, \ref{prop:proprety_Delta}, \ref{prop:DeltaNonZero}, \ref{prop:surface}, and \ref{prop:sumDegrees} below.
\end{proof}

The map by Conti \& De Lellis has two point singularities (in the reference configuration), namely, $\vec 0=(0,0,0)$ and 
$\vec 0'=(0,0,1)$, which are paired together. 
They are both cavitation points, but the cavities they open have ``opposite signs''. A bubble (the sphere in the deformed configuration centred at $(0,0,\tfrac{1}{2})$ and of radius $\tfrac{1}{2}$; denote it by $\Gamma$) is created from (horizontal $\e$-disks centred at) both points (in the sense of the bubbling-off of spheres in harmonic map theory, as explained in Section \ref{sec:recovery_sequence}). However, the material points surrounding $\vec 0'$ (those in region $e$) are sent to the outer side of $\Gamma$, whereas the points surrounding $\vec 0$ (those in region $a$) are sent to the radius $\tfrac{1}{2}$ ball $\hat B$ enclosed by $\Gamma$. Thus, the ``positive'' cavity opened 
at $\vec 0'$ is filled with the portion of the body next to the ``negative'' cavitation point $\vec 0$. 

Cavitation singularities affect the topology of the elastic body: first, the domain is punctured. 
Then, in the case of a positive cavitation point,  a hole is observed after the body is deformed. 
In particular, the boundary of the image $\vec u(\Set\setminus \{\vec 0\})$ of any smooth region $\Set$ 
containing $\vec 0$ but not $\vec 0'$ will have the additional connected component $\Gamma$ apart from 
the image of the boundary $\vec u(\partial \Set)$. The situation around the negative cavitation point $\vec 0'$ 
is analogous. As has been pointed out since \cite[Example 6.1]{Ball77}, these changes in topology are detected 
by the distributional determinant $\Det D\vec u$ of \eqref{eq:defDet}, which in turn is related to the 
Brouwer (and the Brezis--Nirenberg) degree. 
In particular, 
$$
	\Det D\vec u = (\det D\vec u)\Le^3 + \frac{\pi}{6}\delta_{(0,0,1)} - \frac{\pi}{6} \delta_{(0,0,0)}.
$$
The $\frac{\pi}{6}$ in front of the Dirac masses is the volume enclosed by the bubble $\Gamma$, and the negative sign in front of the Dirac at $\vec x=\vec 0$ is indicative of the reversed orientation in which the bubble is being reached as $\vec x\to \vec 0$ compared to the orientation obtained of $\Gamma$ when $\vec x\to \vec 0'$.

The positive cavity opened at $\vec 0'$ can be seized by taking small $r$ and looking at the set 
$$
	\{\vec y\in \R^3: \deg (\vec u, B_r(\vec 0'),\vec y)=1\ \text{ and }\ \vec y \notin \imG\big ( \vec u, B_r(\vec 0')\big )\}
$$
(that is, the points that are enclosed by $\vec u\big (\partial B_r(\vec 0')\big )$ but are not material points). This corresponds to the notion of the topological image of $\vec 0'$, introduced by \v{S}ver\'ak \cite{Sverak88}. For maps satisfying condition INV (maps that are monotone in the sense of sending inside what was inside, and outside what was outside) and with geometric images $\imG(\vec u, \Om)$ of finite perimeter, M\"uller \& Spector \cite[Th.\ 8.4]{MuSp95} established that the cavitation points can be identified as points $\vec \xi$ for which 
$$
	\Le^3\big ( \imT(\vec u, \vec \xi) \big ) >0,
$$
and that there is an at most countable set $C(\vec u)\subset \Om$ of them. 
In the $H^1$ setting, where, as shown by Conti \& De Lellis \cite{CoDeLe03}, condition INV is unstable and negative cavitations may occur, the natural analogue for identifying the singularities is to fix a point $\vec x\in \Om$, to define the map 
$$
	\Delta_{\vec x, r}(\vec y):= \deg \big (\vec u, B(\vec x, r), \vec y\big ) - \chi_{\imG\left(\vec u, B(\vec x, r)\right)}(\vec y),
$$
and to let $r\to 0^+$. For $\vec x=\vec 0'$ and $\vec x =\vec 0$ we obtain the maps
$$
\Delta_{\vec 0'} = \chi_{\hat B} , 
\qquad \qquad
\Delta_{\vec 0}(\vec y) = - \chi_{\hat B} .
$$
For any other $\vec x$, the map $\Delta_{\vec x}$ is identically zero. 

Our starting point is the result by Mucci \cite{Mucci05,Mucci10a,Mucci10b}, \cite[Th.\ 6.2]{HeMo12} that for $H^1$ maps 
with $\E(\vec u)<\infty$ the singular part of the distributional determinant consists always only of Dirac masses (at most a countable number of them), even when INV is not satisfied. Developing further his arguments, as well as those in \cite{MuSp95} and \cite{HeMo12}, we arrive at Theorem \ref{th:main2}, stated in the Introduction.
To prove the theorem, we begin by mentioning
that the bilinear form
\[
 \overline {\E}_{\vec u}(\phi, \vec g), \quad \text{with } \phi\in C_c(\Om) \text{ and }\vec g\in C^1_c(\R^3,\R^3) ,
\]
the surface energy $\E(\vec u)$, and the measure
\begin{equation*}
\mu_{\vec u}(E):= \sup \{ \overline{\E}_{\vec u}(E,\vec g) : \vec g \in C^1_c(\R^3,\R^3), \|\vec g\|_{L^\infty}\leq 1 \},
	\quad  \text{on Borel subsets $E$ of }\Om
\end{equation*}
(Definition \ref{def:measure_mu})
shall play an important auxiliary role.

In this section, we will use standard notation about measure theory and functions of bounded variation: push-forward $\sharp$ of a measure \cite[Def.\ 1.70]{AmFuPa00}, restriction $\res$ of a measure \cite[Def.\ 1.65]{AmFuPa00}, set $\At$ of atoms of a measure \cite[Sect.\ 6]{HeMo12}, total variation $|\cdot|$ of a measure \cite[Def.\ 1.4]{AmFuPa00}, reduced boundary $\p^*$ of a finite-perimeter set \cite[Def.\ 3.60]{AmFuPa00}, space $SBV$ \cite[Ch.\ 4]{AmFuPa00}, set $J_{\vec u^{-1}}$ of jumps of $\vec u^{-1}$ \cite[Def.\ 3.67]{AmFuPa00}, traces $(\vec u^{-1})^{\pm}$ and normal $\vec \nu$ of $\vec u^{-1}$ at a jump point \cite[Prop.\ 3.69]{AmFuPa00}, jump $[\cdot]$ of a $BV$ function \cite[Sect.\ 5]{BaHeMoRo_21}, singular part $D^s$, jump part $D^j$ and Cantor part $D^c$ of the derivative of a $BV$ function \cite[Def.\ 3.91]{AmFuPa00}.
Finally, $\M (\R^3)$ denotes the family of the finite Radon measures on $\R^3$, while
$\|\cdot\| = |\cdot| (\R^3)$.

We recall also Definition \eqref{eq:defAs} of the class $\overline{\Ar_s}$, and gather some results from previous papers. 

\begin{proposition}\label{prop:previous}
Let $\vec u \in \overline{\Ar_s}$ satisfy $\E(\vec u) < \infty$.
Then
\begin{enumerate}[a)]
\item\label{item:previousA} $\vec u^{-1}\in SBV(\Om_{\vec b},\R^3)$.

\item \label{item:previousDet} The distributional determinant $\Det D\vec u$ is a measure and 
\begin{equation*}
\Det D\vec u= (\det D\vec u)\Le^3+\sum_{\vec a\in \At(\mu_\vec u)} \Det D\vec u(\{ \vec a \})\delta_{\vec a}.
\end{equation*}

\item\label{item:previousB} For every $\vec g\in C^1_c(\R^3,\R^3)$ the measure $\overline{\E}_{\vec u}(\cdot,\vec g)$ is purely atomic and $$\At(\overline{\E}_{\vec u}(\cdot,\vec g)) \subset \At (\mu_\vec u).$$
\end{enumerate}
\end{proposition}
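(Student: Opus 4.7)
The plan is to assemble parts (a), (b), (c) from existing results rather than prove them from scratch, as the proposition is explicitly stated as a collection of previously established facts. The central ingredients are Theorem \ref{th:previous_article} from our prior paper \cite{BaHeMoRo_21} and the structural theorem for distributional determinants in the $H^1$ setting of Mucci \cite{Mucci05,Mucci10a,Mucci10b}, presented as \cite[Th.~6.2]{HeMo12}.

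First I would address parts (b) and (c) together. The hypothesis $\mathcal E(\vec u)<\infty$ and the standard approximation argument with test functions of the product form $\phi(\vec x)\vec g(\vec y)$ in $\mathcal E_{\vec u}(\vec f)$ imply that for every fixed $\vec g\in C^1_c(\R^3,\R^3)$ the distribution $\overline{\mathcal E}_{\vec u}(\cdot, \vec g)$ is a Radon measure of finite total variation on $\widetilde\Omega$ with $|\overline{\mathcal E}_{\vec u}(\cdot, \vec g)|\leq \|\vec g\|_{L^\infty}\mu_{\vec u}$, so $\mu_{\vec u}$ itself is a finite Radon measure on $\widetilde\Omega$. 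The remaining hypotheses required by \cite[Th.~6.2]{HeMo12} (namely $\vec u\in H^1$ with $\det D\vec u>0$ a.e., a.e.-injective, and finite $\mu_{\vec u}$) are part of the definition of $\mathcal A_s$. That theorem then delivers simultaneously the atomic decomposition of $\Det D\vec u$ claimed in (b), and the fact that $\overline{\mathcal E}_{\vec u}(\cdot, \vec g)$ is purely atomic. The inclusion $\At(\overline{\mathcal E}_{\vec u}(\cdot, \vec g))\subset \At(\mu_{\vec u})$ in (c) is an immediate consequence of the domination inequality above.

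Finally, for (a), Theorem \ref{th:previous_article} gives $\vec u \in \mathcal B$, so $u_1^{-1}, u_2^{-1}\in W^{1,1}(\widetilde\Omega_{\vec b})$ and $u_3^{-1}\in BV(\widetilde\Omega_{\vec b})$. What needs to be shown is that the Cantor part of $D u_3^{-1}$ vanishes. The idea is to exploit the finite surface energy to identify $D^s u_3^{-1}$ as a pure jump measure. Concretely, by the lower-bound portion of Theorem \ref{th:previous_article} one has $\|D^s u_3^{-1}\|_{\mathcal M(\widetilde\Omega_{\vec b})}\leq \tfrac{1}{2}\mu_{\vec u}(\widetilde\Omega)<\infty$, and via the area formula together with the axisymmetric structure, the support of $D^s u_3^{-1}$ is contained in the image of the ``created surfaces'', which by (b) and (c) is a countable union of reduced boundaries of the finite-perimeter sets $\{\Delta_{\vec a} = k\}$ attached to the atoms $\vec a\in \At(\mu_{\vec u})$. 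Hence $D^s u_3^{-1}$ is concentrated on a countably $\mathcal H^2$-rectifiable set of finite $\mathcal H^2$-measure; since any $BV$ function whose singular derivative is concentrated on a $\sigma$-finite $\mathcal H^{N-1}$-set belongs to $SBV$ \cite[Prop.~4.2]{AmFuPa00}, the Cantor part vanishes. The main obstacle is making rigorous the last identification of the support of $D^s u_3^{-1}$ with the rectifiable set of bubbles in the deformed configuration, which should follow by combining the axial structure with the geometric-measure-theoretic arguments of \cite[Sect.~8]{MuSp95} adapted to the $H^1$ setting without INV.
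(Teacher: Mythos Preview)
Your treatment of parts (b) and (c) matches the paper: both are simply attributed to Mucci's structure theorem as presented in \cite[Th.~6.2]{HeMo12}, and your remark that the inclusion $\At(\overline{\E}_{\vec u}(\cdot,\vec g))\subset \At(\mu_{\vec u})$ follows from the domination $|\overline{\E}_{\vec u}(\cdot,\vec g)|\leq \|\vec g\|_{\infty}\mu_{\vec u}$ is exactly right.

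For part (a), however, you take an unnecessarily circuitous route and end with an acknowledged gap. The paper simply cites \cite[Th.~2]{HeMo11}, which gives the $SBV$ regularity of $\vec u^{-1}$ \emph{directly} from the hypothesis $\mathcal E(\vec u)<\infty$ (together with $\vec u\in H^1$, a.e.-injectivity, and $\det D\vec u>0$ a.e.). No appeal to the axisymmetric structure, to Theorem~\ref{th:previous_article}, or to the atomic description from (b)--(c) is needed. Your proposed argument---first landing in $BV$ via membership in $\mathcal B$, then trying to kill the Cantor part by identifying the support of $D^s u_3^{-1}$ with the countably rectifiable bubble set---is not wrong in spirit, but (i) it effectively forward-references the content of Propositions~\ref{pr:muproperties} and~\ref{prop:surface}, risking circularity since those propositions take the $SBV$ regularity of $\vec u^{-1}$ as already established, and (ii) the ``main obstacle'' you flag is precisely the nontrivial part of \cite[Th.~2]{HeMo11}. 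So the fix is simply to invoke that theorem.
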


Item \ref{item:previousA}) of the previous proposition follows from \cite[Th.\ 2]{HeMo11}. Items \ref{item:previousDet}) and  \ref{item:previousB}) are the result of \cite{Mucci05,Mucci10a,Mucci10b}, \cite[Th.\ 6.2]{HeMo12}. This formula generalizes an analogous formula for the distributional Jacobian due to M\"uller and Spector \cite[Th.\ 8.4]{MuSp95} and to Conti and De Lellis \cite[Th.\ 4.2]{CoDeLe03}. The points \( \vec a\) in \(\At (\mu_{\vec u})\) can be seen as ``generalized'' cavitation points. Contrarily to the classical cavitation setting, \(\Det D\vec u(\{ \vec a \})\) is not necessarily positive. Nevertheless, its absolute value can be thought of as the volume of the generalized cavity.

\begin{definition}
Let \(\vec u \in \overline{\Ar_s }\) be such that \( \E(\vec u)<\infty \) and let $\vec \xi, \vec \xi' \in \R^3$.
Fix a Borel orientation $\vec \nu$ of $J_{{\vec u}^{-1}}$.
We define \( \Gamma_{\vecg\xi}^+\), \( \Gamma_{\vecg\xi}^-\), $\Gamma_{\vecg\xi}$ and $\Gamma_{\vecg\xi, \vecg\xi'}$ as
\[
 \Gamma_{\vecg\xi}^{\pm} := \{\vec y\in J_{{\vec u}^{-1}}: ({\vec u}^{-1})^{\pm}(\vec y)= \vecg\xi\} , \qquad \Gamma_{\vecg\xi}:= \Gamma_{\vecg\xi}^- \cup \Gamma_{\vecg\xi}^+, \qquad \Gamma_{\vecg\xi, \vecg\xi'} := \Gamma_{\vecg\xi}^- \cap \Gamma_{\vecg\xi'}^+ .
\]
The map $\vec \nu_{\vec \xi} : \Gamma_{\vec \xi} \to \R^3$ is defined $\Ha^2$-a.e.\ as
\[
 \vec \nu_{\vec \xi} = \begin{cases}
 \vec \nu & \text{in } \Gamma_{\vecg\xi}^-, \\
 -\vec \nu & \text{in } \Gamma_{\vecg\xi}^+ .
  \end{cases}
\]
\end{definition}
One side of $\Gamma_{\vecg\xi}$ consists of material points that were located near  $\vecg\xi$ in the reference configuration, whereas the other side consists of a portion of the body coming from a different cavitation point. Note that $\vecg\nu_{\vecg\xi}$ points towards the latter side.

In the sequel use shall be made of the notation 
\( \vec f \bowtie \vec g : \R^3 \rightarrow \R^3 \times \R^3\), 
$$\vec f \bowtie \vec g (\vec y)=(\vec f(\vec y),\vec g (\vec y)).$$
Recall also Definition \eqref{eq:segment_L} of the segment $L$ where the singularities are confined.

\begin{proposition}\label{pr:muproperties}
Let $\vec u \in \overline{\Ar_s}$ satisfy $\E(\vec u) < \infty$.
Then:
  \begin{enumerate}[i)]
\item\label{it:Lambda}  For all $\vec f\in C_c(\Om \times \R^3, \R^3)$,
\begin{equation}\label{eq:E=Lambda}
 \E_{\vec u}(\vec f) =  \int_{\Om \times \R^3} \vec f \cdot \dd \vec \Lambda
\end{equation}
for the Radon measure
\[
    \vecg\Lambda = \big ( ({\vec u}^{-1})^-\bowtie \id \big ) _{\#}\vecg\nu \Ha^2\res {J_{\vec u^{-1}}}
      - \big ( ({\vec u}^{-1})^+\bowtie \id \big ) _{\#}\vecg\nu \Ha^2\res {J_{\vec u^{-1}}}.
\]
\item \label{it:E_structure} For all $\vec f\in C_c(\Om \times \R^3, \R^3)$,
\[
	  \E_{\vec u}(\vec f) =
	  \sum_{\vecg \xi \in\At(\mu_{\vec u})} \int_{\Gamma_{\vecg\xi}} \vec f( \vecg \xi, \vec y) \cdot \vecg\nu_{\vecg\xi} (\vec y) \dd \Ha^2(\vec y).
\]
      
      \item \label{it:mu_atomic}
$\displaystyle \mu_{\vec u} = \sum_{\vecg\xi \in\At(\mu_{\vec u})}  \Ha^2\big (\Gamma_{\vecg\xi}\big )\,  \delta_{\vecg\xi}$.

       \item  \label{it:jump_atoms}
       $(\vec u^{-1})^{\pm} (\vec y) \in \At(\mu_{\vec u})$ for \(\Ha^2 \)-a.e.\ $\vec y\in J_{\vec u^{-1}}$.
       
       \item \label{it:sym-axis}
$\At (\mu_{\vec u}) \subset L$.
       
       \item \label{it:dipoles}
       $ \displaystyle
              \|D^s \vec u^{-1} \|
              = \sum_{\vecg \xi, \vecg\xi' \in\At(\mu_{\vec u})}
		  \left| \vecg\xi-\vecg\xi' \right| \Ha^2 (\Gamma_{\vecg\xi, \vecg\xi'})$.

\item\label{it:JG} $ \displaystyle \Ha^2 (J_{\vec u^{-1}}) = \! \sum_{\vecg\xi \in\At(\mu_{\vec u})} \! \Ha^2 (\Gamma_{\vec \xi}^+) = \!\sum_{\vecg\xi \in\At(\mu_{\vec u})} \! \Ha^2 (\Gamma_{\vec \xi}^-) = \frac{1}{2} \! \sum_{\vecg\xi \in\At(\mu_{\vec u})} \! \Ha^2 (\Gamma_{\vec \xi}) = \! \sum_{\vecg \xi, \vecg\xi' \in\At(\mu_{\vec u})} \!\Ha^2 (\Gamma_{\vecg\xi, \vecg\xi'})$.
  \end{enumerate}
\end{proposition}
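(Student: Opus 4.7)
The plan is to chain all seven items from two structural inputs: the $SBV$ regularity of $\vec u^{-1}$ (Proposition \ref{prop:previous}\ref{item:previousA})) and the purely atomic character of $\overline{\mathcal E}_{\vec u}(\cdot,\vec g)$ with atoms in $\At(\mu_{\vec u})$ (Proposition \ref{prop:previous}\ref{item:previousB})). The backbone is item \ref{it:Lambda}). I would approximate $\vec u$ by a sequence $\vec u_n\in \Asr$, for which $\mathcal E_{\vec u_n}(\vec f)\equiv 0$ by the divergence identities. The defect that survives in the limit is read off through the surface change-of-variables formula of Proposition \ref{pr:change_of_variables-surfaces}, combined with the $SBV$ representation of the inverse, yielding
\begin{equation*}
   \mathcal E_{\vec u}(\vec f)=\int_{J_{\vec u^{-1}}}\bigl[\vec f\bigl((\vec u^{-1})^-(\vec y),\vec y\bigr)-\vec f\bigl((\vec u^{-1})^+(\vec y),\vec y\bigr)\bigr]\cdot\vec\nu(\vec y)\,d\mathcal H^2(\vec y),
\end{equation*}
which is precisely $\int\vec f\cdot d\vecg\Lambda$ for the $\vecg\Lambda$ stated.

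With \ref{it:Lambda}) in hand, I specialize to $\vec f(\vec x,\vec y)=\phi(\vec x)\vec g(\vec y)$ and read off $\overline{\mathcal E}_{\vec u}(\phi,\vec g)$ as a surface integral linear in $\phi\circ (\vec u^{-1})^\pm$. Since Proposition \ref{prop:previous}\ref{item:previousB}) forces the resulting measure in $\phi$ to be concentrated on $\At(\mu_{\vec u})$, choosing $\phi$ from a countable dense family of bumps separating points of $L$ compels $(\vec u^{-1})^\pm(\vec y)\in\At(\mu_{\vec u})$ for $\mathcal H^2$-a.e. $\vec y\in J_{\vec u^{-1}}$, which is \ref{it:jump_atoms}). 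Substituting into \ref{it:Lambda}) and partitioning $J_{\vec u^{-1}}$ along the level sets $\Gamma^\pm_{\vec \xi}$ of the trace maps yields \ref{it:E_structure}). For \ref{it:mu_atomic}), I would take the supremum over $\|\vec g\|_\infty\le 1$ in \ref{it:E_structure}): since $\vec\nu_{\vec\xi}$ is a unit field, the supremum at each atom is $\mathcal H^2(\Gamma_{\vec\xi})$, and Proposition \ref{prop:previous}\ref{item:previousB}) precludes any diffuse part. Item \ref{it:sym-axis}) then follows from Lemma 3.1 of \cite{BaHeMoRo_21}: axisymmetric $\As$-maps are continuous away from the symmetry axis, so no atom of $\mu_{\vec u}$ can sit off $L$.

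Item \ref{it:dipoles}) is read off from the $SBV$ structure $D^s\vec u^{-1}=D^j\vec u^{-1}$, for which
\begin{equation*}
   \|D^j\vec u^{-1}\|_{\mathcal M(\widetilde\Omega_{\vec b},\R^{3\times 3})}=\int_{J_{\vec u^{-1}}}|(\vec u^{-1})^+-(\vec u^{-1})^-|\,d\mathcal H^2.
\end{equation*}
By \ref{it:jump_atoms}), the integrand equals $|\vec\xi-\vec\xi'|$ on $\Gamma_{\vec\xi,\vec\xi'}$, and grouping the pairs $(\vec\xi,\vec\xi')\in\At(\mu_{\vec u})^2$ gives the dipole formula. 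Finally, \ref{it:JG}) is combinatorial bookkeeping: by \ref{it:jump_atoms}), $J_{\vec u^{-1}}=\bigsqcup_{\vec\xi}\Gamma^+_{\vec\xi}=\bigsqcup_{\vec\xi}\Gamma^-_{\vec\xi}=\bigsqcup_{\vec\xi,\vec\xi'}\Gamma_{\vec\xi,\vec\xi'}$ up to $\mathcal H^2$-null sets, and $\Gamma_{\vec\xi}=\Gamma^+_{\vec\xi}\cup\Gamma^-_{\vec\xi}$ (disjoint up to a null set since the two traces differ at a jump), so summing $\mathcal H^2(\Gamma_{\vec\xi})$ counts every point of $J_{\vec u^{-1}}$ exactly twice, explaining the $\tfrac12$ factor.

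The principal technical obstacle is \ref{it:Lambda}): justifying the surface representation of $\mathcal E_{\vec u}$ under only $H^1$ regularity of $\vec u$ and $SBV$ regularity of $\vec u^{-1}$ demands controlling the change of variables between reference and deformed configurations without uniform bounds on $\cof D\vec u$, and a careful passage to the limit along the smooth approximants $\vec u_n$, in order to show that the bulk parts of the divergence identities really do cancel and only the jump contribution across $J_{\vec u^{-1}}$ survives. All remaining steps are then algebraic manipulations on the resulting surface formula combined with the atomicity of $\overline{\mathcal E}_{\vec u}(\cdot,\vec g)$.
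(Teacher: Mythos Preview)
Your approach to item \ref{it:Lambda}) via approximation by $\vec u_n\in\Asr$ does not work as stated. Since $\mathcal E_{\vec u_n}(\vec f)\equiv 0$ for every $n$, if one could pass to the limit in $\mathcal E_{\vec u_n}(\vec f)$ one would conclude $\mathcal E_{\vec u}(\vec f)=0$, which is false. The whole point is that $\cof D\vec u_n$ concentrates and $\mathcal E_{\vec u_n}$ does \emph{not} converge to $\mathcal E_{\vec u}$; this is precisely the lack of compactness the paper studies. The paper instead invokes \cite[Th.~3]{HeMo11}, which gives the representation of $\mathcal E_{\vec u}$ directly (by changing variables to the deformed configuration and using the $SBV$ structure of $\vec u^{-1}$, not by approximation), but with an extra ``visible surface'' term $\Gamma_V(\vec u)$ and with $\Gamma_I(\vec u)$ a priori possibly smaller than $J_{\vec u^{-1}}$. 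The remaining work in the paper's proof of \ref{it:Lambda}) is to show $\Gamma_V(\vec u)=\varnothing$ (via the Dirichlet condition $\imG(\vec u,U)=\vec b(U)$ and injectivity of $\vec b$) and $\Gamma_I(\vec u)=J_{\vec u^{-1}}$ $\mathcal H^2$-a.e.\ (via \cite[Prop.~5.3]{BaHeMoRo_21}). Proposition \ref{pr:change_of_variables-surfaces}, which you cite, plays no role here.

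Your treatment of \ref{it:mu_atomic}) and \ref{it:jump_atoms}) is also looser than the paper's and in a different order. The paper first proves \ref{it:E_structure}) from \ref{it:Lambda}) and atomicity, then rewrites $\vecg\Lambda$ as a sum of pushforwards $(\vecg\xi\bowtie\id)_{\#}(\vecg\nu_{\vecg\xi}\mathcal H^2\res\Gamma_{\vecg\xi})$. The key technical device is Lemma \ref{le:var_sum}: the maps $\vecg\xi\bowtie\id$ have disjoint images $\{\vecg\xi\}\times\R^3$ in the product space, so the total variation of the sum equals the sum of total variations, giving $|\vecg\Lambda|=\sum_{\vecg\xi}(\vecg\xi\bowtie\id)_{\#}\mathcal H^2\res\Gamma_{\vecg\xi}$. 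From $\mu_{\vec u}(E)=|\vecg\Lambda|(E\times\R^3)$ one reads off \ref{it:mu_atomic}) directly. A second application of Lemma \ref{le:var_sum} to the original definition of $\vecg\Lambda$ yields $\mu_{\vec u}(E)=\mathcal H^2(\{(\vec u^{-1})^-\in E\})+\mathcal H^2(\{(\vec u^{-1})^+\in E\})$; evaluating at $E=\widetilde\Omega\setminus\At(\mu_{\vec u})$ gives \ref{it:jump_atoms}). Your ``countable family of bumps'' route to \ref{it:jump_atoms}) before \ref{it:mu_atomic}) can be made to work, but requires more care than you indicate, and your argument for \ref{it:mu_atomic}) by ``taking the sup over $\vec g$'' does not by itself handle the interaction between different atoms (the disjointness in the $\vec x$-variable, encoded in Lemma \ref{le:var_sum}, is what makes the supremum split cleanly).

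Items \ref{it:sym-axis}), \ref{it:dipoles}), and \ref{it:JG}) in your proposal match the paper's proof.
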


\begin{proof}
We prove each item.

\smallskip

\emph{Proof of \ref{it:Lambda}).}
By \cite[Th.\ 3]{HeMo11}, equality \eqref{eq:E=Lambda} holds for the Radon measure
\[
 \vec \Lambda = \big ( ({\vec u}^{-1})^-\bowtie \id \big ) _{\#}\vecg\nu \Ha^2\res {\Gamma_V (\vec u)}
 + \left( \big ( ({\vec u}^{-1})^-\bowtie \id \big ) - \big ( ({\vec u}^{-1})^+\bowtie \id \big ) \right)_{\#} \vecg\nu \Ha^2\res {\Gamma_I (\vec u)} ,
\]
where $\Gamma_V (\vec u)$ and $\Gamma_I (\vec u)$ are the visible and invisible surfaces, respectively, defined as follows:
\[
 \Gamma_I(\vec u):= \left\{ \vec y \in J_{\vec u^{-1}}:\ (\vec u^{-1})^+ (\vec y) \in\Om \ \textrm{and}
 \ (\vec u^{-1})^-(\vec y) \in \Om \right\}.
\]
The visible surface is denoted by $\Gamma_V (\vec u)$, as the set of points $\vec y_0\in \R^n$ for which there exists $\boldsymbol \nu \in \Sf^{n-1}$ satisfying the following conditions:
\begin{enumerate}[i)]
\item $D(\imG(\vec u, \Om) \cap H^-(\vec y_0, \boldsymbol \nu), \vec y_0) = \frac{1}{2}$.

\item The lateral trace
\[
 (\vec u^{-1})^-(\vec y_0) = \operatorname*{ap\,lim}_{\substack{\vec y \to \vec y_0 \\ \vec y \in H^-(\vec y_0, \boldsymbol \nu)}} \vec u^{-1}(\vec y)
\]
exists and lies inside $\Om$.

\item $D(\imG(\vec u, U)\cap H^+(\vec y_0, \boldsymbol \nu), \vec y_0)=0$ for every open set $U \Subset \Om$.
\end{enumerate}
Here $D$ stands for the density of a set at a point, the definition of which can be found, e.g., in \cite[Def.\ 2.2]{MuSp95}, \cite[Sect.\ 2]{HeMo10}, \cite[Sect.\ 2.1]{BaHeMoRo_21}.
The definition of approximate limit $\operatorname*{ap\,lim}$ is also standard \cite[Def.\ 2.2]{MuSp95}, \cite[Def.\ 1]{HeMo11}.
Finally, $H^{\pm}(\vec y_0, \boldsymbol \nu)$ is the half-space with inequation $\pm (\vec y - \vec y_0) \cdot  \boldsymbol \nu \geq 0$.

The proof of \emph{\ref{it:Lambda})} will be finished as soon as we show that $\Gamma_V (\vec u) = \varnothing$ and $\Gamma_I (\vec u) = J_{\vec u^{-1}}$ $\Ha^2$-a.e.

Assume, for a contradiction, that there exists $\vec y_0 \in \Gamma_V (\vec u)$. In particular, this implies that $(\vec u^{-1})^-(\vec y_0) \in \Om$.
Now take any $U \Subset \Om$ such that $(\vec u^{-1})^-(\vec y_0) \in U$ and $\Om \subset U$.
By \cite[Lemma 5.\emph{ii).c)}]{HeMo11}, $\vec y_0 \in \p^* \imG (\vec u, U)$.
Now, 
by \cite[Prop.~5.1]{BaHeMoRo_21},
$\imG (\vec u, \Om) = \Om_{\vec b}$ a.e., so $\imG (\vec u, U) = \vec b (U)$ a.e\@.
Therefore, $\p^* \imG (\vec u, U) = \p^* \vec b (U)$.
Now, $\p^* \vec b (U) \subset \p \vec b (U) = \vec b (\p U)$.
Thus, $\vec y_0 \in \vec b (\p U)$.
Now we take two open sets $U_1$ and $U_2$ with the same properties as $U$ before and such that, additionally, $\p U_1 \cap \p U_2 = \varnothing$.
Then $\vec y_0 \in \vec b (\p U_1) \cap \vec b (\p U_2)$, which is a contradiction since $\vec b$ is injective.

Now, to prove $\Gamma_I (\vec u) = J_{\vec u^{-1}}$ $\Ha^2$-a.e.\
it suffices to show that the set of points $\vec y \in J_{\vec u^{-1}}$ for which one of the traces $(\vec u^{-1})^{\pm} (\vec y)$ belongs to $\partial \Om$ has zero $\Ha^2$ measure.
But this is a consequence of \cite[Prop.~5.3]{BaHeMoRo_21}

\smallskip

  \emph{Proof of \ref{it:E_structure}).}
We use that $\bar {\E} (\phi, \vec g) = \E_{\vec u}(\phi \, \vec g)$ and that $\bar{\E}_ \vec u(\cdot, \vec g)$ is supported on the atoms of~$\mu_\vec u$.
What we find is that
  \begin{equation} \label{eq:aux_phi_g}
    \E_{\vec u} (\phi \, \vec g) =
    \bar{\E}_\vec u(\phi, \vec g) = \sum_{\vecg \xi\in\At(\mu_\vec u)}
      \bar{\E}_{\vec u} (\{\vecg\xi\}, \vec g) \, \phi(\vecg\xi).
  \end{equation}
Now, given $\vec \xi \in \At(\mu_\vec u)$, we choose a decreasing sequence $\{ \phi_j \}_{j \in \N}$ in $C^1_c (\Om)$ 
such that $\phi_j \to \chi_{\{ \vec \xi \}}$ pointwise.
By dominated convergence and part \emph{\ref{it:Lambda})},
  \begin{equation} \label{eq:aux2_phi_g}
  \begin{split}
 \bar{\E}_{\vec u} (\{\vecg\xi\}, \vec g) & = \lim_{j \to \infty} \bar{\E}_{\vec u} (\phi_j, \vec g) = \lim_{j \to \infty} \int \phi_j (\vec x) \, \vec g(\vec y) \cdot \dd \vecg \Lambda (\vec x, \vec y) \\
 & = \int \chi_{\{\vecg\xi\}}(\vec x) \, \vec g(\vec y) \cdot \dd \vecg \Lambda (\vec x, \vec y) = \left ( \int_{\Gamma_{\vecg\xi}^-} - \int_{\Gamma_{\vecg\xi}^+} \right ) \vec g \cdot \vecg \nu \dd\Ha^2.
  \end{split}
  \end{equation}
As a consequence of \eqref{eq:E=Lambda}, \eqref{eq:aux_phi_g}, \eqref{eq:aux2_phi_g} and 
the definition of $\vec \nu_{\vec \xi}$,
  \begin{equation} \label{eq:Lambda_structure}
  \begin{split}
   \int \vec f \cdot \dd\vecg\Lambda 
   & = \sum_{\vecg \xi\in\At(\mu_\vec u)} \left[ 
   \int_{\Gamma_{\vecg\xi}^-} \vec f (\vecg \xi, \vec y)\cdot \vecg\nu(\vec y)\dd\Ha^2 (\vec y)
    - \int_{\Gamma_{\vecg\xi}^+} \vec f (\vecg \xi, \vec y)\cdot \vecg\nu(\vec y)\dd\Ha^2 (\vec y) \right] \\
    & = \sum_{\vecg \xi\in\At(\mu_\vec u)} \int_{\Gamma_{\vecg\xi}} \vec f (\vecg \xi, \vec y)\cdot \vecg\nu_{\vec \xi} (\vec y) \dd\Ha^2 (\vec y)
  \end{split}
  \end{equation}
for all functions $\vec f$ of separated variables. By the density of the span of the set of functions of separated variables, 
the above holds also for all $\vec f\in C_c(\Om \times \R^3;\R^3)$ and, hence, for all $\vec f$ Borel.
  
  \smallskip
  
  \emph{Proof of \ref{it:mu_atomic}).}
  Equation \eqref{eq:Lambda_structure} may be rewritten as 
\[
\vecg\Lambda = \sum_{\vecg\xi\in\At(\mu_{\vec u})} (\vecg\xi \bowtie \id )_{\#} \bigl( \vecg\nu_{\vec \xi} \Ha^2\res \Gamma_{\vecg\xi} \bigr),
\]
  where $\vecg\xi \bowtie \id $ is the map $(\vecg\xi \bowtie \id )(\vec y)= (\vecg \xi, \vec y)$. 
  
By Lemma \ref{le:var_sum} and \cite[Lemma 1.3]{Ambrosio95} (considering that $\vecg\xi\bowtie \id$ is injective),
we have that 
\begin{align*}
    |\vecg\Lambda| &= \sum_{\vecg\xi\in\At(\mu_{\vec u})} \Big |(\vecg\xi \bowtie \id )_{\#} \bigl( \vecg\nu_{\vec \xi} \Ha^2\res \Gamma_{\vecg\xi} \bigr) \Big |
	  =
	  \sum_{\vecg\xi\in\At(\mu_{\vec u})} (\vecg\xi \bowtie \id )_{\#} \Big | \vecg\nu_{\vec \xi} \Ha^2\res \Gamma_{\vecg\xi}  \Big |
	 \\
	 & = \sum_{\vecg\xi\in\At(\mu_{\vec u})} (\vecg\xi \bowtie \id )_{\#} \Ha^2\res \Gamma_{\vecg\xi}.
\end{align*}

As a consequence of \eqref{eq:E=Lambda} and Riesz' theorem, 
\[
 |\vec \Lambda| (E) = \sup \{ \E_{\vec u} (\vec f) : \, \vec f \in C^1_c(\Om\times \R^3,\R^3) , \, |\vec f| \leq \chi_E \} . 
\]
In particular,  
\begin{equation} \label{le:muu}
 \mu_{\vec u}(E)=|\vec \Lambda| (E\times \R^3) 
	\quad \text{for every Borel set }E\subset \Om.
\end{equation} 
Hence, for any Borel $E\subset\Om$,
  \begin{align*}
   \mu_{\vec u}(E) 
   = \sum_{\vecg\xi\in\At(\mu_{\vec u})}
	\left (\Ha^2 \res \Gamma_{\vecg\xi} \right )
	\big ( \{\vec y : (\vecg \xi, \vec y ) \in E\times \R^3 \} \big ) = \sum_{\vecg\xi \in\At(\mu_{\vec u}) \cap E}  \Ha^2\big (\Gamma_{\vecg\xi}\big ),
  \end{align*}
so \emph{\ref{it:mu_atomic})} is proved.
  
  \smallskip
  
  \emph{Proof of \ref{it:jump_atoms}).}
From \eqref{le:muu}, \emph{\ref{it:Lambda})} and Lemma \ref{le:var_sum}, it follows that for any Borel $E\subset\Om$,
  \begin{align*}
    \mu_{\vec u} (E) 
    &= |\vecg \Lambda| (E\times \R^3)
    \\ &= |\big ( ({\vec u}^{-1})^-\bowtie \id \big ) _{\#}\vecg\nu \Ha^2\res {J_{\vec u^{-1}}}|
	(E\times \R^3)
       + |\big ( ({\vec u}^{-1})^+\bowtie \id \big ) _{\#}\vecg\nu \Ha^2\res {J_{\vec u^{-1}}}| (E\times \R^3)
    \\ &= \Ha^2 \big ( \{ \vec y \in J_{\vec u^{-1}}:
      (\vec u^{-1})^-(\vec y) \in E\}\big )
      + \Ha^2 \big ( \{ \vec y \in J_{\vec u^{-1}}:
      (\vec u^{-1})^+(\vec y) \in E\}\big ).
  \end{align*}
When applied to $E = \Om \setminus \At(\mu_{\vec u})$, it says that $\mu_\vec u\big (\Om\setminus \At(\mu _{\vec u})\big )$ 
is the $\Ha^2$-measure of the set of points in $J_{\vec u^{-1}}$ where at least one of the traces of $\vec u^{-1}$ lies
outside $\At(\mu_{\vec u})$. Because of \emph{\ref{it:mu_atomic})}, that measure is zero.
  
  \smallskip
  
  \emph{Proof of \ref{it:sym-axis}).}
Assume, for a contradiction, that there exists $\vec \xi \in \At (\mu_{\vec u}) \setminus L$.
Then we can find an $r>0$ such that $B(\vec \xi,r) \cap L = \varnothing$ and $\mu_{\vec u}(B( \vec \xi,r))>0$.
By \eqref{le:muu} we also have $|\vec \Lambda |(B(\vec \xi,r)\times\R^3) >0$.
We deduce that there exists $\vec f \in C^1_c(\Om\times \R^3,\R^3)$ with $\supp \vec f \subset B(\vec \xi,r)\times \R^3$ such that
 \[
 \int \vec f \cdot \dd \vec \Lambda \neq 0 .
\]
But since $\supp \vec f \subset B(\vec \xi,r)\times \R^3$ and $B(\vec \xi,r) \cap L = \varnothing$,  we find using \cite[Lemma 3.1]{BaHeMoRo_21},
that
\[
  \int \vec f \cdot \dd \vec \Lambda = \E_\vec u(\vec f) = \E_\vec u(\vec f,\Om \setminus \R\vec e_3)=0.
\]
This contradiction shows that $\At (\mu_{\vec u}) \subset L$.

  \smallskip
  
  \emph{Proof of \ref{it:dipoles}).}
  From \emph{\ref{it:jump_atoms})} it follows that
\begin{equation}\label{eq:Ju-1Gxx}
  J_{\vec u^{-1}} = \bigcup_{\vecg\xi, \vecg\xi' \in \At (\mu_{\vec u})} \Gamma_{\vecg\xi, \vecg \xi'} \qquad \Ha^2 \text{-a.e.}
\end{equation}	
  and the union is clearly disjoint. Moreover, by definition, for all $\vec y\in \Gamma_{\vecg\xi, \vecg\xi'}$ we have $[\vec u^{-1}] (\vec y) = \vecg \xi'-\vecg \xi$.
As $\vec u^{-1} \in SBV (\Om_{\vec b}, \R^3)$ we have, by standard properties of $SBV$ functions (see, e.g., \cite[(3.90) or (4.1)]{AmFuPa00}), that
\[
 D^s \vec u^{-1} = D^j \vec u^{-1} = [\vec u^{-1}] \otimes \vec \nu \, \Ha^2 \res J_{\vec u^{-1}} .
\]
As $\At (\mu_{\vec u})$ is countable, the conclusion follows.

\smallskip

  \emph{Proof of \ref{it:JG}).}
Similarly to \eqref{eq:Ju-1Gxx}, we also have
\begin{equation}\label{eq:Ju-1Gx+-}
  J_{\vec u^{-1}} = \bigcup_{\vecg\xi \in \At (\mu_{\vec u})} \Gamma_{\vecg\xi}^+ \quad \text{and} \quad J_{\vec u^{-1}} = \bigcup_{\vecg\xi \in \At (\mu_{\vec u})} \Gamma_{\vecg\xi}^- \qquad \Ha^2 \text{-a.e.}
\end{equation}
both with disjoint union.
The conclusion then follows.
\end{proof}

\begin{definition}\label{def:Delta_u_x_r}
Let \( \vec u \in \overline{\Ar_s}\) satisfy \( \E(\vec u) <\infty \).
Let  $\vec x\in\Om$ and $r>0$ be such that $B(\vec x,r)\subset\Om$.
We define 
\begin{equation*}
\Delta_{\vec u,\vec x,r} := \deg (\vec u, B(\vec x,r),\cdot) - \chi_{\imG(\vec u,B(\vec  x,r))}.
\end{equation*}
\end{definition}

Here we use the definition of the degree for maps in \(H^1\cap L^\infty\), cf.\ Definition \ref{prop:degree}.

\begin{proposition}\label{prop:proprety_Delta}
Let \( \vec u \in \overline{\Ar_s}\) satisfy \( \E(\vec u) <\infty \), and let $\vec x \in \Om$ and $r>0$ be such that $B(\vec x,r)\subset\Om$.
Then $\Delta_{\vec u,\vec x,r}\in BV(\R^3,\Z)$ and $|D \Delta_{\vec u, \vec x,r}|(\Om)\leq \E(\vec u)$.
Moreover, there exists $\Delta_{\vec u,\vec x} \in BV(\R^3,\Z)$ such that $\Delta_{\vec u,\vec x,r}$ tends weakly$^*$ in $BV(\R^3)$ and in $L^1(\R^3)$ to $\Delta_{\vec u,\vec x}$ as $r \to 0$.
Furthermore, $\Delta_{\vec u,\vec x}$ vanishes outside $B (\vec 0, \| \vec u\|_{L^{\infty} (\R^3, \R^3)})$ and for any $\vec g \in C^1_c (\R^3, \R^3)$,
\begin{equation}\label{eq:derivative_Delta}
 \langle D \Delta_{\vec u,\vec x} , \vec g \rangle = \overline{\E}_{\vec u}( \{ \vec x\}, \vec g) .
\end{equation}
\end{proposition}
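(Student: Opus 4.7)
My plan rests on a single distributional identity linking $\Delta_{\vec u,\vec x,r}$ to the surface-energy bilinear form $\overline{\mathcal E}_{\vec u}$; everything else is then extracted by compactness. First, $\Delta_{\vec u,\vec x,r}$ is integer-valued by construction (the Brezis--Nirenberg degree agrees a.e.\ with the classical one, Definition \ref{prop:degree}, and the subtracted indicator takes values in $\{0,1\}$), and it is supported in $\overline{B(\vec 0,\|\vec u\|_{L^\infty})}$ since the degree vanishes on the unbounded component of $\R^3\setminus \vec u(\partial B(\vec x,r))$ and $\imG(\vec u, B(\vec x, r))$ lies in the same ball.

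The key identity to establish is
\[
    \langle D\Delta_{\vec u,\vec x,r},\vec g\rangle = \overline{\mathcal E}_{\vec u}\bigl(B(\vec x,r),\vec g\bigr),\qquad \vec g \in C^1_c(\R^3,\R^3). \tag{$\ast$}
\]
I would compute $-\int_{\R^3}\Delta_{\vec u,\vec x,r}\dive\vec g$ by splitting two contributions: the degree term produces $\int_{\partial B(\vec x,r)}(\vec g\circ\vec u)\cdot\cof D\vec u\,\vec\nu\dd\mathcal H^2$ directly from Definition \ref{prop:degree}, while the image term produces $\int_{B(\vec x,r)\cap\Omega_0}\dive\vec g(\vec u)\det D\vec u\dd\vec x$ via the area formula with $\varphi = \dive \vec g$ (Proposition \ref{prop:area-formula}), using a.e.\ injectivity of $\vec u|_{\Omega_0}$. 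On the other side, I approximate $\chi_{B(\vec x,r)}$ by a radial cutoff $\phi_\delta\in C^1_c(\widetilde\Omega)$ equal to $1$ on $B(\vec x,r-\delta)$ and $0$ outside $B(\vec x,r+\delta)$, and apply coarea/Fubini to recover the same boundary and volume integrals as $\delta\to 0$. Taking the supremum over $\|\vec g\|_{L^\infty}\le 1$ in $(\ast)$ yields
\[
    |D\Delta_{\vec u,\vec x,r}|(\R^3) \leq \mu_{\vec u}\bigl(B(\vec x,r)\bigr) \leq \mu_{\vec u}(\widetilde\Omega) = \mathcal E(\vec u),
\]
the final equality coming from Proposition \ref{pr:muproperties} through the identification $|\vecg\Lambda|(E\times\R^3)=\mu_{\vec u}(E)$ proved there; combined with the Sobolev inequality $\|\Delta\|_{L^{3/2}(\R^3)}\lesssim |D\Delta|(\R^3)$ for compactly supported BV functions and Hölder, this gives a uniform bound in $BV(\R^3)$.

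The uniform $BV$ bound and fixed compact support allow Rellich--Kondrachov to supply $L^1(\R^3)$-precompactness of $\{\Delta_{\vec u,\vec x,r}\}$ as $r\to 0$. Any $L^1$-limit point $\Delta_{\vec u,\vec x}$ is again integer-valued (along an a.e.\ convergent subsequence) and compactly supported. To upgrade to a full limit I combine $(\ast)$ with the Radon-measure property of $\overline{\mathcal E}_{\vec u}(\cdot,\vec g)$: since $B(\vec x,r)\downarrow \{\vec x\}$,
\[
    \langle D\Delta_{\vec u,\vec x,r},\vec g\rangle \xrightarrow[r\to 0]{} \overline{\mathcal E}_{\vec u}\bigl(\{\vec x\},\vec g\bigr)
\]
for every $\vec g \in C^1_c(\R^3,\R^3)$, so the distributional derivatives converge in $\mathcal D'(\R^3)$ to a fixed limit independent of any subsequence. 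Two $L^1(\R^3)$-limit points therefore share the same distributional derivative, differ by a constant, and by integrability coincide. This pins down the unique limit $\Delta_{\vec u,\vec x}$, proves the weak$^*$ convergence in $BV(\R^3)$, and simultaneously yields \eqref{eq:derivative_Delta}.

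The principal difficulty lies in establishing $(\ast)$: although the underlying manipulation is formally integration by parts combined with change of variables, the mere $L^2$ regularity of $\cof D\vec u$ forces the cutoff approximation and the coarea slicing to be performed only on the full-measure set of radii for which both the Brezis--Nirenberg degree and the surface trace of $\vec u$ on $\partial B(\vec x,r)$ are simultaneously meaningful.
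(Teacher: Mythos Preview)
Your proposal is correct and follows essentially the same route as the paper. The paper obtains your identity $(\ast)$ by invoking \cite[Lemma~3.3]{HeMo12} rather than re-deriving it via radial cutoffs, and it controls $\|\Delta_{\vec u,\vec x,r}\|_{L^1}$ through the Poincar\'e inequality on the fixed ball $B(\vec 0,\|\vec u\|_{L^\infty})$ rather than the Sobolev embedding; the compactness-plus-uniqueness argument for the limit is identical to yours.
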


\begin{proof}
Set, for simplicity, $B:= B (\vec x, r)$.
Since we assume that $\E(\vec u)<\infty$, we have that $\overline{\E}_{\vec u}(\cdot,\vec g)$ is a measure for all $\vec g\in C^1_c(\R^3,\R^3)$.
By \cite[Lemma 3.3]{HeMo12}, for a.e.\ $r > 0$ we have that 
\begin{align*}
\overline{\E}_{ \vec u}(B,\vec g)& =-\int_{\p B} \vec g(\vec u(\vec x))\cdot \cof D\vec u(\vec x) \, \vec \nu(\vec x) \dd \vec x + \int_{B} \dive \vec g(\vec u(\vec x)) \det D \vec u( \vec x) \dd \vec x \\
&= -\int_{\R^3}\dive \vec g( \vec y) \deg (\vec u, B,\vec y)\dd \vec y+\int_{\R^3}\chi_{\imG(\vec u,B)}\dive \vec g (\vec y) \dd \vec y \\
&= \langle D \left( \deg(\vec u, B,\cdot)-\chi_{\imG(\vec u,B)} \right),\vec g \rangle  \\
&= \langle D \Delta_{\vec u,\vec x,r}, \vec g \rangle. 
\end{align*}
In the second equality we used the integral formula for the degree (Definition \ref{prop:degree}) and the area formula (Proposition \ref{prop:area-formula}).
Thus,
\begin{equation}\label{eq:derivative_Delta_and_surface_energy}
\overline{\E}_{ \vec u}(B,\vec g)=\langle D \Delta_{\vec u,\vec x,r},\vec g \rangle \qquad \forall \vec g \in C^1_c(\R^3,\R^3)
\end{equation}
and, hence,
\begin{equation*}
 \left\| D \Delta_{\vec u,\vec x,r} \right\| = 
 \sup_{\substack{\vec g \in C^1_c (\R^3, \R^3) \\ \|\vec g\|_{L^\infty}\leq 1}} \langle D \Delta_{\vec u,\vec x,r},\vec g \rangle = \sup_{\substack{\vec g \in C^1_c (\R^3, \R^3) \\ \|\vec g\|_{L^\infty}\leq 1}} \overline{\E}_{\vec u}(B,\vec g) \leq \E(\vec u).
\end{equation*}
But since $\Delta_{\vec u,\vec x,r}$ vanishes outside $B(\vec 0,\|\vec u\|_{L^\infty})$ we have from the Poin\-car\'e inequality that $\Delta_{\vec u,\vec x,r}\in BV(\R^3,\Z)$ and that $\|\Delta_{\vec u,\vec x,r}\|_{BV(\R^3)}\leq M$ for some constant $M$ not depending on $\vec x$ or $r$.
From the compactness theorem in $BV$ there exists a function $\Delta_{\vec u,\vec x}\in BV(\R^3)$ and a subsequence $r_n\rightarrow 0$ such that $\Delta_{\vec u,\vec x,r_n}\weakcs \Delta_{\vec u,x}$ in $BV (\R^3)$ and $\Delta_{\vec u,\vec x,r_n}\rightarrow \Delta_{\vec u,\vec x}$ in $L^1(\R^3)$. Up to a further subsequence we can also assume the convergence a.e.\ and hence $\Delta_{\vec u,\vec x}\in BV(\R^3,\Z)$.
But by \eqref{eq:derivative_Delta_and_surface_energy} and dominated convergence, we have
\begin{equation*}
\langle D  \Delta_{\vec u,\vec x},\vec g \rangle =\lim_{r_n\rightarrow 0}\overline{\E}_{\vec u}(B(\vec x,r_n),\vec g) = \overline{\E}_{\vec u}( \{ \vec x\}, \vec g), \  \ \forall \vec g \in C^1_c(\R^3,\R^3).
\end{equation*}
As the right-hand side does not depend on the sequence $r_n\rightarrow 0$, the derivative $D \Delta_{\vec u,\vec x}$ does not depend on the subsequence either.
Since $\Delta_{\vec u,\vec x}$ vanishes at infinity we have that the limit $\Delta_{\vec u,\vec x}$ does not depend on the sequence $r_n\rightarrow 0$. Thus $\Delta_{\vec u,\vec x,r}$ tends weakly$^*$ in $BV(\R^3)$ and in $L^1(\R^3)$ to $\Delta_{\vec u,\vec x}$ as $r \to 0$.
\end{proof}

The previous proposition allows us to set the following definition.

\begin{definition}
 Let \(\vec u \in \overline{\Ar_s}\) be with \( \E(\vec u)<\infty\).
For every \( \vec \xi \in \Om\) we define \( \Delta_{\vec u, \vecg \xi} \in BV(\R^3,\Z)\) as the $L^1 (\R^3)$ 
limit of $\Delta_{\vec u, \vecg \xi, r}$ as $r \to 0$, where \(\Delta_{\vec u, \vecg \xi,r} \) is as in Definition \ref{def:Delta_u_x_r}.
\end{definition}

\begin{proposition} \label{prop:DeltaNonZero}
  Let \(\vec u \in \overline{\Ar_s}\) be with \( \E(\vec u)<\infty\). Then, $\Delta_{\vec u,\vecg\xi}$ is not identically zero if and only if $\vecg \xi \in\At(\mu_{\vec u})$.
\end{proposition}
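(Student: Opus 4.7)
The proposition reduces to a distributional argument pivoting on the identity
\begin{equation*}
  \langle D\Delta_{\vec u,\vec\xi},\vec g\rangle = \overline{\E}_{\vec u}(\{\vec\xi\},\vec g), \qquad \vec g\in C^1_c(\R^3,\R^3),
\end{equation*}
already established in Proposition \ref{prop:proprety_Delta}, together with the atomic structure of $\overline{\E}_{\vec u}(\cdot,\vec g)$ recorded in Proposition \ref{prop:previous} \ref{item:previousB}). My plan is to run the two implications separately, each a short calculation.

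For the implication $\vec\xi\notin\At(\mu_{\vec u})\Rightarrow\Delta_{\vec u,\vec\xi}\equiv 0$, I would argue as follows. Since $\overline{\E}_{\vec u}(\cdot,\vec g)$ is a purely atomic measure whose atoms are contained in $\At(\mu_{\vec u})$, we have $\overline{\E}_{\vec u}(\{\vec\xi\},\vec g)=0$ for every $\vec g\in C^1_c(\R^3,\R^3)$. Combined with the displayed identity this yields $D\Delta_{\vec u,\vec\xi}=0$ as a distribution on $\R^3$. Hence $\Delta_{\vec u,\vec\xi}$ is a.e.\ constant; but by Proposition \ref{prop:proprety_Delta} it vanishes outside $B(\vec 0,\|\vec u\|_{L^\infty})$, so the constant must be $0$.

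For the converse $\vec\xi\in\At(\mu_{\vec u})\Rightarrow \Delta_{\vec u,\vec\xi}\not\equiv 0$, I would use the very definition of $\mu_{\vec u}$. Being an atom means $\mu_{\vec u}(\{\vec\xi\})>0$, and since
\begin{equation*}
  \mu_{\vec u}(\{\vec\xi\})=\sup\{\overline{\E}_{\vec u}(\{\vec\xi\},\vec g):\vec g\in C^1_c(\R^3,\R^3),\ \|\vec g\|_{L^\infty}\leq 1\},
\end{equation*}
one can pick some admissible $\vec g$ with $\overline{\E}_{\vec u}(\{\vec\xi\},\vec g)>0$. (Equivalently, knowing from Proposition \ref{pr:muproperties} \ref{it:mu_atomic}) that $\mu_{\vec u}(\{\vec\xi\})=\mathcal{H}^2(\Gamma_{\vec\xi})>0$ and using the formula $\overline{\E}_{\vec u}(\{\vec\xi\},\vec g)=\int_{\Gamma_{\vec\xi}}\vec g\cdot\vec\nu_{\vec\xi}\,\dd\mathcal{H}^2$ extracted in the proof of item \ref{it:E_structure}), one exhibits such $\vec g$ explicitly by taking it close to $\vec\nu_{\vec\xi}$ on a large subset of $\Gamma_{\vec\xi}$.) Then $\langle D\Delta_{\vec u,\vec\xi},\vec g\rangle\neq 0$, so $\Delta_{\vec u,\vec\xi}$ cannot be the zero function.

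Given how directly both directions follow from the preparatory propositions, I do not anticipate a genuine obstacle; the only mild point is to be comfortable invoking the extension of $\overline{\E}_{\vec u}(\cdot,\vec g)$ from open sets to Borel sets (in particular to the singleton $\{\vec\xi\}$) as a Radon measure, which is justified by the assumption $\mathcal E(\vec u)<\infty$ built into the setting.
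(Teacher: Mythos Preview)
Your proof is correct and follows essentially the same approach as the paper: both directions hinge on the identity $\langle D\Delta_{\vec u,\vec\xi},\vec g\rangle=\overline{\E}_{\vec u}(\{\vec\xi\},\vec g)$ together with the atomic structure of $\overline{\E}_{\vec u}(\cdot,\vec g)$ and the definition of $\mu_{\vec u}$. Your treatment of the converse is slightly more direct (invoking the supremum definition of $\mu_{\vec u}(\{\vec\xi\})$ immediately), whereas the paper phrases the same step via the auxiliary measure $\lambda_{\vec\xi}(W):=\vec\Lambda(\{\vec\xi\}\times W)$ and the identity $\mu_{\vec u}(\{\vec\xi\})=|\lambda_{\vec\xi}|(\R^3)$, but this is a cosmetic difference.
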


\begin{proof}
As $\Delta_{\vec u,\vecg\xi}$ vanishes outside $B (\vec 0, \| \vec u\|_{L^{\infty} (\R^3, \R^3)})$, we have that $\Delta_{\vec u,\vecg\xi} = 0$ if and only if $D \Delta_{\vec u,\vecg\xi}=0$.
Together with \eqref{eq:derivative_Delta}, we obtain that
\[
 \Delta_{\vec u,\vecg\xi} = 0 \quad \text{if and only if} \quad \overline{\E}_\vec u(\{\vec\xi\},\vec g) = 0 \text{ for all } \vec g\in C^1_c(\R^3,\R^3) .
\] 

Assume first $\vec \xi \in \Om \setminus \At(\mu_{\vec u})$.
Since $\At (\overline{\E}_\vec u(\cdot,\vec g)) \subset \At(\mu_ \vec u)$ for every $\vec g\in C^1_c(\R^3,\R^3)$ we have that $\vec \xi$ is not an atom of $\overline{\E}_\vec u(\cdot,\vec g)$, so $\overline{\E}_\vec u(\{\vec \xi \},\vec g)=0$, and hence, $\Delta_{\vec u,\vecg\xi} = 0$.

Conversely, assume that $\overline{\E}_\vec u(\{\vec \xi \},\vec g)=0$ for all $\vec g\in C^1_c(\R^3,\R^3)$, and fix any such $\vec g$.
We can write 
\[
\overline{\E}_\vec u(\{ \vec \xi \},\vec g) = \lim_{j \to \infty} \overline{\E}_\vec u(\phi_j \, \vec g) ,
\]
where $\{ \phi_j \}_{j \in \N} \subset C^1_c (\Om)$ satisfies $\phi_j \rightarrow \chi_{\{\vec \xi\}}$ pointwise 
and \(\|\phi_j\|_\infty \leq 1\). Thus we find
\[
\overline{\E}_\vec u(\{\vec \xi \},\vec g) = \lim_{j \to \infty} \int_{\Om \times \R^3} \phi_j (\vec x) \, \vec g( \vec y) \cdot \dd \vec \Lambda(\vec x,\vec y) = \int_{\Om \times \R^3} \chi_{\{ \vec \xi \} }(\vec x) \, \vec g( \vec y) \cdot \dd \vec \Lambda(\vec x,\vec y) .
\]
Let $\lambda_{ \vec \xi}$ be the $\R^3$-measure in $\R^3$ defined by 
$$\lambda_{\vec \xi}(W)= \vec \Lambda (\{ \vec \xi\}\times W) \quad \text{ for any Borel } W \subset \R^3.$$
With this definition we have
\[
\overline{\E}_ \vec u(\{ \vec \xi \},\vec g) = \int_{\Om \times \R^3} \chi_{\{ \vec \xi  \}}(\vec x) \, \vec g(\vec y) \cdot \dd \vec \Lambda (\vec x, \vec y) = \int_{\R^3} \vec g(\vec y) \cdot \dd \lambda_{ \vec \xi} (\vec y).
\]
As $\overline{\E}_\vec u(\{\vec \xi \},\vec g)=0$, we have that $\int_{\R^3} \vec g \cdot\dd \lambda_{\vec \xi }=0$.
Since this is true for all $\vec g \in C^1_c (\R^3, \R^3)$, we obtain that $\lambda_{\vec \xi}=0$.
But by \eqref{le:muu}, 
\begin{equation*}
\mu_\vec u(\{ \vec \xi\})=|\vec \Lambda| (\{\vec \xi \}\times \R^3)=|\lambda_{\vec \xi}|(\R^3) = 0 .
\end{equation*}
This means that $\vec \xi$ is not an atom of $\mu_\vec u$.
\end{proof}

The following result is the closest we are to state that the surface created at each cavitation point (each atom of $\mu_{\vec u}$, or each Dirac mass of the distributional Jacobian in the classical cavitation problem)
 actually encloses a volume.
In classical cavitation (cf.\ \cite[Th.\ 4.8]{HeMo12}) 
this is 
$$\{\vec y \in \Gamma(\vec u): ({\vec u}^{-1})^{\pm} (\vec y) = \vecg\xi\} = \partial^* \imT(\vec u, \vecg\xi)\quad \Ha^2\text{-a.e.}
$$
In the present setting, it is the integer-valued function $\Delta_{\vec u,\vecg\xi}$ which can be thought of as the degree of \(\vec u\) with respect to the surface created from $\vecg\xi$.
The very possibility of defining a topological degree with respect to that surface suggests that it must be a closed surface
(a manifold without boundary),
at least in some weak sense.
That is the content of item \ref{item:surfaceI}) of the next proposition, which shows that $\Gamma_{\vec \xi}$ is, if not the boundary of a volume, at least the union of reduced boundaries of finite perimeter sets. It also goes a little further in the topological  description, 
stating that 
$\Delta_{\vec u, \vecg\xi}$ always jumps just by one and that it decreases in the direction towards the part of the body coming from $\vecg\xi$.
 Item \ref{item:surfaceII}) means that the surface of the generalized cavity created at \(\vec \xi\) can be approached by the image of small spheres surrounding \(\vec \xi\). 
Item \ref{item:surfaceIII}) suggests that the Dirichlet energy in a ball of radius $r$ centred at the singular points scales like $r^1$ and not as the volume $O(r^3)$ of the ball. The significance of that energy concentration is that it might be part of a future regularity argument yielding a competitor test function with less energy than an alleged minimiser producing harmonic dipoles. 

\begin{proposition}\label{prop:surface}
 Let \( \vec u \in \overline{\Ar_s}\) be with \( \E(\vec u)<\infty\) and let $\vec \xi \in \At (\mu_{\vec u})$.
Then
\begin{enumerate}[i)]
 \item\label{item:surfaceI} $\displaystyle D\Delta_{\vec u, \vecg\xi} = - \vecg\nu_{\vecg\xi} \Ha^2 \res \Gamma_{\vecg\xi}$ and
\[
 \Gamma_{\vecg\xi} = \sum_{k\in \N} \partial^* \{\vec y\in \R^3 : \Delta_{\vec u,\vecg\xi}(\vec y)= k\} \quad \Ha^2 \text{-a.e.}
\]
  
  \item\label{item:surfaceII} $\displaystyle \tilde{\vecg\nu}_{\vecg\xi, r} \Ha^2 \res \imG(\vec u,\partial B(\vecg\xi,r))
         \weakcs \vecg\nu_{\vecg\xi} \Ha^2 \res {\Gamma_{\vecg\xi}}$ as $r \to 0$, where 
	$\tilde{\vec\nu}_{\vec\xi, r}\big ( \vec u(\vec x)\big)$ is the unit normal to $\imG(\vec u, \partial B(\vec\xi, r))$ defined in Proposition \ref{pr:change_of_variables-surfaces}
and the sequence $r \to 0$ avoids a set of measure zero.
  
  \item\label{item:surfaceIII} $\displaystyle \liminf_{r\to 0} \int_{\partial B(\vecg\xi, r)} \frac{|D\vec u|^2}{2} \dd\Ha^2 \geq \Ha^2 (\Gamma_{\vecg\xi})$.
\end{enumerate}
\end{proposition}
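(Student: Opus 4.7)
The plan is to prove items (i), (ii), (iii) sequentially, with (i) supplying a distributional identity from which (ii) follows by a convergence argument and (iii) by a lower semicontinuity argument combined with the area--energy inequality.

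First, to prove (i), I would begin from \eqref{eq:derivative_Delta}, which gives $\langle D\Delta_{\vec u,\vec\xi},\vec g\rangle=\overline{\E}_{\vec u}(\{\vec\xi\},\vec g)$ for every test field $\vec g\in C^1_c(\R^3,\R^3)$. Combining this with the explicit representation of $\overline{\E}_{\vec u}(\{\vec\xi\},\cdot)$ obtained in the proof of Proposition \ref{pr:muproperties} (formula \eqref{eq:aux2_phi_g}), namely $\overline{\E}_{\vec u}(\{\vec\xi\},\vec g)=\bigl(\int_{\Gamma_{\vec\xi}^-}-\int_{\Gamma_{\vec\xi}^+}\bigr)\vec g\cdot\vec\nu\,\dd\H^2$, and using the definition of $\vec\nu_{\vec\xi}$, the first identity of (i) follows by direct comparison (the overall sign being dictated by the convention for the distributional derivative of a $BV$ function). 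For the second statement of (i), the plan is to invoke the structure theory for integer-valued $BV$ functions: since $\Delta_{\vec u,\vec\xi}\in BV(\R^3,\Z)$ by Proposition \ref{prop:proprety_Delta}, its derivative has vanishing Cantor part, the function is automatically $SBV$, and the support of its jump part coincides $\H^2$-almost everywhere with $\bigcup_{k\in\Z}\p^*\{\Delta_{\vec u,\vec\xi}=k\}$. The first half of (i) gives $|D\Delta_{\vec u,\vec\xi}|=\H^2\res\Gamma_{\vec\xi}$ (since $\vec\nu_{\vec\xi}$ is a unit vector field), so the support of $|D\Delta_{\vec u,\vec\xi}|$ is precisely $\Gamma_{\vec\xi}$, yielding the claimed identification.

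For (ii), I would combine Definition \ref{prop:degree} with the change of variables formula (Proposition \ref{pr:change_of_variables-surfaces}) to derive, for almost every $r$, the identity of Radon measures $\tilde{\vec\nu}_{\vec\xi,r}\H^2\res\imG(\vec u,\p B(\vec\xi,r))=-D\deg(\vec u,B(\vec\xi,r),\cdot)=-D\Delta_{\vec u,\vec\xi,r}-D\chi_{\imG(\vec u,B(\vec\xi,r))}$. Two convergences then drive the rest of the argument: $\Delta_{\vec u,\vec\xi,r}\to\Delta_{\vec u,\vec\xi}$ in $L^1(\R^3)$ by Proposition \ref{prop:proprety_Delta}, and $\chi_{\imG(\vec u,B(\vec\xi,r))}\to 0$ in $L^1(\R^3)$ since, by the area formula, $|\imG(\vec u,B(\vec\xi,r))|\leq\int_{B(\vec\xi,r)}\det D\vec u\,\dd\vec x\to 0$. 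Testing the displayed identity against any $\vec g\in C^1_c(\R^3,\R^3)$ and passing to the limit then gives, thanks to (i), the announced convergence. The uniform bound $\|D\Delta_{\vec u,\vec\xi,r}\|_{\mc{M}(\R^3,\R^3)}\leq\E(\vec u)$ from Proposition \ref{prop:proprety_Delta} upgrades this to genuine weak$^*$ convergence of Radon measures along a sequence of good radii of full measure.

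Finally, for (iii) the starting point is the pointwise ``area--energy'' inequality $|(\cof D\vec u)\vec\nu|\leq\tfrac12|D\vec u|^2$, valid for any unit vector $\vec\nu$; its proof follows verbatim the argument given in Section \ref{sec:recovery_sequence} for the particular case $\vec\nu=\vec e_3$ (Cauchy's inequality applied to an orthonormal basis of $\vec\nu^\perp$). Integrating this estimate over $\p B(\vec\xi,r)$ and applying Proposition \ref{pr:change_of_variables-surfaces} to rewrite the left-hand side as $\H^2(\imG(\vec u,\p B(\vec\xi,r)))$ produces $\H^2(\imG(\vec u,\p B(\vec\xi,r)))\leq\tfrac12\int_{\p B(\vec\xi,r)}|D\vec u|^2\,\dd\H^2$. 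Lower semicontinuity of the total mass under weak$^*$ convergence of measures, combined with (ii) and the identity $|\vec\nu_{\vec\xi}\H^2\res\Gamma_{\vec\xi}|(\R^3)=\H^2(\Gamma_{\vec\xi})$, then closes the argument. The main obstacle, in my view, is the passage from distributional to weak$^*$ measure convergence in (ii): one must ensure equibounded mass for the surface measures $\tilde{\vec\nu}_{\vec\xi,r}\H^2\res\imG(\vec u,\p B(\vec\xi,r))$ along the chosen sequence of radii, which relies on the perimeter control of $\imG(\vec u,B(\vec\xi,r))$ and on the uniform bound supplied by $\E(\vec u)<\infty$ via Proposition \ref{prop:proprety_Delta}.
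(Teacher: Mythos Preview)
Your proposal is correct and follows essentially the same route as the paper's proof: item (i) via \eqref{eq:derivative_Delta} combined with the representation \eqref{eq:aux2_phi_g} (the paper rederives this via \cite[Th.~2\,iv)]{HeMo11}, but it is the same identity), item (ii) via the decomposition $-D\deg=-D\Delta_{\vec u,\vec\xi,r}-D\chi_{\imG(\vec u,B(\vec\xi,r))}$ together with the $L^1$ convergences from Proposition~\ref{prop:proprety_Delta} and the area formula, and item (iii) via the area--energy inequality and lower semicontinuity of mass. The only minor difference is that the paper packages the computation in (ii) through $\overline{\E}_{\vec u}(B(\vec\xi,r),\vec g)$ and \cite[Lemma~3.3]{HeMo12}, whereas you unpack the same identity directly from Definition~\ref{prop:degree} and Proposition~\ref{pr:change_of_variables-surfaces}; the ``obstacle'' you flag regarding the mass bound is not needed for (iii), since lower semicontinuity of the total variation already holds under convergence against $C^1_c$ test fields.
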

   
\begin{proof}
We prove each item separately.

\smallskip

\emph{Proof of \ref{item:surfaceI})}.
We know that for any $\{ \phi_j \}_{j \in \N} \subset C^1_c (\Om)$ with $\phi_j \rightarrow \chi_{\{\vec \xi\}}$ 
pointwise and \(\|\phi_j\|_\infty \leq 1\) we have, for any $\vec g \in C^1_c (\R^3, \R^3)$,
\begin{equation*}
 \overline{\E}_\vec u(\{\vec \xi \},\vec g) = \lim_{j \to \infty} \E_{\vec u} (\phi_j \, \vec g).
\end{equation*}
We use the description of the surface energy from \cite[Th.\ 2\,iv)]{HeMo11} to obtain
\begin{align*}
\overline{\E}_\vec u(\{\vec \xi \}, \vec g)&= \lim_{j \to \infty} \int_{J_{\vec u^{-1}}} [\phi_j (\vec u^{-1}(\vec y))] \, \vec g(\vec y)\cdot \vec \nu(\vec y) \dd \Ha^2(\vec y) \\
&= \int_{J_{\vec u^{-1}}} \Bigl( \chi_{\{\vec \xi\}} \bigl( (\vec u^{-1} )^+ (\vec y) \bigr) - \chi_{\{\vec \xi\}}
\bigl( (\vec u^{-1})^- (\vec y) \bigr) \Bigr) \, \vec g(\vec y)\cdot \vec \nu(\vec y) \dd\Ha^2(\vec y) \\
&= \int_{\Gamma_{\vec \xi}^+} \vec g(\vec y)\cdot \vec \nu(\vec y) \dd\Ha^2(\vec y) -\int_{\Gamma_{\vec \xi}^-} \vec g(\vec y)\cdot \vec \nu(\vec y) \dd\Ha^2(\vec y) 
= - \int_{\Gamma_{\vec \xi}}\vec g(\vec y) \, \vec \nu_{\vec \xi}(\vec y)\dd\Ha^2(\vec y) ,
\end{align*}
due to the definition of $\vec \nu_{\vec \xi}$.
Thanks to \eqref{eq:derivative_Delta} we obtain that
\[
 \langle D \Delta_{\vec u,\vec x} , \vec g \rangle = - \langle \vec \nu_{\vec \xi} \,\Ha^2 \res \Gamma_{\vec \xi} , \vec g \rangle ,
\]
whence the conclusion follows.

\smallskip

\emph{Proof of \ref{item:surfaceII})}.
From Proposition \ref{prop:proprety_Delta} we have that $D \Delta_{\vec u,\vec \xi,r}  \overset{\ast}\rightharpoonup D \Delta_{\vec u,\vec \xi}$ in $\M (\R^3)$.
So from point \emph{\ref{item:surfaceI})} it suffices to prove that 
\begin{equation*}
D \Delta_{\vec u,\vec \xi,r} = - \tilde{\vec \nu}_{\vec \xi,r} \Ha^2 \res \imG(\vec u,\p B(\vec \xi,r)).
\end{equation*}
The last equality holds because, from \eqref{eq:derivative_Delta_and_surface_energy}, \cite[Lemma 3.3]{HeMo12} and the change of variables formula for surfaces (Proposition \ref{pr:change_of_variables-surfaces}), for any $\vec g\in C^1_c(\R^3,\R^3)$ and a.e.\ $r > 0$,
\begin{align*}
\langle D \Delta_{\vec u,\vec \xi,r},\vec g \rangle &= \overline{\E}_\vec u(B(\vec \xi,r),\vec g) \\
&=-\int_{\p B(\vec \xi,r)} \vec g(\vec u(\vec x)) \cdot \cof D\vec u(\vec x) \, \vec \nu(\vec x) \dd \Ha^2 (\vec x) + \int_{B(\vec \xi,r)}\dive \vec g(\vec u(\vec x))\det D\vec u(\vec x) \dd \vec x \\
&= -\langle \tilde{\vec \nu}_{\vec \xi,r} \Ha^2 \res \imG(\vec u,\p B(\vec \xi,r)),\vec g \rangle +\int_{B(\vec \xi,r)}\dive \vec g(\vec u(\vec x))\det D\vec u(\vec x) \dd \vec x
\end{align*}
and 
\begin{equation*}
\int_{B(\vec \xi,r)}\dive \vec g(\vec u(\vec x))\det D\vec u(\vec x) \dd \vec x \rightarrow 0 \text{ as } r \rightarrow 0.
\end{equation*}

\smallskip

\emph{Proof of \ref{item:surfaceIII})}.
By \emph{\ref{item:surfaceII})} and the semicontinuity of the total variation of a measure with respect to the weak$^*$ convergence, we obtain that
\[
 \Ha^2 (\Gamma_{\vec \xi}) \leq \liminf_{r \to 0} \Ha^2 \big (\imG(\vec u,\p B (\xi, r))\big ) .
\]
Now, for each $r>0$ thanks to the change of variable formula for surfaces (Proposition \ref{pr:change_of_variables-surfaces}), 
we find that 
\begin{align*}
 \Ha^2 \big (\imG(\vec u,\p B (\vec \xi, r))\big ) = \int_{\p B(\vec \xi,r)} \left|(\cof D\vec u) \vec \nu \right| \dd \Ha^2 \leq \int_{\p B(\vec \xi,r)} \frac{|D\vec u|^2}{2} \dd \Ha^2.
\end{align*}
The last inequality can be obtained by representing the linear transformation $\vec F=D\vec u(\vec x)$
in an orthonormal basis $\vec e_1'$, $\vec e_2'$, $\vec e_3'$ with $\vec e_3'=\vec\nu(\vec x)=\vec e_1'\wedge \vec e_2'$. Indeed, in that basis the columns of $\vec F$ are the coordinates of $\vec F\vec e_1'$, $\vec F\vec e_2'$ and $\vec F \vec\nu$, so that 
$$
	|(\cof \vec F)\vec \nu| = | (\vec F\vec e_1')\wedge (\vec F \vec e_2')|\leq |\vec F\vec e_1'||\vec F\vec e_2'|\leq \frac{|\vec F\vec e_1'|^2 +|\vec F\vec e_2'|^2}{2}\leq \frac{|\vec F|^2}{2},
$$  
where we recall that we are using the Frobenius norm for matrices.
The conclusion now follows.
\end{proof}

The following proposition shows that there must be a cancellation of the degrees; in particular, some must be negative unless $\Delta_{\vec u,\vecg\xi}=0$ for all $\xi \in\At(\mu_{\vec u})$. 
\begin{proposition} \label{prop:sumDegrees}
Let \(\vec u \in \overline{\Ar_s}\) be with \( \E(\vec u)<\infty\).
Then
    \[
	\sum_{\vecg\xi\in\At(\mu_{\vec u})} \Delta_{\vec u,\vecg\xi} = 0.
    \]
\end{proposition}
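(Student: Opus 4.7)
The plan is to first show that the distributional derivative of the sum vanishes, and then bootstrap from this to the vanishing of the sum itself using the fact that each summand is supported in a common bounded set.

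The starting point is the formula $D\Delta_{\vec u,\vec\xi} = -\vec\nu_{\vec\xi}\,\mathcal H^2\res\Gamma_{\vec\xi}$ provided by Proposition \ref{prop:surface}\ref{item:surfaceI}), which together with Proposition \ref{pr:muproperties}\ref{it:mu_atomic}) yields the total variation bound $\|D\Delta_{\vec u,\vec\xi}\|_{\mathcal M(\R^3)}=\mathcal H^2(\Gamma_{\vec\xi})$ and absolute convergence
\begin{equation*}
\sum_{\vec\xi\in\At(\mu_{\vec u})}\|D\Delta_{\vec u,\vec\xi}\|_{\mathcal M(\R^3)}
=\sum_{\vec\xi\in\At(\mu_{\vec u})}\mathcal H^2(\Gamma_{\vec\xi})=\mu_{\vec u}(\widetilde\Omega)<\infty.
\end{equation*}
For any $\vec g\in C^1_c(\R^3,\R^3)$ I would then compute
\begin{equation*}
\sum_{\vec\xi}\langle D\Delta_{\vec u,\vec\xi},\vec g\rangle
=-\sum_{\vec\xi}\Bigl(\int_{\Gamma_{\vec\xi}^-}\vec g\cdot\vec\nu\,\dd\mathcal H^2-\int_{\Gamma_{\vec\xi}^+}\vec g\cdot\vec\nu\,\dd\mathcal H^2\Bigr)
\end{equation*}
by splitting $\Gamma_{\vec\xi}=\Gamma_{\vec\xi}^-\cup\Gamma_{\vec\xi}^+$ and using the definition of $\vec\nu_{\vec\xi}$. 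Proposition \ref{pr:muproperties}\ref{it:jump_atoms}) shows that the families $\{\Gamma_{\vec\xi}^-\}_{\vec\xi\in\At(\mu_{\vec u})}$ and $\{\Gamma_{\vec\xi}^+\}_{\vec\xi\in\At(\mu_{\vec u})}$ each partition $J_{\vec u^{-1}}$ (up to $\mathcal H^2$-null sets), hence both sums reassemble to $\int_{J_{\vec u^{-1}}}\vec g\cdot\vec\nu\,\dd\mathcal H^2$ and cancel. Thus $\sum_{\vec\xi}D\Delta_{\vec u,\vec\xi}=0$ in the sense of distributions.

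To pass from the vanishing of the derivative to the vanishing of the sum I would invoke the Sobolev embedding $BV(\R^3)\hookrightarrow L^{3/2}(\R^3)$ for compactly supported functions, together with the observation from Proposition \ref{prop:proprety_Delta} that each $\Delta_{\vec u,\vec\xi}$ is supported in the fixed ball $B(\vec 0,R)$ with $R=\|\vec u\|_{L^\infty}$. This yields
\begin{equation*}
\|\Delta_{\vec u,\vec\xi}\|_{L^1(\R^3)}\leq C(R)\,\|D\Delta_{\vec u,\vec\xi}\|_{\mathcal M(\R^3)},
\end{equation*}
so the series $\sum_{\vec\xi}\Delta_{\vec u,\vec\xi}$ converges absolutely in $L^1(\R^3)$ to a function $\Delta\in L^1(\R^3)$ supported in $B(\vec 0,R)$. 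The previous paragraph then gives $D\Delta=0$, so $\Delta$ is a.e.\ constant and, being integrable, must vanish identically, which concludes the proof.

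I do not foresee a genuine obstacle here: all the heavy lifting (the identification of $D\Delta_{\vec u,\vec\xi}$ with a signed surface measure on $\Gamma_{\vec\xi}$, the partition property of $J_{\vec u^{-1}}$, and the uniform localisation of $\Delta_{\vec u,\vec\xi,r}$) has been established in the preceding propositions. The only mild technical point is to justify interchanging the infinite sum with the pairing against $\vec g$ in order to deduce $\sum_{\vec\xi}D\Delta_{\vec u,\vec\xi}=0$, which is immediate from the absolute convergence of the total variations.
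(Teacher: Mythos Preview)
Your proposal is correct and follows essentially the same route as the paper: identify $D\Delta_{\vec u,\vec\xi}$ via Proposition~\ref{prop:surface}\ref{item:surfaceI}), use the partition of $J_{\vec u^{-1}}$ by the $\Gamma_{\vec\xi}^\pm$ to see that the signed surface measures cancel, and then invoke the common compact support plus a Poincar\'e-type inequality to pass from $D(\sum_{\vec\xi}\Delta_{\vec u,\vec\xi})=0$ to $\sum_{\vec\xi}\Delta_{\vec u,\vec\xi}=0$. The only cosmetic differences are that the paper cites \eqref{eq:Ju-1Gx+-} and item~\ref{it:JG}) rather than~\ref{it:jump_atoms}) and~\ref{it:mu_atomic}) for the partition and summability, and phrases the convergence directly in $BV$ instead of $L^1$.
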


\begin{proof}
For each $\vec \xi \in \At (\mu_{\vec u})$, by Proposition \ref{prop:surface}.\emph{\ref{item:surfaceI})} 
and the definition of $\vec \nu_{\vec \xi}$,
\begin{equation}\label{eq:DDelta}
 D \Delta_{\vec u,\vec \xi} = - \vec \nu_{\vec \xi} \Ha^2 \res \Gamma_\vec \xi = \vec \nu \Ha^2 \res \Gamma_{\vec \xi}^+ - \vec \nu \Ha^2 \res \Gamma_{\vec \xi}^-.
\end{equation}
The series
\[
 \sum_{\vecg\xi\in\At(\mu_{\vec u})} D \Delta_{\vec u,\vecg\xi}
\]
is absolutely convergent in $\M (\R^3, \R^3)$ since, by \eqref{eq:DDelta} and Proposition \ref{pr:muproperties}.\emph{\ref{it:JG})},
\[
 \sum_{\vecg\xi\in\At(\mu_{\vec u})} \left\| D \Delta_{\vec u,\vecg\xi} \right\| = \sum_{\vecg\xi\in\At(\mu_{\vec u})} \left( \Ha^2 (\Gamma_{\vec \xi}^+) + \Ha^2 (\Gamma_{\vec \xi}^-) \right) = 2 \Ha^2 (J_{\vec u^{-1}}) .
\]
As each $\Delta_{\vec u,\vec \xi}$ vanishes outside $B (\vec 0, \| \vec u \|_{L^{\infty} (\R^3, \R^3)})$, the series
\[
 \sum_{\vecg\xi\in\At(\mu_{\vec u})} \Delta_{\vec u,\vecg\xi}
\]
is absolutely convergent in $BV (\R^3)$ and, hence, defines a $BV$ function.
Now, by \eqref{eq:DDelta} and \eqref{eq:Ju-1Gx+-},
\begin{align*}
D \left( \sum_{\vec \xi \in \At(\mu_\vec u)} \! \Delta_{\vec u,\vec \xi} \right) & = \! \sum_{\vec \xi \in \At(\mu_\vec u)} \! D \Delta_{\vec u,\vec \xi} = \vec \nu \left( \sum_{\vecg\xi\in\At(\mu_{\vec u})} \! \Ha^2 \res \Gamma_{\vec \xi}^+ - \sum_{\vecg\xi\in\At(\mu_{\vec u})} \!\Ha^2 \res \Gamma_{\vec \xi}^- \right) \\
& = \vec \nu \left( \Ha^2 \res J_{\vec u^{-1}} -  \Ha^2 \res J_{\vec u^{-1}} \right) = 0 .
\end{align*}
As $\sum_{\vec \xi} \Delta_{\vec u,\vec \xi}$ vanishes outside $B (\vec 0, \| \vec u \|_{L^{\infty} (\R^3, \R^3)})$, we obtain the conclusion.
\end{proof}

\section{Minimal connection length for the elastic harmonic dipoles}
It has already been observed that elasticity problems can be described through the theory of (Cartesian) currents (see, e.g., \cite{GiMoSo98I,GiMoSo98II,CoDeLe03,Mucci05,Mucci10a,Mucci10b}). We now recall some definitions and properties of currents. In this section, instead of $\R^3$ we work in $\R^n$ with $n\geq 1$ an integer.

\begin{definition}
Let $U\subset \R^n$ be a an open set and let $k\in \N$. A $k$-dimensional \textit{current} in $U$ is a linear form on the set $\mathcal{D}^k(U)$ of $C^\infty$ $k$-differential forms with compact support in $U$.
The set of those currents is denoted by $\mathcal{D}_k(U)$. \\
The \textit{boundary} $\p T$ of a $k$-dimensional current $T$ is the $(k-1)$-dimensional current defined by
\[ \langle \p T,\omega \rangle = \langle T, d \omega \rangle \text{ for every } \omega \in \mathcal{D}^{k-1}(U) \]
and the boundary of a $0$-dimensional current is set equal to $0$. \\
The \textit{mass} of a current $T$ is
\[ \mathbb{M}(T):=\sup\{ \langle T, \omega \rangle : \omega \in \mathcal{D}^k(U), |\omega| \leq 1 \}.\]
\end{definition}

\begin{definition}
\begin{enumerate}[a)]
\item A current $T$ is said \textit{normal} if $T$ and $\p T$ have finite mass.

\item A current is \textit{rectifiable} if it can be written as
\[ \langle T,\omega \rangle =\int_{\Rcal} \langle \omega(\vec x),\vec \tau(\vec x) \rangle \, m(\vec x)\dd\Ha^k(\vec x) \]
where
\begin{enumerate}[i)]
\item $\Rcal$ is a $k$-rectifiable set
\item $\vec \tau$ is a unit $k$-dimensional vector which spans $\operatorname{Tan}(\Rcal, \vec x)$ for $\Ha^{k}$-a.e.\ $\vec x\in \Rcal$; such a $\vec \tau$ is called an orientation.
\item $m$ is a real function, called the multiplicity, and which satisfies $\int_{\Rcal} |m |\dd\Ha^{k} <\infty$.
\end{enumerate}
If $T$ is a rectifiable current we write $T=\llbracket \Rcal,m,\vec \tau \rrbracket$.

\item A current is an \textit{integer multiplicity rectifiable current} if it is a rectifiable current such that the multiplicity $m$ takes integer values.
\end{enumerate}
\end{definition}

A particular case of an integer multiplicity rectifiable current is the one given by the integration on the graph of a function. Let $\Om \subset \R^n$ be a smooth bounded domain. We denote by $\A^1(\Om,\R^n)$ the class of vector-valued maps $\vec u:\Om\rightarrow \R^n$ that are a.e.\ approximately differentiable and such that all the minors of the Jacobian matrix $D\vec u$ are integrable.
When the domain and the space dimension are clear from the context, we will use the shorter notation $\A^1$.
For $\vec u \in \A^1$ we let
\[ M(D\vec u)(\vec x)=(\vec e_1,D\vec u(\vec x) \, \vec e_1)\wedge \cdots \wedge (\vec e_n,D\vec u(\vec x) \, \vec e_n) \]
where $\{\vec e_i\}_{i=1,\ldots,n}$ is the standard basis of $\R^n$, and here $\wedge$ denotes the exterior product (for the definition and properties we refer, e.g., to \cite[Sect.\ 2.1]{GiMoSo98I}). We also let
\[ \mathcal{G}_\vec u=\{(\vec x,\vec u(\vec x)) : \vec x\in \A_\vec u \} \]
where
\[\A_\vec u=\{\vec x \in \Om : \vec u \text{ is approximately differentiable at } \vec x \}. \]
For $\vec u \in \A^1$ we can define a current $G_\vec u \in \mathcal{D}_n(\R^n\times\R^n)$ by
\[ \langle G_\vec u, \omega \rangle = \int_{\R^n \times \R^n} \langle \xi, \omega \rangle \dd\Ha^n \res{\mathcal{G}_\vec u}\]
with $\xi=\frac{M(D \vec u)(\vec x)}{|M (D \vec u) (\vec x)|}$. We can show that $\mathcal{G}_\vec u$ is a countably rectifiable set and, so, $G_\vec u$ is an integer multiplicity rectifiable current.
The mass of this current is equal to
\[\mathbb{M}(G_\vec u)=\Ha^n(\mathcal{G}_\vec u)=\int_\Om|M (D \vec u)|\dd \vec x.\]
\noindent
If $\vec u\in W^{1,n-1}(\Om,\R^n)$ with $\det D\vec u\in L^1(\Om)$, we can see that $\vec u\in \A^1$.

As in nonlinear elasticity, in the theory of currents it is essential to distinguish the reference and deformed configurations.
It is customary to denote by $\R^n_{\vec x}$ the space where the reference configuration lies, which has coordinates $\vec x$, while $\R^n_{\vec y}$ is the space of the deformed configuration with coordinates $\vec y$.

We now introduce the concept of stratification of differential forms and of currents.
\begin{definition}
Let $\omega$ be an $n$-differential form on $\R^n_{\vec x} \times \R^n_{\vec y}$.
We can write \[\omega= \sum\limits_{\substack{\alpha, \beta  \\ |\alpha|+|\beta|=n}} f_{\alpha,\beta}\dd x_\alpha\wedge \dd y_\beta\] with $\alpha$ and $\beta$ some multi-indices. For every integer $h$ we then define
\[ \omega^{(h)}=\sum\limits_{\substack{|\alpha|+|\beta|=n \\|\beta|=h}} f_{\alpha,\beta} \dd x_\alpha \wedge \dd y_\beta. \]
Given a current $T$ on $\R^n_{\vec x} \times \R^n_{\vec y}$, we define its $h$-stratum $(T)_h$ by
\[ \langle (T)_h, \omega \rangle=\langle T, \omega^{(h)} \rangle. \]
\end{definition}

We can now make a link between the surface energy $\E$ and the theory of currents.

\begin{proposition}
Let $\vec u\in W^{1,n-1}\cap L^\infty(\Om,\R^n)$ be such that $\det D\vec u \in L^1(\Om)$ and let $\vec f \in C^1_c(\Om\times\R^n,\R^n)$.
We define the $n$-differential form $\omega_\vec f$ by
\begin{equation}
 \omega_\vec f(\vec x,\vec y)=\sum_{j=1}^n(-1)^{j-1}f^j(x,y)\widehat{\dd y^j} 
\end{equation}
where $\widehat{\dd y^j}=\dd y^1\wedge \cdots \wedge \dd y^{j-1}\wedge \dd y^{j+1}\wedge \cdots \wedge \dd y^n.$
Then
\[\E_\vec u(\vec f)=\langle \p G_\vec u, \omega_\vec f \rangle \quad \text{ and } \quad
 E(\vec u)=\mathbb{M}(\p G_\vec u). \]
\end{proposition}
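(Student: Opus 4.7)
My plan is to prove the two equalities in turn; the first by a direct algebraic computation, and the second by establishing a verticality property of $\partial G_{\vec u}$.

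For the first equality, I would compute $d\omega_{\vec f}$ by exterior differentiation. Writing $df^j = \sum_i \partial_{x_i}f^j\,\dd x^i + \sum_k \partial_{y_k}f^j\,\dd y^k$ and using the identity $\dd y^k\wedge\widehat{\dd y^j}=\delta_{kj}(-1)^{j-1}\dd y^1\wedge\cdots\wedge\dd y^n$, one obtains
\begin{equation*}
d\omega_{\vec f}=\sum_{i,j=1}^n(-1)^{j-1}\partial_{x_i}f^j\,\dd x^i\wedge\widehat{\dd y^j}\;+\;(\div_{\vec y}\vec f)\,\dd y^1\wedge\cdots\wedge\dd y^n.
\end{equation*}
Pulling back to $\mathcal{G}_{\vec u}$ via $\vec x\mapsto(\vec x,\vec u(\vec x))$, the top stratum yields $(\div_{\vec y}\vec f)(\vec x,\vec u(\vec x))\det D\vec u(\vec x)$ times the volume form, while each $(-1)^{j-1}\dd x^i\wedge\widehat{\dd y^j}$ pulls back to $(\cof D\vec u)_{ij}(\vec x)$ times the volume form (the standard identification of cofactors with signed $(n-1)$-minors of $D\vec u$). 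Integration yields $\langle G_{\vec u},d\omega_{\vec f}\rangle=\mathcal{E}_{\vec u}(\vec f)$, and this equals $\langle\partial G_{\vec u},\omega_{\vec f}\rangle$ by definition of boundary.

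For the mass identity, one direction is immediate. The comass of $\omega_{\vec f}$ equals $|\vec f|$ pointwise, since $\omega_{\vec f}$ has no $\dd x$-factors and the $\widehat{\dd y^j}$ are orthonormal among the vertical $(n-1)$-forms; hence $\|\vec f\|_\infty\leq 1$ implies $|\omega_{\vec f}|\leq 1$ and therefore $\mathcal{E}(\vec u)\leq\mathbb{M}(\partial G_{\vec u})$. For the reverse, the plan is to show that $\partial G_{\vec u}$ is \emph{vertical}, meaning $\langle\partial G_{\vec u},\omega^{(h)}\rangle=0$ for every stratum $h\leq n-2$. Granting this, any test $(n-1)$-form $\omega$ with comass $\leq 1$ satisfies $\langle\partial G_{\vec u},\omega\rangle=\langle\partial G_{\vec u},\omega^{(n-1)}\rangle$, and the vertical stratum $\omega^{(n-1)}$ can be uniquely written as $\omega_{\vec f}$ for some $\vec f$ with $|\vec f|\leq 1$ (the comass of $\omega^{(n-1)}$, computed as the supremum over vertical simple unit $(n-1)$-vectors, being bounded by the comass of $\omega$). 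This yields $\mathbb{M}(\partial G_{\vec u})\leq\mathcal{E}(\vec u)$.

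To establish the verticality I would take a model test form $\omega^{(h)}=\phi(\vec x,\vec y)\,\dd x^I\wedge\dd y^J$ with $|J|=h\leq n-2$; then $d\omega^{(h)}$ is a sum of terms of bi-degrees $(|I|+1,h)$ and $(|I|,h+1)$, which when pulled back to the graph involve only minors of $D\vec u$ of orders $h$ and $h+1\leq n-1$---all $L^1$-integrable under the hypothesis $\vec u\in W^{1,n-1}$. I would then mollify $\vec u$ to obtain smooth $\vec u_\eps$ for which $\langle G_{\vec u_\eps},d\omega^{(h)}\rangle=0$ by Stokes' theorem (the pullback having compact support in $\Omega$), and pass to the limit using the classical weak continuity of minors of order $\leq n-1$ under $W^{1,n-1}$-convergence (see, e.g., \cite{GiMoSo98I}). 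The $L^\infty$ bound on $\vec u$ ensures uniform boundedness of the approximations and controls the test integrand. The main obstacle is precisely this verticality step: the weak continuity of minors only just matches the integrability, since minors of order at most $n-1$ are controlled for $\vec u\in W^{1,n-1}$, and it is exactly the excluded top stratum $h=n-1$---involving $\det D\vec u$---that produces the non-trivial defect $\mathcal{E}_{\vec u}(\vec f)$ making up the mass of $\partial G_{\vec u}$.
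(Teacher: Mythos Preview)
Your proposal is correct and follows essentially the same route as the paper's own proof. The paper is considerably more terse: it asserts the first identity as ``a consequence of the definitions'' (citing \cite[Sects.~4 and~7]{HeMo10}), then obtains the second by noting that $\omega_{\vec f}$ is purely $(n-1)$-vertical so that $\mathcal{E}(\vec u)=\mathbb{M}((\partial G_{\vec u})_{n-1})$, and finally invokes the fact that $\vec u\in W^{1,n-1}$ forces $(\partial G_{\vec u})_h=0$ for all $h\leq n-2$, citing \cite[Prop.~3 and Rk.~3, p.~246]{GiMoSo98I} for the approximation argument---precisely the verticality step you spell out via mollification and weak continuity of minors of order at most $n-1$.
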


\begin{proof}
The equality $\E_\vec u(\vec f)=\langle \p G_\vec u, \omega_\vec f \rangle$ is a consequence of the definitions, and has been observed in other places \cite[Sects.\ 4 and 7]{HeMo10}.
Since $\omega_\vec f$ is an $(n-1)$-vertical form (i.e., it is the $(n-1)$ stratum of itself), that equality implies that 
\[\E(\vec u)=\mathbb{M}((\p G_\vec u)_{n-1}).\]
Finally, we recall that if $\vec u\in W^{1,p}$ then $(\p G_\vec u)_h=0$ for all $h\leq p-1$ (this can be shown by approximation by smooth functions see, e.g., \cite[Prop.\ 3 and Rk.\ 3 p.\ 246]{GiMoSo98I}).
Therefore, $(\p G_\vec u)_{n-1} = \p G_\vec u$, so, in particular, $\mathbb{M}((\p G_\vec u)_{n-1}) = \mathbb{M}(\p G_\vec u)$.
\end{proof}
In particular if $\vec u\in \Ar_s$, where \(\Ar_s\) is defined in \eqref{eq:defAs}, then $\p G_\vec u=0$.

\begin{definition}
Let $\Om\subset \R^n$ be a smooth open bounded in $\R^n$. We say that $T$ is a Cartesian current in $\Om\times \R^n$ if
\begin{enumerate}[i)]
\item $T$ is an integer multiplicity rectifiable current $T=\llbracket \Rcal,m, \vec \tau \rrbracket$;
\item $\mathbb{M}(T) <\infty$ and
\[ \| T\|_1:=\sup\{ \langle T,|y|\varphi(x,y)\dd \vec x \rangle : \varphi \in C^1_c(\Om \times\R^n), |\varphi|\leq 1\} <\infty ; \]
\item $T \res {\dd x_1\wedge \cdots \wedge \dd x_n}$ is a positive Radon measure in $\Om\times \R^n$ and $\pi_\# T=\llbracket \Om\rrbracket $, with $\pi:\R^n_\vec x\times\R^n_\vec y \rightarrow \R^n_\vec x$ given by $\pi (\vec x,\vec y) = \vec x$;
\item $\p T \res {\Om\times \R^n}=0$.
\end{enumerate}
\end{definition}

We remark that if $T=G_\vec u$ for some $\vec u \in \A^1$ then $\|T\|_1=\|\vec u\|_{L^1}$.

We also define the support of a current.

\begin{definition}
Let $T$ be a $k$-dimensional current in $\Om\times \R^n$. The support of $T$ is the smallest closed set $F$ such that
$\langle T, \omega \rangle=0$ if the support of $\omega$ is contained in the complement of $F$.
In other words,
\begin{multline*}
 \supp T:= \bigcap \{ K\subset \Om\times \R^3: K \text{ is relatively closed in } \Om\times \R^n \text{ and} \\
 \langle T,\omega\rangle =0 \text{ for all } \omega \in \mathcal{D}^k(\Om\times \R^n) \text{ such that } \supp \omega \subset (\Om\times \R^n)\setminus K \} .
\end{multline*}
\end{definition}

The following result is taken from \cite[Props.\ 4.1 and 4.4]{HeRo18}.

\begin{proposition}\label{prop:previous_article}
Let $\Om \subset\R^3$ be an axisymmetric domain. Let $(\vec u_n)_n \subset \Ar_s$ be such that $\vec u_n \rightharpoonup \vec u$ in $H^1$.
Then, passing to a subsequence, $G_{\vec u_n}\rightharpoonup G_\vec u+S$ for a current $S$ with $S^{(1)}=S^{(3)}=0$ and $\supp S \subset L \times \R^3$.
\end{proposition}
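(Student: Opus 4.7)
The plan is to extract a subsequence via uniform mass bounds on $G_{\vec u_n}$ (which follow from the uniform $H^1$ bound, since $|M(D\vec u_n)|\leq C(1+|D\vec u_n|^2+|\cof D\vec u_n|+|\det D\vec u_n|)$ and all four quantities have uniformly bounded $L^1$ norm, the last two via the equi-integrability coming from Müller's theorem applied to the orientation-preserving maps in $\mathcal A_s^r$). Passing to a subsequence, one finds a current $T$ with $G_{\vec u_n}\rightharpoonup T$ in $\mathcal D_3(\widetilde\Omega\times\R^3)$; set $S:=T-G_{\vec u}$. It remains to verify the three claims on $S$ stratum by stratum.

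For $S^{(1)}=0$: the $1$-stratum is a linear form in the entries $\partial_i u_n^j$ paired with smooth compactly supported coefficients. Since $\vec u_n\rightharpoonup \vec u$ in $H^1$ gives $D\vec u_n\rightharpoonup D\vec u$ in $L^2$, we have $(G_{\vec u_n})_1\rightharpoonup (G_{\vec u})_1$ in the sense of currents; hence the $1$-stratum of $S$ vanishes. For $S^{(3)}=0$: the $3$-stratum pairs the Jacobian with a top vertical form. Here the decisive ingredient is the weak $L^1$ convergence $\det D\vec u_n\rightharpoonup \det D\vec u$ in $L^1(\widetilde\Omega)$, which in the axisymmetric setting was established in \cite{HeRo18} (it follows from the injectivity a.e., orientation preservation, and axisymmetry; the classical Brezis--Coron/Müller higher integrability would only give convergence in the sense of distributions, which is insufficient). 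Consequently $(G_{\vec u_n})_3\rightharpoonup (G_{\vec u})_3$ and $S^{(3)}=0$.

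For $\supp S\subset L\times\R^3$: fix a compact axisymmetric set $K\Subset \widetilde\Omega\setminus L$. By Lemma 3.1 of \cite{BaHeMoRo_21}, every map in $\mathcal A_s$ admits a continuous (in fact Hölder-continuous in the two-dimensional generating half-plane) representative on $K$, with modulus of continuity depending only on $\|D\vec u\|_{L^2}$ and $\operatorname{dist}(K,L)$. Hence the $\vec u_n$ are equicontinuous on $K$ and, up to a subsequence, converge uniformly to $\vec u$ there. The graphs $\mathcal G_{\vec u_n}\cap (K\times\R^3)$ then converge in Hausdorff distance to $\mathcal G_{\vec u}\cap (K\times \R^3)$, while $D\vec u_n \rightharpoonup D\vec u$ weakly in $L^2(K)$ and the higher minors converge as measures; a standard graph-current argument (see \cite[Sect.~3]{GiMoSo98I}) yields $G_{\vec u_n}\res(K\times\R^3)\rightharpoonup G_{\vec u}\res(K\times \R^3)$, so $S$ has no mass over $K$. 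Varying $K$ exhausts $\widetilde\Omega\setminus L$, giving the support assertion.

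The main obstacle is the second step: proving $\det D\vec u_n\rightharpoonup \det D\vec u$ in $L^1$ rather than just distributionally. Without the axial symmetry this would be false in general (concentration of Jacobians at cavitation-like singularities can occur), and the proof in \cite{HeRo18} genuinely exploits axisymmetry together with the constraint $\vec u_n\in\mathcal A_s^r$. Granted that step, the remaining strata ($S^{(0)}=0$ is trivial since $\pi_\#G_{\vec u_n}=\llbracket\widetilde\Omega\rrbracket=\pi_\# G_{\vec u}$) and the localization of the support follow in a routine manner from the equicontinuity off the axis.
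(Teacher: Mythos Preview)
The paper does not give its own proof of this proposition; it imports it verbatim from \cite[Props.\ 4.1 and 4.4]{HeRo18}. Your outline is the right skeleton, and you correctly single out the weak $L^1$ convergence of Jacobians from \cite{HeRo18} as the nontrivial ingredient behind $S^{(3)}=0$.

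There is, however, a genuine gap in your localization of $\supp S$. On a compact $K\Subset \widetilde\Omega\setminus L$ you need $(G_{\vec u_n})_2\rightharpoonup (G_{\vec u})_2$, which amounts to
\[
  \int_K \phi(\vec x,\vec u_n(\vec x))\,(\cof D\vec u_n)_{ij}\,\dd\vec x \ \longrightarrow\ \int_K \phi(\vec x,\vec u(\vec x))\,(\cof D\vec u)_{ij}\,\dd\vec x.
\]
Uniform convergence of $\vec u_n$ on $K$ handles the first factor, but for the second you need $\cof D\vec u_n\rightharpoonup \cof D\vec u$ at least weakly$^*$ in $\mathcal M(K)$ (not merely in $\mathcal D'$). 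Your phrase ``the higher minors converge as measures'' is ambiguous: a sequence bounded in $L^1$ always has weak$^*$ limit points in $\mathcal M$, but the limit may carry a singular part in addition to $(\cof D\vec u)\,\mathcal L^3$---this is precisely what happens on the axis in the Conti--De Lellis example and is the mechanism producing $S$. Neither the Hausdorff convergence of the graphs nor a ``standard argument'' from \cite{GiMoSo98I} rules this out; the graph-current convergence results there require either an exponent $p>n-1$ or exactly the no-concentration of minors that is in question. What \cite{HeRo18} actually proves---exploiting the explicit cylindrical form of $\cof D\vec u$ for axisymmetric maps (cf.\ Appendix~\ref{sec:Appendix_B} here)---is that off the axis the cofactor entries cannot concentrate. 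This is the companion statement to the determinant result you invoked for $S^{(3)}=0$ and is the step your argument is missing.

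A minor point: the identity $\pi_\# G_{\vec u_n}=\llbracket\widetilde\Omega\rrbracket$ only yields $S^{(0)}=0$ against $\vec y$-independent test forms. For general $\phi(\vec x,\vec y)\,\dd\vec x$, use a.e.\ convergence of $\vec u_n$ (Rellich, along a subsequence) together with dominated convergence.
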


Since all the $\vec u_n$ are sufficiently regular (they belong to $\Ar_s$), $\partial G_{\vec u_n}=0$ for all $n$ and this property is inherited by the limit current $G_{\vec u}+ S$. Hence, $\partial S =-\partial G_{\vec u}$. In the map by Conti \& De Lellis this gives 
$$\partial S= \{\vec 0'\}\times \Gamma - \{\vec 0\}\times \Gamma,
$$
using the same notation as in the beggining of Section \ref{sec:VI},
  namely,
$\vec 0$ and $\vec 0'$ are the cavitation points in the reference configuration and $\Gamma$ is the bubble created by $\vec u$, as seen in the deformed configuration.
As mentioned in \cite[Sect.\ 7]{CoDeLe03}, since the ``hole'' opened at $\vec 0$ has the ``wrong'' sign, the defect current $S$ must be a cylinder connecting $\{\vec 0\}\times \Gamma$ with $\{\vec 0'\}\times \Gamma$, as in the dipoles in harmonic map theory. 

We are now in position to prove the result of this section: that our candidate \eqref{eq:relaxedF} for the relaxed energy (cf.~Theorem \ref{th:previous_article}) can be expressed in the language of Cartesian currents by
\begin{equation*}
F(\vec u)= \int_\Om |D\vec u|^2+2\mathbb{M}(S),
\end{equation*}
with \(S\) defined in Proposition \ref{prop:previous_article}.
Note that even though the defect current $S$ might depend on the chosen subsequence, the mass $\mathbb{M}(S)$ does not, since, as shown in Proposition \ref{prop:relaxed_energy_current} below, $\mathbb{M}(S)$ admits two expressions in terms of quantities depending only on $\vec u$.

\begin{lemma}
	\label{le:defectCurrentXalpha}
Let \( \vec u\in \overline{\Ar_s}\), and let \((\vec u_k)\subset \Ar_s \) be such that \(\vec u_k \rightharpoonup \vec u\) in \(H^1\). We have, up to a subsequence, that \(G_{\vec u_k} \rightharpoonup G_\vec u +S\) from Proposition \ref{prop:previous_article}.
	Then, for any $i\in\{1,2,3\}$, any $\eta_1,\eta_2\in C^1_c(\Om)$, and any $g \in C^1_c(\R^3)$,
$$
	\langle S, \eta_1(\vec x) g(\vec y)\, \dd x_1\wedge \widehat {\dd y_i} \rangle 
= \langle S, \eta_2(\vec x) g(\vec y)\, \dd x_2\wedge \widehat {\dd y_i} \rangle =0.
$$
\end{lemma}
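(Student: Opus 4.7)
The plan is to combine the structural fact that $\supp S\subset L\times\R^3$ (from Proposition~\ref{prop:previous_article}) with the rectifiability of $S$, so as to exploit that both $dx_1$ and $dx_2$ vanish on the linear subspace where $S$ is concentrated.

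First I would establish that $S$ is integer multiplicity rectifiable. Each $G_{\vec u_k}$ is i.m.r.\ with vanishing boundary in $\widetilde\O\times\R^3$ (because $\vec u_k\in\Asr$), and from the uniform bound $E(\vec u_k)\le E(\vec b)$ one controls $\sup_k \int_\O|M(D\vec u_k)|\,\dd\vec x$: indeed $|M(D\vec u_k)|\le C(1+|D\vec u_k|+|\cof D\vec u_k|+|\det D\vec u_k|)$, and using $|\cof \vec A|\le C|\vec A|^2$ together with the coercivity $H(t)/t\to\infty$ as $t\to\infty$ one controls each of these terms in $L^1$. The Federer--Fleming closure theorem then gives that the weak limit $G_{\vec u}+S$ is i.m.r., and since $G_{\vec u}$ itself is i.m.r., the difference $S=(G_{\vec u}+S)-G_{\vec u}$ is again i.m.r.

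Write $S=\llbracket M,m,\vec{\tau}\rrbracket$. Since $L\times\R^3$ is contained in the $4$-dimensional linear subspace $V:=\{\vec x\in\R^3:x_1=x_2=0\}\times\R^3$, the rectifiable set $M$ lies in $V$, and because $V$ is linear its tangent space at every point is $V$ itself. Hence the approximate tangent $3$-plane to $M$ is contained in $V$ at $\mathcal H^3$-a.e.\ point, so the orientation $\vec{\tau}(\vec x,\vec y)$ belongs to $\Lambda^3 V$. Both $dx_1$ and $dx_2$ annihilate every vector of $V$, so for $j\in\{1,2\}$ the $3\times 3$ determinant expression computing $\langle\vec{\tau},\,dx_j\wedge\widehat{dy_i}\rangle$ has a zero row and therefore vanishes. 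This yields
\begin{equation*}
\langle S,\eta(\vec x)g(\vec y)\,dx_j\wedge\widehat{dy_i}\rangle
=\int_M\eta(\vec x)\,g(\vec y)\,m(\vec x,\vec y)\,\langle\vec{\tau},\,dx_j\wedge\widehat{dy_i}\rangle\,\dd\mathcal H^3
=0
\end{equation*}
for every $\eta\in C^1_c(\widetilde\O)$, every $g\in C^1_c(\R^3)$ and $j=1,2$, which is precisely the statement.

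The only step that is not purely algebraic is the integer multiplicity rectifiability of $S$, which is where the uniform mass bound and Federer--Fleming are needed; the tangent-plane computation afterwards just reflects the fact that a $3$-dimensional i.m.r.\ current concentrated on the flat set $L\times\R^3$ cannot see the differentials of the collapsed horizontal variables $x_1$ and $x_2$.
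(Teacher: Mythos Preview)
Your proof is correct and follows essentially the same route as the paper: use the rectifiable structure $S=\llbracket \mathcal M, m, \vec\tau\rrbracket$ together with $\supp S\subset L\times\R^3$ to conclude that the approximate tangent $3$-planes lie in the $4$-space $V=\operatorname{Span}\{\vec e_3,\overline{\vec e}_1,\overline{\vec e}_2,\overline{\vec e}_3\}$, whence $dx_1$ and $dx_2$ annihilate $\vec\tau$ and the pairing vanishes. The only difference is that you spell out the rectifiability of $S$ via the mass bound and the Federer--Fleming closure theorem, whereas the paper takes this as already granted by the reference behind Proposition~\ref{prop:previous_article}.
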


\begin{proof}
 Since $S$ is a $3$-rectifiable current, $S=\llbracket \Rcal, m, \vec \tau\rrbracket$
for some $3$-rectifiable set $\Rcal$, some integer-valued multiplicity $m$ and some unit tangent $3$-vector $\vec \tau$. 
Since $\supp S \subset L\times \R^3$, it may be assumed, without loss of generality, that for $\Ha^3$-a.e.\ $(\vec x, \vec y)\in \Rcal$,
$$\operatorname{Tan} (\Rcal, (\vec x, \vec y)) \subset \operatorname{Span}(\{\vec e_3, \overline {\vec e}_1, \overline{\vec e}_2, \overline{\vec e}_3\}),
$$
where $\vec e_3$ is the direction parallel to the symmetry axis in the reference configuration and $\overline{\vec e}_1$, $\overline{\vec e}_2$, $\overline{\vec e}_3$ is the canonical basis for the target ambient space $\R^3$. 
Therefore, for $\Ha^3$-a.e.\ $(\vec x, \vec y)\in \Rcal$, the unit $3$-vector $\vec \tau(\vec x, \vec y)$ is a linear combination 
of the $3$-vectors 
$$
	\vec e_3 \wedge \overline{\vec e}_1 \wedge \overline{\vec e}_2,
	\quad  \vec e_3 \wedge \overline{\vec e}_2 \wedge \overline{\vec e}_3,
	\quad \vec e_3 \wedge \overline{\vec e}_3 \wedge \overline{\vec e}_1
	\quad \text{and}\quad
	\overline{\vec e}_1\wedge \overline{\vec e}_2 \wedge \overline{\vec e}_3.
$$
The four of them are orthogonal to $\dd x_1 \wedge \widehat{\dd y_i}$ and to $\dd x_2\wedge \widehat{\dd y_i}$, hence 
$$
	\langle S, \eta_\alpha g \dd x_\alpha\wedge \widehat {\dd y_i}\rangle 
	= \int_{\Rcal} \langle \eta_\alpha g \dd x_\alpha\wedge \widehat {\dd y_i}, \vec \tau (\vec x, \vec y)\rangle \,m(\vec x, \vec y)\,\dd\Ha^3(\vec x, \vec y) =0. 
$$
\end{proof}

\begin{proposition}\label{prop:relaxed_energy_current}
Let \( \vec u\in \overline{\Ar_s}\), let \((\vec u_k)\subset \Ar_s \) such that \(\vec u_k \rightharpoonup \vec u\) in \(H^1\). We have, up to a subsequence, that \(G_{\vec u_k} \rightharpoonup G_\vec u +S\) from Proposition \ref{prop:previous_article}.
Then 
\begin{align*}
\|D^s \vec u^{-1}\|&= \mathbb{M}(S) 
\\ & = \sup \Bigl\{ \E_{\vec u}(\phi \, \vec g):\,\phi\in C^1_c(\Om),\, \vec g \in C^1_c(\R^3, \R^3),\, \|\nabla \phi|_{\overline\Om}\|_{\infty}\leq 1, \|\vec g\|_{\infty}\leq 1\Bigr\} .
\end{align*}
\end{proposition}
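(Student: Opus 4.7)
The plan is to establish each equality separately by identifying the middle quantity $\mathbb{M}(S)$ with the constrained supremum, and then showing the left-hand side also equals this supremum. Everything hinges on the identity
\[
\E_{\vec u}(\phi\vec g)=\langle \partial G_{\vec u},\phi\,\omega_{\vec g}\rangle=-\langle \partial S,\phi\,\omega_{\vec g}\rangle=-\langle S,\dd(\phi\,\omega_{\vec g})\rangle,
\]
which follows from Proposition \ref{prop:link_surf_cart}, from $\partial G_{\vec u_k}=0$ (since $\vec u_k\in\Asr$), and from the weak convergence $G_{\vec u_k}\rightharpoonup G_{\vec u}+S$.

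For the equality $\mathbb{M}(S)=\sup\{\E_{\vec u}(\phi\vec g)\}$, I would expand
\[
\dd(\phi\,\omega_{\vec g})=\sum_{i,j=1}^{3}(-1)^{j-1}\partial_{x_i}\phi\,g^j\,\dd x_i\wedge\widehat{\dd y^j}+\phi\,(\dive_{\vec y}\vec g)\,\dd y^1\wedge \dd y^2\wedge \dd y^3.
\]
Proposition \ref{prop:previous_article} gives $S^{(3)}=0$, so the last summand vanishes when paired with $S$; Lemma \ref{le:defectCurrentXalpha} kills the $i=1,2$ terms, leaving
\[
\E_{\vec u}(\phi\vec g)=-\Bigl\langle S,\partial_{x_3}\phi\sum_{j=1}^{3}(-1)^{j-1}g^j\,\dd x_3\wedge\widehat{\dd y^j}\Bigr\rangle.
\]
Since $\supp S\subset L\times\R^3$ with $L\subset\overline\Omega$, the constraints $\|\nabla\phi|_{\overline\Omega}\|_\infty\leq 1$ and $\|\vec g\|_\infty\leq 1$ force the comass of the test $3$-form to be at most $1$ on $\supp S$, yielding $\sup\E_{\vec u}(\phi\vec g)\leq \mathbb{M}(S)$. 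For the matching lower bound, I would approximate any admissible test form for $\mathbb{M}(S)$ of the type $\sum_j h_j(\vec x,\vec y)\,\dd x_3\wedge\widehat{\dd y^j}$ by product forms $h(\vec x)g^j(\vec y)$ (separation of variables on the compact $\supp S$), and realize each scalar factor $h(\vec x)$ on $L$ as $\partial_{x_3}\phi$ with $\|\nabla\phi|_{\overline\Omega}\|_\infty\leq \|h\|_\infty$ by integrating $h$ in $x_3$ and extending off the axis through a cutoff that is constant in $x_3$ throughout $\overline\Omega$.

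For the equality $\|D^s\vec u^{-1}\|_{\mathcal M}=\sup\{\E_{\vec u}(\phi\vec g)\}$, I would apply the area formula and the identity $(\cof D\vec u)^\top=(\det D\vec u)\,D\vec u^{-1}\!\circ\vec u$ to transport both integrands in $\E_{\vec u}(\phi\vec g)$ to the deformed configuration, and then integrate by parts in $BV(\widetilde\Om_{\vec b})$ against $w:=\phi\circ\vec u^{-1}$ to obtain
\[
\E_{\vec u}(\phi\vec g)=-\int_{\widetilde\Om_{\vec b}}\vec g\cdot \dd D^s(\phi\circ\vec u^{-1}).
\]
The Vol'pert chain rule, combined with $\|\nabla\phi|_{\overline\Omega}\|_\infty\leq 1$, gives $\|D^s(\phi\circ\vec u^{-1})\|_{\mathcal M}\leq \|D^s\vec u^{-1}\|_{\mathcal M}$ and hence the upper bound. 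The matching lower bound comes from the sharp test choice $\phi(\vec x):=\eta(\vec x)x_3$, with $\eta\in C^1_c(\widetilde\Om)$ equal to $1$ on $\overline\Omega$: then $\phi\circ\vec u^{-1}$ equals $u_3^{-1}$ on $\Om_{\vec b}$ and is smoothly continued outside, and the axisymmetry of $\vec u$ (which forces $u_1^{-1},u_2^{-1}\in W^{1,1}$, so that $D^s\vec u^{-1}=D^su_3^{-1}\otimes\vec e_3$) yields $D^s(\phi\circ\vec u^{-1})=D^s u_3^{-1}$, so the supremum is at least $\|D^s u_3^{-1}\|_{\mathcal M}=\|D^s\vec u^{-1}\|_{\mathcal M}$.

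The most delicate point will be the jump-side estimate in the Vol'pert chain rule: the bound $|\phi((\vec u^{-1})^+)-\phi((\vec u^{-1})^-)|\leq |(\vec u^{-1})^+-(\vec u^{-1})^-|$ requires $|\nabla\phi|\leq 1$ on the entire segment joining the two traces, which could in principle exit $\overline\Omega$. Ruling this out amounts to showing that $\mathcal{H}^2$-a.e.\ both traces $(\vec u^{-1})^\pm$ lie on the symmetry axis $L\subset\overline\Omega$, so that the connecting segment is itself contained in $L$. When $\E(\vec u)<+\infty$ this is exactly Proposition \ref{pr:muproperties}\,\ref{it:sym-axis}), but in the present setting where only $\vec u\in\overline{\Asr}$ is assumed, the confinement must be deduced directly from axisymmetry together with the $W^{1,1}$ regularity of $u_1^{-1}$ and $u_2^{-1}$.
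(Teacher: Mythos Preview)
Your outline matches the paper's strategy in all essential ingredients: the identity $\E_{\vec u}(\phi\vec g)=-\langle S,\dd(\phi\,\omega_{\vec g})\rangle$, the use of Lemma~\ref{le:defectCurrentXalpha} together with $S^{(3)}=0$ to reduce to $\dd x_3\wedge\widehat{\dd y^j}$ components, the $BV$ chain-rule formula $\E_{\vec u}(\phi\vec g)=-\langle D^s(\phi\circ\vec u^{-1}),\vec g\rangle$, and the sharp test $\phi\approx x_3$ to recover $\|D^s u_3^{-1}\|$. Your closing paragraph on the jump estimate and the confinement of the traces to $L$ is exactly the delicate point the paper addresses.

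There is, however, a genuine gap in your route to $\mathbb{M}(S)\leq \sup\{\E_{\vec u}(\phi\vec g)\}$. A general admissible test form for $\mathbb{M}(S)$ has coefficients $h_j(x_3,\vec y)$, and Stone--Weierstrass only approximates these by \emph{sums} $\sum_k a_k(x_3)b_k^j(\vec y)$ of products. Since the supremum on the right is over a \emph{single} product $\partial_{x_3}\phi\cdot g^j$, a density argument does not transport the bound: you would be controlling $\langle S,\omega\rangle$ by $\sum_k\|a_k\|_\infty\|\vec b_k\|_\infty\cdot\sup$, and that coefficient sum need not stay below $1$. The paper sidesteps this by \emph{not} proving $\mathbb{M}(S)=\sup$ directly. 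It first shows the two inequalities $\|D^s u_3^{-1}\|\leq\mathbb{M}(S)$ and $\mathbb{M}(S)\leq\|D^s u_3^{-1}\|$, the second via the $BV$ formula \eqref{eq:linkCurrents}: for a product test form $\zeta$ one gets the \emph{linear} estimate $|\langle S,\zeta\rangle|\leq |\zeta|_\infty\,\|D^s u_3^{-1}\|$, which then extends by density to all $\zeta$ and yields $\mathbb{M}(S)\leq\|D^s u_3^{-1}\|$. Only afterwards is the sup identified with $\|D^s u_3^{-1}\|$ via the same $BV$ formula and the specific choice $\phi_n\to x_3$. So the fix is to reorganize your two equalities: first establish $\mathbb{M}(S)=\|D^s\vec u^{-1}\|$ using the linear bound coming from the $BV$ formula, and deduce the sup characterization from that.
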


\begin{proof}
Let \(\phi \in C^1_c(\Om) \), \(\vec g \in C^1_c(\R^3,\R^3)\), and \(\vec f(\vec x, \vec y)=\phi(\vec x) \, \vec g(\vec y)\). From 
\cite[Lemma 5.2]{BaHeMoRo_21} we have that
\begin{equation}
	\label{eq:linkBV}
	\begin{split}
\E_{\vec u }(\phi\,\vec g) &= 
- \langle D^s (\phi \circ \vec u^{-1}),\vec g \rangle  \\
& =
 - \int_{\Om_{\vec b}} \nabla \phi(\vec u^{-1}(\vec y))\otimes \vec g(\vec y)\cdot \dd D^c \vec u^{-1}(\vec y)
 - \int_{J_{\vec u^{-1}}} [ \phi \circ \vec u^{-1}] \, \vec g \cdot \vec \nu \, \dd \Ha^2 .
 \end{split}
\end{equation}
Therefore,  if \(\omega_{\vec f} = \sum_{i=1}^3 (-1)^{i-1}\phi g_i\widehat{\dd y_i}\), 
\begin{align*}
\langle D^s(\phi \circ \vec u^{-1}), \vec g \rangle 
&= 
-\E_{\vec u}(\phi\,\vec g)=-\langle \partial G_{\vec u}, \omega_{\vec f}\rangle 
= \langle \partial S, \omega_{\vec f} \rangle 
\\
&= \langle S, \sum_{i,\alpha}\p_{x_\alpha} \phi g_i \dd x_\alpha \wedge \widehat{\dd y_i} \rangle 
\leq  \mathbb{M}(S) \|\nabla \phi|_{\overline\Om}\|_{\infty}\|\vec g\|_{\infty},
\end{align*}
where in the last inequality we have used that 
\( \supp S \subset L \times \R^3\).
In particular one finds
\begin{equation*}
\| D^s(\phi\circ \vec u^{-1})\|\leq 
\mathbb{M}(S)\|\nabla \phi|_{\overline\Om}\|_{\infty}.
\end{equation*}
Now we can take a sequence of functions \(\phi_n\in C^1_c(\Om)\) such that \(\phi_n(x_1,x_2,x_3) \rightarrow x_3\)
 in \(C^1\) in a neighbourhood of \(L\) and \( \|\nabla \phi_n|_{\overline\Om}\|_{\infty}\rightarrow 1\). 
 We find \(\|D^s u^{-1}_3\|\). 
Since the first two components of $\vec u^{-1}$ have Sobolev regularity
\cite[Prop.~5.1]{BaHeMoRo_21}, then \(\|D^s (\vec u^{-1})\|\).

The reverse inequality goes as follows. With \(\eta_1,\, \eta_2,\, \eta_3 \in C^1_c(\Om) \), \(\vec g \in C^1_c(\R^3,\R^3)\) we set 
$$
\zeta(\vec x,\vec y):= \sum_{i,\alpha =1}^3 (-1)^{-(i-1)}\eta_\alpha g_i\, \dd x_\alpha\wedge \widehat{\dd y_i}.
$$ We also take an 
$\phi \in C^1_c(\Om_{\vec b})$ such that 
\(\phi (\vec x):=\int_0^{x_3}\eta_3(0,0,s) \dd s\) 
 for all $\vec x$ in a neighbourhood of $L$. 
Set \(\omega =\sum_{i=1}^3 (-1)^{-(i-1)}\phi(\vec x)g_i(\vec y) \widehat{\dd y_i}\). 

Then we have, thanks to \eqref{eq:linkBV} and Lemma \ref{le:defectCurrentXalpha},
\begin{align}\nonumber 
\langle S, \zeta\rangle 
&=\langle S,\dd \omega\rangle 
= 
-\langle \partial G_{\vec u}, \omega\rangle = -\E_{\vec u}(\phi\,\vec g)
\\ 
	\label{eq:linkCurrents}
&=  \int_{\Om_{\vec b}} \nabla \phi(\vec u^{-1}(\vec y))\otimes \vec g(\vec y)\cdot \dd D^c \vec u^{-1}(\vec y)+ \int_{J_{\vec u^{-1}}} [ \phi \circ \vec u^{-1}] \, \vec g \cdot \vec \nu \, \dd \Ha^2 .
\end{align}
By a proof similar to that of Proposition \ref{pr:muproperties}.\ref{it:sym-axis}), based now on \eqref{eq:linkBV}, it can be seen that $\vec u^{-1}(\vec y)\in L$ for $|D^c\vec u^{-1}|$-a.e.\ $\vec y\in \Om_{\vec b}$. 
On the other hand, for $\Ha^2$-a.e.\ $\vec y\in J_{\vec u^{-1}}$ the jump of $\phi\circ \vec u^{-1}$ can be rewritten as
$$
	[\phi\circ \vec u^{-1}](\vec y) = \int_{\big (u_3^{-1}\big )^-(\vec y)}^{\big ( u_3^{-1}\big)^+(\vec y)} \partial_{x_3} \phi(0,0,s) \dd s.
$$
Therefore,
$$
	|\langle S, \zeta \rangle|=|\langle S,\dd \omega\rangle |
 	\leq \|\nabla \phi|_{\overline\Om}\|_\infty |D^su_3^{-1}|\big (\Om_{\vec b}  \setminus J_{\vec u^{-1}}\big ) 
+ \|\nabla \phi|_{\overline\Om}\|_\infty\int_{J_{\vec u^{-1}}} |[\vec u^{-1}]|\dd\Ha^2 .
$$
Now, we have \(\| \nabla \phi|_{\overline\Om} \|_{\infty}\leq  \|\eta_3\|_{\infty}\leq |\zeta|_\infty.\)

We obtain \(\mathbb{M}(S) \leq \|D^s u_3^{-1}\|\) 
and
\begin{equation*}
\mathbb{M}(S) = \|D^s u_3^{-1}\|.
\end{equation*}

It remains to show that
$$
	\|D^s \vec u^{-1}\|
 = \sup \Bigl\{ \E_{\vec u}(\phi \, \vec g):\,\phi\in C^1_c(\Om),\, \vec g \in C^1_c(\R^3, \R^3),\, \|\nabla \phi|_{\overline\Om}\|_{\infty}\leq 1, \|\vec g\|_{\infty}\leq 1\Bigr\} .
$$
From the equality
\begin{multline*}
-\E_{\vec u}(\phi\,\vec g) = 
\int_{\Om_{\vec b}} \nabla \phi(\vec u^{-1}(\vec y))\otimes \vec g(\vec y)\cdot \dd D^c \vec u^{-1}(\vec y)
\\ + \int_{J_{\vec u^{-1}}} \Bigg ( \int_{\big (u_3^{-1}\big )^-(\vec y)}^{\big ( u_3^{-1}\big)^+(\vec y)} \partial_{x_3} \phi(0,0,s) \dd s \Bigg ) \, \vec g \cdot \vec \nu \, \dd \Ha^2,
\end{multline*}
the claim can be obtained arguing as above. 
This concludes the proof. 
\end{proof}

We note, in passing, that from \eqref{eq:linkCurrents} and  Lemma \ref{le:defectCurrentXalpha}, arguing as in \cite[Prop.~4]{HeMoXu15}, \cite[Th.\ 2.3]{Ambrosio95}, it is possible to obtain an alternative proof of the Sobolev regularity of $u^{-1}_1$, and $u^{-1}_2$ established in \cite[Prop.~5.1]{BaHeMoRo_21}.

In order to give an additional intuition about the supremum in Proposition \ref{prop:relaxed_energy_current}, suppose, to fix ideas, that condition INV is satisfied
and $\vec u$ has finite surface energy (i.e., that $D\vec u^{-1}$ has no Cantor parts and
$\Ha^2(J_{\vec u^{-1}})<\infty$). In that case, as mentioned in Section \ref{sec:VI}, the created surface comes from an at most countable collection $C(\vec u)$ of singular points, and it is known \cite[Th.\ 4.6]{HeMo12} that
$$
	\sup \{\E{\vec u}(\phi\,\vec g):\, \phi\in C^1_c(\Om),\, \vec g \in C^1_c(\R^3, \R^3),\, \|\phi\|_{\infty}\leq 1, \|\vec g\|_{\infty}\leq 1\Bigr\} = \sum_{\vec \xi\in C(\vec u)} \Per \imT(\vec u, \vec \xi).
$$
This, in turn, can be rewritten as
$$
	\sum_{\vec \xi\in C(\vec u)} \Per \imT(\vec u, \vec \xi) = \sum_{\vec\xi\in C(\vec u)} \Ha^2(\Gamma_{\vec\xi})= \Ha^2 (J_{\vec u^{-1}}) = \int_{J_{\vec u^{-1}}} 1\cdot \dd\Ha^2.
$$
Proposition \ref{prop:relaxed_energy_current} shows that when the supremum is taken under the constraint $\|\nabla \phi|_{\overline{\Om}}\|_\infty\leq 1$ on the gradient of $\phi$ instead of under the constraint $\|\phi\|_\infty\leq 1$, what is attained is 
$$
	\sum_{\vec\xi,\,\vec \xi'\,\in C(\vec u)} |\vec \xi-\vec \xi'|\Ha^2(\Gamma_{\vec \xi,\,\vec \xi '}) = 
	\int_{J_{\vec u^{-1}}} |[\vec u^{-1}]|\dd\Ha^2 
	=
\|D^s \vec u^{-1}\|.
$$
This contrast is reminiscent of the models for cohesive fracture where a Barenblatt surface energy is added instead of the Griffith term characteristic of brittle fracture (see, e.g., \cite{BoFrMa08,DalMasoIurlano13,FocardiIurlano14,Barchiesi18,Francfort21}).
Also, note that in one case attention is paid only to the deformed state of the body, whereas in the quantity 
$\|D^s\vec u^{-1}\|$ appearing in the neo-Hookean problem the length of the dipole in the reference configuration is also important. In particular, the total energy can be reduced by moving the singular points (with topological charges of opposite sign) closer to each other, using inner variations that produce no increment in the area of the bubble in the deformed configuration. If the singularities are moved by a distance $\delta$ this would pressumably come with a cost of order $\delta^3$ in the elastic energy, but this will be compensated with the gain of $O(\delta^1)$ in the singular energy.

Proposition \ref{prop:previous_article} and Proposition \ref{prop:relaxed_energy_current} strengthen the analogy 
between our candidate for the relaxed energy and the relaxed energy of Bethuel-Brezis-Coron for the harmonic map 
problem form \(\mathbb{B}^3\) (the unit ball in $\R^3$) to \(\Sf^2\) in \cite{BeBrCo90}. 
Indeed, from Proposition \ref{prop:relaxed_energy_current} we see that \( \|D^s\vec u^{-1}\|\) 
is the perfect analogue of the quantity \(L (\vec u)\) in \cite[Eq.~1]{BeBrCo90}. Furthermore their relaxed energy was reformulated in the context of Cartesian currents in \cite{Giaquinta_Modica_Soucek_1989a} (see also \cite{GiMoSo98II}). It is of the form \( \frac12 \int_{\mathbb{B}^3} |D \vec u|^2+ \mathbb{M}(S)\) where \(S\) is the defect current associated to \( \vec u\) (we refer to \cite{Giaquinta_Modica_Soucek_1989a} for the precise definition). 

In the context of harmonic maps, the relaxed energy was investigated in order to explore the question of existence of smooth minimising harmonic maps from \(\mathbb{B}^3\) to \(\Sf^2\) for a given boundary data with zero degree. This question, raised by Hardt and Lin in \cite{Hardt_Lin_1986} (see question (2) at the end of the paper), is still unsolved to this day. It was also investigated in an axisymmectric setting in \cite{Hardt_Lin_1992}, where the authors showed that axially symmetric minimisers can have singularities. Concerning the (partial) regularity of minimisers of the relaxed energy for harmonic maps, we refer to \cite[Th.\ 1 p. 424]{GiMoSo98II} and \cite{Bethuel_Brezis_1991}. Our paper indicates that the problem of existence of minimising configurations for neo-Hookean materials is of the same type of the Hardt--Lin problem, making a bridge between these two areas. Furthermore, in the context of nonlinear elasticity, the supplementary term has a clear physical interpretation which may help in the understanding of the regularity problem, and thus shed new light on both problems.

\appendix

\section{Technical lemmas about the zenith angle function of the bubble}
\begin{lemma}
        \label{le:f_and_derivatives}
    Let $f_\e(r)$ be as in \eqref{eq:def_stereo}. Then,   for \(\e\) small enough (\(\e <\min(1/e,\sqrt{3}/2)\)),
    \begin{enumerate}[a)]
     \item 
     \label{it:p_r-cos_f}
        $ \displaystyle \frac{1}{2}\cdot \e^2r(\e^4+r^2)^{-3/2} \leq \partial_r \Big (-\cos f_\e(r)\Big ) \leq 
        6\e^2r(\e^4+r^2)^{-3/2} $.
    \item 
    \label{it:int-p_r-cos_f}
     $ \displaystyle \int_{r=0}^\e r\big |\partial_r\big (\cos f_\e(r)\big )\big |\dd r 
     \leq 12\e^2|\ln\e|$.
    
    \item
    \label{it:p_r-complicated}
     $ \displaystyle   \left |\partial_r \Bigg ( \frac{r}{\partial_r\Big (\cos f_\e(r)\Big)}
        \Bigg )
        \right | \leq 64
        \e^{-2}r(\e^4+r^2)^{1/2}\leq 64\sqrt{2}$.

    \end{enumerate}
    
\end{lemma}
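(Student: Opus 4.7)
The plan is to reduce everything to the two pieces of $f_\eps$, writing $f_\eps(r) = \theta_1(r) + \theta_2(r)$ with $\theta_1(r):=\arctan(r/\eps^2)$ and $\theta_2(r):=\alpha_\eps r/\eps$. For these we have the explicit formulas
$$
    \sin\theta_1 = \frac{r}{\sqrt{\eps^4+r^2}},\quad \cos\theta_1 = \frac{\eps^2}{\sqrt{\eps^4+r^2}},\quad \theta_1'(r) = \frac{\eps^2}{\eps^4+r^2},\quad \theta_2'(r)=\frac{\alpha_\eps}{\eps},
$$
together with the elementary bounds $0\leq\theta_2(r)\leq\alpha_\eps\leq\eps$ (so $\cos\theta_2\geq\cos\eps\geq\tfrac12$ for $\eps$ small), $\alpha_\eps\leq\eps$, and $\eps^4+r^2\leq 2\eps^2$ for $0\leq r\leq\eps$. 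These will be the only tools needed.

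For part \ref{it:p_r-cos_f}), write $\partial_r(-\cos f_\eps(r))= \sin f_\eps(r)\cdot f_\eps'(r)$ and expand $\sin f_\eps = \sin\theta_1\cos\theta_2+\cos\theta_1\sin\theta_2$. The lower bound follows by discarding the nonnegative $\cos\theta_1\sin\theta_2$ term and by using $f_\eps'\geq \theta_1'$, which yields
$\sin f_\eps\cdot f_\eps' \geq \tfrac12\,\sin\theta_1\cdot\theta_1' = \tfrac12\,\eps^2 r(\eps^4+r^2)^{-3/2}$. For the upper bound, use $\sin(\theta_1+\theta_2)\leq\sin\theta_1+\cos\theta_1\cdot\theta_2\leq 2r/\sqrt{\eps^4+r^2}$ and split $f_\eps'=\theta_1'+\theta_2'$; the ``pure stereographic'' product gives exactly $2r\eps^2(\eps^4+r^2)^{-3/2}$, while the crossed term $(2r/\sqrt{\eps^4+r^2})\cdot(\alpha_\eps/\eps)$ is absorbed by the same quantity times a factor of $2$, using $\alpha_\eps/\eps\leq 1$ and $(\eps^4+r^2)/\eps^2\leq 2$. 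Summing gives the factor $6$.

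For part \ref{it:int-p_r-cos_f}), apply the upper bound just obtained and use the trivial estimate $r^2\leq \eps^4+r^2$ to reduce to
$$
    6\eps^2\int_0^\eps \frac{dr}{\sqrt{\eps^4+r^2}}= 6\eps^2\bigl[\sinh^{-1}(r/\eps^2)\bigr]_0^\eps = 6\eps^2\ln\!\bigl(\tfrac{1}{\eps}+\sqrt{1+\tfrac{1}{\eps^2}}\bigr),
$$
which is bounded by $6\eps^2(\ln 2+|\ln\eps|)\leq 12\eps^2|\ln\eps|$ once $\eps\leq 1/e$.

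Part \ref{it:p_r-complicated}) is the main obstacle and the one that forces the relatively large constant $64$. Setting $\phi(r):=\partial_r(-\cos f_\eps(r))=\sin f_\eps\cdot f_\eps'$, the quotient rule gives
$$
    \partial_r\Bigl(\tfrac{r}{\phi}\Bigr)=\frac{1}{\phi}-\frac{r}{\phi^2}\bigl[\cos f_\eps\,(f_\eps')^2+\sin f_\eps\,f_\eps''\bigr],\qquad f_\eps''(r)=-\tfrac{2r\eps^2}{(\eps^4+r^2)^2}.
$$
The strategy is to substitute the formula $\sin f_\eps=\sin\theta_1\cos\theta_2+\cos\theta_1\sin\theta_2$ and the analogous expansion of $\cos f_\eps$, expand $f_\eps'=\theta_1'+\alpha_\eps/\eps$, and identify the ``pure stereographic'' contribution (for which $\phi=r\eps^2(\eps^4+r^2)^{-3/2}$, $r/\phi=(\eps^4+r^2)^{3/2}/\eps^2$ and $\partial_r(r/\phi)=3r(\eps^4+r^2)^{1/2}/\eps^2$). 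The remaining terms involve at least one factor of $\alpha_\eps/\eps$ or $\sin\theta_2$; bounding these via $\alpha_\eps\leq\eps$, $|\sin\theta_2|\leq\eps$, $|1-\cos\theta_2|\leq\eps^2/2$, together with the lower bound on $\phi$ from part \ref{it:p_r-cos_f}) and the elementary inequality $\eps^4+r^2\leq 2\eps^2$, each of them is shown to be dominated by $C\eps^{-2}r(\eps^4+r^2)^{1/2}$ for a modest universal constant $C$. Summing the finitely many pieces yields the constant $64$, and the final inequality $\eps^{-2}r(\eps^4+r^2)^{1/2}\leq\sqrt{2}$ is just $r\leq\eps$ together with $\eps^4+r^2\leq 2\eps^2$.
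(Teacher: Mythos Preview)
Parts \ref{it:p_r-cos_f}) and \ref{it:int-p_r-cos_f}) are correct and follow the paper's argument essentially verbatim (the paper also splits $f_\eps=A+B$ with exactly your $\theta_1,\theta_2$). In \ref{it:int-p_r-cos_f}) you integrate $\int_0^\eps(\eps^4+r^2)^{-1/2}\dd r$ directly rather than splitting at $r=\eps^2$ as the paper does; both work, though your claimed bound ``$\ln 2+|\ln\eps|$'' is off by a hair (one gets $\ln(1+\sqrt 2)+|\ln\eps|$, still $\leq 2|\ln\eps|$ for $\eps\leq 1/e$).

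Part \ref{it:p_r-complicated}) has a genuine gap. With $\phi=\sin f_\eps\cdot f_\eps'$ one has $\phi(r)\sim c\,r$ as $r\to0$, so in your quotient-rule formula the two pieces $1/\phi$ and $r\phi'/\phi^2$ are \emph{each} of order $1/r$ near $r=0$; only their difference is bounded. After you ``identify the pure stereographic contribution'' the remaining terms are \emph{not} individually dominated by $C\eps^{-2}r(\eps^4+r^2)^{1/2}$. For instance the single contribution $\sin\theta_1\cos\theta_2\cdot(\alpha_\eps/\eps)$ to the numerator $\phi-r\phi'$ is of order $r$ (not $r^3$), and dividing it by the lower bound $\phi^2\geq \tfrac14\eps^4r^2(\eps^4+r^2)^{-3}$ gives something that blows up like $1/r$. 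The needed cancellation occurs only when several such terms are combined (essentially because $f_\eps''(0)=0$), and your sketch does not organise this. Note also that ``a factor of $\alpha_\eps/\eps$'' is not a smallness factor: $\alpha_\eps/\eps\to1$.

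The paper's device is to remove the singularity \emph{before} differentiating: write
\[
\frac{r}{\partial_r(\cos f_\eps)}=\frac{1}{g},\qquad g(r):=\frac{\partial_r(-\cos f_\eps)}{r}=\Bigl(\frac{\sin\theta_1}{r}\cos\theta_2+\cos\theta_1\,\frac{\sin\theta_2}{r}\Bigr)f_\eps'(r),
\]
where both $\sin\theta_1/r=(\eps^4+r^2)^{-1/2}$ and $\sin\theta_2/r$ are smooth and bounded at $r=0$. Then $\partial_r(1/g)=-g'/g^2$, the lower bound from \ref{it:p_r-cos_f}) controls $g^{-2}$, and $g'$ is estimated term by term with no hidden cancellations. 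If you want to keep your quotient-rule computation, the fix is equivalent: group the numerator as $\phi-r\phi'=-r^2\,(\phi/r)'$ first, which forces the factor of $r^2$ that your individual ``remaining terms'' lack.
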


\begin{proof}
\underline{Part \ref{it:p_r-cos_f})}
We write 
 $$
    f(r) = \underbrace{\arctan \big ( r/\e^{-2} \big )}_{:=A(r)} + 
    \underbrace{\alpha_\e \frac{r}{\e}}_{:=B(r)}, \quad   f'(r) = \frac{\e^2}{\e^4+r^2}+\underbrace{\frac{\alpha_\e}{\e}}_{\geq 0}.
 $$
 First we observe that
 \begin{align*}
  \sin^2A = \frac{\tan^2 A}{1+\tan^2A} = \frac{r^2}{\e^4+r^2}, \quad   \cos^2 A =\frac{1}{1+\tan^2A}
 \end{align*}
 so
 \begin{equation}
    \label{eq:sin_A}
  \sin A = \frac{r}{\sqrt{\e^4+r^2}}, \quad \cos A = \frac{\e^2}{\sqrt{\e^4+r^2}}.
 \end{equation}
 Hence,
 \begin{equation}\label{eq:p_r-cos-explicit}
  \partial_r \big ( -\cos f(r)\big )
  = \sin f(r)\,f'(r)
  =(\sin A \cos B + \underbrace{\sin B \cos A}_{\geq 0} ) f'(r)
 \geq \frac{r}{(\e^4 +r^2)^{1/2}}\cdot \cos B \cdot \frac{\e^2}{\e^4 + r^2}.
 \end{equation}
Considering that $
    \cos^2 B = 1 - \sin^2 B \geq 1-B^2 \geq 1-\e^2,$
 the above expression yields the lower bound in the statement.
 
 In order to obtain the upper bound, first note that 
 \begin{align*}
  \frac{\alpha_\e}{\e} = \frac{\arctan(\e)}{\e}
  \leq 1 \leq \frac{2\e^2}{\e^4+r^2},
 \end{align*}
 so that 
 \begin{equation}
    \label{eq:u_bound_f'}
  f'(r) \leq \frac{3\e^2}{\e^4 +r^2}.
 \end{equation}
 Regarding $\sin f(r)$, note that
 \begin{align*}
  \sin f(r) &= \sin A \cos B + \sin B \cos A
  = \frac{r}{\sqrt{\e^4+r^2}}\cos B + \sin B \,\frac{\e^2}{\sqrt{\e^4+r^2}}
  \\
  &\leq \frac{r}{\sqrt{\e^4+r^2}} + \frac{\e^2}{\sqrt{\e^4+r^2}}\sin B.
 \end{align*}
Now, \(
 0\leq \sin B\leq B = \frac{\arctan(\e)}{\e}r \leq r,
\)
therefore
\begin{align*}
 0\leq \sin f(r)
 \leq \frac{r}{\sqrt{\e^4+r^2}} (1+\e^3) 
 \leq \frac{2r}{\sqrt{\e^4+r^2}}.
\end{align*}
That, combined with \eqref{eq:u_bound_f'}, yields
the upper bound in \emph{\ref{it:p_r-cos_f})}.
\medskip

\underline{Part \ref{it:int-p_r-cos_f}):}
We split the integration interval into two parts: $(0,\e^2)$ and $(\e^2, \e)$.
In the first part, we use the upper bound obtained in part a) and $r^2\leq \e^4$ to write that
\begin{align*}
 \int_0^{\e^2} r|\partial_r\big(\cos f(r)\big ) |\dd r 
 &
 \leq \int_0^{\e^2}
 \frac{6\e^2r^2\dd r}{(\e^4+r^2)^{3/2}}
 \leq \int_0^{\e^2}
 \frac{6\e^6\dd r}{\e^6}=6\e^2.
\end{align*}
In the second part, using again the upper bound in a) and using that
$\e^4+r^2 \geq r^2$
the following is obtained:
\begin{equation*}
 \int_\e^{\e^2}
 r|\partial_r\big(\cos f(r)\big ) |\dd r 
 \leq
 \int_\e^{\e^2}
 \frac{6\e^2r^2\dd r}{(r^2)^{3/2}}
 =6\e^2\ln \frac{1}{\e}.
\end{equation*}
If $\e<1/e$ then $1<|\ln\e|$. Hence,
\begin{align*}
 \int_0^{\e^2}
 r|\partial_r\big(\cos f(r)\big ) |\dd r 
 &
 \leq 6\e^2(1+|\ln \e|)\leq 12\e^2|\ln \e|.
\end{align*}

\underline{Part \ref{it:p_r-complicated}):}
Going back to \eqref{eq:p_r-cos-explicit} one finds that
\begin{align*}
 \frac{\partial_r\big (\cos f(r)\big )}{r}
 =  \left (\frac{\sin A}{r}\cos B + \cos A \,\frac{\sin B}{r}\right ) f'(r).
\end{align*}
Since 
$$\frac{r}{\partial_r\big (\cos f(r)\big )} 
= 1\Bigg / \frac{\partial_r\big (\cos f(r)\big )}{r},$$
differentiation with respect to $r$ gives
\begin{multline*}
 -\Bigg[ 
 \frac{\partial_r\big (\cos f(r)\big )}{r}
 \Bigg ]^{2} \partial_r \Bigg ( 
 \frac{r}{\partial_r\big (\cos f(r)\big )}
 \Bigg )
 = f''(r) \left (\frac{\sin A}{r}\cos B + \cos A \,\frac{\sin B}{r}\right ) 
 \\
 + f'(r) \Bigg(
   \Big ( \frac{\sin A}{r}\Big )'\cos B 
   - \frac{\sin A}{r}\sin B\,B'
   + (\cos A)'\frac{\sin B}{r} + \cos A \Big ( \frac{\sin B}{r}\Big )'\Bigg ).
\end{multline*}

From equations \eqref{eq:sin_A} it is easy to see that
\begin{align*}
 & f''(r) = - 2\e^2 r (\e^4+r^2)^{-2},
 \quad
  \frac{\sin A}{r} = (\e^4 +r^2)^{-1/2},\quad
  \Big (\frac{\sin A}{r}\Big )' =-r(\e^4 +r^2)^{-3/2},
\\
 & \cos A =\e^2(\e^4+r^2)^{-1/2}, \quad \text{and}\quad 
 (\cos A)'= -\e^2r(\e^4+r^2)^{-3/2}.
\end{align*}
Also, \(
 \left|\frac{\sin B}{r}\right | \leq \frac{B}{r}=\frac{\arctan(\e)}{\e}\leq 1, \) and we see that
\[
  \left |\Big (\frac{\sin B}{r}\Big )' \right |
 = \left|\frac{\cos B\cdot r\cdot B' - \sin B}{r^2}\right |
 \leq r^{-2}\big (|B-\sin B| + |B||1-\cos B|\big )
 \leq r^{-2}\Big ( 
 \frac{B^3}{3!} + \frac{B^3}{2!}\Big )\leq r. 
\]

Combining the lower bound in Part \ref{it:p_r-cos_f}),
\eqref{eq:u_bound_f'}, and the above calculations yields
\begin{multline*}
 \frac{1}{4}\e^4 (\e^4+r^2)^{-3}
 \Big | \partial_r \Big ( \frac{r}{\cos f(r)}\Big ) \Big |
 \leq 
    2\e^2 r (\e^4 +r^2)^{-2}
    \Big ( (\e^4 +r^2)^{-1/2}\cdot 1 
    + \e^2 (\e^4+r^2)^{-1/2}\cdot 1 \Big )
\\
    + 3\e^2(\e^4+r^2)^{-1} \Big ( r(\e^4 + r^2)^{-3/2}\cdot 1 
    + (\e^4 +r^2 )^{-1/2}\cdot r \cdot 1
    \\
    + \e^2 r(\e^4 + r^2)^{-3/2}\cdot 1
    +
    \e^2 (\e^4 + r^2)^{-1/2} r \Big ).
\end{multline*}

Therefore we find that
\begin{align*}
    \left| \partial_r \Big ( \frac{r}{\cos f(r)}\Big ) \right|
    \leq 4\e^{-4}(\e^4+r^2)^3\cdot (2(1+\e^2) + 3(2+2\e^2))\e^2 r(\e^4 +r^2)^{-5/2}.
\end{align*}
From here, the claim follows, considering that $r\leq \e$ and $\e^4+r^2\leq 2\e^2$.
\end{proof}

\begin{lemma}
	\label{le:positive_h}
	For every $\varphi\in [0,\frac{\pi}{2})$ and $s\in [0,1]$,
	$$
		g_\e(\varphi)<\e,\qquad 
		\e - g_\e(\varphi)\sin \varphi >0
		\quad
		\text{and}
		\quad
		h_\e(s,\varphi)>0.
	$$
\end{lemma}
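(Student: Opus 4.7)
The plan is to verify the three inequalities in succession, using only the elementary properties of $g_\eps$ inherited from $f_\eps$ via the inverse function theorem.

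First I would observe that $f_\eps: [0,\eps]\to [0,\pi/2]$ is $C^1$, strictly increasing (since $f_\eps'(r)=\frac{\eps^2}{\eps^4+r^2}+\frac{\alpha_\eps}{\eps}>0$), and satisfies $f_\eps(0)=0$, $f_\eps(\eps)=\pi/2$. Hence its inverse $g_\eps$ is a $C^1$ strictly increasing bijection from $[0,\pi/2]$ onto $[0,\eps]$ with $g_\eps(0)=0$, $g_\eps(\pi/2)=\eps$, and $g_\eps'(\varphi)=1/f_\eps'(g_\eps(\varphi))>0$ for all $\varphi$. The first claim $g_\eps(\varphi)<\eps$ for $\varphi\in[0,\pi/2)$ is then immediate from strict monotonicity, and the second claim follows at once from $g_\eps(\varphi)\sin\varphi<\eps\cdot 1=\eps$.

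For the third claim, I would examine each of the three factors of
\[
 h_\eps(s,\varphi)=\eps\Big((1-s)\tfrac{g_\eps(\varphi)}{\sin\varphi}+s\eps\Big)\Big((1-s)g_\eps'(\varphi)\cos\varphi+s(\eps-g_\eps(\varphi)\sin\varphi)\Big).
\]
The leading factor $\eps$ is positive. For the middle factor, both summands are nonnegative; at interior $\varphi\in(0,\pi/2)$ this is clear since $g_\eps>0$ and $\sin\varphi>0$, while at $\varphi=0$ the quotient $g_\eps(\varphi)/\sin\varphi$ should be understood by its limit $g_\eps'(0)=1/f_\eps'(0)>0$, so in all cases the middle factor is strictly positive. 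For the third factor, note that $g_\eps'(\varphi)>0$ and $\cos\varphi>0$ on $[0,\pi/2)$, so $(1-s)g_\eps'(\varphi)\cos\varphi\geq 0$ with strict inequality whenever $s<1$; and by the second claim, $s(\eps-g_\eps(\varphi)\sin\varphi)\geq 0$ with strict inequality whenever $s>0$. Since $s\in\{0,1\}$ and $s\in(0,1)$ are all covered, the third factor is strictly positive.

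There is essentially no obstacle here: the only mild point of care is the interpretation of the $0/0$ expression $g_\eps(\varphi)/\sin\varphi$ at $\varphi=0$, which the inverse function theorem resolves by giving $g_\eps'(0)=\eps^2/(1+\alpha_\eps\eps)>0$. Multiplying the three positive factors yields $h_\eps(s,\varphi)>0$ on $[0,1]\times[0,\pi/2)$, completing the proof.
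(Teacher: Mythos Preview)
Your proof is correct and follows essentially the same approach as the paper's: strict monotonicity of $f_\eps$ gives $g_\eps(\varphi)<\eps$, from which $\eps-g_\eps(\varphi)\sin\varphi>0$ follows immediately, and then positivity of $h_\eps$ is read off factor by factor. Your write-up is slightly more careful than the paper's in that you explicitly handle the indeterminate quotient $g_\eps(\varphi)/\sin\varphi$ at $\varphi=0$ via the limit $g_\eps'(0)$, whereas the paper simply asserts that ``the remaining terms \ldots\ are easily seen to be also strictly positive.''
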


\begin{proof}
	For every $r<\e$
	$$
		f(r)= \arctan(r/\e^2) + \alpha_\e\frac{r}{\e} < \arctan(1/\e) + \arctan(\e)=\frac{\pi}{2}=f(\e).	
	$$
	Hence 
	$$
		\varphi <\frac{\pi}{2}\ \Rightarrow\ g(\varphi) < \e
		\ \Rightarrow\ \e -g(\varphi)\sin\varphi > \e (1-\sin \varphi) >0.
	$$
	The remaining terms in the formula for $h(s,\varphi)$ are easily seen to be also strictly positive. 
\end{proof}

\begin{lemma}
	\label{le:firstHalf}
For all	$0\leq \varphi\leq \frac{\pi}{4}$ it holds that $g_\e(\varphi)\leq \e^2$.
\end{lemma}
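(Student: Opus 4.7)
The plan is to exploit the fact that $g_\eps$ is the inverse of $f_\eps$, which is a strictly increasing function on $[0,\eps]$ (as already noted just after the definition \eqref{eq:def_stereo} of $f_\eps$). Thus $g_\eps$ is itself strictly increasing on $[0, \pi/2]$, and the claim $g_\eps(\varphi) \leq \eps^2$ for every $\varphi \in [0, \pi/4]$ is equivalent to the single inequality $f_\eps(\eps^2) \geq \pi/4$.

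The verification of this inequality is an explicit computation. Substituting $r=\eps^2$ into the definition of $f_\eps$ yields
\[
f_\eps(\eps^2) = \arctan\!\Big(\tfrac{\eps^2}{\eps^2}\Big) + \alpha_\eps\,\frac{\eps^2}{\eps}
= \arctan(1) + \eps\,\alpha_\eps
= \frac{\pi}{4} + \eps\arctan(\eps),
\]
and since $\eps > 0$ we have $\eps\arctan(\eps) > 0$, giving $f_\eps(\eps^2) > \pi/4$. Monotonicity of $g_\eps$ then yields $g_\eps(\varphi) \leq g_\eps(f_\eps(\eps^2)) = \eps^2$ for every $\varphi \leq \pi/4$.

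There is no real obstacle here: the lemma is a direct computational consequence of the defining formula \eqref{eq:def_stereo} together with the fact that the linear correction $\alpha_\eps r/\eps$ shifts the value of $f_\eps(\eps^2)$ strictly above the value $\pi/4$ of the unperturbed stereographic angle $\arctan(r/\eps^2)$ at $r = \eps^2$.
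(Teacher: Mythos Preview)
Your proof is correct and takes essentially the same approach as the paper: both exploit that $f_\eps$ is increasing and that the correction term $\alpha_\eps r/\eps$ is nonnegative. The paper phrases it as $\varphi = \arctan(r/\eps^2) + \alpha_\eps r/\eps \geq \arctan(r/\eps^2)$ for $r=g_\eps(\varphi)$, hence $r/\eps^2 \leq \tan\varphi \leq 1$; you phrase it as the single endpoint check $f_\eps(\eps^2)=\pi/4 + \eps\arctan\eps \geq \pi/4$ together with monotonicity of $g_\eps$. These are dual formulations of the same elementary argument.
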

\begin{proof}
	Set $r:=g(\varphi)$. Since
	$$
		\frac{\pi}{4}\geq \varphi= \arctan(r/\e^2) + \alpha_\e \frac{r}{\e} \geq \arctan(r/\e^2),
	$$
	then \(	1\geq \tan(\varphi) \geq r/\e^2\).
\end{proof}

\begin{lemma}
		\label{le:gPrime}
	The first and second derivatives of $g_\e$ are given by
	$$
		g'_\e(\varphi) = \frac{1}{\frac{\e^2}{r^2+\e^4} + \frac{\alpha_\e}{\e}},\qquad 
		g''_\e(\varphi) = 
		2 \e^2 r (r^2+\e^4)^{-2} \left ( \frac{\e^2}{r^2+\e^4}+ \frac{\alpha_\e}{\e} \right ) ^{-3},
		\qquad
		r=g(\varphi).
	$$
	Furthermore,
\begin{align*}
 & \frac{\e^2}{2}\leq g'_\e(\varphi)\leq 2 && \text{and} \qquad \qquad  |g''_\e(\varphi)|\leq 4\e^{-1} , & \forall \varphi \in [0,\frac{\pi}{2}] , \\
 & |g'_\e(\varphi)|\leq 2\e^2 && \text{and} \qquad \qquad  |g''_\e(\varphi)|\leq 4\e^2 , & \forall \varphi \in [0,\frac{\pi}{4}] .
\end{align*}

\end{lemma}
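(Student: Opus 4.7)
The first task is to produce the closed forms for $g_\eps'$ and $g_\eps''$. By Lemma \ref{le:positive_h}, $f_\eps$ is strictly increasing on $[0,\eps]$ and maps onto $[0,\pi/2]$, so the inverse function theorem applies. A direct calculation from \eqref{eq:def_stereo} gives
$$
f_\eps'(r)=\frac{\eps^2}{r^2+\eps^4}+\frac{\alpha_\eps}{\eps},\qquad f_\eps''(r)=-\frac{2\eps^2 r}{(r^2+\eps^4)^2},
$$
and therefore $g_\eps'(\varphi)=1/f_\eps'(r)$ and $g_\eps''(\varphi)=-f_\eps''(r)/[f_\eps'(r)]^3$ with $r=g_\eps(\varphi)$, which yields the two stated formulas.

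For the uniform bounds I introduce the shorthand $u:=\eps^2/(r^2+\eps^4)$ and $v:=\alpha_\eps/\eps\geq 0$, so $g_\eps'=(u+v)^{-1}$ and $|g_\eps''|=2u^2 r\,\eps^{-2}(u+v)^{-3}$. On $[0,\pi/2]$, the range $r\in[0,\eps]$ forces $r^2+\eps^4\leq 2\eps^2$, whence $u\geq 1/2$ and $g_\eps'\leq 2$; while $u\leq\eps^{-2}$ and (since $\alpha_\eps\leq \eps$) $v\leq 1$, so $u+v\leq 2\eps^{-2}$ for small $\eps$, giving $g_\eps'\geq \eps^2/2$.

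For $g_\eps''$ the naive estimate $(g_\eps')^3\leq 8$ combined with a pointwise bound on $r/(r^2+\eps^4)^2$ produces a constant that is too large; this is the only subtle point of the proof. The trick is to keep one factor of $u$ in the denominator: since $v\geq 0$, $(u+v)^3\geq u^3$, so $u^2/(u+v)^3\leq 1/u$, and hence
$$
|g_\eps''(\varphi)|\leq \frac{2r}{\eps^2 u}=\frac{2r(r^2+\eps^4)}{\eps^4}.
$$
On $[0,\pi/2]$ we use $r\leq\eps$ and $r^2+\eps^4\leq 2\eps^2$ to conclude $|g_\eps''|\leq 4\eps^{-1}$. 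For $\varphi\in[0,\pi/4]$, Lemma \ref{le:firstHalf} gives $r\leq\eps^2$, and then $r^2+\eps^4\leq 2\eps^4$, so the same two arguments produce $u\geq 1/(2\eps^2)$, hence $|g_\eps'|\leq 2\eps^2$, and $|g_\eps''|\leq 2r(r^2+\eps^4)/\eps^4\leq 4\eps^2$. All four inequalities are thus obtained by the same short chain of estimates once the key algebraic simplification $u^2/(u+v)^3\leq 1/u$ has been spotted.
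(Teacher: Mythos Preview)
Your proof is correct and follows essentially the same approach as the paper: both derive the formulas from the inverse function theorem, both use $r^2+\eps^4\leq 2\eps^2$ (resp.\ $\leq 2\eps^4$ via Lemma \ref{le:firstHalf}) to bound $g_\eps'$, and both obtain the $g_\eps''$ bounds by dropping the nonnegative term $\alpha_\eps/\eps$ from the cubed factor to reach $|g_\eps''|\leq 2\eps^{-4}r(r^2+\eps^4)$. Your $u,v$ notation and the inequality $u^2/(u+v)^3\leq 1/u$ are just a repackaging of that same step.
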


\begin{proof}
	Since $g$ is the inverse of $f$,
	$$
		g'(\varphi) = \frac{1}{f'(g(\varphi))},\qquad g''(\varphi) = \frac{-f''(g(\varphi))}{f'(g(\varphi))^3}.
	$$
	The first bound for $g'$ comes by noting that 
	$$
		0\leq r\leq \e\ \Rightarrow\ r^2 +\e^4 \leq \e^2 + \e^2 
		\ \Rightarrow\ 
		\frac{1}{2} = \frac{\e^2}{2\e^2}\leq \frac{\e^2}{r^2+\e^4} + \frac{\alpha_\e}{\e}  \leq \e^{-2}+
		\frac{\alpha_\e}{\e}  
		\leq 2\e^{-2}
	$$
	(recall that $g(\varphi)\leq \e$ for all $\varphi$ because it is the inverse of the function $f$ whose domain is $[0,\e]$).
	The finer estimate in the left-half interval $[0,\frac{\pi}{4}]$ is obtained similarly but using Lemma \ref{le:firstHalf} and  noting that $r^2 +\e^4\leq \e^4+\e^4$ when $r\leq \e^2$. 
	The estimates for $|g''|$ follow from the inequality
	$$
		g''(\varphi) \leq 2\e^2 r (r^2+\e^4)^{-2}\left ( \frac{\e^2}{r^2+\e^4}\right ) ^{-3}
		= 2\e^{-4} r(r^2+\e^4).
	$$
\end{proof}

\begin{lemma}
		\label{le:hE2Cos}
	For all $\varphi \in (0,\frac{\pi}{2}]$, $s\in [0,1]$, and $\e \leq \frac{1}{\pi}$,
\begin{align*}
 &		(1-s) \frac{g_\e(\varphi)}{\sin\varphi}+s\e \leq \sqrt{2}\e,
		\qquad
		0< (1-s)g'_\e(\varphi)\cos \varphi + s(\e - g_\e(\varphi)\sin\varphi) 
	\leq \frac{3\pi}{2}\cos\varphi,
\\ &
		|h_\e(s,\varphi)|\leq \frac{3\pi\sqrt{2}}{2}\, \e^2\cos\varphi 
		\quad 
		\text{and}
		\quad
		|\partial_\varphi h_\e(s,\varphi)| = O(\e).
\end{align*}
\end{lemma}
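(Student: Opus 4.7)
The proof will establish the four estimates in order, making systematic use of the structure $h_\eps = \eps\, A(s,\varphi)\, B(s,\varphi)$, where
$$A(s,\varphi) := (1-s)\tfrac{g_\eps(\varphi)}{\sin\varphi} + s\eps,\qquad B(s,\varphi):=(1-s)g_\eps'(\varphi)\cos\varphi + s(\eps - g_\eps(\varphi)\sin\varphi),$$
both of which are convex combinations of two nonnegative quantities (positivity for $B$ is Lemma \ref{le:positive_h}). The pointwise bounds on $g_\eps$, $g_\eps'$ and $g_\eps''$ needed throughout are provided by Lemmas \ref{le:firstHalf} and \ref{le:gPrime}. I expect the estimate on $\partial_\varphi h_\eps$ to be the main obstacle, since a naive bound on $\partial_\varphi A$ would blow up like $\sin^{-2}\varphi$ near $\varphi=0$, and $\partial_\varphi B$ involves $g_\eps''$ which is only controlled by $4\eps^{-1}$ on the upper half interval.

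For the first inequality, by convexity and since $\eps \leq \sqrt{2}\eps$, it suffices to show $g_\eps(\varphi)/\sin\varphi \leq \sqrt{2}\eps$. I will split at $\varphi = \pi/4$: for $\varphi \in [\pi/4,\pi/2]$, use $\sin\varphi \geq \sqrt{2}/2$ together with $g_\eps(\varphi) \leq \eps$ (Lemma \ref{le:positive_h}); for $\varphi \in (0,\pi/4]$, use $|g_\eps'|\leq 2\eps^2$ from Lemmas \ref{le:firstHalf}--\ref{le:gPrime} to get $g_\eps(\varphi) \leq 2\eps^2 \varphi$, then exploit monotonicity of $\varphi/\sin\varphi$ on $(0,\pi/2)$ to bound $\varphi/\sin\varphi \leq (\pi/4)/\sin(\pi/4) = \pi\sqrt{2}/4$, yielding $g_\eps/\sin\varphi \leq \pi\sqrt{2}\eps^2/2 \leq \sqrt{2}\eps$ since $\eps\leq 1/\pi \leq 2/\pi$.

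For the second inequality, the first term satisfies $g_\eps'\cos\varphi \leq 2\cos\varphi$ directly from Lemma \ref{le:gPrime}. For the second term, I will integrate by the fundamental theorem of calculus:
$$\eps - g_\eps(\varphi)\sin\varphi = \int_\varphi^{\pi/2}\bigl(g_\eps'(t)\sin t + g_\eps(t)\cos t\bigr)\dd t \leq 2\cos\varphi + \eps(1-\sin\varphi),$$
and then use the inequality $1 - \sin\varphi \leq (\pi/2-\varphi)\cos\varphi \leq (\pi/2)\cos\varphi$ (which follows from $1-\cos x \leq x\sin x$ on $[0,\pi/2]$) to obtain $\eps - g_\eps\sin\varphi \leq (2 + \pi\eps/2)\cos\varphi \leq (5/2)\cos\varphi$ when $\eps \leq 1/\pi$. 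The convex combination is then bounded by $(5/2)\cos\varphi < (3\pi/2)\cos\varphi$. Multiplying the bounds for $A$ and $B$ by $\eps$ immediately yields the third inequality with constant $5\sqrt{2}/2 < 3\pi\sqrt{2}/2$.

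For the final estimate, Leibniz's rule gives $\partial_\varphi h_\eps = \eps(\partial_\varphi A \cdot B + A\cdot \partial_\varphi B)$. Using $|A|\leq \sqrt{2}\eps$ and the trivial bound $|\partial_\varphi B|\leq |g_\eps''|\,|\cos\varphi| + 2|g_\eps'| + |g_\eps| = O(\eps^{-1})$ from Lemma \ref{le:gPrime}, the term $\eps\cdot A\cdot \partial_\varphi B$ is $O(\eps)$. The critical observation for $\partial_\varphi A$ is to set $k(\varphi) := g_\eps'(\varphi)\sin\varphi - g_\eps(\varphi)\cos\varphi$; differentiating gives the cancellation $k'(\varphi) = \bigl(g_\eps''(\varphi) + g_\eps(\varphi)\bigr)\sin\varphi$, and $k(0) = 0$, so
$$|k(\varphi)| \leq \bigl(\|g_\eps''\|_\infty + \|g_\eps\|_\infty\bigr)(1-\cos\varphi).$$
Dividing by $\sin^2\varphi$ and using $(1-\cos\varphi)/\sin^2\varphi = 1/(1+\cos\varphi) \leq 1$, together with $\|g_\eps''\|_\infty \leq 4/\eps$ and $\|g_\eps\|_\infty\leq \eps$, gives $|\partial_\varphi A| \leq 4/\eps + \eps$. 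Thus $\eps\cdot B\cdot \partial_\varphi A = O(\eps\cdot \cos\varphi\cdot \eps^{-1}) = O(1)\cdot \cos\varphi$; combined with the previous bound this yields $|\partial_\varphi h_\eps| = O(\eps)$, where a finer split at $\varphi = \pi/4$ (using $|g_\eps''| \leq 4\eps^2$ on $[0,\pi/4]$) can be used if a sharper dependence is ever required.
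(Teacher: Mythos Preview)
Your arguments for the first three estimates are correct and essentially match the paper's; your integral treatment of $\eps - g_\eps(\varphi)\sin\varphi$ in the second estimate is in fact slightly cleaner than the paper's case split at $\varphi=\pi/4$.

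The gap is in the final estimate. Your bound $|\partial_\varphi A| \leq 4/\eps + \eps$ is too weak: plugging it into $\eps\,B\,\partial_\varphi A$ gives only $O(\cos\varphi)=O(1)$, and the sentence ``combined with the previous bound this yields $|\partial_\varphi h_\eps|=O(\eps)$'' is a non sequitur, since $O(1)\cos\varphi + O(\eps)$ is not $O(\eps)$ uniformly in $\varphi$. Your proposed refinement (splitting at $\pi/4$ and using $|g_\eps''|\leq 4\eps^2$ on $[0,\pi/4]$) does repair the lower half-interval, but on $[\pi/4,\pi/2]$ the function $k(\varphi)=\int_0^\varphi (g_\eps''+g_\eps)\sin t\,\dd t$ still picks up the global bound $\|g_\eps''\|_\infty=O(\eps^{-1})$, so the cancellation trick yields nothing better than $|\partial_\varphi A|=O(\eps^{-1})$ there.

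The missing observation is that on $[\pi/4,\pi/2]$ no cancellation is needed: since $\sin\varphi\geq 1/\sqrt{2}$ one has directly
\[
\Bigl|\partial_\varphi\Bigl(\tfrac{g_\eps}{\sin\varphi}\Bigr)\Bigr|
=\frac{|g_\eps'\sin\varphi - g_\eps\cos\varphi|}{\sin^2\varphi}
\leq 2\bigl(|g_\eps'|+|g_\eps|\bigr)\leq 2(2+\eps)=O(1).
\]
This is exactly how the paper proceeds. Combining it with your (correct) $k$-argument on $[0,\pi/4]$ gives $|\partial_\varphi A|=O(1)$ uniformly, and then $\eps\,|B|\,|\partial_\varphi A|=O(\eps)$ as required. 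The bound $|\partial_\varphi h_\eps|=O(\eps)$ is genuinely needed downstream (e.g.\ in Lemma~\ref{le:partialUAPrime}), so this step cannot be left at $O(1)$.
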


\begin{proof}
By Lemma \ref{le:positive_h}, for all $\varphi$
	$$
		g(\varphi)\leq \e 
		\quad
		\text{and}
		\quad 
		0< \e- g(\varphi)\sin \varphi \leq \e.
	$$
	For $\varphi \in [0, \frac{\pi}{4}]$, since $\cos \varphi \geq \frac{1}{\sqrt{2}}$, it holds, in particular, that 
	$$
		\e - g(\varphi)\sin \varphi \leq \sqrt{2}\,\cos \varphi.
	$$
	For $\varphi \in [\frac{\pi}{4}, \frac{\pi}{2}]$, since
\begin{equation*}
	-(\e - g(\varphi)\sin \varphi)'
	= \underbrace{g'(\varphi)}_{\leq 2}\sin\varphi +
	 \underbrace{g(\varphi)}_{\leq \e}\cos\varphi \leq 3,
\end{equation*}
then
\begin{equation*}
	0\leq \e - g(\varphi)\sin \varphi = -\int_\varphi^{\frac{\pi}{2}} (\e - g(t)\sin t)'\dd t \leq 3(\frac{\pi}{2} - \varphi).
\end{equation*}
The relation 
$$
	\frac{\pi}{2}-\varphi = \frac{\frac{\pi}{2}-\varphi}{\sin(\frac{\pi}{2}-\varphi)}\cos \varphi 
	\leq 
	\frac{\pi}{2} \cos\varphi 
$$
then yields 
\begin{equation*}
	0\leq \e - g(\varphi)\sin \varphi \leq \frac{3\pi}{2} \cos \varphi
	\quad \forall\varphi \in [0,\frac{\pi}{2}]
\end{equation*}
and
$$
	0\leq (1-s)\underbrace{g'(\varphi)}_{\leq 2}\cos \varphi + s(\e - g(\varphi)\sin\varphi) 
	\leq \big ( (1-s)+ s\big ) \frac{3\pi}{2}\cos\varphi.
	$$
	
So as to estimate the first factor in $h$, since $g(0)=0$ and $\sup_{[0,\frac{\pi}{4}]} |g'(\varphi)|\leq 2\e^2$, 
	\begin{align}
			\label{eq:gNearZero}
		0\leq \varphi<\frac{\pi}{4}\ \Rightarrow\ 
		0\leq g(\varphi)\leq 2\e^2 \varphi.
	\end{align}
	Using that $\frac{\sin \varphi}{\varphi}$ is decreasing (and taking its value at $\varphi=\frac{\pi}{2}$ for simplicity) we obtain that 
	$$
		\frac{g(\varphi)}{\sin \varphi}
		\leq 2\e^2 \frac{\varphi}{\sin \varphi}
		\leq 2\e^2 \frac{\frac{\pi}{2}}{\sin \frac{\pi}{2}} = \pi \e^2\leq \e.
	$$
	On the other hand, for $\varphi\in [\frac{\pi}{4}, \frac{\pi}{2}]$,
	$$
		\frac{g(\varphi)}{\sin \varphi}
		\leq \frac{\e}{\frac{1}{\sqrt{2}}}
		=\sqrt{2}\e.
	$$
	Therefore,
	\begin{align*}
		(1-s) \frac{g(\varphi)}{\sin \varphi} + s\e \leq (1-s)\sqrt{2}\e +s\e \leq \sqrt{2}\e\quad
		\text{and}
		\quad
		|h(s,\varphi)|\leq \frac{3\pi\sqrt{2}}{2}\e^2 \cos\varphi.
	\end{align*}
	
	In order to estimate $\partial_\varphi \Big ( \frac{g(\varphi)}{\sin \varphi} \Big )$, as before consider first the case when $\varphi < \frac{\pi}{4}$. 
	By \eqref{eq:gNearZero}
	and Lemma \ref{le:firstHalf},
	$$
		|g''(\varphi)|\leq 2\e^{-4} g(\varphi)(g(\varphi)^2 + \e^4)
		\leq 2\e^{-4} (2\e^2\varphi) ((\e^2)^2 + \e^4) = 8 \e^2 \varphi.
	$$
	Hence
	$$
		|(g'(\varphi)\sin\varphi - g(\varphi)\cos \varphi)'|
		= | g''(\varphi)+g(\varphi)| \sin\varphi
		\leq 10\e^2 \varphi^2.
	$$
	Since 
	$$
		\lim_{\varphi\to 0^+} g'(\varphi) \sin \varphi - g(\varphi)\cos\varphi
		= \e^2 \cdot 0 - 0\cdot 1=0,
	$$
	integrating its derivative we obtain that 
	$$
		| g'(\varphi) \sin \varphi - g(\varphi)\cos\varphi| \leq \frac{10}{3}\e^2 \varphi^3.
	$$
	Using again that $\frac{\sin \varphi}{\varphi}$ is decreasing  we find that
	\begin{align*}
		\left | \partial_\varphi \Big ( \frac{g(\varphi)}{\sin \varphi} \Big ) \right |
		\leq \frac{\frac{10}{3}\e^2 \varphi^3}{\sin^2\varphi} 
		\leq \frac{10}{3}\e^2
		\Big ( \frac{\frac{\pi}{2}}{\sin \frac{\varphi}{2}} \Big )^2 \varphi 
		= \frac{10\pi^2}{12} \e^2 \varphi.
	\end{align*}
	
	When $\frac{\pi}{4}\leq \varphi\leq \frac{\pi}{2}$,
	$$
	\left | \partial_\varphi \Big ( \frac{g(\varphi)}{\sin \varphi} \Big ) \right |
		\leq \frac{|g'(\varphi)| + |g(\varphi)|}{\sin^2\varphi}
		\leq 
		\frac{2 +\e}{1/2}\leq 5. 
	$$

	The derivative of $h$ is 
	\begin{align*}
		\partial_\varphi h(s,\varphi ) &= \e (1-s) \underbrace{\partial_\varphi \Big ( \frac{g(\varphi)}{\sin \varphi} \Big )}_{\leq 5}
		\Big ( (1-s) \underbrace{g'(\varphi)}_{\leq 2}\cos\varphi
	+ s \underbrace{(\e-g(\varphi)\sin\varphi)}_{\leq \e}\Big )
	\\
	&\quad + 
	\e \Big ( \underbrace{(1-s) \frac{g(\varphi)}{\sin \varphi} + s\e}_{\leq \sqrt{2} \e} \Big ) 
	\Big ( (1-s) \underbrace{\big (g'(\varphi)\cos \varphi )'}_{=O(\e^{-1})}
	- s \underbrace{\big ( g(\varphi) \sin\varphi \big )'}_{=O(1)} \Big ).
	\end{align*}
\end{proof}

\begin{lemma}
		\label{le:lowerEMinusG}
	For all $\varphi \in [0,\frac{\pi}{2}]$,
	$$
		 \frac{1}{3} \leq \frac{\e-g(\varphi)}{\max\{\e,\frac{g(\varphi)}{\e}\}\cos\varphi}
		 \leq 2\sqrt{2},
		 \quad 
		 \text{and}
		 \quad 
		\frac{\e-g(\varphi)\sin\varphi}{g'(\varphi)\cos\varphi}\leq \frac{8}{\max\{\e,\frac{g(\varphi)}{\e}\}}.
	$$
\end{lemma}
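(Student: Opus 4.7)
The proof will proceed by splitting the range of $\varphi$ into two regimes, delineated by the value $\varphi = \pi/4$, or equivalently (via Lemma \ref{le:firstHalf}) by whether $r := g_\eps(\varphi) \leq \eps^2$ or $r > \eps^2$. In the first regime one has $\max\{\eps, g_\eps(\varphi)/\eps\} = \eps$, while in the second $\max = r/\eps$. The workhorse of the argument will be the identity
\[
    \tfrac{\pi}{2} - \varphi \;=\; \arctan\!\Big(\tfrac{\eps^2}{r}\Big) - \alpha_\eps\tfrac{r}{\eps},\qquad r = g_\eps(\varphi),
\]
obtained from $f_\eps(g_\eps(\varphi))=\varphi$ together with $\arctan(x)+\arctan(1/x)=\pi/2$ for $x>0$, combined with the sharper form $\cos\varphi = (\eps^2\cos B - r\sin B)/\sqrt{r^2+\eps^4}$ with $B:=\alpha_\eps r/\eps\leq \eps$, obtained by expanding $\cos f_\eps(r)$.

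In the regime $\varphi\in[0,\pi/4]$, so that $r\leq \eps^2$, the quantities $\eps - r$, $\eps$, and $\cos\varphi$ are all comparable to constants ($\eps-r\in[\eps(1-\eps),\eps]$, $\cos\varphi\in[1/\sqrt2,1]$), and both inequalities follow by elementary estimation: the first ratio lies in $[1/\sqrt2, \sqrt2]\subset[1/3,2\sqrt2]$, and for the second, using the bound $g_\eps'(\varphi)\geq \eps^2/2$ from Lemma \ref{le:gPrime}, one gets a ratio of at most $2\sqrt2/\eps<8/\eps$.

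The bulk of the work takes place in the regime $\eps^2<r\leq\eps$. Here $\sqrt{r^2+\eps^4}\in[r,\sqrt2\, r]$; since $B\leq\eps$ one has $\cos B = 1+O(\eps^2)$ and $\sin B = B + O(\eps^3)$, so that
\[
    \cos\varphi \;\sim\; \frac{\eps^2 - r\cdot \alpha_\eps r/\eps}{r} \;=\; \frac{\eps^2-r^2}{r} + O(\eps^2r) \;=\; \frac{(\eps-r)(\eps+r)}{r} + O(\eps^2r),
\]
with explicit constants. Since $\eps+r\in[\eps,2\eps]$, one obtains $\cos\varphi\sim(\eps-r)\eps/r$ with multiplicative constants comfortably inside $[1/2,\sqrt2]$, giving $(\eps-r)/((r/\eps)\cos\varphi)\in[1/3,2\sqrt2]$. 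For the second inequality, write $\eps-r\sin\varphi = (\eps-r)+r(1-\sin\varphi) \leq (\eps-r)+r(\pi/2-\varphi)^2/2$, use the first inequality to control both $\eps-r$ and $\cos\varphi$, and apply the lower bound $g_\eps'(\varphi)\geq r^2/(2\eps^2)$ (which follows immediately from Lemma \ref{le:gPrime} since $r^2+\eps^4\leq 2r^2$ when $r\geq\eps^2$) to conclude with a constant $4\sqrt2+1<8$.

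The delicate point is Case B near its upper endpoint $r\to\eps$, where both $\eps-r$ and $\cos\varphi$ tend to zero simultaneously: one must retain enough precision in the Taylor expansions of $\arctan(\eps^2/r)$, $\arctan\eps$, $\cos B$, and $\sin B$ to ensure that the leading balance $(\eps-r)(\eps+r)/r$ genuinely dominates the error terms of order $\eps^4$. Once this bookkeeping is carried out, the looseness of the target interval $[1/3,2\sqrt2]$ (compared with the sharper asymptotic constants $1/2$ and $\sqrt2$ the analysis actually produces) absorbs all the lower-order corrections.
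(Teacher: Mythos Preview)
Your overall strategy — splitting at $r=\eps^2$ and estimating $\cos\varphi$ directly from the exact formula $\cos\varphi=(\eps^2\cos B-r\sin B)/\sqrt{r^2+\eps^4}$ — is a legitimate alternative to the paper's route, and Case~A and the reduction of the second inequality to the first are both fine. The gap is in Case~B, precisely at the point you flag as ``delicate''. The error estimate you actually write down, namely
\[
\cos\varphi \;=\; \frac{\eps^2-r^2}{r}\;+\;O(\eps^2 r)\quad\text{(together with an }O(\eps^4/r)\text{ error from }\cos B,\sin B\to 1,B\text{)},
\]
is an \emph{absolute} error of size $O(\eps^3)$. This does \emph{not} suffice: if $r=\eps-\delta$ with $\delta\ll\eps^3$, the main term $(\eps^2-r^2)/r\approx 2\delta$ is dwarfed by the $O(\eps^3)$ remainder, and the ratio $(\eps-r)/\big((r/\eps)\cos\varphi\big)$ is not controlled from either side. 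Adding further Taylor terms does not help: any expansion to fixed order leaves an absolute remainder of some order $\eps^{2k}$, and one can always take $\delta\ll\eps^{2k-1}$. What is needed is the observation that the numerator $N(r):=\eps^2\cos B-r\sin B$ vanishes \emph{exactly} at $r=\eps$ and satisfies $N'(r)=-2r+O(r\eps^2)$, so that by integration $N(r)=(\eps^2-r^2)\big(1+O(\eps^2)\big)$, a \emph{relative} error. You hint that ``enough precision'' will do the job, but the actual bookkeeping you present does not deliver this, and your sentence ``the leading balance genuinely dominates the error terms of order $\eps^4$'' is false as stated.

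The paper avoids this issue altogether by working with $\cot\varphi$ rather than $\cos\varphi$. Writing $\varphi=A+B$ with $\tan A=r/\eps^2$ and using $\cot(A+B)=(1-\tan A\tan B)/(\tan A+\tan B)$ together with the elementary bounds $\tan B\le\eps$ and (via the mean value theorem) $\eps-\tan B\le 2(\eps-r)$, one obtains directly
\[
\frac{\eps(\eps-r)}{2r}\;\le\;\cot\varphi\;\le\;\Big(\frac{\eps}{r}+2\Big)(\eps-r)
\]
for $r\ge\eps^2$, which are exact algebraic inequalities with the factor $(\eps-r)$ already isolated. Both bounds on the first ratio, and then the second inequality, follow in a few lines. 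The virtue of your approach, once the relative error is established, is that it is more ``hands-on'' and makes the asymptotic constants ($\approx 1/2$ and $\approx\sqrt2$) visible; the paper's approach is shorter and entirely free of Taylor expansions.
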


\begin{proof}
	Set $r:=g(\varphi)$. By definition of $g$,
	$$
		\varphi= \arctan(\frac{r}{\e^2}) + \frac{\alpha_\e r}{\e}.
	$$ 
	Since $\alpha_\e\frac{r}{\e}\leq \alpha_\e=\arctan(\e)=O(\e)$, the mean value theorem applied to $t\mapsto \tan(t)$ yields
	\begin{align*}
		\e - \tan (\frac{\alpha_\e r}{\e}) &=
		\tan (\alpha_\e) - \tan ( \frac{\alpha_\e r}{\e}) \leq 
		2(\alpha_\e - \frac{\alpha_\e r}{\e}) = 2\frac{\alpha_\e}{\e} (\e-r)
		\leq 2(\e-r).
	\end{align*}
	On the other hand, the formula $\frac{\cos (x+y)}{\sin(x+y)} = \frac{1-\tan x\tan y}{\tan x+\tan y}$ applied to $x=\arctan(\frac{r}{\e^2})$, $y=\frac{\alpha_\e r}{\e}$ gives
	\begin{align*}
		\frac{\cos\varphi}{\sin\varphi}
		&= \frac{1- \frac{r}{\e^2}\tan(\frac{\alpha_\e r}{\e})}{\frac{r}{\e^2}
		+ \tan(\frac{\alpha_\e r}{\e})}
		\leq \frac{1- \frac{r}{\e^2}\tan(\frac{\alpha_\e r}{\e})}{\frac{r}{\e^2}} = \frac{\e^2}{r} - \e + \e - \tan(\frac{\alpha_\e r}{\e})
		\leq  \frac{\e}{r} (\e-r) + 2(\e-r).
	\end{align*}
	If $r\geq \e^2$ then 
	$\max\{\e, \frac{r}{\e}\}= \frac{r}{\e}$, so
	$$
		r\geq \e^2 \ \Rightarrow\ \max\{\e, \frac{r}{\e}\}\cos \varphi \leq \frac{r}{\e}
		(\frac{\e}{r}+2)\sin\varphi\, (\e-r) \leq 3 (\e-r). 
	$$
	If $r\leq \e^2$ then $\e-r>\e/2$
	and 
	$$
		\max\{\e, \frac{r}{\e}\}\cos \varphi \leq \e  \leq 2(\e-r),
	$$
	proving the first lower bound.
	
	Analogously,
	\begin{equation*}
		\frac{\cos\varphi}{\sin\varphi}
		= \frac{1- \frac{r}{\e^2}\tan(\frac{\alpha_\e r}{\e})}{\frac{r}{\e^2}
		+ \tan(\frac{\alpha_\e r}{\e})}
		\geq \frac{1-\frac{r}{\e^2}\cdot \e}{\frac{r}{\e^2} + \e}
		= \frac{\e(\e-r)}{r+\e^3}
		\geq \frac{\e (\e-r)}{r+r}.
	\end{equation*}
		If $r\geq \e^2$ then $\varphi\geq \arctan(1)+ \frac{\alpha_\e}{\e} \geq \frac{\pi}{4}$ and
		\begin{align*}
		\e - r \leq \frac{2r}{\e \sin \varphi} \cos \varphi \leq 2\sqrt{2} \frac{r}{\e}\cos\varphi.
	\end{align*}
	If $r\leq \e^2$ then $\varphi \leq \frac{\pi}{4} + O(\e)$, $\cos\varphi \geq \frac{1}{2}$, and
	\begin{equation*}
		\e -r \leq \e = 2\cdot \e\cdot \frac{1}{2}\cdot \e \leq 2\e \cos\varphi.
	\end{equation*}
	In both cases
	\begin{equation*}
		\e - r \leq 2\sqrt{2} \max\{\e, \frac{r}{\e}\}\cos\varphi,
	\end{equation*}
	which proves the upper bound in the first claim.
		
	Note now that 
	\begin{align*}
		\frac{1}{g'(\varphi)} = f'(r)= \frac{\e^2}{r^2 +\e^4} + \frac{\alpha_\e}{\e} \leq 2\frac{\e^2}{\max\{r,\e^2\}^2}
		= \frac{2}{\max\{\frac{r}{\e}, \e\}^2}.
	\end{align*}
	Thus,
	\begin{equation*}
		\frac{\e-g(\varphi)}{g'(\varphi)\cos\varphi}\leq \frac{4\sqrt{2}}{\max\{\frac{r}{\e}, \e\}}.
	\end{equation*}
	To obtain the last claim, write
	\begin{align*}
		\frac{\e - g(\varphi)\sin\varphi}{g'(\varphi)\cos\varphi} = \frac{\e - g(\varphi)}{g'(\varphi)\cos\varphi}
		+ \frac{g(\varphi)}{g'(\varphi)} \frac{1-\sin \varphi}{\cos \varphi}
		\leq \frac{4\sqrt{2}}{\max\{\frac{r}{\e}, \e\}}
		+\frac{2r}{\max\{\frac{r}{\e}, \e\}^2}
		\frac{1-\sin \varphi}{\cos\varphi}.
	\end{align*}
	The second factor is bounded:
	\begin{align}
			\label{eq:1menosSinCos}
		\frac{1-\sin \varphi}{\cos\varphi}
		= \frac{\sin \Big ( \frac{\frac{\pi}{2} - \varphi}{2}\Big ) }{\cos \Big ( \frac{\frac{\pi}{2} - \varphi}{2}\Big ) }\leq \tan \frac{\pi}{4} = 1.
	\end{align}
	The proof is finished by noting that 
	$r\leq \max\{\frac{r}{\e},\e\}^2$ for all $r\in [0,\e]$. 
\end{proof}

\begin{lemma}
	\label{le:logEpsilon}
	If $a$, $b$, and $\lambda\ne 1$ are any positive numbers such that $\frac{b}{a}<\lambda$, then 
	$$
		\int_{s=0}^1 \frac{\dd s}{(1-s)a+sb} \leq  \frac{1}{1-\lambda^{-1}}\frac{\ln (\lambda)}{b}.
	$$
\end{lemma}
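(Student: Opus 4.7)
The plan is to evaluate the integral explicitly and then reduce the claimed inequality to the monotonicity of a one-variable function.

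First I would compute the elementary antiderivative: since the integrand is $\bigl((1-s)a + sb\bigr)^{-1} = \bigl(a + s(b-a)\bigr)^{-1}$, whenever $a\neq b$ one finds
\[
 \int_0^1 \frac{\dd s}{(1-s)a+sb} = \frac{\ln b - \ln a}{b-a} = \frac{\ln(b/a)}{b-a}.
\]
(The case $a=b$ is handled by continuity: the integral equals $1/a = 1/b$ and corresponds to $t=1$ in what follows.)

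Next, setting $t:=b/a>0$, the desired inequality becomes, after multiplying numerator and denominator on the right by $\lambda$,
\[
 \frac{\ln t}{t-1}\leq \frac{\lambda\ln\lambda}{t(\lambda-1)},
 \qquad \text{equivalently}\qquad
 \varphi(t):=\frac{t\ln t}{t-1}\leq \frac{\lambda\ln\lambda}{\lambda-1}=\varphi(\lambda).
\]
Thus the lemma is reduced to verifying that $\varphi$ is nondecreasing on $(0,\infty)$, since the hypothesis $b/a<\lambda$ reads $t<\lambda$. A direct computation gives
\[
 \varphi'(t) = \frac{t-1-\ln t}{(t-1)^2},
\]
which is nonnegative because the function $t\mapsto t-1-\ln t$ is minimized at $t=1$ with value $0$ (its derivative is $1-1/t$). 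Hence $\varphi$ is increasing, and the inequality $\varphi(t)\leq \varphi(\lambda)$ follows from $t<\lambda$ (regardless of whether $\lambda>1$ or $\lambda<1$, since in the latter case also $t<\lambda<1$ so $\varphi$ is still evaluated on the same branch of monotonicity).

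There is essentially no obstacle here; the only mild subtlety is bookkeeping the signs of $b-a$ and $1-\lambda^{-1}$ in the two regimes $\lambda>1$ and $\lambda<1$, which the substitution $t=b/a$ dispatches uniformly because the function $\varphi(t)=t\ln t/(t-1)$ is positive and monotone on all of $(0,\infty)\setminus\{1\}$ (extended by $\varphi(1)=1$).
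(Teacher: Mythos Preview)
Your argument is correct and essentially identical to the paper's: both compute the integral as $\frac{\ln(b/a)}{b-a}$ and rewrite it as $\frac{1}{b}\cdot\frac{t\ln t}{t-1}$ with $t=b/a$, then use that this function of $t$ is increasing. The paper simply asserts the monotonicity, whereas you supply the derivative computation and the $a=b$ continuity case.
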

\begin{proof}
	Changing variables to $u=(1-s)a+sb$
	it follows that
	$$
		\int_{s=0}^1 \frac{\dd s}{(1-s)a+sb} = \frac{\ln (b/a)}{b-a}
		= \frac{1}{b} \frac{ \ln t}{1- t^{-1}} \Big |_{t=b/a}.
	$$
	This yields the conclusion because that function is increasing in $t$.
\end{proof}

\begin{lemma}
	For all $\varphi \in [0, \frac{\pi}{2})$,
	$$
		g'(\varphi)\cos \varphi < \e - g(\varphi)\sin \varphi .
	$$
\end{lemma}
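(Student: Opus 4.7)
The plan is to introduce the auxiliary function
$$F(\varphi) := \varepsilon - g_\varepsilon(\varphi)\sin\varphi - g_\varepsilon'(\varphi)\cos\varphi, \qquad \varphi \in [0, \tfrac{\pi}{2}],$$
and show that $F(\pi/2)=0$ while $F$ is strictly decreasing on a neighbourhood of $\pi/2$, so that $F(\varphi) > 0$ for all $\varphi \in [0, \pi/2)$.

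First I would verify the boundary value. Since $f_\varepsilon(\varepsilon) = \arctan(\varepsilon^{-1}) + \alpha_\varepsilon = \pi/2$, its inverse $g_\varepsilon$ satisfies $g_\varepsilon(\pi/2) = \varepsilon$, and therefore
$$F(\tfrac{\pi}{2}) = \varepsilon - \varepsilon \cdot 1 - g_\varepsilon'(\tfrac{\pi}{2})\cdot 0 = 0.$$

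Next I would differentiate $F$. A direct computation yields the cancellation
$$F'(\varphi) = -g_\varepsilon'(\varphi)\sin\varphi - g_\varepsilon(\varphi)\cos\varphi - g_\varepsilon''(\varphi)\cos\varphi + g_\varepsilon'(\varphi)\sin\varphi = -\cos\varphi\,\bigl(g_\varepsilon(\varphi) + g_\varepsilon''(\varphi)\bigr).$$
By Lemma~\ref{le:gPrime} we have the explicit expression
$$g_\varepsilon''(\varphi) = 2\varepsilon^2 r(r^2+\varepsilon^4)^{-2}\Bigl(\tfrac{\varepsilon^2}{r^2+\varepsilon^4}+\tfrac{\alpha_\varepsilon}{\varepsilon}\Bigr)^{-3} \geq 0,$$
with $r=g_\varepsilon(\varphi)$, and strict positivity for $r>0$. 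Since $g_\varepsilon$ is strictly increasing with $g_\varepsilon(0)=0$, both $g_\varepsilon(\varphi)$ and $g_\varepsilon''(\varphi)$ are strictly positive on $(0, \pi/2]$. Hence $F'(\varphi) < 0$ for every $\varphi \in (0, \pi/2)$.

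Combining these two facts, $F$ is strictly decreasing on $[0,\pi/2]$ (continuity at $0$ is clear), and therefore $F(\varphi) > F(\pi/2) = 0$ for every $\varphi \in [0, \pi/2)$, which is exactly the claimed inequality. The only subtle point is making sure that $F'$ is strictly negative and not merely nonpositive near the endpoint $\varphi = \pi/2$, but this follows immediately from $g_\varepsilon(\pi/2)=\varepsilon>0$ together with continuity of $g_\varepsilon$, so no obstacle should arise.
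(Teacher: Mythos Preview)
Your proof is correct and follows exactly the same approach as the paper: define the auxiliary function $F(\varphi)=\eps-g(\varphi)\sin\varphi-g'(\varphi)\cos\varphi$, compute $F'(\varphi)=-\bigl(g(\varphi)+g''(\varphi)\bigr)\cos\varphi<0$ on $(0,\pi/2)$, and use $F(\pi/2)=0$ to conclude. The paper presents the same argument in a terser two-line form.
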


\begin{proof}
It suffices to observe that
$\e - g(\varphi)\sin\varphi - g'(\varphi)\cos\varphi$
vanishes as $\varphi\to \frac{\pi}{2}$ and
\begin{align*}
	\Big ( \e - g(\varphi)\sin\varphi - g'(\varphi)\cos\varphi)\Big )' 
	&= -\big ( g(\varphi) + g''(\varphi)\big ) \cos\varphi <0.
\end{align*}
\end{proof}

\begin{lemma}
		\label{le:logNablaVarphi}
	For all $\varphi \in [0,\frac{\pi}{2})$,
	\begin{align*}
		\int_{s=0}^1\frac{\dd s }{(1-s)g'(\varphi)\cos \varphi + s(\e - g(\varphi)\sin\varphi)}
		\leq \frac{2}{\e-g(\varphi)} |\ln \e|.
	\end{align*}
\end{lemma}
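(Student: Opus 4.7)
Set $a:=g'_\eps(\varphi)\cos\varphi$ and $b:=\eps - g_\eps(\varphi)\sin\varphi$, so that the integrand denominator is the affine function $(1-s)a+sb$. I would first invoke the unnamed lemma immediately preceding this statement, which gives $0<a<b$ for every $\varphi \in [0,\tfrac{\pi}{2})$, and then integrate explicitly:
$$
    \int_{0}^{1}\frac{\dd s}{(1-s)a+sb}=\frac{\ln(b/a)}{b-a}.
$$
This exact evaluation makes the problem purely one of controlling two quantities: an upper bound on $b/a$ and a lower bound on $b$.

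For the ratio $b/a$, the last estimate in Lemma \ref{le:lowerEMinusG} gives
$$
    \frac{b}{a}=\frac{\eps - g_\eps(\varphi)\sin\varphi}{g'_\eps(\varphi)\cos\varphi}
    \leq \frac{8}{\max\{\eps, g_\eps(\varphi)/\eps\}}
    \leq \frac{8}{\eps}.
$$
Taking $\lambda=8/\eps$ in Lemma \ref{le:logEpsilon}, for which $\lambda>b/a$, yields
$$
    \int_{0}^{1}\frac{\dd s}{(1-s)a+sb}\leq \frac{\ln(8/\eps)}{(1-\eps/8)\,b}.
$$
To complete the plan, I would use the trivial inequality $g_\eps(\varphi)\sin\varphi\leq g_\eps(\varphi)$, hence $b\geq \eps - g_\eps(\varphi)>0$, so that
$$
    \int_{0}^{1}\frac{\dd s}{(1-s)a+sb}\leq \frac{\ln(8/\eps)}{(1-\eps/8)(\eps - g_\eps(\varphi))}.
$$

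The remaining step is to check
$$
    \frac{\ln(8/\eps)}{1-\eps/8}=\frac{\ln 8+|\ln\eps|}{1-\eps/8}\leq 2\,|\ln\eps|,
$$
which is equivalent to $\ln 8 \leq (1-\eps/4)\,|\ln\eps|$. Since the right-hand side tends to $+\infty$ as $\eps\to 0^{+}$, this is satisfied whenever $\eps$ is sufficiently small, which is the regime in which the estimate is used in the paper. This last uniform constant is, in my view, the only delicate point: the direct integration and the bound $b/a\leq 8/\eps$ are immediate from the preceding lemmas, whereas squeezing the constant down to the claimed value $2$ (rather than an arbitrary absolute constant) requires $\eps$ to be taken small enough, which matches the asymptotic nature of the construction in Section~\ref{sec:recovery_sequence}.
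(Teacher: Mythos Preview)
Your proof is correct and follows essentially the same route as the paper: both set $a=g'(\varphi)\cos\varphi$, $b=\eps-g(\varphi)\sin\varphi$, invoke Lemma~\ref{le:lowerEMinusG} for $b/a\leq 8/\eps$, apply Lemma~\ref{le:logEpsilon} with $\lambda=8/\eps$, and then use $b\geq \eps-g(\varphi)$. You are simply more explicit than the paper about the final constant-checking step $\frac{\ln(8/\eps)}{1-\eps/8}\leq 2|\ln\eps|$, which the paper leaves to the reader.
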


\begin{proof}
	By Lemma \ref{le:positive_h},
	$$
		0<\e - g(\varphi)< \e - g(\varphi)\sin \varphi.
	$$
	Combining that with the previous lemmas,
	with 
	$$
		a= g'(\varphi)\cos \varphi, \quad b= \e - g(\varphi)\sin\varphi,\quad \lambda = \frac{8}{\e},
	$$
	it is obtained that
	\begin{align*}
		\int_{s=0}^1\frac{\dd s }{(1-s)g'(\varphi)\cos \varphi + s(\e - g(\varphi)\sin\varphi)} 
		&\leq \frac{1}{1-\frac{\e}{8}} \frac{\ln \frac{8}{\e}}{\e - g(\varphi)\sin \varphi}.
	\end{align*}

\end{proof}

\section{Working with axially symmetric maps}\label{sec:Appendix_B}

In this appendix, we describe axially symmetric maps in cylindrical and spherical coordinates. We compute the differential of an axially symmetric map in those coordinates system. This allows us to express the Dirichlet energy, the cofactor matrix and the Jacobian determinant of these maps. All these quantities are needed in Section \ref{sec:towards_upper_bound}  to understand the construction of  test functions which proves that the lower bound of the relaxed energy found in Theorem \ref{th:previous_article} is optimal for the limiting map of the Conti--De Lellis example.

\subsection{Axially symmetric maps in cylindrical and spherical coordinates}
We denote by \( (r,\theta,x_3)\in \R^+\times [0,2\pi) \times \R\) the cylindrical coordinates and \(( \rho,\theta,\varphi) \in \R^+ \times [0,2\pi) \times [0,\pi]\) the spherical coordinates. Hence we have
\[ (x_1,x_2,x_3)=(r\cos \theta,r\sin\theta, x_3)= (\rho \cos \theta \sin \varphi,\rho \sin \theta \sin \varphi, \cos \varphi). \]
Note that  \( \theta\) is the same angle for cylindrical and spherical coordinates.
We consider axisymmetric maps with respect to the \(x_3\)-axis. Let \(\Om\) be an axisymmetric domain. The map \(\vec u: \Om \rightarrow \R^3\) is axisymmetric if, in cylindrical coordinates, it can be expressed in the following way
\[ \vec u(r\cos \theta,r\sin \theta,z)=v_1(r,z) \vec e_r +v_2(r,z) \vec e_3 =v_1(r,z) (\cos \theta \vec e_1 +\sin \theta \vec e_2)+v_2(r,z)\vec e_3,\]
for some \( \vec v=(v_1,v_2): \vec P(\Om) \rightarrow \R^+ \times \R\), with \(\vec P\) defined in \eqref{def:pi_P}.
In this case the cylindrical coordinates \( (u_r,u_\theta,u_{x_3})\) of the map \(\vec u\) are \( (v_1(r,z),\theta, v_2(r,z))\). We can also express spherical coordinates of the map
\[ (u_\rho,u_\theta,u_\varphi)=\left(\sqrt{v_1^2(r,z)+v_2^2(r,z)},\theta, \arccos \left(\frac{v_2(r,z)}{v_1^2(r,z)+v_2^2(r,z)}\right)\right). \]

Thus, a  map \(\vec u: \Om \rightarrow \R^3\) is axisymmetric if its spherical coordinates \( (u_\rho,u_\theta,u_\varphi)\) satisfy that \( u_\rho, u_\varphi\) do not depend on \(\theta\) and \(u_\theta=\theta\).

Next we give the expression of the differential of a map given in different coordinate systems.

\subsection{Use of cylindrical-cylindrical coordinates}
We recall from the Appendix in \cite{HeRo18} that if \(\vec u: \Om \rightarrow \R^3\) is axisymmetric and is given in cylindrical coordinates by \( \vec u(r\cos \theta, r\sin \theta, x_3)= v_1(r,x_3) \vec e_r +v_2(r,x_3) \vec e_3\) then
\[
D\vec u= \begin{pmatrix}
\p_rv_1 & 0 & \p_{x_3} v_1 \\
0 & \frac{v_1}{r} & 0 \\
\p_rv_2 & 0 & \p_{x_3}v_2
\end{pmatrix},
\quad
\cof D \vec u = \begin{pmatrix}
\frac{v_1}{r}\p_{x_3}v_2 & 0 & -\frac{v_1}{r}\p_rv_2 \\
0 & \det D v & 0 \\
-\frac{v_1}{r}\p_{x_3}v_1 & 0 & \frac{v_1}{r}\p_rv_1
\end{pmatrix},
\quad
\det D\vec u = \frac{v_1}{r}  \det D \vec v,
\]
and the neo-Hookean energy is given by
\begin{equation*}
E(\vec u) =2\pi \int_{\pi(\Om)}\left(|\p_r \vec v|^2+|\p_{x_3} \vec v|^2\right)rdrd x_3 +2\pi \int_{\pi(\Om)}\frac{v_1^2}{r}\dd r\dd x_3.
\end{equation*}
\subsection{Use of spherical-spherical coordinates}\label{subse:spherical-spherical}
Let us assume that \( \vec u \) is an axisymmetric map given in spherical coordinates in the domain and in the target. We can write
\[
\vec u(\rho \cos\theta \sin \varphi, \rho \sin \theta \sin \varphi, \rho \cos \varphi)=u_\rho(\rho,\varphi) \vec e_\rho, \ \text{ with } \
\vec e_\rho: =( \cos \theta \sin u_\varphi, \sin \theta \sin u_\varphi, \cos u_\varphi).
\]

Let \( \vec \gamma(t)= (\rho(t),\theta(t), \varphi(t))\) be a \(C^1\) curve in spherical coordinates. We compute \( \frac{d}{dt} \left[ \vec u\circ \vec \gamma \right]\). By the chain rule,
\begin{align*}
\frac{d[u\circ \gamma](t)}{dt}=\p_\rho u_\rho \dot{\rho} \vec e_\rho +u_\rho \dot{\rho}\p_\rho \vec e_\rho +\p_\rho u_\varphi \dot{\varphi} \vec e_\rho +u_\rho \dot{\varphi}\p_\varphi \vec e_\rho +u_\rho \dot{\theta} \p_\theta \vec e_\rho.
\end{align*}
We let
\begin{equation*}
\vec e_\theta  := (-\sin \theta, \cos \theta, 0) , \qquad \vec e_\varphi := (\cos \theta \cos u_\varphi, \sin \theta \cos u_\varphi,-\sin u_\varphi).
\end{equation*}
The basis \( (\vec e_\rho, \vec e_\varphi, \vec e_\theta)\) is orthonormal and $\vec e_\rho \wedge \vec e_\varphi = \vec e_\theta$. We can verify that
\begin{align*}
\p_\rho \vec e_\rho &= \p_\rho u_\varphi \vec e_\varphi , \\
\p_\theta \vec e_\rho &= (-\sin \theta \sin u_\varphi, \cos \theta \sin u_\varphi,0)=\sin u_\varphi \vec e_\theta , \\
\p_\varphi \vec e_\rho &= \p_\varphi u_\varphi (\cos \theta \cos u_\varphi, \sin \theta \cos u_\varphi,-\sin u_\varphi)=\p_\varphi u_\varphi \vec e_\varphi.
\end{align*}
Thus we find
\[
\frac{d[u\circ \gamma](t)}{dt}=\p_\rho u_\rho \dot{\rho}\vec e_\rho +u_\rho \dot{\rho} \p_\rho u_\varphi \vec e_\varphi+\p_\varphi u_\rho \dot{\varphi} \vec e_\rho +u_\rho \dot{\varphi}\p_\varphi u_\varphi \vec e_\varphi+u_\rho \dot{\theta}\sin u_\varphi \vec e_\theta.
\]
On the other hand, \( \gamma(t)= \rho \vec \e_\rho+\rho \dot{\theta}\sin \varphi \vec \e_\theta+\rho \dot{\varphi}\vec \e_\varphi\) where \( (\vec \e_\rho,\vec \e_\theta,\vec \e_\varphi)\) is the orthonormal basis given by
\begin{equation*}
\begin{split}
\vec \e_\rho& =(\cos \theta(t)\sin \varphi(t), \sin \theta(t) \sin \varphi(t), \cos \varphi(t)) , \\
\vec \e_\theta& =( -\sin \theta(t), \cos \theta(t),0) , \\
\vec \e_\varphi& = (\cos \theta(t) \cos \varphi(t), \sin \theta(t) \cos \varphi(t), \sin \varphi(t)),
\end{split}
\end{equation*}
which satisfies $\vec \e_\rho \wedge \vec \e_\theta = \vec \e_\varphi$.
 Hence we can express
\begin{multline*}
\frac{d[\vec u\circ \vec \gamma](t)}{dt} = \p_\rho u_\rho (\vec \gamma'\cdot \vec \e_\rho) \vec e_\rho +u_\rho \p_\rho u_\varphi (\vec \gamma'\cdot \vec\e_\rho) \vec e_\varphi +\p_\varphi u_\rho \frac{ \vec \gamma'\cdot \vec \e_\varphi}{\rho} \vec e_\rho \\
+u_\rho \p_\varphi u_\varphi \frac{\vec \gamma'\cdot \vec \e_\varphi}{\rho}\vec e_\varphi+u_\rho \sin u_\varphi \frac{\vec\gamma'\cdot \vec \e_\theta}{\rho \sin \varphi}\vec e_\theta.
\end{multline*}
Now we use that for three vectors, \(\vec a,\vec b, \vec h\) we have \( \vec a \otimes \vec b . \vec h=(\vec b\cdot \vec h ) \vec a\) and we find
\begin{multline*}
\frac{d[\vec u \circ \vec \gamma](t)}{dt}=\bigl[ \p_\rho u_\rho \vec e_\rho \otimes \vec \e_\rho +u_\rho \p_\rho u_\varphi \vec e_\varphi \otimes \vec \e_\rho +\frac{1}{\rho}\p_\varphi u_\rho \vec e_\rho \otimes \vec \e_\varphi \\
\frac{1}{\rho}\p_\varphi u_\rho \vec e_\rho \otimes \vec \e_\varphi+\frac{1}{\rho}u_\rho \p_\varphi u_\varphi \vec e_\varphi \otimes \vec \e_\varphi 
+\frac{u_\rho \sin u_\varphi}{\rho \sin \varphi}\vec e_\theta\otimes \vec \e_\theta \bigr].\vec \gamma'.
\end{multline*}
Hence in the bases \( (\vec e_\rho,\vec e_\theta,\Vec e_\varphi), (\vec \e_\rho, \vec \e_\theta,\vec \e_\varphi)\) (in the reference and deformed configurations, respectively) we have
\begin{equation*}
D \vec u=\begin{pmatrix}
\p_\rho u_\rho &0& \frac{1}{\rho}\p_\varphi u_\rho \\
0 & \frac{u_\rho \sin u_\varphi}{\rho \sin \varphi} & 0 \\
u_\rho \p_\rho u_\varphi & 0 & \frac{1}{\rho}u_\rho \p_\varphi u_\varphi
\end{pmatrix} .
\end{equation*}
Thus we find
\begin{equation*}
\begin{split}
|D \vec u|^2= |\p_\rho u_\rho|^2+|u_\rho \p_\rho u_\varphi|^2+\frac{1}{\rho^2}|\p_\varphi u_\rho|^2
+\frac{1}{\rho^2}|u_\rho \p_\varphi u_\varphi|^2 +\frac{|u_\rho \sin u_\varphi|^2}{\rho^2\sin^2\varphi},
\\
\cof D \vec u = \begin{pmatrix}
\frac{u_\rho^2 \sin u_\varphi \p_\varphi u _\varphi}{\rho^2 \sin \varphi}& 0 & \frac{-u_\rho^2 \sin u_\varphi \p_\rho u_\varphi}{\rho \sin \varphi} \\
0 & \frac{u_\rho}{\rho}(\p_\rho u_\rho \p_\varphi u_\varphi- \p_\rho u_\varphi \p_\varphi u_\rho) & 0 \\
-\frac{u_\rho \sin u_\varphi \p _\varphi u_\rho}{\rho^2 \sin \varphi}& 0 & \frac{u_\rho \p_\rho u_\rho \sin u_\varphi}{\rho \sin \varphi}
\end{pmatrix}
\end{split}
\end{equation*}
and
\begin{equation}\label{eq:det_spherical_spherical}
\det D \vec u = \frac{u_\rho^2 \sin u_\varphi}{\rho^2 \sin \varphi}(\p_\rho u_\rho \p_\varphi u_\varphi-\p_\rho u_\varphi\p_\varphi u_\rho).
\end{equation}
Note that the Dirichlet energy can be expressed by
\begin{multline}\label{eq:Dirichlet_energy_spherical_sperical}
\int_\Om |D \vec u|^2=2\pi \int_{\pi(\Om)} \Bigl[ |\p_\rho u_\rho|^2+|u_\rho \p_\rho u_\varphi|^2+\frac{1}{\rho^2}|\p_\varphi u_\rho|^2 \\
+\frac{1}{\rho^2}|u_\rho \p_\varphi u_\varphi|^2 +\frac{|u_\rho \sin u_\varphi|^2}{\rho^2\sin^2\varphi}\Bigr] \rho^2 \sin \varphi \dd \rho \dd\varphi.
\end{multline}

\subsection{Use of cylindrical-spherical coordinates}
Now we find the differential of a map given in spherical coordinates in the target but in cylindrical coordinates in the domain. If we assume that \(\vec u \) is axisymmetric then we can write

\begin{align*}
\vec u(r\cos \theta, r\sin \theta,x_3)&=u_\rho(r,x_3) (\cos \theta \sin u_\varphi(r,x_3) \vec e_1 +\sin \theta \sin u_\varphi(r,x_3) \vec e_2) +\cos u_\varphi(r,x_3) \vec e_3 \\
&=u_\rho(r,x_3) \sin u_\varphi(r,x_3) \vec e_r +u_\rho(r,x_3) \cos u_\varphi(r,x_3) \vec e_3.
\end{align*}
By using the same method as in Subsection \ref{subse:spherical-spherical},
 we find that, in the basis \( (\vec e_r,\vec e_\theta,\vec e_3)\),
\begin{equation*}
D\vec u=\begin{pmatrix}
\p_ru_\rho \sin u_\varphi +u_\rho \p_ru_\varphi \cos u_\varphi &0 & \p_{x_3} u_\rho \sin u_\varphi+u_\rho\p_{x_3} u_\varphi\cos u_\varphi \\
0 & \frac{u_\rho \sin u_\varphi}{r} & 0 \\
\p_r u_\rho \cos u_\varphi-u_\rho\p_r u_\varphi \sin u_\varphi &0 & \p_{x_3}u_\rho \cos u_\varphi-u_\rho \p_{x_3} u_\varphi \sin u_\varphi
\end{pmatrix} ,
\end{equation*}
\begin{equation}\label{eq:det_spherical_cylindrical}
\det D\vec u= \frac{u_\rho^2\sin u_\varphi}{r}(\p_r u_\varphi \p_{x_3}u_\rho-\p_ru_\rho\p_{x_3} u_\varphi).
\end{equation}
In this case the Dirichlet energy is given by 
\begin{multline}\label{eq:Dirichlet_energy_cyl_coord}
\int_\Om |D\vec u|^2 =2\pi\int_{\pi(\Om)} \Bigl[|\p_ru_\rho|^2+|u_\rho \p_ru_\varphi|^2+\frac{1}{r^2}|u_\rho \sin u_\varphi|^2+|\p_{x_3}u_\rho|^2 +|u_\rho \p_{x_3}u_\varphi|^2 \Bigr]r \dd r \dd x_3.
\end{multline}

\section{A lemma in measure theory}

In Proposition \ref{pr:muproperties} we need the following property.

\begin{lemma} \label{le:var_sum}
  Let $\{f_k\}_{k\in \N}$ be a countable family of functions from $X\to Y$. 
  If the images $f_k(X)$ are disjoint, then 
  \[
	\left | \sum_k {f_k}_{\#} {\vecg\mu}\right | = \sum_k \big | {f_k}_{\#} {\vecg\mu}\big |.
  \]
\end{lemma}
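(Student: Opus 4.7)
My plan is to reduce the claim to the classical fact that the total variation is additive on mutually singular vector-valued measures, and then pass to the countable limit.

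First I would set $\nu_k := (f_k)_{\#}\vecg\mu$ and observe that each $\nu_k$ is concentrated on $f_k(X)$: for any Borel $E\subset Y$ with $E\cap f_k(X)=\emptyset$ one has $f_k^{-1}(E)=\emptyset$, hence $\nu_k(E)=0$. Since the images $f_k(X)$ are pairwise disjoint by hypothesis, the family $\{\nu_k\}$ is pairwise mutually singular.

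Next I would verify, directly from the definition of the total variation, the finite version: for two $\R^N$-valued measures $\lambda_1,\lambda_2$ concentrated on disjoint Borel sets $A_1,A_2$, one has $|\lambda_1+\lambda_2|=|\lambda_1|+|\lambda_2|$. Indeed, given any finite Borel partition $\{E_i\}$ of a Borel set $E\subset Y$, refining it by intersecting with $A_1$, $A_2$, and the complement $Y\setminus(A_1\cup A_2)$ (on which both measures vanish) yields
\[
\sum_i|\lambda_1(E_i)+\lambda_2(E_i)|= \sum_i|\lambda_1(E_i\cap A_1)+\lambda_2(E_i\cap A_2)|\leq |\lambda_1|(E)+|\lambda_2|(E),
\]
so $|\lambda_1+\lambda_2|\leq|\lambda_1|+|\lambda_2|$; the reverse inequality is the general subadditivity of the variation. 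By induction one obtains $\bigl|\sum_{k=1}^N\nu_k\bigr|=\sum_{k=1}^N|\nu_k|$ for every $N\in\N$.

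Finally I would let $N\to\infty$. The convergence of $\sum_k\nu_k$ as a vector Radon measure is implicit in the statement (and automatic in the application to Proposition \ref{pr:muproperties} because $\sum_k|\nu_k|(Y)<\infty$). The partial sums $S_N:=\sum_{k=1}^N\nu_k$ therefore converge to $\nu:=\sum_k\nu_k$ in total variation, so $|S_N|(E)\to|\nu|(E)$ for every Borel set $E$; on the other hand, the monotone convergence theorem gives $\sum_{k=1}^N|\nu_k|(E)\nearrow \sum_k|\nu_k|(E)$. Combining both limits with the finite-sum identity yields $|\nu|(E)=\sum_k|\nu_k|(E)$, which is the claim. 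The only non-routine step is the refinement argument establishing the additivity in the two-summand case; the countable passage to the limit is then immediate, and no substantial obstacle is expected.
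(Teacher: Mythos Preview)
Your overall strategy---observe that the pushforwards are mutually singular, establish the finite case, then pass to the limit in total variation---is sound and close in spirit to the paper's argument. However, the displayed computation in your two-summand step proves the wrong inequality. What you wrote,
\[
\sum_i|\lambda_1(E_i)+\lambda_2(E_i)|= \sum_i|\lambda_1(E_i\cap A_1)+\lambda_2(E_i\cap A_2)|\leq |\lambda_1|(E)+|\lambda_2|(E),
\]
establishes $|\lambda_1+\lambda_2|\leq|\lambda_1|+|\lambda_2|$, which is precisely the general subadditivity you then invoke for the \emph{reverse} direction. The nontrivial inequality $|\lambda_1|+|\lambda_2|\leq|\lambda_1+\lambda_2|$ is not a general fact; it is exactly where the disjoint-supports hypothesis is needed. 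The fix is to run the refinement the other way: given partitions of $E$ approximating $|\lambda_1|(E)$ and $|\lambda_2|(E)$, intersect everything with $A_1$, $A_2$, and $Y\setminus(A_1\cup A_2)$; on each resulting piece $\lambda_1+\lambda_2$ coincides with $\lambda_1$, $\lambda_2$, or $0$, so the combined sum is bounded by $|\lambda_1+\lambda_2|(E)$.

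Once that is corrected, your induction and limit passage are fine. For comparison, the paper skips the finite-then-limit structure and works directly with the countable sum: for a given set $A$ and $\eps>0$ it chooses a single partition $\{A_i\}$ that is $\eps/M$-good simultaneously for the first $M$ pushforwards, refines by the images $f_k(X)$ to get pieces $A_{ik}$ on which $(f_k)_\#\vecg\mu$ agrees with the full sum, and bounds $\sum_{k=1}^M|(f_k)_\#\vecg\mu|(A)-\eps$ by $|\sum_j (f_j)_\#\vecg\mu|(A)$ in one stroke. This avoids the total-variation convergence step you rely on, though your route is equally valid once the two-summand case is repaired.
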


\begin{proof}
  Take any set $A\subset Y$. Fix $M\in \N$ and $\e>0$. Refining the partitions if necessary,
  from the definition of the total variation of a vector-valued measure 
  it can be seen that a partition $A=\bigcup A_i$ exist such that 
  \[
  \sum_i |{f_k}_{\#}\vecg\mu(A_i)| 
   \geq |{f_k}_{\#}\vecg\mu|(A) -\frac{\e}{M}
    \text{ for each } k\in \{1,\ldots, M\}.
  \]
  Set $A_{ik}:=A_i\cap f_k(X)$. Since the $A_{ik}$ are disjoint, 
  for every $k$ and $i$ we have that
  \[
    {f_k}_{\#}\vecg\mu(A_i) = {f_k}_{\#}\vecg\mu(A_{ik})
      = \left (\sum_{j\in\N} {f_j}_{\#}\vecg\mu \right )(A_{ik}).
  \]
  Therefore,
  \[
   \sum_{k=1}^M |{f_k}_{\#}\vecg\mu|(A) -\e \leq 
   \sum_{k=1}^M \sum_i |{f_k}_{\#}\vecg\mu(A_i)| 
   = \sum_{k=1}^M \sum_i \left |\left (\sum_{j\in\N} {f_j}_{\#}\vecg\mu \right )(A_{ik})\right |.
  \]
  By definition of total variation, the right-hand side is less than or equal to $\left |\sum_{j\in\N} {f_j}_{\#}\vecg\mu \right | (A)$. 
  Since the inequality holds for all $M$ and $\e$, 
  we find that $\sum |{f_k}_{\#}\vecg\mu|(A)\leq \left |\sum {f_k}_{\#}\vecg\mu \right | (A)$.
  The reverse inequality is easier to prove: it is deduced from the definition of $|\sum {f_k}_{\#}\vecg\mu  |$
  and the triangle inequality.  
\end{proof}

\section{Surface energy of a harmonic dipole}
\label{se:surface_energy_dipole}

Let $\vec u$ be the Conti--De Lellis map defined in Section \ref{se:LimitMap}. Here we prove that its 
surface energy $\E(\vec u)$, as it is defined in \cite{HeMo10}, coincides with twice the area of its created surface, 
that is, $2\pi$. Set $\Om:=B(\vec 0, 3)$. Given $\vec f$ any test function in $C_c^1(\Om \times \R^3, \R^3)$,
	\begin{multline*}
		\E_{\vec u}(\vec f) =
		\int_\Om \cof \nabla \vec u \cdot \nabla_{\vec x} \vec f\big ( \vec x, \vec u(\vec x)\big )
		+ \det \nabla \vec u(\vec x) \dive_{\vec y} \vec f\big ( \vec x, \vec u(\vec x)\big )
		\dd \vec x
		\\ =
		\lim_{\rho\to 0} \int_{\Om_\rho} \cof \nabla \vec u \cdot \nabla_{\vec x} \vec f\big ( \vec x, \vec u(\vec x)\big )
		+ \det \nabla \vec u(\vec x) \dive_{\vec y} \vec f\big ( \vec x, \vec u(\vec x)\big )
		\dd \vec x,
	\end{multline*}
	where $\Om_\rho:=\Om \setminus \Big ( B\big ((0,0,0), \rho\big )\cup B\big ((0,0,1),\rho\big )\Big)$. Using that  $\vec u$ is smooth in $\Om_{\rho}$, by changing variables we obtain
	\begin{align*}
	 \E_{\vec u}(\vec f) &=
	 \lim_{\rho\to 0} \int_{\vec u (\Om_\rho)} \dive \Big ( \vec f\big (\vec u^{-1}(\vec y), \vec y\big )\Big ) \dd\vec y
	 =
	 \lim_{\rho\to 0} \int_{\partial \vec u (\Om_\rho)}
	 \vec f \big ( \vec u^{-1}(\vec y),\vec y \big ) \cdot
	 \vecg \nu(\vec y) \dd\Ha^2(\vec y).
	\end{align*}

Choosing as the Borel orientation of the bubble $\Gamma$ the unit vector $\vecg\nu_{\Gamma}$ 
pointing outside $B\big ( (0,0,\frac{1}{2}), \frac{1}{2} \big )$, the above integral can be written as
	\begin{align*}
	 \E_{\vec u}(\vec f) &=
	 \int_\Gamma
	 \vec f \Big ( \big (\vec u^{-1}\big )^-(\vec y),\vec y \Big ) \cdot
	 \vecg \nu_\Gamma (\vec y) \dd\Ha^2(\vec y)
	 - \int_\Gamma
	 \vec f \Big ( \big (\vec u^{-1}\big )^+(\vec y),\vec y \Big ) \cdot
	 \vecg \nu_\Gamma (\vec y) \dd\Ha^2(\vec y)
	 \\ &= \int_\Gamma
	 \vec f \Big ( (0,0,0),\vec y \Big ) \cdot
	 \vecg \nu_\Gamma (\vec y) \dd\Ha^2(\vec y)
	 - \int_\Gamma
	 \vec f \Big ( (0,0,1),\vec y \Big ) \cdot
	 \vecg \nu_\Gamma (\vec y) \dd\Ha^2(\vec y).
	\end{align*}
	Since $|\vec f \cdot \vecg\nu|\leq \|\vec f\|_\infty$, this integral is bounded above by $2\Ha^2(\Gamma)= 2\pi$
	for any $\vec f$ with $\|\vec f\|_\infty \leq 1$.
	Taking a test function $\vec f$ such that
	\begin{align*}
	 \vec f\Big ( (0,0,0),\vec y \Big ) = \vecg\nu_\Gamma(\vec y)
	 \quad \text{and} \quad
	 \vec f\Big ( (0,0,1),\vec y \Big ) = -\vecg\nu_\Gamma(\vec y)
	\end{align*}
	for all $\vec y$ on $\Gamma$
	(which is possible since the singular points are separated in the reference configuration),
	it follows that the supremum over all such $\vec f$ is exactly $2\pi$; that is, $\E(\vec u)= 2\pi$, as claimed.

\addtocontents{toc}{\setcounter{tocdepth}{1}}
\subsection*{Financial Support}

Marco Barchiesi has been supported by project VATEXMATE.

Duvan Henao has received funding from FONDECYT project 1231401 and from the
Center for
Mathematical Modeling through ANID/Basal
project FB210005.

Carlos Mora-Corral has been supported by the Agencia Estatal de Investigaci\'on of the Spanish Ministry of Research and Innovation, 
through project PID2021-124195NB-C32 and the Severo Ochoa Programme for Centres of Excellence in R\&D CEX2019-000904-S, 
by the Madrid Government (Comunidad de Madrid, Spain) under the multiannual Agreement with UAM in the line for the Excellence 
of the University Research Staff in the context of the V PRICIT (Regional Programme of Research and Technological Innovation), and by the ERC Advanced Grant 834728.

R\'emy Rodiac has been partially supported by the ANR project BLADE Jr. ANR-18-CE40-0023.

\addtocontents{toc}{\setcounter{tocdepth}{2}}

{\small
\bibliography{biblio} \bibliographystyle{siam}
}

\end{document}